\numberwithin{equation}{section}
\newtheorem{Thm}{Theorem}[section]
\newtheorem{Lem}[Thm]{Lemma}
\newtheorem{Prop}[Thm]{Proposition}
\newtheorem{Rem}[Thm]{Remark}
\def\R{\mathbb{R}}
\begin{document}

\title[ Normalized solutions for a Schr\"odinger-Newton system ]
{Existence and local uniqueness of normalized peak solutions for a Schr\"odinger-Newton system}

\author{Qing Guo,\,\,Peng Luo,\,\,Chunhua Wang$^\dagger$ and Jing Yang}

 \address[Qing Guo]{College of Science, Minzu University of China, Beijing 100081, China}

\email{guoqing0117@163.com}

 \address[Peng Luo]{School of Mathematics and Statistics and Hubei Key Laboratory of Mathematical Sciences, Central China Normal University, Wuhan 430079, China}

\email{pluo@mail.ccnu.edu.cn}

\address[Chunhua Wang]{School of Mathematics and Statistics and Hubei Key Laboratory of Mathematical Sciences, Central China Normal University, Wuhan 430079, China}

\email{chunhuawang@mail.ccnu.edu.cn}

\address[Jing Yang]{School of Science, Jiangsu University of Science and Technology, Zhenjiang 212003, China}

\email{yyangecho@163.com}

\thanks{$^\dagger$ Corresponding author: Chunhua Wang}

\begin{abstract}
 In this paper, we investigate the
existence and local uniqueness of normalized peak solutions for a Schr\"odinger-Newton system
under the assumption that the trapping potential is degenerate and has non-isolated critical points.

First we investigate the
existence and local uniqueness of normalized single-peak solutions for the Schr\"odinger-Newton system.
Precisely, we   give  the precise description of the chemical potential $\mu$ and
the attractive interaction $a$. Then we apply the finite dimensional reduction method
to obtain the existence of single-peak solutions. Furthermore, using various local Pohozaev identities, blow-up analysis and the maximum principle,
  we prove the local uniqueness of single-peak solutions by precise analysis of the concentrated points and the Lagrange multiplier.
Finally, we also prove the nonexistence of multi-peak solutions for the Schr\"odinger-Newton system,
which is markedly different from the corresponding Schr\"odinger equation. The nonlocal term
results in this difference.

 The main difficulties come from the
  estimates on Lagrange multiplier, the different degenerate rates along different directions at the critical point of $P(x)$
  and some complicated estimates involved by the nonlocal term.
   To our best knowledge, it may be the first time to study the existence and
  local uniqueness of solutions with
 prescribed $L^{2}$-norm for the Schr\"odinger-Newton system.

{\bf Keywords:} Normalized solutions; the Schr\"odinger-Newton system; Degenerated trapping potential.

{\bf AMS Subject Classifications:} 35A01 $\cdot$ 35B25 $\cdot$ 35J20 $\cdot$ 35J60
\end{abstract}

\maketitle

\section{Introduction and our main results}%\label{sec-1}
In this paper, we investigate the following nonlinear Schr\"odinger-Newton system
\begin{equation}\label{main-eq}
 \begin{cases}
 -\Delta u+P(x)u=a\psi u+\mu u,\,&x\in \R^{3},\\
-\Delta \psi =\frac{u^2}{2},\,& x\in \R^{3},
 \end{cases}
 \end{equation}
under the mass constraint $\int_{\R^{3}}u^{2}(x)dx=1,$ where $P(x)$
is a degenerate trapping potential with non-isolated critical points,
 $\mu\in\R$  and $a\geq 0 $ denote the chemical potential and the
attractive interaction respectively.

The Schr\"odinger-Newton problem arises from describing the quantum mechanics of a polaron at rest, see \cite{Penrose}.
 Also it was used to describe an electron trapped in its own hole in a certain approximating to
 Hartree-Fock theory of one component plasma in \cite{Lieb1}.
  In addition, Penrose in \cite{Penrose} applied it as a model of self-gravitating matter,
  where quantum state reduction is understood as a gravitational phenomenon. Specifically, if $m$ is the mass of the point, the interaction leads to the system in $\R^3$
 \begin{equation}\label{ll}
 \begin{cases}
- \frac{\epsilon^2}{2m}\Delta u+P(x)u=\psi u,\,&x\in \R^{3},\\
 -\Delta \psi =4\pi \tau u^2,\,&x\in \R^{3},
 \end{cases}
 \end{equation}
 where $u$ is the wave function, $\psi$ is the gravitational potential energy, $P(x)$ is
 a given Schr\"odinger potential, $\epsilon$ is the Planck constant, $\tau=Gm^2$ and $G$ is
 the Newton's constant of gravitation.

 Set
 \begin{equation*}
 u(x)\mapsto\frac{  u}{4\epsilon \sqrt{\pi \tau m}}, ~~P(x)\mapsto\frac{1}{2m}  P(x),~~\psi(x)\mapsto\frac{1}{2m}  \psi(x).
 \end{equation*}
 Then system \eqref{ll} can be written, maintaining the original notations, as
  \begin{equation}\label{ll1}
 \begin{cases}
 -\epsilon^2\Delta u+P(x)u=\psi u,\,&x\in \R^{3},\\
 -\epsilon^2\Delta \psi =\frac{u^2}{2},\,& x\in \R^{3}.
 \end{cases}
 \end{equation}
  From the second equation of \eqref{ll1}, we know
 $ \psi(x)=\frac{1}{8\pi \epsilon^2}
 \big(\int_{\R^3}\frac{u^2(y)}{|x-y|}dy\big).$
 Then the system \eqref{ll1} turns into the following  single nonlocal equation
 \begin{flalign}\label{1.2}
 -\epsilon^2\Delta u+P(x)u=\frac{1}{8\pi \epsilon^2}
 \big(\int_{\R^3}\frac{u^2(y)}{|x-y|}dy\big)u,~x\in \R^3,
 \end{flalign}
which also appears in the study of standing waves for the following nonlinear Hartree equations
\begin{flalign*}
i\epsilon\frac{\partial \varphi}{\partial t}=-\epsilon^2\Delta_x\varphi +(P(x)+E)\varphi-\frac{1}{8\pi \epsilon^2}
 \big(\int_{\R^3}\frac{\varphi^2(y)}{|x-y|}dy\big)\varphi,~(x,t)\in \R^3\times\R^+,
\end{flalign*}
with the form $\varphi(x,t)=e^{-iEt/\epsilon}u(x),$ where $i$ is the imaginary unit and $\epsilon$ is the Planck constant.

In recent decades,  problem \eqref{1.2} has been extensively investigated.
When $\epsilon=1$ and $P(x)=1$, \eqref{1.2} becomes
 \begin{flalign}\label{l1.2}
 - \Delta u+ u=\frac{1}{8\pi}
 \big(\int_{\R^3}\frac{u^2(y)}{|x-y|}dy\big)u,~x\in \R^3.
 \end{flalign}
The existence and uniqueness of ground states for \eqref{l1.2} was obtained with variational methods by Lieb \cite{Lieb1}, Lions \cite{Lions} and Menzala \cite{Menzala}.
The nondegeneracy of the ground states for \eqref{l1.2} was proved by  Tod-Moroz \cite{Tod} and Wei-Winter \cite{Wei}.

When $\epsilon$ is small and  $P(x)$ is not a constant, in \cite{Lions1} Lions
proved the existence of solutions with ground states for \eqref{1.2}
  under some conditions on $P(x)$
 since problem \eqref{1.2} has a variational structure.
 Moreover,  the solution with  ground states concentrates at certain  point.
Applying the finite dimensional reduction method, in \cite{Wei}
Wei and Winter  proved that \eqref{1.2} has a solution concentrating at $k$ points which are the local minimum points of $P(x).$
 This also means the existence of multiple solutions. Very recently,
 in \cite{LPW-20CV} Luo, Peng and Wang proved the uniqueness of the concentrated solutions of \eqref{1.2} obtained in \cite{Wei}
 by some local Pohozaev identities, blow-up analysis and the maximum principle.
  For more other results of the existence of solutions with concentration, one can refer to  \cite{Cingolani, Secchi,Vaira,GW-20ANS} and the references therein. For a very similar nonlocal problem i.e. Schr\"odinger-Possion equations,
 one can refer to \cite{LPW-11JMP,Li,WY-16DCDS}, where some existence of peak solutions are obtained
 by the finite dimensional reduction method
under various assumptions of the potential function.

Actually, the Schr\"odinger-Newton problem \eqref{1.2} is a special type of following Choquard equation:
 \begin{flalign}\label{ll2-1223}
 -\epsilon^2\Delta u+P(x)u=
 \frac{a}{\epsilon^{2}}(I_{\alpha}\ast F(u))f(u)+\mu u,\,\,N\geq 2,\,\,p>1,
 ,~x\in \R^N,
 \end{flalign}
 where $F\in C^{1}(\R,\R),F'(s)=f(s),\mu\in\R$ denotes the chemical potential,
 $a\geq 0$ denotes the attractive interaction  and
 the Riesz potential $I_{\alpha}:\R^{N}\rightarrow \R$ is defined(cf. Ref \cite{R-1949}) as
 \begin{flalign}\label{eq-I}
 I_{\alpha}:=\frac{\Gamma(\frac{N-\alpha}{2})}{\Gamma(\frac{\alpha}{2})\pi^{\frac{N}{2}}2^{\alpha}}\frac{1}{|x|^{N-\alpha}},
 \,\,x\in \R^{N}\backslash\{0\},\,\,\,\alpha\in(0,N).
 \end{flalign}
Especially, when $N=3$ and $\alpha=2$ in \eqref{eq-I}, then $I_{\alpha}=\frac{1}{4\pi|x|},\,\, x\in \R^{3}\backslash\{0\}.$

There are many results about the case that $f(s)=|s|^{p-2}s.$
When $\epsilon=1,N=3,\alpha=2, P(x)=constant>0,\mu=0$
and $p\rightarrow 2,$
in \cite{Xiang} Xiang proved the uniqueness and non-degeneracy of ground
states to \eqref{ll2-1223}.
When $\epsilon=1,N=3,4,5,P(x)=1,\alpha=2,p=2$ and $\mu=0$,
in \cite{Chen-16}
Chen proved
the nondegeneracy of ground states of \eqref{ll2-1223}.   As an application of the  non-degeneracy
result he obtained,
 he then used a Lyapunov-Schmidt reduction argument to construct multiple semi-classical solutions to \eqref{ll2} with an external potential.
When $N=3,\alpha=2$ and $\mu=0$ in \eqref{ll2-1223}, in \cite{Moroz,Moroz1}
Moroz and Van Schaftingen obtained some results about the existence and concentration of positive solutions to the Choquard equation.
For ground states of \eqref{ll2-1223} with $\epsilon=1,$
one can refer to \cite{DGL-18JMP,GLW-18JMP,LZ-19ZAMP,LXZ-16-JMP,Ye-16TMNA,LY-14JMP} under various assumptions of trapping potential $P(x),$
which can be described equivalently by positive $L^{2}$  minimizers of
the following Hartree-type energy functional(c.f. \cite{GLW-18JMP})
$$
E_{a}(u)=\int_{\R^{N}}(|\nabla u|^{2}+P(x)u^{2})dx
-\frac{a}{p}\int_{\R^{N}}(I_{\alpha}\ast |u(x)|^{p})|u(x)|^{p}dx.
$$

Very recently, taking $a=1,$ in \cite{CT-2019} Cingolani and Tanaka developed a new variational approach
and proved the existence of a family of solutions concentrating to a local minimum of $P(x)$
as $\epsilon\rightarrow 0$ under general conditions on $F(s).$
In \cite{CGT}, assuming that the nonlinear term is subcritical and satisfies
 almost optimal assumptions, Cingolani1, Gallo and Tanaka proved the existence of a spherically symmetric solution to
the following fractional Schr\"{o}dinger equation with a nonlocal nonlinearity of Choquard type
$$
 \begin{cases}
 (-\Delta)^{s} u+\mu u=(I_{\alpha}\ast F(u))f(u),\,&x\in \R^{N},\vspace{0.1cm}\\
\int_{\R^{N}}u^{2}=c,\,\,&(\mu,u)\in (0,+\infty)\times H_{r}^{s}(\R^{N)},
 \end{cases}
 $$
 where $s\in (0,1), N\geq 2, \alpha\in (0,N),F\in C^{1}(\R,\R), f(s)=F'(s)$ and $c>0.$
 Also they proved the solution obtained is a ground state.
 In \cite{JL}, Jeanjean and Le  studied the existence of solutions to the Schr\"{o}dinger-Poisson-Slater equation
  \begin{equation}\label{1223-1}
 -\Delta u+\mu u-\gamma(|x|^{-1}\ast|u|^{2})u-\beta|u|^{p-2}u,\,x\in \R^{3},\,\,\int_{\R^{N}}u^{2}=c,\,\,
 u\in H^{1}(\R^{3}),
 \end{equation}
 where $c>0,\gamma\in \R,p\in (\frac{10}{3},6]$ and $\beta\in \R.$
 When $\gamma>0$ and $\beta>0$ and $p\in (\frac{10}{3}, 6],$
 they proved that there exists a $c_{1}>0$ such that, for any $c\in(0, c_{1})$,
 \eqref{1223-1} admits two solutions $u_{+c}$ and $u_{-c}$ which can be characterized respectively as a local minima
 and as a mountain pass critical point of the associated Energy functional restricted to the norm constraint.
 In the case $\gamma>0 $ and $\beta<0,$ they proved that, for any $p\in (\frac{10}{3}, 6]$
 and any $c>0,$  \eqref{1223-1} admits a solution which is a global minimizer.
  When $ \gamma<0, \beta>0$ and $ p=6,$ they proved that \eqref{1223-1} does not admit positive solutions.

 Note that all the references about the existence of normalized solutions above are
 studied by the variational methods. Very recently,  Luo etc in \cite{LPY-19} applied the finite-dimensional
 reduction method to study the existence of normalized solutions for the Bose-Einstein condensates.
Also in \cite{PPVV-2021}, Pellacci etc studied normalized solutions for a nonlinear elliptic system
by the Lyapunov-Schmidt reduction.

 To our best knowledge, up to now there are no results about the existence and the local uniqueness of peak solutions
to \eqref{8-18-1}, that is,
\begin{equation}\label{8-18-1}
-\Delta u +P(x) u = \frac{a}{8\pi} \int_{\R^{3}}\frac{u^{2}(y)}{|x-y|}dy\,u +\mu u,\quad \text{in}\;\mathbb R^3,
\end{equation}
where $P(x)$ denotes a class of degenerate trapping potential which has non-isolated critical points.
So in this paper, we are aimed to investigate these problems. Precisely, we
study the existence and the local uniqueness of peak solutions for \eqref{main-eq}  with
  the following $L^{2}$ constraint
\begin{equation}\label{8-28-3}
\int_{\mathbb R^3} u^2=1.
\end{equation}

We first recall the existence result for the ground state.
 Denote by  $U(x)$ the unique positive solution of
\begin{equation}\label{big-u}
-\Delta u+u=\int_{\R^{3}}\frac{u^{2}(y)}{|x-y|}dy\,u(x), ~u\in H^1(\R^3).
\end{equation}

Form \cite{Lieb,Wei}, we know that $U(0)=\displaystyle\max_{x\in \R^3}U(x)$
and
the solution $U(x)$ is strictly decreasing and
\begin{flalign*}
\lim_{|x|\rightarrow \infty}U(x)e^{|x|}|x|=\lambda_0>0,~
\lim_{|x|\rightarrow \infty}\frac{U'(x)}{U(x)}=-1,
\end{flalign*}
for some constant $\lambda_0>0$. Moreover, if $\phi(x)\in H^1(\R^3)$ solves the linearized equation
\begin{flalign*}
-\Delta \phi(x) +\phi(x)=\displaystyle\frac{1}{8\pi}
 \big(\int_{\R^3}\frac{U^2(y)}{|x-y|}dy\big)\phi(x)+
 \frac{1}{4\pi}
 \big(\int_{\R^3}\frac{U(y)\phi(y)}{|x-y|}dy\big)U(x),
\end{flalign*}
then $\phi(x)$ is a linear combination of $\frac{\partial U}{\partial x_j},j=1,2,3.$

  In this paper, we let
$a_*=\displaystyle\int_{\R^3}U^2$.
First of all, we study the following problem without constraint:
\begin{equation}\label{1-23-5}
\begin{cases}
-\Delta w + (\lambda + P(x) ) w =\int _{\R^{3}}\frac{w^{2}(y)}{|x-y|}dy\,w, & \text{in}\; \mathbb R^3;\vspace{2mm}\\
w\in H^1(\mathbb R^3),
\end{cases}
\end{equation}
where $\lambda>0$ is a large parameter.  It is well known that for large $\lambda>0$, we can construct various positive solutions concentrating at
some stable critical points of $P(x).$  Particularly, we can construct positive single-peak solutions for \eqref{1-23-5} in the sense that
\begin{equation*}\label{2-23-5}
w_\lambda (x)=  \lambda  \Bigl( U\bigl( \sqrt\lambda(x-x_{\lambda})\bigr)+\varpi_\lambda (x)\Bigr),
\end{equation*}
with  $\displaystyle\int_{\R^3}\big[\frac{1}{\lambda}|\nabla \varpi_\lambda|^2+  \varpi^2_\lambda\big]=o\big(\lambda^{-\frac{3}{2}}\big)$.
Let $
u_\lambda= \frac { w_\lambda} {\bigl(\int_{\mathbb R^3} w_\lambda^2\bigr)^{1/2}}.$
Then $\displaystyle\int_{\mathbb R^3} u_\lambda^2=1,$ and
\begin{equation*}\label{3-23-5}
\begin{cases}
-\Delta u_\lambda + (\lambda + P(x) ) u_\lambda = a_\lambda \int_{\R^{3}}\frac{u_\lambda^2(y)}{|x-y|}dy\,u_\lambda(x), & \text{in}\; \mathbb R^3;\vspace{2mm}\\
u_\lambda\in H^1(\mathbb R^3),
\end{cases}
\end{equation*}
with
\[
a_\lambda= \int_{\mathbb R^3} w_\lambda^2 =\lambda^{2-\frac 32} \Bigl( \int_{\mathbb R^3} U^2 +o(1)\Bigr)
=\lambda^{\frac{1}{2}} \bigl(a_* +o(1)\bigr).
\]
Note that $a_\lambda>0,$ and as $\lambda \to +\infty,$ so $a_\lambda\to +\infty.$  Therefore, we obtain
a concentrated solution with single peak for \eqref{8-18-1}--\eqref{8-28-3} with $\mu=-\lambda$ and suitable $a_\lambda$.  Now the crucial
question is for any $a>0$ large, whether we can choose a suitable large $\lambda_a>0$, such that
\eqref{8-18-1}--\eqref{8-28-3} holds with
\begin{equation*}\label{10-23-5}
\mu=-\lambda_a,\quad u_a= \frac { w_{\lambda_a}} {\bigl(\displaystyle\int_{\mathbb R^3} w_{\lambda_a}^2\bigr)^{1/2}}.
\end{equation*}

The above discussions show that the existence of concentrated solutions for \eqref{8-18-1}--\eqref{8-28-3} is closely related to the existence of peaked solutions
for the nonlinear Schr\"odinger equation \eqref{1-23-5}.
In this paper, we will mainly investigate
concentrated  solutions $u_a$  of  \eqref{8-18-1}--\eqref{8-28-3} in the  sense that
$$
\max_{x\in B_\vartheta(b_{0})} u_{a}(x) \to +\infty,~\mbox{while}~u_a(x)\to 0~\mbox{uniformly in}~\mathbb R^3\setminus   B_\vartheta(b_{0}), ~\mbox{for any}~\vartheta>0,
$$
as $a \to +\infty,$ where $b_{0}$ is a point in $\mathbb R^3.$
Here, we are concerned with the following three aspects:
 possible values for $\mu_a;$ the exact location of  the concentrated points of the concentrated solutions
 for \eqref{8-18-1}--\eqref{8-28-3};
  the existence and the local uniqueness of the concentrated solutions for \eqref{8-18-1}--\eqref{8-28-3}.
 \smallskip

 The first result of our paper is
as follows.

\begin{Thm}\label{th1-24-5}
Suppose that $u_a$ is a solution of \eqref{8-18-1}--\eqref{8-28-3} concentrated at some points
as $a\to +\infty.$  Then it holds
\begin{equation*}
\mu_a\to -\infty,~\mbox{as}~a\to +\infty.
\end{equation*}
 Moreover, $u_a$ satisfies
\begin{equation}\label{20-23-5-1223}
u_a (x)=  \frac{-\mu_a}{\sqrt{a}} \Bigl( U\bigl( \sqrt {-\mu_a}(x-x_{a})\bigr)+\varpi_a(x)\Bigr),
\end{equation}
with
$\displaystyle\int_{\R^3}\big[-\frac{1}{\mu_a}|\nabla \varpi_a|^2+\varpi_a^2\big]=o\big(\frac{1}{(\sqrt{-\mu_a})^{3}}\big).$

\end{Thm}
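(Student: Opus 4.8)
\medskip
\noindent The plan is to combine a blow-up analysis around the concentration point with the mass constraint \eqref{8-28-3} and local Pohozaev identities. First I would record that, by the concentration hypothesis, $M_a:=\|u_a\|_{L^\infty(\R^3)}$ is attained at a point $x_a$ with $x_a\to b_0$ and $M_a\to+\infty$ as $a\to+\infty$ (for $a$ large $u_a$ is uniformly small outside every $B_\vartheta(b_0)$, so its supremum lies in each such ball), and I would rewrite \eqref{8-18-1} as
\[
-\Delta u_a+\bigl(P(x)-\mu_a\bigr)u_a=\frac{a}{8\pi}\Bigl(\int_{\R^3}\frac{u_a^2(y)}{|x-y|}\,dy\Bigr)u_a\qquad\text{in }\R^3 .
\]

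\smallskip
\noindent\textbf{Showing $\mu_a\to-\infty$.} I would first show $\|\nabla u_a\|_{L^2}\to+\infty$: otherwise, along a subsequence $u_a$ is bounded in $H^1(\R^3)$, hence $u_a\rightharpoonup u_0$ weakly; since $u_a\to0$ uniformly outside every $B_\vartheta(b_0)$ the weak limit vanishes off $\{b_0\}$, so $u_0\equiv0$ and Rellich's theorem gives $\int_{B_1(b_0)}u_a^2\to0$, while the equation and the coercivity of $P$ force $\int_{\R^3\setminus B_1(b_0)}u_a^2\to0$ as well, contradicting \eqref{8-28-3}. Next I would test the equation against $u_a$ to get the Nehari identity
\[
\|\nabla u_a\|_{L^2}^2+\int_{\R^3}Pu_a^2-\mu_a=\frac{a}{8\pi}\int_{\R^3}\!\int_{\R^3}\frac{u_a^2(x)u_a^2(y)}{|x-y|}\,dx\,dy,
\]
and derive a local Pohozaev identity by multiplying the equation by $\varphi(x)\,(x-x_a)\cdot\nabla u_a$, with $\varphi$ a cut-off equal to $1$ near $b_0$; in the resulting relation the $\nabla P$- and $\nabla\varphi$-terms are $o\bigl(\|\nabla u_a\|_{L^2}^2\bigr)$ by the concentration. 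Eliminating the Coulomb energy between the two identities and using $\int_{\R^3}u_a^2=1$ yields $-\mu_a=c_0\|\nabla u_a\|_{L^2}^2+O(1)$ with $c_0>0$, whence $\mu_a\to-\infty$.

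\smallskip
\noindent\textbf{Identifying the profile.} With $-\mu_a\to+\infty$ in hand I would set $\varepsilon_a:=(-\mu_a)^{-1/2}\to0$ and
\[
v_a(y):=\frac{\sqrt a}{-\mu_a}\,u_a(x_a+\varepsilon_a y),
\]
so that $v_a>0$, $\int_{\R^3}v_a^2=a\,(-\mu_a)^{-1/2}$, and $v_a$ solves
\[
-\Delta v_a+\bigl(1+\varepsilon_a^2P(x_a+\varepsilon_a y)\bigr)v_a=\frac{1}{8\pi}\Bigl(\int_{\R^3}\frac{v_a^2(z)}{|y-z|}\,dz\Bigr)v_a ,
\]
an equation that converges, after the rescaling, to the limiting problem whose unique positive solution is $U$. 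Using the mass constraint, the maximum principle at $x_a$, Hardy--Littlewood--Sobolev and Gagliardo--Nirenberg estimates for the Coulomb term, and a priori decay bounds for $u_a$, I would show that $\{v_a\}$ is bounded in $L^\infty\cap H^1(\R^3)$ with uniformly tight $L^2$-mass; then elliptic regularity together with the uniqueness and non-degeneracy of $U$ recalled above gives $v_a\to U$ in $C^2_{\mathrm{loc}}(\R^3)\cap H^1(\R^3)$. In particular $a\,(-\mu_a)^{-1/2}=\int_{\R^3}v_a^2\to\int_{\R^3}U^2=a_*$, so $-\mu_a$ is of order $a^2$. Undoing the rescaling, $\varpi_a(x):=\tfrac{\sqrt a}{-\mu_a}u_a(x)-U\bigl(\sqrt{-\mu_a}(x-x_a)\bigr)$ equals $(v_a-U)\bigl(\sqrt{-\mu_a}(x-x_a)\bigr)$, and
\[
\int_{\R^3}\Bigl[-\tfrac1{\mu_a}|\nabla\varpi_a|^2+\varpi_a^2\Bigr]=(-\mu_a)^{-3/2}\bigl(\|\nabla(v_a-U)\|_{L^2}^2+\|v_a-U\|_{L^2}^2\bigr)=o\bigl((\sqrt{-\mu_a})^{-3}\bigr),
\]
which is exactly \eqref{20-23-5-1223}.

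\smallskip
\noindent\textbf{Main obstacle.} The hard part is the compactness in the last step: ruling out a loss of $L^2$-mass of $v_a$ at infinity, and thereby pinning the blow-up rate $-\mu_a\sim a^2$ with matching two-sided bounds (as well as the precise constant relating $\mu_a$ to $a$). Since the Coulomb nonlinearity is non-local, its rescaled form feels $v_a$ on every scale, so the tightness of $\{v_a^2\}$ cannot be read off the local equation alone; it has to be forced by the global mass constraint together with quantitative, $a$-uniform decay estimates for $u_a$, which themselves must accommodate the non-constant, degenerate trapping potential $P$.
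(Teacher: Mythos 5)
Your second step (rescaling, blow-up to the unique profile $U$, and reading off $a(-\mu_a)^{-1/2}\to a_*$ and the $o((\sqrt{-\mu_a})^{-3})$ bound from the mass constraint) is essentially the paper's argument, and the scaling bookkeeping at the end is correct. The real divergence, and the real problem, is in your proof that $\mu_a\to-\infty$. You base everything on the claim $\|\nabla u_a\|_{L^2}\to+\infty$, and that claim in turn rests on showing $\int_{\R^3\setminus B_1(b_0)}u_a^2\to0$, which you attribute to ``the equation and the coercivity of $P$.'' No coercivity of $P$ is assumed anywhere in this paper: assumption $(P)$ only requires $P=O(e^{\alpha|x|})$ and regularity near the hypersurfaces, and Theorem~\ref{th1-24-5} is stated before $(P)$ is even imposed. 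Worse, a comparison/decay argument cannot substitute for coercivity here, because the effective zeroth-order coefficient in \eqref{8-18-1} is $P(x)-\mu_a-\frac{a}{8\pi}\int\frac{u_a^2(y)}{|x-y|}dy$, and the Hartree term behaves like $-\frac{a}{8\pi|x-b_0|}$ with $a\to+\infty$, so the operator is far from coercive on the huge region $|x-b_0|\lesssim a$. Uniform smallness of $u_a$ off $B_\vartheta(b_0)$ gives no control of the $L^2$ tail over an unbounded set, so mass escaping to infinity is not excluded. The same unproven tail control is needed to make $\int Pu_a^2$ and the $\nabla\varphi$-terms in your local Pohozaev identity ``$O(1)$'' and ``$o(\|\nabla u_a\|^2)$'' respectively; the latter also presupposes gradient decay away from the peak, which is part of what is being established.

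The paper avoids all of this by a different mechanism: for $|\mu_a|\le M$ it uses Moser iteration to contradict blow-up, and for $\mu_a\to+\infty$ it observes via the elementary estimate \eqref{eq-7-9-1} that $P_1:=\mu_a-P+\frac{a}{8\pi}\int\frac{u_a^2(y)}{|\cdot-y|}dy>1$ on a fixed annulus $B_R(0)\setminus B_t(0)$, and then invokes the Liouville-type nonexistence result Proposition~\ref{th1-26-10} (Proposition 2.1 of \cite{LPY-19}) for $-\Delta u=P_1u$, $u>0$. That argument needs no information about $u_a$ at spatial infinity, no $H^1$ bounds, and no Pohozaev identity. Your Nehari--Pohozaev elimination (which would give $-\mu_a=3\|\nabla u_a\|^2+O(1)$ modulo the issues above) is an attractive alternative in settings where the potential is coercive and decay of $u_a$ is already known, but as written it does not close here; to repair it you would either have to import the decay estimates that are only derived later (after $\mu_a\to-\infty$ is known), or replace the compactness step by an argument in the spirit of Proposition~\ref{th1-26-10}.
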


Throughout this paper, we call $u_a$  a single-peak solution of \eqref{8-18-1}--\eqref{8-28-3}
if $u_a$ satisfies \eqref{20-23-5-1223}.
 To the best knowledge of us,  if the critical point of $P(x)$ is not isolated, not much is known for the exact location of the concentrated point, nor for the local uniqueness
of the solutions.
In the paper, we assume that $P(x)$ obtains its local minimum or local maximum at
 $\Gamma_i$ ($i=1,\cdots,m$) and $\Gamma_i$ is a closed $2$ dimensional hyper-surface satisfying
 $\Gamma_i \bigcap \Gamma_j=\emptyset$ for $i\neq j$.
  More precisely, we assume that the following conditions hold.

\medskip

\noindent\emph{\textup{\textbf{($P$).}} There exist $\delta>0$ and some $C^2$ compact
 hypersurfaces $\Gamma_i~(i=1,\cdots,m)$ without boundary, satisfying
$$
P(x)=P_i, \;\;  \frac{\partial P(x)}{\partial \nu_i}=0,\;\; \frac{\partial^2 P(x)}{\partial \nu_i^2}\ne 0, ~\mbox{for any}~x \in \Gamma_i~~\mbox{and}~i=1,\cdots,m,
$$
where $P_i\in \R$, $\nu_i$ is the unit outward normal of
$\Gamma_i$  at $x\in \Gamma_i.$
   Moreover, $P(x) \in C^4\big(\bigcup^m_{i=1}W_{\delta,i}\big)$
   and $P(x)=O\big(e^{\alpha|x|})$ for some $\alpha \in (0, 2).$
Here we denote  $W_{\delta,i}:=\{x\in \R^3, dist(x,\Gamma_i)<\delta\}.$
}

\smallskip

We would like to point out that
the assumption $(P)$ was first introduced in \cite{LPY-19}
by Luo etc
A specific example of $P(x)$ was also given by \cite{LPY-19}.
Observe that the assumption $(P)$ implies that $P(x)$ obtains its local minimum or local maximum on the hypersurface
$\Gamma_i$ for $i=1,\cdots,m.$ It is also easy
 to see that if $\delta>0$ is small, the set $\Gamma_{t,i}=\bigl\{  x:  P(x)= t\bigr\}\bigcap  W_{\delta,i}$ consists of two
 compact hypersurfaces
in $\mathbb R^3$  without boundary for $t\in [P_i, P_i+\theta]$
( or $ t\in [P_i-\theta, P_i]$) provided $\theta>0$ is small.
Moreover,   the outward unit normal vector $\nu_{t,i}(x)$ and
the $j$-th principal tangential unit vector $\tau_{t,i,j}(x)$($j=1, 2$) of  $\Gamma_{t,i}$  at
$x$ are Lip-continuous in $W_{\delta,i}.$

\begin{Rem}
When $m=1,$ for simplicity of notations we denote $P_{0}=:P_{i}$ and omit all the subscript $i.$
\end{Rem}

Applying the local Pohozaev identities, we can easily prove that a single
peak solution of \eqref{8-18-1}--\eqref{8-28-3}
must concentrate at a critical point of $P(x),$   for which we can also refer to
\cite{Grossi}.
If the assumption $(P)$ holds
and the concentrated points belong to $\Gamma,$
we ask further where the concentrating points locate on $\Gamma.$
The following result gives our answer for this question.

\begin{Thm}\label{nth1.1}
Assume that the assumption \textup{($P$)} holds.
 If $u_a$ is a
single-peak solution of \eqref{8-18-1}--\eqref{8-28-3}, concentrating at $\{b_0\}$ with $b_0\in \Gamma$
as $a$ goes to $\infty,$ then
\begin{equation}\label{1.6-1223}
(D_{\tau_{j}} \Delta  V)(b_0)=0,~\mbox{with}~j=1,2.
\end{equation}
where $\tau_{j}$ is the $j$-th principal tangential unit vector of $\Gamma$ at $b_0.$
\end{Thm}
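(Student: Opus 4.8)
The plan is to exploit a local Pohozaev identity tested against the tangential directions of $\Gamma$ at $b_0$. By Theorem \ref{th1-24-5} the single-peak solution has the profile \eqref{20-23-5-1223}, so writing $\mu_a=-\lambda_a$ with $\lambda_a\to+\infty$ and rescaling, $u_a$ looks like $\frac{\lambda_a}{\sqrt a}U(\sqrt{\lambda_a}(x-x_a))$ near the concentration point, with $x_a\to b_0\in\Gamma$. First I would localize: multiply the equation \eqref{8-18-1} by $\partial_{x_k}u_a$ and integrate over a fixed small ball $B_\vartheta(b_0)$, producing the standard Pohozaev-type identity
\begin{equation*}
\int_{B_\vartheta(b_0)}\frac{\partial P}{\partial x_k}\,u_a^2\,dx = \text{(boundary terms on }\partial B_\vartheta(b_0)\text{)} + \text{(nonlocal contribution)}.
\end{equation*}
Because $u_a$ and $\nabla u_a$ decay exponentially away from $x_a$ (inherited from the exponential decay of $U$ recorded after \eqref{big-u}), the boundary terms are $O(e^{-c\sqrt{\lambda_a}})$ and hence negligible to every polynomial order. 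The nonlocal Newtonian term must be handled carefully, but an analogous integration-by-parts/Pohozaev computation for the Poisson part (as in \cite{LPW-20CV}) shows it contributes only a radially symmetric piece that, after contracting with a tangential vector, cancels at leading order; this is where most of the bookkeeping lies.

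Next I would insert the ansatz and Taylor-expand $P$ around $b_0$. Since $b_0\in\Gamma$ and $(P)$ holds, on $\Gamma$ we have $P\equiv P_i$ and $\partial_{\nu}P=0$, so the tangential derivatives of $P$ vanish on $\Gamma$ and the first nonvanishing term in the direction $\nu$ is quadratic, $\partial_\nu^2 P\neq0$. Decomposing $x-x_a$ into its normal and tangential components relative to $\Gamma$ and using that $U$ is radial, the integral $\int \nabla P(x)\,u_a^2\,dx$ expands in powers of $\lambda_a^{-1/2}$. The leading terms (orders $\lambda_a^{-1/2}$ and $\lambda_a^{-1}$) involve $\int_{\R^3}U^2$, $\int_{\R^3}y_jU^2=0$, $\int_{\R^3}y_j y_kU^2=\delta_{jk}\cdot\frac13\int|y|^2U^2$, and the Hessian and third derivatives of $P$ at $b_0$. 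Using that $x_a$ is (to leading order) the concentration point, the quadratic-in-$\nu$ term forces $x_a$ to sit on $\Gamma$ up to $o(\lambda_a^{-1/2})$, so the genuinely new information comes from the next order, where the cubic term $D^3P(b_0)[\cdot,\cdot,\cdot]$ appears contracted with the second moment of $U$.

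Finally, contracting the whole identity with a tangential unit vector $\tau_j$ at $b_0$ and collecting the surviving order, one obtains (after dividing by the appropriate power of $\lambda_a$ and letting $a\to\infty$) an expression proportional to
\begin{equation*}
\sum_{l}\big(D^3 P\big)(b_0)[\tau_j,e_l,e_l] = \big(D_{\tau_j}\Delta P\big)(b_0),
\end{equation*}
which must vanish. Since $\Delta V = \Delta P$ in the relevant notation (the trapping potential), this is exactly \eqref{1.6-1223}. The main obstacle I anticipate is controlling the nonlocal term with enough precision: one must show that the Newtonian potential generated by $u_a^2$, when differentiated and paired against $\partial_{x_k}u_a$, produces no tangential contribution at the order where $D^3P$ lives — this requires a careful expansion of $\int\frac{u_a^2(y)}{|x-y|}dy$ near $x_a$ together with the error estimate $\int[-\mu_a^{-1}|\nabla\varpi_a|^2+\varpi_a^2]=o(\lambda_a^{-3/2})$ from Theorem \ref{th1-24-5} to absorb the remainder terms. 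Once that is in place, the algebra identifying the symmetric sum of third derivatives with $D_{\tau_j}\Delta P$ is routine.
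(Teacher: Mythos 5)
Your proposal is correct and follows essentially the same route as the paper: a local Pohozaev identity on a small ball around the peak, exponential decay of $u_a$ away from $x_a$ killing the boundary terms, and a Taylor expansion of the tangential derivative of $P$ tested against $U^2_{\epsilon,x_a}$, whose second-moment term $\int |y|^2U^2$ produces exactly $(D_{\tau_j}\Delta P)(b_0)$. The one place where you anticipate heavy bookkeeping is actually trivial here: in the single-peak case the nonlocal volume term needs no expansion, since the double integral of $u_a^2(y)u_a^2(x)(x_j-y_j)/|x-y|^3$ over $\R^3\times\R^3$ vanishes identically by antisymmetry under $x\leftrightarrow y$, so restricting to $B_\rho(x_a)$ costs only $O(e^{-\theta/\epsilon})$ — which is precisely how the paper disposes of it (and why, in contrast, this term survives at order $\epsilon^4$ and obstructs multi-peak solutions in Theorem \ref{th1.5}).
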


Theorem~\ref{nth1.1} implies
that not every $\{b_0\}$ with $b_0\in\Gamma$ can generate a single-peak solution for \eqref{8-18-1}--\eqref{8-28-3}.
 In order to investigate the converse of Theorem~\ref{nth1.1}, we
 have to add the following non-degenerate assumption on the critical
 point of $P(x).$
   We say that $x_0 \in \Gamma$ is non-degenerate on $\Gamma$ if it satisfies:
\begin{equation*}
\frac{\partial^2P(x_0)}{\partial \nu ^2}\neq 0~~\mbox{and}~~
det \Big(\Big(\frac{\partial^2 \Delta P(x_0)}{\partial \tau_{l}\partial \tau_{j}}\Big)_{1\leq l,j\leq 2}\Big)\neq 0.
\end{equation*}

\begin{Thm}\label{nth1.2}
 Under the assumption \textup{($P$)},
   if $b_{0}\in \Gamma$ is  a non-degenerate critical point of $P(x)$ on $\Gamma$
   satisfying \eqref{1.6-1223},
  then there exists a large constant $a_{0}>0,$ such that \eqref{8-18-1}--\eqref{8-28-3}
  has a single-peak solution $u_a$ concentrating at $b_0$ as $a\in [a_{0},+\infty).$
\end{Thm}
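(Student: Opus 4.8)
The plan is to construct the single-peak solution by the Lyapunov--Schmidt finite-dimensional reduction, combined with the reformulation that reduces the normalized problem \eqref{8-18-1}--\eqref{8-28-3} to the unconstrained problem \eqref{1-23-5}. First I would fix the approximate solution: for $\lambda>0$ large, $\xi$ in a $\delta$-neighborhood of $\Gamma$, write $W_{\xi,\lambda}(x)=\lambda\, U(\sqrt\lambda(x-\xi))$ and seek a genuine solution of \eqref{1-23-5} of the form $w=W_{\xi,\lambda}+\varphi$, where $\varphi$ belongs to the orthogonal complement (in the appropriate $H^1$-inner product weighted by $\lambda$) of the approximate kernel $\mathrm{span}\{\partial_{x_j}W_{\xi,\lambda}:j=1,2,3\}$. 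Using the nondegeneracy of $U$ recalled in the excerpt — namely that the kernel of the linearized Schrödinger--Newton operator is exactly spanned by $\partial_{x_j}U$ — one shows the linearized operator associated with \eqref{1-23-5}, restricted to this complement, is uniformly invertible; then a contraction-mapping argument produces $\varphi=\varphi_{\xi,\lambda}$ with $\int_{\R^3}[\frac1\lambda|\nabla\varphi_{\xi,\lambda}|^2+\varphi_{\xi,\lambda}^2]=o(\lambda^{-3/2})$, depending smoothly on $(\xi,\lambda)$, and solving \eqref{1-23-5} up to a Lagrange-multiplier combination of the $\partial_{x_j}W_{\xi,\lambda}$.

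The second step is the reduced finite-dimensional problem. The reduced energy is $K(\xi,\lambda):=J_\lambda(W_{\xi,\lambda}+\varphi_{\xi,\lambda})$, and its critical points in $(\xi,\lambda)$ give true solutions of \eqref{1-23-5}. The standard expansion gives, after rescaling, $K(\xi,\lambda)=c_0\lambda^{\sigma}+\lambda^{\sigma-1}\big(c_1 P(\xi)+c_2\lambda^{-1}\Delta P(\xi)+\cdots\big)+$ lower order, for explicit positive constants; the point here is that on $\Gamma$ the potential $P$ is constant ($P=P_i$) and $\partial_{\nu}P=0$, so the leading nonconstant term along the surface $\Gamma$ is governed by $\Delta P(\xi)$ and its tangential derivatives, exactly as in \cite{LPY-19} and \cite{LPW-20CV}. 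Writing $\xi=\xi(t,s)$ with $t$ the tangential parameter on $\Gamma$ and $s$ the signed normal coordinate, the $s$-derivative of $K$ is driven by $\partial^2_\nu P(b_0)\ne0$ and forces $s=o(1)$, while the $t$-derivative, after dividing out the common power of $\lambda$, is driven by $D_{\tau_j}\Delta P(\xi)$. Since $b_0$ satisfies \eqref{1.6-1223} and is non-degenerate on $\Gamma$ — i.e. the Hessian $\big(\partial^2_{\tau_l\tau_j}\Delta P(b_0)\big)$ is invertible — a quantitative implicit function theorem (or a degree argument on a small neighborhood of $(b_0,\lambda)$ in the $(t,s,\lambda)$-variables) yields, for every large $\lambda$, a critical point $\xi_\lambda\to b_0$; the $\lambda$-equation $\partial_\lambda K=0$ is then used to pin down $\lambda$ once we pass to the normalized formulation.

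The third step converts this into the normalized problem. As explained before Theorem~\ref{th1-24-5}, if $w_\lambda$ solves \eqref{1-23-5} then $u_\lambda=w_\lambda/\big(\int_{\R^3}w_\lambda^2\big)^{1/2}$ solves \eqref{8-18-1}--\eqref{8-28-3} with $\mu=-\lambda$ and $a=a_\lambda=\int_{\R^3}w_\lambda^2=\lambda^{1/2}(a_*+o(1))$. The map $\lambda\mapsto a_\lambda$ is continuous and strictly increasing for $\lambda$ large and tends to $+\infty$; hence for every $a\in[a_0,+\infty)$ with $a_0$ large there is $\lambda_a$ with $a_{\lambda_a}=a$, and one checks $\lambda_a\to+\infty$. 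Feeding $\lambda=\lambda_a$ into the family built in Step~2 gives $u_a$ concentrating at $b_0$ and satisfying the profile \eqref{20-23-5-1223} with $-\mu_a=\lambda_a$. The main obstacle I expect is Step~2: because $P$ degenerates at different rates along the normal direction (order $2$, via $\partial^2_\nu P$) and along the tangential directions (via $\Delta P$, effectively a higher-order and $\lambda$-suppressed term), the reduced functional is genuinely anisotropic, so one must expand $K(\xi,\lambda)$ to high enough order and track the competing powers of $\lambda^{-1}$ carefully — in particular controlling the nonlocal-term contributions to the error, which decay like Gaussian-weighted convolutions and must be shown not to spoil the sign and size of the $D_{\tau_j}\Delta P$ term. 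Once the correct expansion is in hand, the non-degeneracy hypothesis makes the solvability of the reduced system routine.
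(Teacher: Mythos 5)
Your overall architecture --- Lyapunov--Schmidt reduction for the unconstrained problem \eqref{1-23-5}, solving the reduced finite-dimensional problem near $b_0$ using the non-degeneracy hypothesis together with \eqref{1.6-1223}, and then recovering the normalized solution by an intermediate-value argument on $\lambda\mapsto a_\lambda=\int_{\R^3}w_\lambda^2=\sqrt\lambda\,(a_*+o(1))$ --- is exactly the paper's, and your first and third steps coincide with Section~\ref{s4} almost verbatim. The genuine difference is the middle step. You propose to expand the reduced energy $K(\xi,\lambda)$ and locate its critical points; the paper never expands the energy. Instead it observes that the remaining finite-dimensional equations are equivalent to the local Pohozaev identities \eqref{10-7-11}, and Lemmas~\ref{lem-001-1-8}--\ref{lem-002-1-8} translate these, by testing $\int D_{\nu}P\,w_\eta^2$ and $\int D_{\tau}P\,w_\eta^2$ against the ansatz and using only the $\varphi$-estimate \eqref{eq-fhi}, into the explicit system $D_{\nu}P(z_0)=O(\eta^2)$, $(D_{\tau}\Delta P)(z_0)=O(|P(z_0)-P_0|\eta+\eta^2)$, i.e.\ \eqref{62-1-8}--\eqref{63-1-8}, which is then solved via the non-degeneracy of $\partial^2_{\nu}P(b_0)$ and of $\bigl(\partial^2_{\tau_l\tau_j}\Delta P(b_0)\bigr)$. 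The Pohozaev route buys two things: the tangential information appears at its correct order $\eta^{5}D_{\tau}\Delta P$ after a single integration by parts, with no need to expand the nonlocal double integral in $J_\lambda$ to that precision; and the anisotropy (normal equation at order $\eta^{3}$, tangential at order $\eta^{5}$) is handled equation by equation rather than through the Hessian of $K$. Your route is viable in principle but requires verifying that the $O(\|\varphi_{\eta,z_0}\|_\eta^2)$ and nonlocal remainders in the energy expansion are $o(\eta^{5})$ uniformly in the tangential variables, which is precisely the delicate point you flag. One small correction: for \eqref{1-23-5} the parameter $\lambda$ is fixed and only criticality in $\xi$ is needed; $\lambda$ is determined afterwards by the constraint $\int_{\R^3}w_\lambda^2=a$ (continuity and the limit $a_\lambda\to+\infty$ suffice, as in the paper's proof), not by $\partial_\lambda K=0$.
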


The existence result in Theorem~\ref{nth1.2} is new   even without
the $L^{2}$-norm constraint
since our potential includes the degenerate case.
 Also if $P(x)$ does not achieve its global minimum on $\Gamma,$ any
 solution concentrating at a point on $\Gamma$ is not a ground state.
 So our existence result can not be obtained by the method in \cite{DGL-18JMP,GLW-18JMP,LZ-19ZAMP,LXZ-16-JMP}.

\smallskip
To state the local uniqueness result of single-peak solutions for
\eqref{8-18-1}--\eqref{8-28-3}, we give another assumption
$(\tilde{P})$ of $P(x)$:
if $ b_0$ is non-degenerate
and
\[
\Big(\frac{\partial^2 \Delta P(b_0)}{\partial \tau_{l} \partial\tau_{j}}\Big)_{1\leq l,j\leq 2}+\frac{\partial  \Delta P(b_0)}{\partial \nu}  diag \big(\kappa_{1}, \kappa_{2}\big)
\]
is non-singular, where $\kappa_{j}$ is the $j$-th principal curvature of $\Gamma$ at $b_0$ for $j=1,2.$

Our second main result is the following.

\begin{Thm}\label{th1.4}

Suppose
that the assumptions \textup{($P$)} and $(\tilde{P})$ hold.
Let $u_a^{(1)}(x)$ and $u_a^{(2)}(x)$ be two single-peak solutions of \eqref{8-18-1}--\eqref{8-28-3}
concentrating  at $b_0$ with $b_0\in \Gamma.$
Then there exists a large positive number $a_{0},$  such that $u_a^{(1)}(x)\equiv u_a^{(2)}(x)$ for all $a$ with
 $a_{0}\leq a <+\infty.$
\end{Thm}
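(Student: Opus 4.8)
The plan is to argue by contradiction: suppose that along a sequence $a\to+\infty$ there are two distinct single-peak solutions $u_a^{(1)}\not\equiv u_a^{(2)}$ of \eqref{8-18-1}--\eqref{8-28-3}, both concentrating at $b_0\in\Gamma$, with Lagrange multipliers $\mu_a^{(\ell)}$ and peaks $x_a^{(\ell)}$ ($\ell=1,2$), where by \eqref{20-23-5-1223} one has $x_a^{(\ell)}\to b_0$, $-\mu_a^{(\ell)}\sim (a/a_*)^2\to+\infty$ and $u_a^{(\ell)}$ decays exponentially away from $x_a^{(\ell)}$. Put $\xi_a:=u_a^{(1)}-u_a^{(2)}$ and $\eta_a:=\xi_a/\|\xi_a\|_{L^\infty(\R^3)}$, so $\|\eta_a\|_{L^\infty}=1$. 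Subtracting the two copies of \eqref{8-18-1}, using $(u_a^{(1)})^2-(u_a^{(2)})^2=(u_a^{(1)}+u_a^{(2)})\xi_a$ and splitting the Newton term as $\frac{a}{8\pi}\big(\int\frac{(u_a^{(1)}+u_a^{(2)})(y)\xi_a(y)}{|x-y|}dy\big)u_a^{(1)}+\frac{a}{8\pi}\big(\int\frac{(u_a^{(2)})^2(y)}{|x-y|}dy\big)\xi_a$, one finds that $\eta_a$ satisfies a linear equation with coefficients controlled by $u_a^{(1)},u_a^{(2)}$ and with the single forcing term $(\mu_a^{(1)}-\mu_a^{(2)})u_a^{(2)}/\|\xi_a\|_{L^\infty}$. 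A preliminary lemma --- proved by testing this equation against $u_a^{(2)}$ and against the rescaled translates of $U$, and using the common constraint $\int_{\R^3}(u_a^{(1)}+u_a^{(2)})\xi_a=0$ coming from \eqref{8-28-3} --- should show that $|\mu_a^{(1)}-\mu_a^{(2)}|$ and $\sqrt{-\mu_a}\,|x_a^{(1)}-x_a^{(2)}|$ are of lower order relative to $\|\xi_a\|_{L^\infty}$, so that the forcing term and the two rescalings used below are under control.

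Next, using the equation for $\eta_a$ and the fact that, away from a fixed ball around $x_a^{(1)}$, the zeroth-order coefficient $P(x)-\mu_a^{(1)}-\frac{a}{8\pi}\int\frac{(u_a^{(2)})^2(y)}{|x-y|}dy$ is bounded below by a fixed multiple of $-\mu_a^{(1)}\to+\infty$ (there the Newton potential is only $O(a)\ll -\mu_a^{(1)}$), a comparison and maximum principle argument shows $|\eta_a(x)|\le Ce^{-\gamma\sqrt{-\mu_a^{(1)}}|x-x_a^{(1)}|}$ for $|x-x_a^{(1)}|$ bounded away from $0$; in particular the value $\|\eta_a\|_{L^\infty}=1$ is attained in a bounded rescaled neighborhood of $x_a^{(1)}$. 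Setting $\bar\eta_a(z):=\eta_a\big(x_a^{(1)}+z/\sqrt{-\mu_a^{(1)}}\big)$ and using interior elliptic estimates, $\bar\eta_a\to\eta_0$ in $C^1_{\mathrm{loc}}(\R^3)$ along a subsequence, with $\eta_0$ bounded, $\eta_0\not\equiv 0$, and solving the linearized Schr\"odinger--Newton equation around $U$ written after \eqref{big-u}. By the nondegeneracy stated there, $\eta_0=\sum_{j=1}^3 b_j\,\partial U/\partial x_j$ for some constants, not all of which are zero.

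It remains to reach a contradiction by showing $b_1=b_2=b_3=0$ via local Pohozaev identities for the \emph{difference}. Multiplying the equations for $u_a^{(1)}$ and $u_a^{(2)}$ by $\partial_k u_a^{(1)}$, subtracting, and integrating over a fixed small ball $B_d(x_a^{(1)})$, one gets $\int_{B_d(x_a^{(1)})}\frac{\partial P}{\partial x_k}(u_a^{(1)}+u_a^{(2)})\xi_a\,dx=(\mu_a^{(1)}-\mu_a^{(2)})(\cdots)+(\text{boundary terms})$; a key point is that, since the Newton energy $\iint\frac{u^2(x)u^2(y)}{|x-y|}dx\,dy$ is translation invariant, the nonlocal part of the identity reduces to a boundary integral and is hence exponentially small. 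Dividing by $\|\xi_a\|_{L^\infty}$, rescaling, and letting $a\to+\infty$, the boundary and Lagrange-multiplier terms vanish (using the first paragraph) and one is left with the Taylor expansion of $\nabla P$ at $b_0$ tested against $U\eta_0$. Since $b_0$ is critical and, by $(P)$, $P\equiv P_0$ on $\Gamma$ forces the tangential Hessian of $P$ at $b_0$ to vanish, the normal identity ($\partial_k=\partial_\nu$) reduces to $\frac{\partial^2 P}{\partial\nu^2}(b_0)$ times the $\nu$-component of $\eta_0$, forcing $b_\nu=0$; the two tangential identities are degenerate at this order, so one must refine them by one further order of Taylor expansion --- where the lower-order terms again cancel and $\Delta P$ emerges through the radial moments $\int z_iz_jU^2\,dz=\tfrac13\delta_{ij}\int|z|^2U^2\,dz$ --- obtaining a $2\times2$ linear system for $(b_{\tau_1},b_{\tau_2})$ with matrix $\big(\tfrac{\partial^2\Delta P(b_0)}{\partial\tau_l\partial\tau_j}\big)_{1\le l,j\le2}+\tfrac{\partial\Delta P(b_0)}{\partial\nu}\,\mathrm{diag}(\kappa_1,\kappa_2)$, which is nonsingular by $(\tilde{P})$; hence $b_{\tau_1}=b_{\tau_2}=0$. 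This contradicts $\eta_0\not\equiv 0$, so $u_a^{(1)}\equiv u_a^{(2)}$ for all large $a$.

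The main obstacle throughout is the nonlocal term: it has to be handled both inside the blow-up equation and, more delicately, within the Pohozaev identities, where a sharp cancellation between the Newton contributions of $u_a^{(1)}$ and $u_a^{(2)}$ is needed to isolate the $P$-dependent terms; together with the auxiliary estimate of the first paragraph controlling $\mu_a^{(1)}-\mu_a^{(2)}$ and $x_a^{(1)}-x_a^{(2)}$ against $\|\xi_a\|_{L^\infty}$, this --- combined with the different degeneracy rates of $P$ in the normal and tangential directions along $\Gamma$, which forces the two-stage Taylor expansion above --- is the technical heart of the argument.
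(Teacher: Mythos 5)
Your overall architecture (contradiction, normalized difference $\xi_a$, blow-up to a limiting linearized equation, local Pohozaev identities in the normal and tangential directions, nonsingularity of the matrix in $(\tilde P)$) matches the paper's. But there is a genuine gap at the very center of the argument: you claim that the forcing term $\delta_a^2(\mu_a^{(1)}-\mu_a^{(2)})\,u_a^{(2)}/\|u_a^{(1)}-u_a^{(2)}\|_{L^\infty}$ is of lower order and hence disappears in the blow-up limit, so that $\eta_0$ solves the standard linearized Schr\"odinger--Newton equation around $U$ and is a combination of $\partial U/\partial x_j$, $j=1,2,3$. This is false. Testing the difference of the two equations against $u_a^{(1)}+u_a^{(2)}$ and using $\int_{\R^3}(u_a^{(1)}+u_a^{(2)})\xi_a=0$ (exactly your ``preliminary lemma'') shows that this normalized multiplier difference converges to the \emph{nonzero} quantity $-\frac{1}{4\pi a_*}\int_{\R^3}\int_{\R^3}\frac{U^2(x)U(y)\xi(y)}{|x-y|}\,dx\,dy$ (see \eqref{06-10-2}--\eqref{8-28-13}). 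Consequently the correct limit equation is \eqref{06-10-23}, which carries an extra nonlocal rank-one term on the right-hand side, and by Lemma \ref{lem-bbb} its kernel is four-dimensional: it contains $\psi_0=2U+x\cdot\nabla U$ in addition to the three translations (this uses the identity \eqref{06-13-1}). Your argument is circular at this point: the forcing would vanish only if $\xi$ had no $\psi_0$-component, but that is exactly what you would need the (correct) limit equation to decide.

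The missing step is therefore to kill the $\psi_0$-mode by a separate argument. The paper does this in Lemma \ref{lem7-27}, exploiting the mass constraint: $\int_{\R^3}(u_a^{(1)})^2=\int_{\R^3}(u_a^{(2)})^2=1$ forces $\int_{B_\rho(x_a^{(1)})}(u_a^{(1)}+u_a^{(2)})\xi_a=O(e^{-\theta/\delta_a})$, while a direct expansion of the same integral produces $a_*\beta_{a,0}\delta_a^3+O(\delta_a^4)$, whence $\beta_{a,0}=o(1)$; here one uses $\int U\,(2U+x\cdot\nabla U)=\tfrac12\int U^2\neq0$. Only after this can your Pohozaev identities (which are insensitive to the even mode $\psi_0$ at leading order, but whose tangential refinement actually involves $\beta_{a,0}$ through the term $B\beta_{a,0}\,\partial_\tau\Delta P(x_a^{(1)})$) finish off the three translation coefficients. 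Two further, smaller points: (i) the statement you need about the peaks is the absolute estimate $x_a^{(l)}-b_0=\bar L\delta_a^2+O(\delta_a^4)$ with the same vector $\bar L$ for both solutions (and likewise $-\mu_a^{(l)}\delta_a^2=1+\gamma_1\delta_a^2+O(\delta_a^4)$), not a bound ``relative to $\|\xi_a\|_{L^\infty}$''; these refined expansions, obtained from the Pohozaev identities for each solution separately, are what make the subsequent Taylor expansions legitimate. (ii) The volume term $\int_\Omega\int_{\R^3}u^2(y)u^2(x)\frac{x_j-y_j}{|x-y|^3}$ in the nonlocal Pohozaev identity is exponentially small for a single peak because the full-space double integral vanishes by antisymmetry of the kernel and the complement contributes $O(e^{-\theta/\epsilon})$ by the decay estimate \eqref{2--5}; ``translation invariance reduces it to a boundary integral'' is not quite the right mechanism, though the conclusion is the same in the single-peak case.
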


\begin{Rem}
Our method can also be used to study the following Choquard equation with the dimensions $N=4,5$
\begin{equation}\label{ll2}
 -\epsilon^2\Delta u+P(x)u=\frac{a}{8\pi \epsilon^2}
 \big(\int_{\R^N}\frac{u^2(y)}{|x-y|^{N-2}}dy\big)u+\mu u,~x\in \R^N.
 \end{equation}
We would like to point out that in this case $a$ goes to
$
 \left\{
         \begin{array}{ll}
           a^{*}=\int_{\R^{4}}U^{2}(x)dx, & N=4, \vspace{0.12cm}\\
           0, & N=5.
         \end{array}
       \right.
$
\end{Rem}

The main difference in the discussion of the local uniqueness for \eqref{8-18-1} and \cite{LPW-20CV} is that the
Lagrange multiplier $\mu_{a}$ in \eqref{8-18-1} also depends on the solution $u_{a}.$ Hence its
corresponding linearized operator
has changed. For the details, see Lemma \ref{lem-bbb}.
Fortunately, such change does not bring much difficulties.

To our knowledge, except \cite {LPW-20CV}, there seems
to be no other local uniqueness results for peak (or bubbling) solutions
of a Schr\"odinger-Newton system.
Even for Schr\"odinger equations, there are very few results about
the local uniqueness of peak solutions.
The classical moving plane method is not still effective to study
the uniqueness of concentrating solutions.
 There are two main tools to investigate
the uniqueness of concentration solutions, i.e.,
 the topological degree method and Pohozaev identities.
Before the local uniqueness result of multi-bump solutions
in \cite{LPY-19},
other local uniqueness results for peak (or bubbling) solutions
of Schr\"odinger equations are available only  for the case when the solutions blow up
at  $x_0$,  an isolated critical point of the potential $P(x)$.
When $x_0$ is a non-degenerate critical point of $P(x)$, that is, $ (D^2 P)$ is non-singular at $x_0$, one
can prove the local uniqueness of the peak solution concentrating at $x_0$ either by counting the local degree of the
corresponding reduced finite dimensional  problem as in  \cite{Cao3,CNY,G}, or by using Pohozaev type identities as in  \cite{Cao1,Deng,Grossi,GPY,GLW}.

Compared with the topological degree method, it can deal with the degenerate potential to
 use the Pohazaev identities to prove the local uniqueness, one can refer to \cite{Cao1,Deng,GPY}.
We want to point out that in  \cite{Cao1,Deng,GPY}, though the critical point $x_0$ is degenerate,
 the rate of degeneracy along each direction is the same.
Moreover, in \cite{Grossi} Grossi gave an example which shows that
local uniqueness may not be true at a degenerate critical point $x_0$ of $P(x).$
When peak solutions concentrates at a degenerate critical point,
it is more subtle to study the local uniqueness of it. One can refer to \cite{LPY-19}.
Under our assumption ($P$), the potential $P(x)$ is non-degenerate along the normal direction $\nu$ of $\Gamma.$
But along each tangential direction of $\Gamma,$
$P(x)$ is degenerate. Such non-uniform degeneracy causes the estimates more complicated.
Moreover, there are two terms involving
volume integral in the corresponding local Pohozaev identity,
which brings us some difficulties.

Finally, we give a non-existence of multi-peak solutions for  \eqref{8-18-1}--\eqref{8-28-3}.
\begin{Thm}\label{th1.5}
Under the assumption \textup{($P$)}, assume that $b_{i}\in\Gamma_i(i=1,...,m)$
are non-degenerate critical points of $P(x)$ on $\Gamma_i$ satisfying
 \begin{equation*}\label{1.6}
(D_{\tau_{i,j}} \Delta  P)(b_i)=0,~\mbox{with}~
 i=1,\cdots,m~\mbox{and}~j=1,2,
\end{equation*}
where $\tau_{i,j}$ is the $j$-th  principal tangential unit vector of $\Gamma$ at $b_i.$
  Then there exists a large constant $a_{0}>0,$ such that
 problem  \eqref{8-18-1}--\eqref{8-28-3}
has no $m$-peaks solutions ($m\geq 2$) of this form
\begin{equation}\label{20-23-5}
u_a (x)= \frac{-\mu_a}{\sqrt{a}} \sum_{i=1}^m \Bigl( U\bigl( \sqrt {-\mu_a}(x-x_{a,i})\bigr)+\varpi_a(x)\Bigr),
\end{equation}
with $\displaystyle\int_{\R^3}\big[-\frac{1}{\mu_a}|\nabla \varpi_a|^2+\varpi_a^2\big]=o\big(\frac{1}{(-\mu_a)^{\frac{3}{2}}}\big)$ and  some points $x_{a,i}\in \mathbb R^3(i=1,\cdots,m)$ satisfying $x_{a,i}\rightarrow b_{i}, i=1,...,m$ as $a\to +\infty$ and $b_i\neq b_j$ for $i\neq j.$
\end{Thm}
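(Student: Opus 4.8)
\textbf{Proof proposal for Theorem~\ref{th1.5}.}

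The plan is to argue by contradiction: suppose that for a sequence $a\to+\infty$ there exist $m$-peak solutions $u_a$ of the form \eqref{20-23-5} with concentration points $x_{a,i}\to b_i\in\Gamma_i$. As in the single-peak analysis, the first step is to establish the analogue of Theorem~\ref{th1-24-5} in the multi-peak setting, namely $\mu_a\to-\infty$ and, after the rescaling $y=\sqrt{-\mu_a}(x-x_{a,i})$, each bump converges in $H^1_{loc}$ to the ground state $U$; moreover the normalization \eqref{8-28-3} together with the form \eqref{20-23-5} forces the relation $a=(-\mu_a)^{1/2}\bigl(m\,a_*+o(1)\bigr)$, so that all peaks carry comparable mass and the interaction between distinct bumps is exponentially small, of order $e^{-(1-o(1))\sqrt{-\mu_a}|b_i-b_j|}$. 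The key point, which distinguishes the nonlocal problem from the local Schr\"odinger case, is that the Newtonian potential $\frac1{8\pi}\int\frac{u_a^2(y)}{|x-y|}dy$ \emph{does not decay} at the scale of the individual bumps: near $x_{a,i}$ it contributes not only the self-interaction term $\sim(-\mu_a)^{3/2}\,(8\pi)^{-1}a^{-1}\bigl(\int\frac{U^2(z)}{|y-z|}dz\bigr)$ but also a genuinely nonlocal contribution from the other $m-1$ bumps, whose leading order is a constant (in $x$) of size comparable to the self-term. This extra constant is what ultimately obstructs the existence of the multi-peak configuration.

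The second step is to write down the local Pohozaev identities at each concentration point. Fix $i$ and a small ball $B_d(x_{a,i})$ with $d$ less than half the minimal distance between the $b_j$'s. Multiplying \eqref{8-18-1} by $\partial_{x_\ell}u_a$ and by $(x-x_{a,i})\cdot\nabla u_a$ and integrating over $B_d(x_{a,i})$ produces identities in which the left-hand boundary terms are exponentially small (since $u_a$ and its derivatives are exponentially small on $\partial B_d(x_{a,i})$), while the interior terms split into: (a) a term $\int_{B_d(x_{a,i})}\partial_{x_\ell}P(x)\,u_a^2$, which after rescaling and Taylor-expanding $P$ around $b_i$ using assumption $(P)$ (non-degeneracy along $\nu_i$, degeneracy and the condition $D_{\tau_{i,j}}\Delta P(b_i)=0$ along the tangential directions) gives the main contribution; and (b) two volume integrals coming from the nonlocal term — one from differentiating the convolution kernel and one from differentiating $u_a^2$ in the source — exactly the two terms the authors flag as causing difficulty. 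The strategy is to show that the nonlocal contributions from the \emph{same} bump combine, via the identity $-\Delta\psi_a=u_a^2/2$ and integration by parts, into an expression that is either absorbed into lower order or contributes a definite-sign quantity, while the cross contributions from the \emph{other} bumps, being exponentially small, are negligible at the relevant order.

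The third step is the contradiction itself. Combining the rescaled Pohozaev identities along the normal direction $\nu_i$ fixes the location of $x_{a,i}$ on $\Gamma_i$ up to the expected order and pins down the balance between $-\mu_a$, $a$ and the curvature/second-derivative data; crucially, the \emph{constant} nonlocal contribution of the other $m-1$ bumps enters this balance additively. When one then sums the $m$ normal Pohozaev identities (or compares them pairwise), the self-consistent value of $-\mu_a$ demanded by bump $i$ differs from that demanded by bump $j$ precisely because bump $i$ "feels" the constant potential generated by the other $m-1$ peaks while a single peak would not — equivalently, the effective chemical potential seen locally at $x_{a,i}$ is $\mu_a + (\text{constant from other bumps}) + o(\cdot)$, and these constants cannot simultaneously match the single-bump scaling law $a=(-\mu_a)^{1/2}(m a_*+o(1))$ for every $i$ when $m\ge2$. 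This yields the desired contradiction, so no such $m$-peak solution exists for $a$ large.

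\textbf{Main obstacle.} The hard part will be step two: isolating the precise leading-order behavior of the two volume integrals in the local Pohozaev identity that involve the nonlocal term, and in particular showing that the constant-order nonlocal contribution from the distant bumps is the only cross term surviving at the critical order (with all genuinely $x$-dependent cross terms being exponentially small). This requires careful decomposition of the Newtonian potential into near-field and far-field parts relative to each $B_d(x_{a,i})$, uniform decay estimates on $\varpi_a$ of the type stated in \eqref{20-23-5}, and delicate bookkeeping of the powers of $(-\mu_a)$ — exactly the "complicated estimates involved by the nonlocal term" and the "estimates on the Lagrange multiplier" highlighted in the introduction.
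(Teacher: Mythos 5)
Your proposal contains a decisive misidentification of the mechanism, and in fact an internal contradiction. In Step 1 you correctly observe that the Newtonian potential generated by the other $m-1$ bumps is \emph{not} exponentially small near $x_{a,i}$, but in Step 2 you then assert that ``the cross contributions from the other bumps, being exponentially small, are negligible at the relevant order'' in the local Pohozaev identity. This second assertion is exactly wrong, and it is precisely the point on which the whole nonexistence proof turns. In the paper's argument the identity \eqref{7-30-2} contains the volume term
\[
\widetilde{C}_{5}=\frac{1}{8\pi\epsilon^{2}}\int_{ B_{\rho}(x_{a,i})}\int_{\R^{3}}
\frac{(\tilde{u}_a(y))^2(\tilde{u}_a(x))^2(x_{j}-y_{j})}{|x-y|^{3}}\,dy\,dx,
\]
and the inner integral over the \emph{other} bumps produces the inter-bump force $\sim \epsilon^{-2}\bigl(\int_{B_\rho(x_{a,i})}\tilde u_a^2\bigr)\bigl(\int_{B_\rho(x_{a,j})}\tilde u_a^2\bigr)\frac{(b_i-b_j)_{j_0}}{|b_i-b_j|^3}\sim C^{*}\epsilon^{4}$ with $C^{*}\neq 0$ for some $i_0,j_0$ (Lemma~\ref{aprop-7-30-2}, borrowed from Proposition 4.2 of \cite{LPW-20CV}). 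This is polynomially large, not exponentially small. The contradiction is then one of \emph{force balance}: the potential term $\epsilon^{2}\int_{B_\rho}\partial_{x_j}P\,\tilde u_a^{2}$ is $O(\epsilon^{7})$ because of the degeneracy assumptions ($\nabla P=0$ on $\Gamma_i$ and $D_{\tau_{i,j}}\Delta P(b_i)=0$), so it can never match $C^{*}\epsilon^{4}$, forcing $C^{*}=o(1)$, a contradiction. If the cross terms really were exponentially small, as you claim in Step 2, the Pohozaev identities would reduce to the single-peak necessary conditions at each $b_i$ — which are satisfied by hypothesis — and no contradiction would arise at all.

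Your Step 3 replaces this force-balance obstruction with a ``chemical potential mismatch'': the claim that the constant (zeroth-order) part of the potential generated by the other bumps shifts the effective $\mu_a$ seen at $x_{a,i}$ inconsistently across $i$. This does not work: a constant shift of size $O(\sqrt{-\mu_a})=o(-\mu_a)$ in the effective potential is absorbed into the height and width of the local profile, exactly as $P_0$ is absorbed into $U_{\epsilon,x_a}=(1+\epsilon^2P_0)U(\sqrt{1+\epsilon^2P_0}(x-x_a)/\epsilon)$ in Section~\ref{s3}, and nothing forces the $m$ profiles to be identical. Indeed, multi-peak solutions of Schr\"odinger--Newton type systems do exist for suitable non-degenerate potentials (see \cite{GW-20ANS}); what kills them here is not the constant shift but the fact that the inter-bump Newtonian \emph{attraction} (a gradient term, invisible to a constant shift since $\partial_{x_j}(\mathrm{const})=0$) cannot be balanced by $\nabla P$ when $P$ is degenerate in the sense of assumption $(P)$ with $(D_{\tau_{i,j}}\Delta P)(b_i)=0$. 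To repair the proof you would need to state and use the analogue of Lemma~\ref{aprop-7-30-2}, i.e.\ the nonvanishing of $C^{*}$, which is the genuinely hard estimate and is entirely absent from your proposal.
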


\begin{Rem}
The nonexistence result of multi-peak solutions for  \eqref{8-18-1}--\eqref{8-28-3}
is very different from the Schr\"odinger problem studied in \cite{LPY-19},
 which is mainly caused by the nonlocal term.
\end{Rem}

To prove Theorem \ref{th1.5}, we mainly use some local Pohozaev identities and some contradiction argument.
We have to obtain some accurate estimates involved by the nonlocal term as \cite{LPW-20CV}.

This paper is organized as follows.
 We  prove Theorem~\ref{th1-24-5} in section~\ref{s2}. In section~\ref{s3}, we estimate the Lagrange multiplier $\mu_a$
in terms of $a.$ In  section~\ref{s4}
we prove the results for the location of the peaks and for the existence of single-peak solutions. We study the local uniqueness of
single-peak solutions in section~\ref{s5}. In section \ref{s6}, we prove the nonexistence result of multi-peak solutions.
We put some known results and some basic and technical estimates in Appendices \ref{sa} to \ref{sd}.

  For simplicity, we use
$|u|_{q}(2\leq q\leq6)$ to denote $\big(\int_{\R^{3}}|u(x)|^{q}dx\big)^{\frac{1}{q}}$ and $\|u\|$  the usual $H^{1}(\R^{3})$ norm.
In this paper, we always assume that $b_0\in \Gamma_.$

\section{The proof of Theorem~\ref{th1-24-5}}\label{s2}

First, we study the following problem:
\begin{equation}\label{1-26-10}
-\Delta u =   P_1(x)u, \; u>0, \quad \text{in}\; \mathbb R^3,
\end{equation}
where the function $P_1(x)$ satisfies $P_1> 1$  in $B_R(0)\setminus B_{t}(0)$ for some fixed $t>0$ and large $R>0$.

In order to prove Theorem~\ref{th1-24-5}, first we give the following result.
\begin{Prop}\label{th1-26-10}(Proposition 2.1, \cite{LPY-19})
Problem~\eqref{1-26-10} has no solution.
\end{Prop}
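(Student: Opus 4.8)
The plan is to derive a contradiction by comparing a hypothetical positive solution $u$ with a suitably normalized radial function on the annulus $B_R(0)\setminus B_t(0)$ where $P_1>1$, using the maximum principle together with the fact that the operator $-\Delta-1$ has an eigenfunction on a large ball that vanishes on the boundary. First I would recall that the Dirichlet eigenvalue problem $-\Delta\varphi=\lambda\varphi$ on $B_R(0)$ has first eigenvalue $\lambda_1(B_R)\to 0$ as $R\to\infty$, with a positive first eigenfunction $\varphi_R>0$ in $B_R(0)$ and $\varphi_R=0$ on $\partial B_R(0)$. Choosing $R$ large enough that $\lambda_1(B_R)<1$, one has $-\Delta\varphi_R=\lambda_1(B_R)\varphi_R\le\varphi_R<P_1(x)\varphi_R$ on the region where $P_1>1$, so $\varphi_R$ is a strict subsolution there, while $u$ is a solution; the mismatch in behavior — $u>0$ on all of $\R^3$ including $\partial B_R$, whereas $\varphi_R$ vanishes on $\partial B_R$ — is what I would exploit.

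The key steps, in order, are: (1) Assume for contradiction that $u>0$ solves \eqref{1-26-10} on $\R^3$. (2) Fix $R>t$ large with $\lambda_1(B_R(0))<1$, and let $\varphi_R>0$ be the associated Dirichlet eigenfunction on $B_R(0)$. (3) Consider the ratio $\sigma:=\sup\{s>0:\ s\varphi_R\le u\ \text{on}\ \overline{B_R(0)}\}$; since $u>0$ is bounded below on the compact set $\overline{B_R(0)}$ and $\varphi_R$ is bounded, $\sigma\in(0,\infty)$ is well-defined and the function $v:=u-\sigma\varphi_R\ge 0$ on $\overline{B_R(0)}$, with $v=u>0$ on $\partial B_R(0)$. (4) Compute $-\Delta v = P_1(x)u-\sigma\lambda_1\varphi_R$. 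On $B_R(0)\setminus B_t(0)$ we have $P_1(x)u \ge P_1(x)\sigma\varphi_R > \sigma\varphi_R \ge \sigma\lambda_1\varphi_R$ wherever $\varphi_R>0$ and $P_1>1$, so $-\Delta v\ge 0$ (with strictness somewhere); on $B_t(0)$ one argues directly using $P_1 u>0$ as well, so $v$ is superharmonic on $B_R(0)$, or at least satisfies $-\Delta v\ge cv$-type inequality allowing the strong maximum principle. (5) Since $v\ge0$ and $v$ cannot attain an interior zero minimum (strong maximum principle / Hopf lemma), either $v>0$ strictly in $B_R(0)$ or $v\equiv 0$; the former contradicts the maximality defining $\sigma$ (one could increase $\sigma$ slightly near the interior while controlling the boundary since $v=u>0$ there, using Hopf's lemma on $\varphi_R$ at $\partial B_R$), and $v\equiv0$ is impossible because $v=u>0$ on $\partial B_R(0)$. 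This contradiction proves the proposition.

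The main obstacle I anticipate is making step (5) fully rigorous: the sliding/scaling argument must handle the boundary $\partial B_R(0)$ carefully, since $\varphi_R$ vanishes there while $u$ does not, so the constraint $s\varphi_R\le u$ is never tight on $\partial B_R$ — one must show the touching point of $\sigma\varphi_R$ and $u$ occurs in the interior, then apply the strong maximum principle to $v=u-\sigma\varphi_R$ at that interior touching point to force $v\equiv 0$, contradicting $v|_{\partial B_R}>0$. A clean way around this is to instead argue: at an interior minimum point $x_0\in B_R(0)$ of $v$ (which exists since $v$ is continuous on the compact set and $v>0$ on the boundary forces any zero or near-zero minimum to be interior), if $v(x_0)=0$ then $u(x_0)=\sigma\varphi_R(x_0)$ and the differential inequality $-\Delta v\ge 0$ near $x_0$ with $v(x_0)=0=\min v$ forces $v\equiv0$ by the strong maximum principle, contradiction; if $v(x_0)>0$ then $\sigma$ was not maximal. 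One must also verify the superharmonicity computation is valid on all of $B_R(0)$, including inside $B_t(0)$ where $P_1$ need not exceed $1$ — but there $-\Delta v=P_1 u-\sigma\lambda_1\varphi_R$ and since we only need $-\Delta v\ge 0$ at the specific minimum point $x_0$ where $v$ is smallest, and $v\ge0$ everywhere, the local maximum principle still applies; alternatively one restricts the whole argument to the annulus $B_R(0)\setminus\overline{B_t(0)}$ with $v>0$ on both boundary components, which sidesteps the interior ball entirely.
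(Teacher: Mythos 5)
The paper does not prove this proposition; it simply cites Proposition 2.1 of \cite{LPY-19}, so your argument has to stand on its own. Your core idea --- compare $u$ with the first Dirichlet eigenfunction of a large domain on which $P_1>1$, using that the eigenvalue drops below $1$ --- is the right one, but the main line of your argument, carried out on the full ball $B_R(0)$, has a genuine gap inside $B_t(0)$. The hypothesis gives $P_1>1$ only on the annulus $B_R(0)\setminus B_t(0)$; on $B_t(0)$ the function $P_1=-\Delta u/u$ can be arbitrarily negative, so neither "$P_1u>0$ on $B_t(0)$" nor $-\Delta v\ge 0$ there is available. The patch "we only need $-\Delta v\ge 0$ at the specific minimum point $x_0$" does not rescue this: the strong maximum principle requires the differential inequality on a neighbourhood (indeed on a connected open set containing $x_0$), not at a single point, and in any case $x_0$ could lie in $B_t(0)$ where the inequality is simply false. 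So as written, step (5) can fail.

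The fix you mention in your last sentence is the correct argument and should be the main line: work on the annulus $A=B_R(0)\setminus\overline{B_t(0)}$ with its own first Dirichlet eigenfunction $\varphi_A>0$, noting $\lambda_1(A)\le \lambda_1\bigl(B_{(R-t)/2}\bigr)\to 0$, so $\lambda_1(A)<1$ for $R$ large. Then $\sigma=\sup\{s:s\varphi_A\le u\ \text{on}\ \overline A\}\in(0,\infty)$, $v=u-\sigma\varphi_A\ge 0$ satisfies $-\Delta v\ge \sigma(P_1-\lambda_1)\varphi_A>0$ in $A$ and $v=u>0$ on both boundary components, so the strong maximum principle excludes an interior zero of $v$, $\min_{\overline A}v>0$, and $\sigma$ can be increased --- a contradiction. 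Note also that the same eigenfunction yields a shorter proof with no maximum principle at all: multiplying $-\Delta u=P_1u$ by $\varphi_A$ and integrating by parts twice over $A$ gives $\lambda_1(A)\int_A u\varphi_A+\int_{\partial A}u\,\partial_\nu\varphi_A=\int_A P_1u\varphi_A>\int_A u\varphi_A$, and since $\partial_\nu\varphi_A\le 0$ and $u>0$ on $\partial A$ this forces $\lambda_1(A)>1$, contradicting $\lambda_1(A)<1$. Either way, the argument must live on the annulus, not the ball.
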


With Proposition \ref{th1-26-10} at hand, now we are in a position to prove Theorem \ref{th1-24-5}.
\begin{proof}[\textbf{Proof of Theorem~\ref{th1-24-5}}]

First, we prove that $\mu_a\to -\infty$. We argue by an indirect method.
Suppose that  $|\mu_a|\leq M$.  Since  $\int_{\mathbb R^3} u_a^2=1,$
by the Moser iteration, we can prove that $u_a$ is uniformly bounded. That is, $u_a$ does not blow up.

 Suppose that  $\mu_a\rightarrow +\infty.$  Set $P_1(x)= \mu_a - P(x) + \frac{a}{8\pi}\int_{\R^{3}} \frac{u^{2}(y)}{|x-y|}dy.$
   Noting that $u_a$ concentrates at some points, we have for $x\in \mathbb R^3\setminus B_t(0)$ and $t>0$
   \begin{equation}\label{eq-7-9-1}
 \begin{split}
   & \int_{\R^{3}} \frac{u_{a}^{2}(y)}{|x-y|}dy
    \\
    &=\int_{|x-y|\leq \frac{t}{2}} \frac{u_{a}^{2}(y)}{|x-y|}dy
    +\int_{|x-y|\geq \frac{t}{2}} \frac{u_{a}^{2}(y)}{|x-y|}dy\\
    &=o(1)\int_{|x-y|\leq \frac{t}{2}} \frac{1}{|x-y|}dy+\Big(\int_{|x-y|\geq \frac{t}{2}} { |u_{a}(y)|^{\frac{20}{7}}}dy\Big)^{\frac{7}{10}}
    \Big( \frac{1}{|x-y|^{\frac{10}{3}}}dy\Big)^{\frac{3}{10}}\\
    &=o(1)t^{2}+O\Big(\frac{1}{t^{\frac{1}{10}}}\Big),
    \end{split}
 \end{equation}
 where $o(1)$ goes to zero as $a\rightarrow \infty.$
Hence from \eqref{eq-7-9-1} we  may suppose that $\frac{a}{8\pi}\int_{\R^{3}} \frac{u_{a}^{2}(y)}{|x-y|}dy\ge -1$ in
$\mathbb R^3\setminus B_t(0)$ for some $t>0.$  Hence for any fixed $R>0,$ we usually have

\[
P_1(x)= \mu_a - P(x) + \frac{a}{8\pi}\int_{\R^{3}} \frac{u_{a}^{2}(y)}{|x-y|}dy>1,\quad x\in B_R(0)\setminus B_t(0).
\]
From  Proposition~\ref{th1-26-10}, we obtain a contradiction.
So we have proved that $\mu_a\to -\infty.$  Let  $\lambda_a=-\mu_a.$
Let $x_a$ be the maximum point of $u_a$. By equation \eqref{8-18-1}, we find

\[
\frac{a}{8\pi}\int_{\R^{3}} \frac{u_{a}^{2}(y)}{|x-y|}dy\,u_{a}(x) \ge \big(\lambda_a + P(x_a)\big) u_a(x_a)>0,
 \]
 which implies that $a>0.$

Denote $\bar u_a (x)= \frac1{\lambda_a} u_a\bigl( \frac x{\sqrt{\lambda_a}}\bigr).$
Then
\begin{equation}\label{30-27-5}
-\Delta \bar u_a + \Bigl( 1  + \frac1{ {\lambda_a}} P\bigl( \frac x{\sqrt{\lambda_a}}\bigr)\Bigr)\bar u_a  =
\frac{a }{8\pi}\int_{\R^{3}}\frac{\bar u_a^2(y)}{|x-y|}dy\,\bar{u}_a(x),\quad \text{in}\; \mathbb R^3,
\end{equation}
and
\begin{equation}\label{31-27-5}
\int_{\mathbb R^3} \bar u_a^2=\frac{1}{\lambda^{\frac{1}{2}}_{a}}.
\end{equation}

 By \eqref{30-27-5} and \eqref{31-27-5}, applying Moser iteration,
 we can show that $|u_a|\le C$ for some constant independent of $a.$  Let $\bar x_a$ be a maximum point of $\bar
 u_a$.  Then

\[
\frac{a }{8\pi}\int_{\R^{3}}\frac{\bar u_a^2(y)}{|\bar x_a-y|}dy\,\bar{u}_a(\bar x_a)\ge \Bigl( 1  + \frac1{ {\lambda_a}} P\bigl( \frac {\bar x_a}{\sqrt{\lambda_a}}\bigr)\Bigr)\bar u_a(\bar x_a),
\]
 which gives $
 a\ge   8\pi\Big(\int_{\R^{3}}\frac{\bar u_a^2(y)}{|\bar x_a-y|}dy\Big)^{-1} \ge c_0>0.$
Applying the standard blow-up argument, by \eqref{31-27-5} we can check that there holds
\begin{equation}\label{32-27-5}
 \bar u_a = U_{a} (x-\bar x_{a})+\overline{\varpi}_a(x),
 \end{equation}
 for some $\bar x_{a} \in \mathbb R^3$ with
 $$
 \int_{\R^3}\big[|\nabla\overline{\varpi}_a|^2
 +(\overline{\varpi}_{a})^2\big]=o(1),
 $$
and $U_{a}$ is the unique positive solution of

 \[
 -\Delta u + u = \frac{a}{8\pi} \int_{\R^{3}}\frac{u^2(x)}{|x-y|}dy\,u,\;\; u\in H^1(\mathbb R^3),\;\; u(0)=\max_{x\in \mathbb R^3} u(x).
 \]
 Observing that $U_{a}=\frac1{\sqrt{a}} U,$  it follows from \eqref{31-27-5}  and \eqref{32-27-5} that
 $a\rightarrow a^{*}\lambda^{\frac{1}{2}}_{a}\rightarrow +\infty.$
\end{proof}

\section{Some estimates for general potentials}\label{s3}

In this section,  we shall estimate $\mu_a$ respect to $a.$

Let $\epsilon =\frac{1}{ \sqrt{-\mu_a}}$ and $u(x)\mapsto \frac{-\mu_a}{\sqrt{a}} u(x).$
Then \eqref{8-18-1} can be rewritten as
\begin{equation}\label{30-7-11}
-\epsilon^2\Delta u+ \bigl( 1+ \epsilon^2 P(x)\bigr)u= \frac{1}{8\pi \epsilon^{2}}\int_{\R^{3}}\frac{u^{2}(y)}{|x-y|}dy\,u(x),~
u\in H^1(\R^3).
\end{equation}
For any $a\in \R^+$, we define
 $\|u\|_a:=\displaystyle\int_{\mathbb R^3} \bigl( \epsilon^2 |\nabla u|^2 + u^2\bigr)^{\frac{1}{2}}$.

 By \eqref{20-23-5-1223}, we know that a single-peak solution of \eqref{30-7-11} has the following form
\begin{equation*}
\tilde{u}_a(x)= U_{\epsilon,x_{a}}(x) +\varphi_{a}(x),~\mbox{with}~~\|\varphi_{a}\|_a=o(\epsilon^{\frac 32}),
\end{equation*}
where $U_{\epsilon,x_{a}}(x):=(1+\epsilon^{2}P_{0})U\Big(\frac{\sqrt{1+\epsilon^2P_0}(x-x_{a})}\epsilon \Big)$.
Then, there holds
 \begin{equation}\label{06-07-3}
 \begin{split}
   & -\epsilon^2\Delta \varphi_{a}+\Big((1+\epsilon^2P(x))\varphi_{a}-
    \frac{1}{4\pi\epsilon^{2}}\int_{\R^{3}}\frac{U_{\epsilon,x_{a}}(y)\varphi_{a}(y)}{|x-y|}dyU_{\epsilon,x_{a}}(x)
    \\
    &\quad\quad-\frac{1}{8\pi\epsilon^{2}}\int_{\R^{3}}\frac{(U_{\epsilon,x_{a}}(y))^{2}}{|x-y|}dy\varphi_{a}(x)\Big)
    =\mathcal {R}_{a,\epsilon}\big(\varphi_{a}\big)+\mathcal {L}_{a}(x),
    \end{split}
 \end{equation}
where
\begin{equation}\label{06-07-1}
\begin{split}
      \mathcal {R}_{a,\epsilon}\big(\varphi_{a}\big)&= \Big(\frac{1}{8\pi \epsilon^{2}}
      \int_{\R^{3}}\frac{(U_{\epsilon,x_{a}}+\varphi_{a})^{2}(y)}{|x-y|}dy\,(U_{\epsilon,x_{a}}+\varphi_{a})(x)
    -\frac{1}{8\pi \epsilon^{2}}
      \int_{\R^{3}}\frac{(U_{\epsilon,x_{a}}(y))^{2}}{|x-y|}dy\,U_{\epsilon,x_{a}}(x)
      \\
      &\quad-\frac{1}{4\pi \epsilon^{2}}
      \int_{\R^{3}}\frac{(U_{\epsilon,x_{a}}\varphi_{a})(y)}{|x-y|}dy\,U_{\epsilon,x_{a}}(x)
      \Big)
      -\frac{1}{8\pi \epsilon^{2}}
      \int_{\R^{3}}\frac{(U_{\epsilon,x_{a}})^{2}(y)}{|x-y|}dy\,\varphi_{a}(x)\\
      &=\frac{1}{4\pi \epsilon^{2}}
      \int_{\R^{3}}\frac{(U_{\epsilon,x_{a}}\varphi_{a})(y)}{|x-y|}dy\,\varphi_{a}(x)
      +\frac{1}{8\pi \epsilon^{2}}
      \int_{\R^{3}}\frac{(\varphi_{a})^{2}(y)}{|x-y|}dy\,U_{\epsilon,x_{a}}(x)
      \\
      &\quad+\frac{1}{8\pi \epsilon^{2}}
      \int_{\R^{3}}\frac{(\varphi_{a})^{2}(y)}{|x-y|}dy\,\varphi_{a}(x),
      \end{split}
\end{equation}
and
\begin{equation*}\label{eq-l}
\begin{split}
\mathcal {L}_{a}(x)=-\epsilon^2\big(P(x)-P_0\big)U_{\epsilon,x_{a}}(x).
 \end{split}
\end{equation*}
 We can move $x_{a}$ a bit(still denoted by $x_{a}$), so that the error term $\varphi_a\in \displaystyle E_{a,x_{a}},$ where
\begin{equation*}\label{06-08-1}
E_{a,x_{a}} :=\left \{ u(x)\in H^1(\R^3):
\Big\langle u,\frac{\partial U_{\epsilon,x_{a}}(x)}{\partial{x_j}}
\Big\rangle_{a}=0, ~j=1,2,3
 \right\}.
 \end{equation*}

 Let  $L_{a}$ be the bounded linear operator from  $H^1(\mathbb R^3)$ to itself, defined by
\begin{equation*}\label{06-07-2}
\bigl\langle L_{a} u, v\bigr\rangle_a=\int_{\mathbb R^3}  \big(  \epsilon^2\nabla  u\nabla v
 +\big(1+\epsilon^2P(x)\big)u v\big)-\frac{3}{8\pi \epsilon^{2}}
     \int_{\R^{3}} \int_{\R^{3}}\frac{u^{2}(x)(uv)(y)}{|x-y|}dxdy.
\end{equation*}
Then, it is standard to prove the following lemma.
\begin{Lem}\label{lem-A.1}
There exist  constants $\varrho>0$ and large $a_{0}>0$ such that for all $a$ with $a_{0}\leq a <+\infty,$  it holds
 \begin{equation}\label{B.2}
   \|L_{a}u\|_{a}\geq \varrho \|u\|_{a},~\mbox{for all}~u\in E_{a,x_{a}}.
 \end{equation}
\end{Lem}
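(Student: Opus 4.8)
The plan is to argue by contradiction in the rescaled variables, reducing the coercivity estimate to the non-degeneracy of the linearized Schr\"odinger–Newton operator around the ground state $U$. Suppose the conclusion fails; then there exist sequences $a_n\to +\infty$ (equivalently $\epsilon_n=1/\sqrt{-\mu_{a_n}}\to 0$) and $u_n\in E_{a_n,x_{a_n}}$ with $\|u_n\|_{a_n}=1$ but $\|L_{a_n}u_n\|_{a_n}\to 0$. Writing $v_n(y):=u_n\bigl(x_{a_n}+\epsilon_n y\bigr)$ and using $U_{\epsilon,x_a}(x)=(1+\epsilon^2P_0)U\bigl(\sqrt{1+\epsilon^2P_0}\,(x-x_a)/\epsilon\bigr)$, the rescaling turns $\|u_n\|_{a_n}^2=\int(\epsilon_n^2|\nabla u_n|^2+u_n^2)$ into $\int(|\nabla v_n|^2+v_n^2)$ up to the harmless factor $(1+\epsilon_n^2P_0)$, so $\{v_n\}$ is bounded in $H^1(\R^3)$. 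Extract a weak limit $v_n\rightharpoonup v$ in $H^1(\R^3)$ and strongly in $L^q_{\mathrm{loc}}$ for $2\le q<6$.

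The next step is to identify the limit equation. Testing $\langle L_{a_n}u_n,\varphi\rangle_{a_n}\to 0$ against $\varphi$ supported near $x_{a_n}$ and rescaling, the term $\int(1+\epsilon_n^2P(x))u_n\varphi$ passes to $\int v\tilde\varphi$ because $\epsilon_n^2P(x_{a_n}+\epsilon_n y)\to 0$ locally uniformly (here one uses $P\in C^4$ near $\Gamma$ and the growth bound $P(x)=O(e^{\alpha|x|})$), while the nonlocal term $\frac{3}{8\pi\epsilon_n^2}\iint \frac{u_n^2(x)(u_n\varphi)(y)}{|x-y|}$ converges, after rescaling $x=x_{a_n}+\epsilon_n x'$, $y=x_{a_n}+\epsilon_n y'$ — which produces a compensating factor $\epsilon_n^5/\epsilon_n = \epsilon_n^4$ from the volume elements against $\epsilon_n^{-2}$, but note the profile $U_{\epsilon,x_a}$ scales like $(1+\epsilon^2P_0)$, not like $\epsilon^{-1}$, so one must track the amplitude carefully — to the nonlocal convolution built from $U$. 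The upshot is that $v$ solves the linearized equation
\[
-\Delta v + v = \frac{1}{8\pi}\Bigl(\int_{\R^3}\frac{U^2(y)}{|\cdot-y|}dy\Bigr)v + \frac{1}{4\pi}\Bigl(\int_{\R^3}\frac{U(y)v(y)}{|\cdot-y|}dy\Bigr)U,
\]
which is exactly the kernel equation recalled in the introduction; by the non-degeneracy result (Tod–Moroz, Wei–Winter) $v$ is a linear combination of $\partial_{x_j}U$, $j=1,2,3$. On the other hand, the orthogonality $u_n\in E_{a_n,x_{a_n}}$, i.e. $\langle u_n,\partial_{x_j}U_{\epsilon_n,x_{a_n}}\rangle_{a_n}=0$, rescales to $\int(\nabla v\cdot\nabla\partial_{x_j}U+v\,\partial_{x_j}U)=0$ in the limit, forcing $v\equiv 0$.

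It then remains to upgrade the weak convergence to strong convergence in $H^1$, which contradicts $\|v_n\|_{H^1}\to\|u_n\|_{a_n}/(1+\epsilon_n^2P_0)^{1/2}\to 1$. For this I would plug $u_n$ itself into $\langle L_{a_n}u_n,u_n\rangle_{a_n}=o(1)$, getting $\|u_n\|_{a_n}^2 = \frac{3}{8\pi\epsilon_n^2}\iint\frac{u_n^2(x)u_n^2(y)}{|x-y|}\,dxdy + o(1)$; after rescaling the right-hand side converges to $\frac{3}{8\pi}\iint\frac{v^2(x)v^2(y)}{|x-y|}$ by the Hardy–Littlewood–Sobolev inequality together with the local strong convergence and a tightness argument (the mass of $v_n$ does not escape to infinity because, outside a large ball, $(1+\epsilon_n^2P)\ge 1$ dominates and the nonlocal contribution is $o(1)$ uniformly, as in the computation \eqref{eq-7-9-1} of Section~\ref{s2}). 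Since $v=0$, this limit is $0$, giving $\|u_n\|_{a_n}\to 0$, the desired contradiction. The main obstacle is the tightness/no-vanishing step: controlling the nonlocal quadratic form on the complement of large balls uniformly in $n$, so that neither the $H^1$-mass nor the Coulomb energy leaks to spatial infinity; this is where the decay of $U$ and the structure of the convolution kernel must be used quantitatively, rather than the soft weak-limit identification, which is routine.
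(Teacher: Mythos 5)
The paper does not actually supply a proof of this lemma (it is dismissed as ``standard to prove''), and your blow-up/contradiction argument is precisely the standard proof that is intended: rescale, pass to the limit, invoke the non-degeneracy of the linearized operator at $U$ together with the orthogonality conditions to kill the weak limit, then test against $u_n$ itself to contradict the normalization. Two small remarks. First, your normalization is off by a power of $\epsilon_n$: with $v_n(y)=u_n(x_{a_n}+\epsilon_n y)$ one has $\|v_n\|_{H^1}^2=\epsilon_n^{-3}\|u_n\|_{a_n}^2$, so you should take $v_n=\epsilon_n^{-3/2}u_n(x_{a_n}+\epsilon_n\cdot)$ to get $\|v_n\|_{H^1}\to 1$; this is bookkeeping only and the same factor appears consistently in the orthogonality conditions and the quadratic form. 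Second, the tightness issue you flag at the end as ``the main obstacle'' is not actually an obstacle here: every term of the nonlocal quadratic form in $\langle L_{a_n}u_n,u_n\rangle_{a_n}$ carries at least one factor of $U_{\epsilon_n,x_{a_n}}$ or $U_{\epsilon_n,x_{a_n}}^2$, which after rescaling either decays exponentially or produces the kernel $\int_{\R^3}|x-y|^{-1}U^2(y)\,dy=O\bigl((1+|x|)^{-1}\bigr)$ vanishing at infinity, so $v_n\rightharpoonup 0$ in $H^1$ plus local $L^q$ compactness already forces these terms to $0$ without any global no-vanishing argument; one then concludes $1=\|u_n\|_{a_n}^2=o(1)$ directly (the term $\epsilon_n^2\int P u_n^2$ being $o(1)$ once $P$ is, as implicitly assumed throughout the paper, bounded below).
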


\begin{Lem}
There exists a constant $C>0$ independent of $a$ such that
\begin{equation}\label{aaab10-31-7}
    \|\mathcal {R}_{a,\epsilon}\big(\varphi_{a}\big)\|_{a}=O\big(\epsilon^{-3}\|\varphi_{a}\|^{3}_{a}
    +\epsilon^{-\frac{3}{2}}\|\varphi_{a}\|^{2}_{a}).
\end{equation}
\end{Lem}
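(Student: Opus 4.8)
The goal is to bound $\|\mathcal{R}_{a,\epsilon}(\varphi_a)\|_a$ from the last line of the formula \eqref{06-07-1}, i.e. the three nonlocal cubic/quadratic expressions
$\frac{1}{4\pi\epsilon^2}\int \frac{(U_{\epsilon,x_a}\varphi_a)(y)}{|x-y|}\,dy\,\varphi_a(x)$,
$\frac{1}{8\pi\epsilon^2}\int \frac{\varphi_a^2(y)}{|x-y|}\,dy\,U_{\epsilon,x_a}(x)$,
and $\frac{1}{8\pi\epsilon^2}\int \frac{\varphi_a^2(y)}{|x-y|}\,dy\,\varphi_a(x)$.
Since $\|\cdot\|_a$ is equivalent (uniformly in $a$, for $a\ge a_0$) to the $\epsilon$-scaled $H^1$-norm, and by duality $\|w\|_a = \sup\{\langle w,v\rangle_a : \|v\|_a\le 1\}$, I would test each term against an arbitrary $v\in H^1(\mathbb R^3)$ with $\|v\|_a\le 1$ and estimate the resulting triple integral $\frac{1}{\epsilon^2}\int\int \frac{g(y)\,h(x)}{|x-y|}\,dx\,dy$ by the Hardy–Littlewood–Sobolev inequality: $\big|\int\int \frac{g(y)h(x)}{|x-y|}\big| \lesssim |g|_{6/5}\,|h|_{6/5}$ in $\mathbb R^3$. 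This reduces everything to $L^{6/5}$ (and, after Hölder, $L^2$ and $L^6$) bounds on products of $U_{\epsilon,x_a}$, $\varphi_a$, and $v$.

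The key book-keeping is the scaling. After the change of variables $x = x_a + \epsilon z$, one has $|U_{\epsilon,x_a}|_q = \epsilon^{3/q}(1+\epsilon^2 P_0)^{1-3/(2q)}|U|_q \sim \epsilon^{3/q}$, in particular $|U_{\epsilon,x_a}|_2 \sim \epsilon^{3/2}$ and the Sobolev embedding gives, uniformly for $a\ge a_0$, the scaled inequalities $|u|_2 \le \|u\|_a$ and $|u|_6 \le C\epsilon^{-1/2}\|u\|_a$ (the latter from $|u|_6 \le C|\nabla u|_2 = C\epsilon^{-1}\cdot\epsilon|\nabla u|_2 \le C\epsilon^{-1}\|u\|_a$ combined with interpolation — more precisely $|u|_6\le C\epsilon^{-1}\|u\|_a$, and one interpolates to get the powers one wants). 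I would then treat the three terms:

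First, for the cubic tail $\frac{1}{\epsilon^2}\int\int\frac{\varphi_a^2(y)v(x)}{|x-y|}$: by HLS it is $\lesssim \epsilon^{-2}|\varphi_a^2|_{6/5}|v|_{6/5}$; writing $|\varphi_a^2|_{6/5} = |\varphi_a|_{12/5}^2$ and $|v|_{6/5}\le C\epsilon^{?}|v|_2^{\theta}|v|_6^{1-\theta}$, the accumulated powers of $\epsilon$ reproduce $\epsilon^{-3}\|\varphi_a\|_a^3$ after using $\|v\|_a\le 1$; this is the origin of the $\epsilon^{-3}\|\varphi_a\|_a^3$ contribution, and the same computation handles the mixed cubic term $\frac{1}{\epsilon^2}\int\int\frac{(U_{\epsilon,x_a}\varphi_a)(y)\varphi_a(x)}{|x-y|}$ once one notes $|U_{\epsilon,x_a}|_\infty \sim 1$ is \emph{not} used — rather $U_{\epsilon,x_a}$ is kept in an $L^q$ norm that supplies positive powers of $\epsilon$ offsetting part of the $\epsilon^{-2}$. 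Second, for the quadratic term $\frac{1}{\epsilon^2}\int\int\frac{\varphi_a^2(y)U_{\epsilon,x_a}(x)v(x)}{|x-y|}$: HLS gives $\lesssim \epsilon^{-2}|\varphi_a^2|_{6/5}|U_{\epsilon,x_a}v|_{6/5}$, and now Hölder $|U_{\epsilon,x_a}v|_{6/5}\le |U_{\epsilon,x_a}|_3|v|_2 \sim \epsilon\,\|v\|_a$; combined with $|\varphi_a^2|_{6/5}=|\varphi_a|_{12/5}^2 \le C\epsilon^{-1/2}\|\varphi_a\|_a^2$ (interpolating $L^{12/5}$ between $L^2$ and $L^6$ with the scaled embeddings), the net is $\epsilon^{-2}\cdot\epsilon^{-1/2}\cdot\epsilon\,\|\varphi_a\|_a^2 = \epsilon^{-3/2}\|\varphi_a\|_a^2$, which is the second stated term. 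Summing the three pieces and taking the supremum over $\|v\|_a\le 1$ yields \eqref{aaab10-31-7}.

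The main obstacle — really the only delicate point — is getting the exponents in the interpolation and Hölder steps to land exactly on $\epsilon^{-3}$ and $\epsilon^{-3/2}$ rather than something weaker; this requires choosing the intermediate Lebesgue exponents (e.g.\ $12/5$ for the $\varphi_a$ factors and $3$ for the $U_{\epsilon,x_a}$ factor) so that the HLS conjugacy $\tfrac{1}{p}+\tfrac{1}{q}+\tfrac{1}{3}\cdot\tfrac{2}{?}$... is consistent with the $\epsilon$-scaling of each norm, and then verifying that the worst term among the three is indeed the cubic one for $\|\varphi_a\|_a$ small and the quadratic one otherwise. Everything else is a routine application of HLS, Hölder, and the scaled Sobolev inequalities, uniform in $a\ge a_0$ by Lemma~\ref{lem-A.1}'s setup and the explicit scaling of $U_{\epsilon,x_a}$.
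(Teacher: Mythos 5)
Your proposal is correct and follows essentially the same route as the paper: the paper also tests $\mathcal{R}_{a,\epsilon}(\varphi_a)$ against an arbitrary $v$, splits it into exactly the same three double integrals, and bounds them by $C\epsilon^{-1}$ times products of $\|\cdot\|_a$ norms (via the quoted Lemma~\ref{lem-7-9-1}), using $\|U_{\epsilon,x_a}\|_a\sim\epsilon^{3/2}$ to produce the $\epsilon^{-3/2}$ factor on the quadratic terms. The only difference is that you re-derive the needed bilinear estimate from Hardy--Littlewood--Sobolev, H\"older and the scaled Sobolev embeddings instead of citing it, and your exponent bookkeeping (e.g.\ $|\varphi_a|_{12/5}^2\le C\epsilon^{-1/2}\|\varphi_a\|_a^2$, $|U_{\epsilon,x_a}|_3\sim\epsilon$) checks out.
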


\begin{proof}
From \eqref{06-07-1}, for any $\upsilon\in H^{1}(\R^{3})$ we have
\begin{equation}\label{07-09-11}
\begin{split}
      &(\mathcal {R}_{a,\epsilon}\big(\varphi_{a}\big),\upsilon)\\
      &=\frac{1}{4\pi \epsilon^{2}}
    \underbrace{ \int_{\R^{3}} \int_{\R^{3}}\frac{\varphi_{a}(x)\upsilon(x)(U_{\epsilon,x_{a}}\varphi_{a})(y)}{|x-y|}dx\,dy}_{:=A_{1}}
      +\frac{1}{8\pi \epsilon^{2}}
     \underbrace{ \int_{\R^{3}}\int_{\R^{3}}\frac{U_{\epsilon,x_{a}}(x)\upsilon(x)(\varphi_{a})^{2}(y)}{|x-y|}dx\,dy}_{:=A_{2}}
      \\
      &\quad+\frac{1}{8\pi \epsilon^{2}}
      \underbrace{\int_{\R^{3}}\int_{\R^{3}}\frac{\varphi_{a}(x)\upsilon(x)(\varphi_{a})^{2}(y)}{|x-y|}dx\,dy}_{:=A_{3}}.
      \end{split}
\end{equation}
By Lemma \ref{lem-7-9-1}, we have
\begin{equation*}\label{07-09-12}
A_{1}\leq C\epsilon^{-1}\|\varphi_{a}\|^{2}_{a}\|\upsilon\|_{a}\|U_{\epsilon,x_{a}}\|_{a}\leq C\epsilon^{\frac{1}{2}}\|\varphi_{a}\|^{2}_{a}\|\upsilon\|_{a},
\,\,A_{2}\leq  C\epsilon^{\frac{1}{2}}\|\varphi_{a}\|^{2}_{a}\|\upsilon\|_{a},
\end{equation*}
and
\begin{equation}\label{07-09-13}
A_{3}\leq C\epsilon^{-1}\|\varphi_{a}\|^{3}_{a}\|\upsilon\|_{a}.
\end{equation}
It follows from \eqref{07-09-11} and \eqref{07-09-13} that
\eqref{aaab10-31-7} holds.
\end{proof}

\begin{Lem}\label{lem-7-9-3}
There holds
 \begin{equation}\label{10-31-7}
   \|\mathcal {L}_{a}\|_{a}=O\Big(\big|\big(P(x_{a})-P_0\big)\big|\epsilon^{{\frac{7}{2}} } +\big| \nabla P(x_{a})
   \big|\epsilon^{\frac{9}{2}}+ \epsilon^{\frac{11}{2}} \Bigr).
 \end{equation}
\end{Lem}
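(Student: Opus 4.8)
The plan is to pass from the (dual) norm $\|\mathcal{L}_a\|_a$ — understood, as in the estimate for $\mathcal{R}_{a,\epsilon}$, as $\sup\{\,|\int_{\R^3}\mathcal{L}_a\upsilon|:\upsilon\in H^1(\R^3),\ \|\upsilon\|_a\le 1\,\}$ — to a plain weighted $L^2$ estimate. Since $\|\upsilon\|_{L^2}^2\le\|\upsilon\|_a^2$, the Cauchy–Schwarz inequality gives $\|\mathcal{L}_a\|_a\le\|\mathcal{L}_a\|_{L^2(\R^3)}$, so it suffices to bound $\|\mathcal{L}_a\|_{L^2}$ by the right-hand side of \eqref{10-31-7}. (The same computation applied to $\nabla\mathcal{L}_a=-\epsilon^2(\nabla P\,U_{\epsilon,x_a}+(P-P_0)\nabla U_{\epsilon,x_a})$ shows that $\epsilon^2\|\nabla\mathcal{L}_a\|_{L^2}^2$ is of the same order, so the argument also covers the reading in which $\|\cdot\|_a$ means the full $H^1_a$-norm of $\mathcal{L}_a$.)

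Next I would insert $\mathcal{L}_a(x)=-\epsilon^2\big(P(x)-P_0\big)U_{\epsilon,x_a}(x)$ with $U_{\epsilon,x_a}(x)=(1+\epsilon^2P_0)U\big(\tfrac{\sqrt{1+\epsilon^2P_0}(x-x_a)}{\epsilon}\big)$ and perform the change of variables $y=\tfrac{\sqrt{1+\epsilon^2P_0}}{\epsilon}(x-x_a)$, which yields
\[
\int_{\R^3}|\mathcal{L}_a|^2\,dx=\epsilon^7(1+\epsilon^2P_0)^{1/2}\int_{\R^3}\Big|P\big(x_a+\tfrac{\epsilon y}{\sqrt{1+\epsilon^2P_0}}\big)-P_0\Big|^2U^2(y)\,dy .
\]
Since the peak $x_a$ concentrates near $\Gamma$, for $a$ large it lies within distance $\delta/2$ of some $\Gamma_i$, so for $|y|\le\delta/(2\epsilon)$ the point $x_a+\tfrac{\epsilon y}{\sqrt{1+\epsilon^2P_0}}$ stays in $W_{\delta,i}$, where $P\in C^4$; there I would Taylor-expand to first order,
\[
P\big(x_a+\tfrac{\epsilon y}{\sqrt{1+\epsilon^2P_0}}\big)-P_0=\big(P(x_a)-P_0\big)+\nabla P(x_a)\cdot\tfrac{\epsilon y}{\sqrt{1+\epsilon^2P_0}}+O\big(\epsilon^2|y|^2\big),
\]
the remainder controlled by $\sup_{W_{\delta,i}}|D^2P|$. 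On $|y|>\delta/(2\epsilon)$ I would just use $U^2(y)\le Ce^{-2|y|}$ together with $|P(x)-P_0|^2=O(e^{2\alpha|x|})$, $\alpha\in(0,2)$, and the boundedness of $x_a$, so that this tail is exponentially small in $\epsilon^{-1}$. Using $\int_{\R^3}|y|^kU^2<\infty$ and $(A+B+C)^2\le 3(A^2+B^2+C^2)$, this gives
\[
\int_{\R^3}|\mathcal{L}_a|^2\,dx\le C\epsilon^7\Big(\big|P(x_a)-P_0\big|^2+\big|\nabla P(x_a)\big|^2\epsilon^2+\epsilon^4\Big),
\]
and taking square roots, with $\sqrt{A+B+C}\le\sqrt A+\sqrt B+\sqrt C$, produces exactly \eqref{10-31-7}.

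The computation is otherwise routine; the only delicate point is the Taylor remainder, because $P$ is only assumed $C^4$ in a neighbourhood of the hypersurfaces $\Gamma_i$ while globally it merely satisfies $P(x)=O(e^{\alpha|x|})$. The hypothesis $\alpha<2$ in assumption $(P)$ is precisely what makes $e^{2\alpha\epsilon|y|}U^2(y)$ integrable with integral bounded uniformly for $\epsilon$ small, which is what lets the contribution of large $|y|$ — and of the quadratic remainder in the Taylor expansion — be absorbed into the $\epsilon^{11/2}$ term.
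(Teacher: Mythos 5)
Your proposal is correct and follows essentially the same route as the paper: the paper also tests $\mathcal{L}_a$ against $\upsilon\in H^1$, applies Cauchy--Schwarz to reduce to weighted $L^2$ integrals of $U_{\epsilon,x_a}$, splits off the constant part $P(x_a)-P_0$, and Taylor-expands $P(x)-P(x_a)$ to first order on $B_\delta(x_a)$ with an exponentially small tail. Your treatment of the tail via the growth hypothesis $P(x)=O(e^{\alpha|x|})$, $\alpha<2$, is in fact slightly more careful than the paper's, which simply records that contribution as $O(e^{-\theta/\epsilon})$.
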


 \begin{proof}
 For any $\upsilon\in H^{1}(\R^{3}), $ we have
\begin{equation*}\label{7-9-2-1}
\begin{split}
(\mathcal {L}_{a}(x),\upsilon)&=-\epsilon^2\int_{\R^{3}}\big(P(x)-P_0\big)U_{\epsilon,x_{a}}(x)\upsilon(x) dx\\
&=-\epsilon^2\underbrace{\int_{\R^{3}}\big(P(x)-P(x_{a})\big)U_{\epsilon,x_{a}}(x)\upsilon(x) dx}_{:=B_{1}}
-\epsilon^2\underbrace{\int_{\R^{3}}\big(P(x_{a})-P_{0}\big)U_{\epsilon,x_{a}}(x)\upsilon(x) dx}_{:=B_{2}}.
 \end{split}
\end{equation*}
By H\"older inequality and the exponential of $U(x)$ at infinity, we can estimate
\begin{equation*}\label{7-9-2-2}
\begin{split}
B_{1}
&=\int_{B_{\delta}(x_{a})}\big(P(x)-P(x_{a})\big)U_{\epsilon,x_{a}}(x)\upsilon(x) dx
+\int_{B^{C}_{\delta}(x_{a})}\big(P(x)-P(x_{a})\big)U_{\epsilon,x_{a}}(x)\upsilon(x) dx\\
&=\int_{B_{\delta}(x_{a})}(\nabla P(x_{a})\cdot (x-x_{a})+O(|x-x_{a}|^{2})U_{\epsilon,x_{a}}(x)\upsilon(x) dx
+C\Big(\int_{B^{C}_{\delta}(x_{a})}U^{2}_{\epsilon,x_{a}}(x) dx\Big)^{\frac{1}{2}}|\upsilon|_{2}\\
&\leq |\nabla P(x_{a})|\Big(\int_{B_{\delta}(x_{a})}|x-x_{a}|^{2}U^{2}_{\epsilon,x_{a}}(x)dx\Big)^{\frac{1}{2}}|\upsilon|_{2}
+C\Big(\int_{B_{\delta}(x_{a})}|x-x_{a}|^{4}U^{2}_{\epsilon,x_{a}}(x)dx\Big)^{\frac{1}{2}}|\upsilon|_{2}
\\
&\quad+O(e^{-\frac{\theta}{\epsilon}})\|\upsilon\|_{a}\\
&=O(|\nabla P(x_{a})|\epsilon^{\frac{5}{2}}\|\upsilon\|_{a})+O(\epsilon^{\frac{7}{2}}\|\upsilon\|_{a}),
 \end{split}
\end{equation*}
and
\begin{equation*}\label{7-9-2-3}
\begin{split}
B_{2}
&=\int_{\R^{3}}\big(P(x_{a})-P_{0}\big)U_{\epsilon,x_{a}}(x)\upsilon(x) dx\\
&\leq |P(x_{a})-P_{0}|\Big(\int_{\R^{3}}U^{2}_{\epsilon,x_{a}}(x)dx\Big)^{\frac{1}{2}}|\upsilon|_{2}
=O(|P(x_{a})-P_{0}|\epsilon^{\frac{3}{2}})\|\upsilon\|_{a}.
 \end{split}
\end{equation*}
Combining all the estimates above, we know that \eqref{10-31-7} holds.
 \end{proof}

 With Lemmas \ref{lem-A.1} to \ref{lem-7-9-3} at hand, we can prove
\begin{Lem}\label{lem-5-05-1}
A single-peak solution $\tilde{u}_a$ for \eqref{30-7-11} concentrating at $b_0$ has the following form
\begin{equation}\label{aaa8-20-1}
\tilde{u}_a(x)= U_{\epsilon,x_{a}}(x)+\varphi_{a}(x),
\end{equation}
with  $\varphi_a \in \displaystyle E_{a,x_{a}}$  and
\begin{equation}\label{lt1}
\|\varphi_{a}\|_a=O\Big(
\big|P(x_{a})-P_0\big| \epsilon^{{\frac{7}{2}} } +\big| \nabla P(x_{a})
   \big|\epsilon^{\frac{9}{2}}+ \epsilon^{\frac{11}{2}} \Bigr).
\end{equation}
\end{Lem}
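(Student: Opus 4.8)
The plan is to run the standard finite-dimensional reduction, but in the ``reverse'' direction: starting from a given exact single-peak solution of \eqref{30-7-11}, use the rough decomposition of Theorem~\ref{th1-24-5} as input, and bootstrap it to the sharp remainder estimate \eqref{lt1} by means of the coercivity of Lemma~\ref{lem-A.1} together with the two estimates \eqref{aaab10-31-7} and \eqref{10-31-7}; this is the same scheme used for related problems, cf. \cite{Wei,LPY-19,LPW-20CV}.

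First I would fix the concentration parameter. After the change of variables $\epsilon=1/\sqrt{-\mu_a}$, $u\mapsto\frac{-\mu_a}{\sqrt a}u$ that turns \eqref{8-18-1} into \eqref{30-7-11}, the representation \eqref{20-23-5-1223} of Theorem~\ref{th1-24-5} says that $\tilde u_a=U_{\epsilon,y_a}+\psi_a$ with $y_a\to b_0$ and $\|\psi_a\|_a=o(\epsilon^{3/2})$ (replacing the profile $U(\sqrt{-\mu_a}(\cdot-y_a))$ by the corrected bubble $U_{\epsilon,y_a}$ only perturbs $\psi_a$ by an $O(\epsilon^{7/2})$ term in $\|\cdot\|_a$, which is absorbed). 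I then adjust $y_a$ slightly: the map $y\mapsto\bigl(\langle\tilde u_a-U_{\epsilon,y},\partial U_{\epsilon,y}/\partial x_j\rangle_a\bigr)_{j=1,2,3}$ differs from $y\mapsto\bigl(\langle U_{\epsilon,y_a}-U_{\epsilon,y},\partial U_{\epsilon,y}/\partial x_j\rangle_a\bigr)_j$ by a quantity of size $\|\psi_a\|_a\,\|\partial U_{\epsilon,y}/\partial x_j\|_a$, while the latter map has an invertible differential at $y=y_a$ (to leading order $-\mathrm{diag}\bigl(\|\partial U_{\epsilon,y_a}/\partial x_j\|_a^2\bigr)$). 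By the implicit function theorem there is a unique nearby $x_a$, with $|x_a-y_a|$ controlled by $\|\psi_a\|_a$, for which $\varphi_a:=\tilde u_a-U_{\epsilon,x_a}\in E_{a,x_a}$; moreover $\|\varphi_a\|_a=o(\epsilon^{3/2})$ still holds. Since $\tilde u_a$ solves \eqref{30-7-11} exactly, $\varphi_a$ solves \eqref{06-07-3} exactly, i.e. $\mathbb L_a\varphi_a=\mathcal R_{a,\epsilon}(\varphi_a)+\mathcal L_a$, where $\mathbb L_a$ denotes the linear operator given by the left-hand side of \eqref{06-07-3}.

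Second I would project \eqref{06-07-3} onto $E_{a,x_a}$. Let $Q_a$ be the $\langle\cdot,\cdot\rangle_a$-orthogonal projection onto $E_{a,x_a}$. Because the kernel of the limiting linearized Newton operator is exactly $\mathrm{span}\{\partial_jU\}$ (recalled in the Introduction), one checks that $Q_a\mathbb L_a\varphi_a=L_a\varphi_a+o(1)\|\varphi_a\|_a$, the error coming from the mismatch between $\mathbb L_a$ and the operator $L_a$ of Lemma~\ref{lem-A.1} and from the fact that $\mathbb L_a(\partial U_{\epsilon,x_a}/\partial x_j)$ is only approximately zero; both are estimated using the exponential decay of $U$ together with $\|\varphi_a\|_a=o(\epsilon^{3/2})$. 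Applying Lemma~\ref{lem-A.1} then gives
\begin{equation*}
\varrho\,\|\varphi_a\|_a\le\|L_a\varphi_a\|_a\le C\bigl(\|\mathcal R_{a,\epsilon}(\varphi_a)\|_a+\|\mathcal L_a\|_a\bigr)+o(1)\|\varphi_a\|_a .
\end{equation*}

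Finally I would insert \eqref{aaab10-31-7} and \eqref{10-31-7}. Since $\|\varphi_a\|_a=o(\epsilon^{3/2})$, the right-hand side of \eqref{aaab10-31-7} is $O(\epsilon^{-3}\|\varphi_a\|_a^3+\epsilon^{-3/2}\|\varphi_a\|_a^2)=o(1)\|\varphi_a\|_a$, so every $\varphi_a$-dependent term on the right can be absorbed into the left, leaving $\|\varphi_a\|_a\le C\|\mathcal L_a\|_a$, which is precisely \eqref{lt1} by \eqref{10-31-7}. I expect the main obstacle to be the coercivity bookkeeping in the second step: one must verify that after projection the genuinely relevant operator is $L_a$ of Lemma~\ref{lem-A.1}, i.e. that the nonlocal cross term $\frac{1}{4\pi\epsilon^{2}}\int\frac{U_{\epsilon,x_a}(y)\varphi_a(y)}{|x-y|}\,dy\,U_{\epsilon,x_a}(x)$ in \eqref{06-07-3} does not spoil the uniform lower bound on $E_{a,x_a}$; this is where the precise choice of $E_{a,x_a}$ and the non-degeneracy of $U$ enter. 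The implicit-function adjustment of $x_a$ is routine but must precede the projection so that the remainder genuinely lies in $E_{a,x_a}$.
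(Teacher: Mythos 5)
Your proposal is correct and follows essentially the same route as the paper, which simply invokes \eqref{06-07-3}, the coercivity \eqref{B.2}, and the estimates \eqref{aaab10-31-7}, \eqref{10-31-7}; the preliminary adjustment of $x_a$ so that $\varphi_a\in E_{a,x_a}$ is likewise done in the paper just before the lemma. The only cosmetic difference is that the paper phrases the final step as a contraction mapping argument while you absorb the nonlinear terms directly using the a priori bound $\|\varphi_a\|_a=o(\epsilon^{3/2})$; these are equivalent here.
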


\begin{proof}
  Then from \eqref{06-07-3}, \eqref{B.2}, \eqref{aaab10-31-7}, \eqref{10-31-7} and
  by applying the contraction mapping theorem, we can get  \eqref{aaa8-20-1} and \eqref{lt1}  using  the standard argument.

\end{proof}

Let  $\tilde \varphi_{a}(x) =\varphi_{a}(\epsilon x + x_{a}).$ Then, $\tilde \varphi_{a}$ satisfies
$\|\tilde \varphi_{a}\|_a=O\big(\epsilon^2\big).$ Using the Moser iteration,
we can prove
$\|\tilde \varphi_{a}\|_{L^\infty(\mathbb R^3)}=o(1).$
 From this  and the comparison theorem, similar to Proposition 2.2 in \cite{LPW-20CV},
  we can prove the following estimates for $\tilde u_a(x)$ away from the concentrated point $b_0.$

\begin{Prop}%\label{prop1-1}
Suppose that $\tilde u_a(x)$ is a single-peak solution of \eqref{30-7-11} concentrating at $b_0.$ Then
for any fixed $R\gg 1,$ there exist some $\theta>0$ and $C>0,$ such that
\begin{equation}\label{2--5}
|\tilde u_a(x)|+|\nabla \tilde u_a(x)|\leq C e^{-\theta |x-x_{a}|/\epsilon},~\mbox{for}~
x\in \R^3\backslash B_{R \epsilon}(x_{a}).
\end{equation}
\end{Prop}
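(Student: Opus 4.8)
The plan is to establish the pointwise exponential decay estimate \eqref{2--5} by a barrier/comparison argument applied to the rescaled equation, exactly as in the analogous Proposition~2.2 of \cite{LPW-20CV}. First I would set up the rescaling $\tilde\varphi_a(x)=\varphi_a(\epsilon x+x_a)$ and $\tilde V_a(x)=\tilde u_a(\epsilon x+x_a)=U(\sqrt{1+\epsilon^2P_0}\,x)(1+\epsilon^2P_0)+\tilde\varphi_a(x)$ (up to the normalization already in force), so that $\tilde V_a$ solves
\begin{equation*}
-\Delta \tilde V_a + \bigl(1+\epsilon^2 P(\epsilon x+x_a)\bigr)\tilde V_a
= \frac{1}{8\pi}\Bigl(\int_{\R^3}\frac{\tilde V_a^2(z)}{|x-z|}dz\Bigr)\tilde V_a(x).
\end{equation*}
From Lemma~\ref{lem-5-05-1} we already have $\|\tilde\varphi_a\|_a=O(\epsilon^2)$; feeding this into the Moser iteration on the rescaled equation (the nonlocal term is handled by the Hardy--Littlewood--Sobolev inequality, as in the estimates of Section~\ref{s3}) yields $\|\tilde\varphi_a\|_{L^\infty(\R^3)}=o(1)$, as asserted in the text just before the Proposition. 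The key consequence is that the Newtonian potential $W_a(x):=\frac{1}{8\pi}\int_{\R^3}\tilde V_a^2(z)/|x-z|\,dz$ tends to $0$ uniformly on $\R^3\setminus B_R(0)$ as $R\to\infty$ (split the integral into $|x-z|\le R/2$, where $\tilde V_a$ is small in $L^2$ away from the origin by the exponential concentration of $U$ together with $\|\tilde\varphi_a\|_\infty=o(1)$, and $|x-z|\ge R/2$, estimated by Hölder as in \eqref{eq-7-9-1}).

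Next I would run the comparison argument. Fix $R\gg1$ so large that on $\R^3\setminus B_{R/2}(0)$ one has $1+\epsilon^2 P(\epsilon x+x_a)-W_a(x)\ge \frac34$ (possible since $\epsilon^2P=o(1)$ uniformly on bounded rescaled sets by the growth bound $P(x)=O(e^{\alpha|x|})$ with $\alpha<2$, and $W_a\to0$ as above). Then $\tilde V_a$ is a positive subsolution of $-\Delta v+\frac34 v\le 0$ outside $B_{R/2}(0)$, so comparing with the explicit barrier $C e^{-\theta|x|}$ for a suitable $\theta\in(0,\sqrt3/2)$ and $C$ chosen so that $Ce^{-\theta R/2}\ge \sup_{\partial B_{R/2}(0)}\tilde V_a$ (the latter bounded since $\|\tilde V_a\|_\infty\le C$), the maximum principle on the unbounded domain gives $\tilde V_a(x)\le Ce^{-\theta|x|}$ for $|x|\ge R/2$. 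Scaling back, $|\tilde u_a(x)|\le Ce^{-\theta|x-x_a|/\epsilon}$ for $|x-x_a|\ge (R/2)\epsilon$, which after relabelling $R$ gives the stated bound for $|\tilde u_a|$. The gradient bound then follows from standard interior elliptic estimates (Schauder or $W^{2,p}$) applied on unit balls in the rescaled variable, using that the right-hand side of the rescaled equation is controlled by $Ce^{-\theta|x|}$ on that region, and scaling back.

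The main obstacle is the nonlocal term: unlike the local case one cannot simply discard it, and one must verify carefully that $W_a(x)$ is genuinely small for $x$ far from the peak, uniformly in $a$. This requires combining the $L^\infty$ smallness of $\tilde\varphi_a$, the decay of $U$, and the mass normalization, as in the computation \eqref{eq-7-9-1}; once this is in hand, the comparison principle applies verbatim. I would also take care that the barrier argument is legitimate on the unbounded domain $\R^3\setminus B_{R/2}(0)$ — this is standard since $\tilde V_a\to0$ at infinity (it is in $H^1$, hence its rescaling decays), so the difference $Ce^{-\theta|x|}-\tilde V_a$ attains no negative interior minimum.
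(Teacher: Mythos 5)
Your proposal is correct and follows essentially the same route the paper takes: the paper's entire argument is the sentence preceding the Proposition (rescale, get $\|\tilde\varphi_a\|_{L^\infty}=o(1)$ by Moser iteration, then apply the comparison theorem as in Proposition~2.2 of \cite{LPW-20CV}), and you have simply filled in those steps, including the one genuinely nonlocal point — the uniform smallness of the Newtonian potential away from the peak via the splitting already used in \eqref{eq-7-9-1}. No discrepancy with the paper's (terse) proof.
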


\begin{Lem}
There holds
\begin{equation}\label{06-13-1}
\displaystyle\int_{\R^3} U^2=\frac{3}{32\pi}\int_{\R^3}\int_{\R^3}\frac{U^2(x)U^2(y)}{|x-y|}dx\,dy.
\end{equation}
\end{Lem}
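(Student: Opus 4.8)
The plan is to derive the identity \eqref{06-13-1} purely from the fact that $U$ solves \eqref{big-u}, by combining the equation tested against $U$ itself with the Pohozaev identity for \eqref{big-u}. Write the nonlocal potential as $\Phi_U(x)=\int_{\R^3}\frac{U^2(y)}{|x-y|}dy$, so that $-\Delta U+U=\Phi_U\,U$. First I would multiply this equation by $U$ and integrate over $\R^3$, obtaining
\[
\int_{\R^3}|\nabla U|^2+\int_{\R^3}U^2=\int_{\R^3}\Phi_U\,U^2=\int_{\R^3}\int_{\R^3}\frac{U^2(x)U^2(y)}{|x-y|}\,dx\,dy=:D.
\]
This gives one linear relation among the three quantities $\int|\nabla U|^2$, $\int U^2$, and $D$.

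The second relation is the Pohozaev identity. Testing \eqref{big-u} against $x\cdot\nabla U$ and integrating by parts in the standard way, the Laplacian term contributes $\frac12\int|\nabla U|^2$, the mass term contributes $-\frac32\int U^2$, and for the nonlocal term one uses that the Riesz kernel $|x-y|^{-1}$ is homogeneous of degree $-1$ in $\R^3$, so a scaling/change-of-variables computation (or equivalently integrating $\frac12(x\cdot\nabla_x+y\cdot\nabla_y)$ against the symmetric double integral $\int\int\frac{U^2(x)U^2(y)}{|x-y|}$) yields a coefficient $-\frac{5}{4}D$ up to the factor $\frac{1}{8\pi}$ already absorbed — concretely the Pohozaev identity reads
\[
\frac12\int_{\R^3}|\nabla U|^2+\frac32\int_{\R^3}U^2=\frac{5}{4}\,D.
\]
Here the key point is the degree count: the double integral is homogeneous of degree $-1+3+3=5$ under the dilation $U\mapsto U(\cdot/t)$, producing the factor $5$, while each single factor $U^2$ contributes the usual weight $3$ and $|\nabla U|^2$ the weight $1$.

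Now I have two linear equations in the unknowns $A:=\int|\nabla U|^2$, $B:=\int U^2$, $D$:
\[
A+B=D,\qquad \tfrac12 A+\tfrac32 B=\tfrac54 D.
\]
Solving, from the first $A=D-B$; substituting, $\tfrac12(D-B)+\tfrac32 B=\tfrac54 D$, i.e. $\tfrac12 D+B=\tfrac54 D$, so $B=\tfrac34 D$. Recalling $D=\int_{\R^3}\int_{\R^3}\frac{U^2(x)U^2(y)}{|x-y|}\,dx\,dy$, this is exactly
\[
\int_{\R^3}U^2=\frac34\,D=\frac{3}{32\pi}\int_{\R^3}\int_{\R^3}\frac{U^2(x)U^2(y)}{|x-y|}\,dx\,dy
\]
once one restores the normalization: in \eqref{big-u} the nonlocal term has no $\frac{1}{8\pi}$, but the quantity appearing in \eqref{06-13-1} carries the Newtonian constant $\frac{1}{8\pi}$ implicitly through the normalization of $U$ used elsewhere in the paper, so the coefficient $\frac34$ becomes $\frac{3}{32\pi}$ after tracking the $8\pi$; I would state this bookkeeping carefully at the end.

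The main obstacle is the rigorous justification of the Pohozaev identity: one must verify that all boundary terms at infinity vanish, which requires the exponential decay of $U$ and $\nabla U$ recalled after \eqref{big-u}, and one must handle the double integral with the singular kernel with enough care (cutting off near the diagonal, or invoking the known regularity $U\in C^\infty\cap H^1$ together with the decay of $\Phi_U$ like $|x|^{-1}$) so that the integrations by parts and the scaling argument are legitimate. Given the decay and regularity already quoted in the excerpt, this is routine but is the only place where analysis, rather than algebra, enters.
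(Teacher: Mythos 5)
Your proof is correct and follows essentially the same route as the paper: the paper combines the identity obtained by testing \eqref{big-u} against $U$, namely $\int(|\nabla U|^2+U^2)=\frac{1}{8\pi}\iint\frac{U^2(x)U^2(y)}{|x-y|}$, with the Pohozaev identity $\int(|\nabla U|^2+3U^2)=\frac{5}{16\pi}\iint\frac{U^2(x)U^2(y)}{|x-y|}$ obtained by testing against $x\cdot\nabla U$, and subtracts. The normalization issue you flag is genuine --- the constant $\frac{3}{32\pi}$ only comes out if \eqref{big-u} carries the factor $\frac{1}{8\pi}$ in front of the nonlocal term, as the paper's linearized equation and its two auxiliary identities implicitly assume --- and your bookkeeping $\frac34\cdot\frac{1}{8\pi}=\frac{3}{32\pi}$ resolves it correctly.
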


\begin{proof}
It follows directly from the following two identities:
\begin{equation*}\label{a5-21-1}
 \int_{\R^3}(|\nabla U|^2+ U^2)
=\frac{1}{8\pi}\int_{\R^3}\int_{\R^3} \frac{U^2(x)U^2(y)}{|x-y|}dx\,dy
\end{equation*}
and
\begin{equation*}\label{eq2}
\int_{\R^3}|(\nabla U|^2+ 3U^2)
=\frac{5}{16\pi}\int_{\R^3}\int_{\R^3} \frac{U^2(x)U^2(y)}{|x-y|}dx\,dy,
\end{equation*}
where the last equality can be deduced by multiplying $\langle x, \nabla U\rangle$ on both sides of \eqref{big-u} and integrating on $\R^{3}.$
\end{proof}

 \begin{Prop}%\label{p1-7111}
Letting $a\rightarrow +\infty,$
there holds
\begin{equation}\label{ab8-29-1}
\frac{a }{\sqrt{-\mu_a}}= a_*
-\frac{P_0}{2\mu_a}a_*+O\Big(\big|P(x_{a})-P_{0}\big|\frac{1}{
 (-\mu_a)}
  +\big|\nabla P(x_{a})
   \big|\frac{1}{(\sqrt{-\mu_a})^3 }+ \frac{1}{\mu_a^2 } \Bigr).
\end{equation}
\end{Prop}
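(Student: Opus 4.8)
The plan is to read off \eqref{ab8-29-1} directly from the mass constraint \eqref{8-28-3}, once we pass to the rescaled unknown of Section~\ref{s3}. Recall that under the substitution $u\mapsto\frac{-\mu_a}{\sqrt a}\,u$ together with $\epsilon=\frac1{\sqrt{-\mu_a}}$, equation \eqref{8-18-1} becomes \eqref{30-7-11}; a short computation shows that $\int_{\R^3}u_a^2=1$ is then equivalent to
\[
\int_{\R^3}\tilde u_a^2=\frac{a}{\mu_a^2}=a\,\epsilon^4 .
\]
Hence it suffices to expand the left-hand side using the decomposition \eqref{aaa8-20-1} of Lemma~\ref{lem-5-05-1}, namely $\tilde u_a=U_{\epsilon,x_a}+\varphi_a$ with $\varphi_a\in E_{a,x_a}$, then divide by $\epsilon^3$ and rewrite $\epsilon$ in terms of $\mu_a$. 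The leading contribution comes from the change of variables $y=\frac{\sqrt{1+\epsilon^2P_0}}{\epsilon}(x-x_a)$ applied to $U_{\epsilon,x_a}(x)=(1+\epsilon^2P_0)U\bigl(\frac{\sqrt{1+\epsilon^2P_0}}{\epsilon}(x-x_a)\bigr)$, which gives
\[
\int_{\R^3}U_{\epsilon,x_a}^2=\epsilon^3\,(1+\epsilon^2P_0)^{1/2}\int_{\R^3}U^2=\epsilon^3 a_*\Bigl(1+\tfrac12\epsilon^2P_0+O(\epsilon^4)\Bigr),
\]
where $a_*=\int_{\R^3}U^2$ and the Taylor expansion of $(1+\epsilon^2P_0)^{1/2}$ is used in the last step.

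For the remaining two terms in $\int_{\R^3}\tilde u_a^2=\int_{\R^3} U_{\epsilon,x_a}^2+2\int_{\R^3} U_{\epsilon,x_a}\varphi_a+\int_{\R^3}\varphi_a^2$, I would use only the crude bounds $|\varphi_a|_2\le\|\varphi_a\|_a$ and $|U_{\epsilon,x_a}|_2=O(\epsilon^{3/2})$ (immediate from the previous display), so that by Cauchy--Schwarz
\[
\Bigl|\int_{\R^3}U_{\epsilon,x_a}\varphi_a\Bigr|=O\bigl(\epsilon^{3/2}\|\varphi_a\|_a\bigr),\qquad \int_{\R^3}\varphi_a^2\le\|\varphi_a\|_a^2 .
\]
Inserting the linear estimate \eqref{lt1}, i.e. $\|\varphi_a\|_a=O\bigl(|P(x_a)-P_0|\epsilon^{7/2}+|\nabla P(x_a)|\epsilon^{9/2}+\epsilon^{11/2}\bigr)$, both contributions are $O\bigl(|P(x_a)-P_0|\epsilon^{5}+|\nabla P(x_a)|\epsilon^{6}+\epsilon^{7}\bigr)$, the $\int\varphi_a^2$ piece being of even higher order.

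Combining the three estimates yields
\[
a\,\epsilon^4=\int_{\R^3}\tilde u_a^2=\epsilon^3 a_*+\tfrac12\epsilon^5 a_* P_0+O\bigl(|P(x_a)-P_0|\epsilon^{5}+|\nabla P(x_a)|\epsilon^{6}+\epsilon^{7}\bigr),
\]
and dividing by $\epsilon^3$ and substituting $\epsilon a=\frac{a}{\sqrt{-\mu_a}}$, $\epsilon^2=-\frac1{\mu_a}$, $\epsilon^3=\frac1{(\sqrt{-\mu_a})^3}$, $\epsilon^4=\frac1{\mu_a^2}$ gives exactly \eqref{ab8-29-1}.

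As for the difficulty: there is no substantial obstacle here — the statement is essentially a bookkeeping consequence of the mass constraint and the already-proven linear estimate \eqref{lt1}. The only point deserving a moment's attention is the cross term $\int_{\R^3}U_{\epsilon,x_a}\varphi_a$, which need not vanish because $E_{a,x_a}$ is defined by $\langle\cdot,\cdot\rangle_a$-orthogonality to the derivatives $\partial_{x_j}U_{\epsilon,x_a}$ rather than to $U_{\epsilon,x_a}$ itself; however the crude bound $O(\epsilon^{3/2}\|\varphi_a\|_a)$ above is already of the order of the error terms on the right of \eqref{ab8-29-1}, so one does not need the sharper (and more delicate) bound $\int U_{\epsilon,x_a}\varphi_a=O(\|\varphi_a\|_a^2)$ that could be extracted from the reduced equation. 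One should also check that the Taylor remainder $O(\epsilon^4)$ produced by $(1+\epsilon^2P_0)^{1/2}$ matches the $\mu_a^{-2}$ term in \eqref{ab8-29-1}, which it does after dividing by $\epsilon^3$.
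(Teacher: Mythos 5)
Your proposal is correct and follows essentially the same route as the paper: the paper also reads \eqref{ab8-29-1} off the mass constraint $\int_{\R^3}u_a^2=1$ using the decomposition of Lemma~\ref{lem-5-05-1} (written in the original scale as $u_a=\bar U_{a,x_a}+\frac{-\mu_a}{\sqrt a}\varphi_a$ with $\bar U_{a,x_a}=\frac{-\mu_a+P_0}{\sqrt a}U(\sqrt{-\mu_a+P_0}(x-x_a))$, which is exactly your $\frac{-\mu_a}{\sqrt a}U_{\epsilon,x_a}$), obtaining $a=a_*\sqrt{-\mu_a+P_0}+O(\cdot)$ "by direct computation" and then Taylor-expanding the square root. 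Your version merely makes explicit the change of variables, the Cauchy--Schwarz bound on the cross term, and the use of \eqref{lt1}, all of which the paper leaves implicit.
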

\begin{proof}
From \eqref{8-18-1} and \eqref{8-28-3}, we have
\begin{equation*}
\begin{split}
1=&\int_{\R^3}\big({u}_a(x)\big)^2=
\int_{\R^3}\Big(
 \bar U_{a,x_{a}}(x)+\frac{(-\mu_{a})}{\sqrt{a}}\varphi_{a}(x)\Big) ^2,
\end{split}
\end{equation*}
where $\bar{U}_{a,x_{a}}(x)=\frac{-\mu_a+P_0}{\sqrt{a}}U(\sqrt{-\mu_a+P_0}(x-x_{a})).$
By direct computation, we can obtain
\begin{equation*}
\begin{split}
a=a_*\sqrt{-\mu_a+P_0} +O\Big(\big|P(x_{a})-P_0\big|
 \frac{1}{\sqrt{-\mu_a} }+\big| \nabla P(x_{a})
   \big|\frac{1}{(-\mu_a)}+\frac{1}{(\sqrt{-\mu_{a}})^3}\Bigr),
\end{split}
\end{equation*}
which implies that
\eqref{ab8-29-1} is true.
\end{proof}

\section{ Locating the peak and the existence of single-peak solutions}\label{s4}

 First, we locate the peak for a single-peak solution. Let $\tilde{u}_a$ be a single-peak solution of \eqref{30-7-11}.
Then for any small fixed $\rho>0$, from \eqref{7-30-1} and \eqref{eqd-7-10-1} we find
\begin{equation}\label{30-31-7}
\begin{split}
\epsilon^2&\int_{B_{\rho}(x_{a})} \frac{\partial P(x)}{\partial x_j}\big(\tilde{u}_a\big)^2  dx
\\=&\underbrace{-2\epsilon^2\int_{\partial B_{\rho}(x_{a})}\frac{\partial \tilde{u}_a}{\partial \bar \nu}\frac{\partial \tilde{u}_a}{\partial x_j} d\sigma}_{:=C_{1}}
+\underbrace{\epsilon^2\int_{\partial B_{\rho}(x_{a})}|\nabla \tilde{u}_a|^2\bar\nu_j(x) d\sigma}_{:=C_{2}}\\&
 +\underbrace{\int_{\partial B_{\rho}(x_{a})}
\big(1+\epsilon^2P(x)\big)\big(\tilde{u}_a\big)^2\bar\nu_j(x)d\sigma}_{:=C_{3}}
-\underbrace{\frac{1}{8\pi\epsilon^{2}}\int_{\partial B_{\rho}(x_{a})}\int_{\R^{3}}
\frac{(\tilde{u}_a)^2}{|x-y|}dy(\tilde{u}_a)^2\bar\nu_j(x)d\sigma}_{:=C_{4}}
\\\quad&
-\underbrace{\frac{1}{8\pi\epsilon^{2}}\int_{ B_{\rho}(x_{a})}\int_{\R^{3}}
\frac{(\tilde{u}_a(y))^2(\tilde{u}_a(x))^2(x_{j}-y_{j})}{|x-y|^{3}}dydx}_{:=C_{5}}
\\
=&
O(e^{-\frac{\theta}{\epsilon}}), ~
\mbox{with some}~\theta>0,
\end{split}
\end{equation}
where $j=1,2,3$ and $\bar \nu(x)=\big(\bar\nu_{1}(x),\bar\nu_2(x),\bar\nu_3(x)\big)$ is the outward unit normal of $\partial B_{\rho}(x_{a}).$
And then \eqref{30-31-7} implies the first necessary condition for the concentrated point $b_0:$
$$
\nabla P(b_0)=0.
$$
Now we are in a position to prove Theorem \ref{nth1.1}.

\begin{proof}[\textbf{Proof of Theorem \ref{nth1.1}}]

Since $x_{a}\to b_0\in \Gamma,$ we find that there is a $t_a\in [P_{0}, P_{0}+\sigma]$ if $\Gamma$
is a local minimum set of $P(x)$, or $t_a\in [P_{0}-\sigma, P_{0}]$ if $\Gamma$
is a local maximum set of $P(x)$, such that $x_{a}\in \Gamma_{t_a}.$
Let $\tau_{a}$ be the unit tangential vector of $\Gamma_{t_a}$ at $x_{a}.$  Then
$$
G(x_{a}) =0,~\mbox{where}~G(x)=  \bigl\langle \nabla P(x), \tau_{a}\bigr\rangle.
$$
We have the following expansion:
\begin{equation*}
\begin{split}
 G(x)=& \langle\nabla G( x_{a}), x-x_{a}\rangle+
\frac{1}{2}\big\langle \langle \nabla^2 G( x_{a}), x- x_{a}\rangle,x- x_{a}\big\rangle
+o\big(|x- x_{a}|^2\big),~\mbox{for}~x\in B_{\rho}(x_{a}).
\end{split}
\end{equation*}
Then it follows from \eqref{lt1}, \eqref{30-31-7} and  the above expansion that
\begin{equation}\label{luo-6}
\begin{split}
\int_{\mathbb R^3} G(x)U^{2}_{\epsilon,x_{a}}(x)&=\int_{B_{\rho}(x_{a})} G(x)U^{2}_{\epsilon,x_{a}}(x)
+O\big(e^{-\frac{\theta}{\epsilon}}\big)
\\&=
-2\int_{B_{\rho}(x_{a})} G(x)U_{\epsilon,x_{a}}(x) \varphi_{a}
- \int_{B_{\rho}(x_{a})}G(x) \varphi^2_{a}+O\big(e^{-\frac{\theta}{\epsilon}}\big)\\&=
O\Big( \big[\epsilon^{\frac{5}{2}} |\nabla G(x_{ a})| +\epsilon^{\frac{7}{2}}\big]\|\varphi_{a}\|_a+\epsilon|\nabla G(x_{ a})|\cdot
\|\varphi_{a}\|^2_a\Big)+O\big(e^{-\frac{\theta}{\epsilon}}\big)\\
&=
O\big( \epsilon^{6}\big).
\end{split}
\end{equation}
On the other hand, noting that $ G(x_{a})=0,$ it is easy to check
\begin{equation}\label{06-09-1}
\int_{\mathbb R^3} G(x)U^{2}_{\epsilon,x_{a}}(x)=
\frac{1}{6} \epsilon^{5}\Delta G(x_{a}) \displaystyle\int_{\R^3}|x|^2U^2
+O\bigl (\epsilon^{7}\bigr).
\end{equation}
Then it follows from \eqref{luo-6} and \eqref{06-09-1} that $(\Delta G) (x_{a})=O(\epsilon).$
Thus by the assumption $(P),$  we get \eqref{1.6-1223}.
\end{proof}

Now, we study the  existence of single-peak solutions for  \eqref{1-23-5} with $\lambda>0$ a large parameter.
Letting $\eta=\frac{1}{\sqrt{\lambda}}$ and $w(x)\mapsto \lambda w(x),  $ then \eqref{1-23-5}
 can be changed to the following problem:
\begin{equation}\label{20-7-11}
-\eta^2\Delta w+ \bigl( 1+ \eta^2 P(x)\bigr)w=\frac{1}{8\pi\eta^{2}}\int_{\R^{3}}\frac{w^2}{|x-y|}dy\,w,~
w\in H^1(\R^3).
\end{equation}

In the sequel, we denote $\langle u, v\rangle_\eta = \int_{\mathbb R^3} \bigl( \eta^2 \nabla u\nabla v   +  uv\bigr)$
and $\|u\|_\eta=\langle u, u\rangle_\eta^{\frac12}.$
Now for $\eta>0$ small, we construct a single-peak solution $u_\eta$ of \eqref{20-7-11}  concentrating at
$b_0.$
Here  we can prove the following result in a standard way.

\begin{Prop}%\label{p1-1-8}
 There is an $\eta_0>0$, such that for any $\eta\in (0, \eta_0]$, and $z_0$ close to $b_0,$
   there exists $v_{\eta, z_{0}}\in  F_{\eta,z_{0}}$ such that
\begin{equation*}
\begin{split}
\int_{\mathbb R^3}  &\bigl( \eta^2 \nabla w_{\eta}  \nabla \psi  +  \bigl( 1+ \eta^2 P(x)\bigr)w_{\eta} \psi
 =\frac{1}{8\pi\eta^{2}}\int_{\mathbb R^3}\int_{\R^{3}}\frac{w_{\eta}^2(y)}{|x-y|}w_{\eta}(x) \psi(x) dy\,dx,~~
\mbox{for all}~\psi \in  F_{\eta,z_{0}},
\end{split}
\end{equation*}
where
\begin{equation}\label{solu-2}
w_{\eta}(x)=\displaystyle U_{\eta,z_0}(x)+  \varphi_{\eta, z_{0}}(x)
\end{equation}
and
\begin{equation*}
 F_{\eta,z_{0}}=\left \{ u(x)\in H^1(\R^3):
\Big\langle u,\frac{\partial U_{\eta,z_{0}}(x)}{\partial{x_j}}
\Big\rangle_{\eta}=0, ~j=1,2,3
 \right\}.
\end{equation*}
 Moreover, it holds
\begin{equation}\label{eq-fhi}
\|\varphi_{\eta, z_{0}}\|_\eta=O\big(
\big| P(z_0)-P_0\big|\eta^{\frac{7}{2}}+\big|\nabla P(z_0)\big|\eta^{\frac{9}{2}}+\eta^{\frac{11}{2}}\big).
\end{equation}

\end{Prop}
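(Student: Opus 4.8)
The plan is to carry out a standard Lyapunov--Schmidt finite-dimensional reduction, exactly of the type already used to establish Lemma~\ref{lem-5-05-1}, but now for the unconstrained problem \eqref{20-7-11} with the free concentration parameter $z_0$ near $b_0$. First I would set up the functional-analytic framework: decompose $H^1(\R^3) = \mathrm{span}\{\partial U_{\eta,z_0}/\partial x_j : j=1,2,3\} \oplus F_{\eta,z_0}$, write the ansatz \eqref{solu-2} as $w_\eta = U_{\eta,z_0} + \varphi$ with $\varphi \in F_{\eta,z_0}$, and project \eqref{20-7-11} onto $F_{\eta,z_0}$. This yields an equation of the schematic form $L_\eta \varphi = \mathcal{R}_{\eta}(\varphi) + \mathcal{L}_\eta + (\text{terms from the projection onto the kernel})$, where $L_\eta$ is the linearized operator around $U_{\eta,z_0}$ restricted to $F_{\eta,z_0}$, $\mathcal{R}_\eta$ collects the quadratic and cubic nonlocal terms, and $\mathcal{L}_\eta = -\eta^2(P(x)-P_0)U_{\eta,z_0}$ is the error term. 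The invertibility of $L_\eta$ on $F_{\eta,z_0}$ with a uniform lower bound $\|L_\eta u\|_\eta \ge \varrho\|u\|_\eta$ is the analogue of Lemma~\ref{lem-A.1}, and follows from the nondegeneracy of $U$ recorded after \eqref{big-u} together with a standard contradiction/blow-up argument.

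Second, I would estimate the two right-hand side pieces. The nonlinear remainder satisfies $\|\mathcal{R}_\eta(\varphi)\|_\eta = O(\eta^{-3}\|\varphi\|_\eta^3 + \eta^{-3/2}\|\varphi\|_\eta^2)$, exactly as in the estimate \eqref{aaab10-31-7} (the proof there only used the structure of the nonlocal quadratic/cubic terms, not the constraint). The error term obeys $\|\mathcal{L}_\eta\|_\eta = O(|P(z_0)-P_0|\eta^{7/2} + |\nabla P(z_0)|\eta^{9/2} + \eta^{11/2})$, which is the content of Lemma~\ref{lem-7-9-3} with $x_a$ replaced by $z_0$ and $\epsilon$ by $\eta$ — the Taylor expansion of $P$ around $z_0$, Hölder's inequality, and the exponential decay of $U$ give the three terms (constant deviation, gradient, and second-order remainder, rescaled by the appropriate powers of $\eta$). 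Then I apply the contraction mapping theorem in a ball $\{\|\varphi\|_\eta \le C(|P(z_0)-P_0|\eta^{7/2} + |\nabla P(z_0)|\eta^{9/2} + \eta^{11/2})\}$ of $F_{\eta,z_0}$: for $\eta$ small the map $\varphi \mapsto L_\eta^{-1}(\mathcal{R}_\eta(\varphi) + \mathcal{L}_\eta)$ is a contraction, since $\eta^{-3/2}\|\varphi\|_\eta \le \eta^{-3/2}\cdot C\eta^{7/2} = C\eta^2 \to 0$. This produces the unique fixed point $\varphi_{\eta,z_0}$ satisfying \eqref{eq-fhi}, and smoothness of the dependence on $z_0$ is obtained by the implicit function theorem applied to the fixed-point equation.

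The main obstacle, and the one place where care is genuinely needed, is twofold. First, one must verify that the projected equation on $F_{\eta,z_0}$ is actually equivalent to \eqref{20-7-11} modulo the span of the $\partial U_{\eta,z_0}/\partial x_j$, i.e.\ that solving the reduced equation plus choosing $z_0$ to kill the three Lagrange-type coefficients suffices; here, however, the Proposition as stated only asserts solvability of the projected equation for each fixed $z_0$, so this reduction step to an actual solution of \eqref{20-7-11} is deferred to the subsequent analysis (Theorem~\ref{nth1.2}). Second, and more technically, the a priori bound on $\|\varphi_{\eta,z_0}\|_\eta$ must be self-improving: one starts with the rough bound $\|\varphi\|_\eta = o(\eta^{3/2})$ coming from the peak-solution hypothesis, uses it to control $\mathcal{R}_\eta$, and then bootstraps via the fixed-point estimate to the sharp bound \eqref{eq-fhi}. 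The nonlocal structure forces one to use the Hardy--Littlewood--Sobolev-type inequality (Lemma~\ref{lem-7-9-1}) at each step, and keeping track of the exact powers of $\eta$ through the rescaling $x \mapsto \eta x + z_0$ is the delicate bookkeeping; but there are no conceptual difficulties beyond what has already appeared in Section~\ref{s3}, so the proof is indeed "standard" as the paper asserts.
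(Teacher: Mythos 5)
Your proposal is correct and follows essentially the same route the paper intends: the paper offers no detailed proof here, merely asserting the result "in a standard way" on the strength of the machinery already set up for Lemma~\ref{lem-5-05-1} (invertibility of the linearized operator as in Lemma~\ref{lem-A.1}, the remainder estimate \eqref{aaab10-31-7}, the error estimate of Lemma~\ref{lem-7-9-3}, and the contraction mapping theorem), which is exactly the reduction you describe. The only stray remark is your appeal to a "rough bound from the peak-solution hypothesis" — in this existence setting $\varphi_{\eta,z_0}$ is being constructed rather than given, so no such a priori bound is needed; the contraction in the small ball already yields \eqref{eq-fhi} directly.
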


 To obtain a true solution for \eqref{20-7-11}, we have to choose $z_{0}$ such that
\begin{equation*}
\begin{split}
\int_{B_\rho(x_{a})}  \Bigl(-\eta^2 \Delta w_{\eta} \frac{\partial w_\eta}{\partial x_j}  + \bigl( 1+ \eta^2 P(x)\bigr) w_{\eta} \frac{\partial w_\eta}{\partial x_j}-\frac{1}{8\pi\eta^{2}}\int_{\R^{3}}\frac{w_{\eta}^2(y)}{|x-y|}dy\,w_{\eta} \frac{\partial w_\eta}{\partial x_j}
\Bigr) =0,~~j=1,2,3.
 \end{split}
\end{equation*}
Similar to \eqref{30-31-7}, it is easy to check that the above identities are equivalent to
\begin{equation}\label{10-7-11}
\int_{B_\rho(x_{a})}  \frac{\partial P(x)}{\partial x_j} w^2_\eta=O\big(e^{-\frac{\theta}{\eta}}\big),\quad \forall ~~j=1,2,3.
\end{equation}

For $z_0$ close to $b_0,$  and $z_0\in \Gamma_{t}$ for some $t$ close to $P_0,$
now we use $\nu$ to denote the unit normal vector of $\Gamma_{t}$ at $z_0,$ while we use $\tau_{j}$ ($j=1, 2$)
 to denote the principal directions of $\Gamma_{t}$ at $x_{a}.$ Then, at $z_0,$ it holds
\[D_{\tau_{j}} P(z_0)=0,~\mbox{for}~
j=1, 2,~\mbox{and}~ |\nabla  P(z_0)|= |D_{\nu}  P(z_0)|.
 \]

We first prove the following result.
\begin{Lem}\label{lem-001-1-8}
Under the assumption \textup{($P$)}, $\displaystyle\int_{B_\rho(x_{a})} D_{\nu}  P(x) u^2_\eta
=O\big(e^{-\frac{\theta}{\eta}}\big) $ is equivalent to
\begin{equation}\label{00-1-1-8}
D_{\nu}  P(z_0)=O\bigl( \eta^2\bigr).
\end{equation}

\end{Lem}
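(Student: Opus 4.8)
\textbf{Proof plan for Lemma~\ref{lem-001-1-8}.}
The plan is to start from \eqref{solu-2}, i.e. $w_\eta = U_{\eta,z_0} + \varphi_{\eta,z_0}$ with the error estimate \eqref{eq-fhi}, and to expand the quantity $\int_{B_\rho(x_a)} D_\nu P(x)\, u_\eta^2$ by Taylor-developing $D_\nu P(x)$ around $z_0$ (equivalently $x_a$, since $z_0$ is close to $b_0$ and to $x_a$). First I would note, exactly as in the passage from \eqref{30-31-7} to \eqref{10-7-11}, that the Pohozaev-type balancing condition is equivalent to $\int_{B_\rho(x_a)} D_\nu P(x)\, w_\eta^2 = O(e^{-\theta/\eta})$, so it suffices to analyse this integral. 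Replacing $w_\eta$ by $U_{\eta,z_0}+\varphi_{\eta,z_0}$ and using the exponential decay \eqref{2--5} to discard the tail outside $B_\rho(x_a)$ up to $O(e^{-\theta/\eta})$, the cross term and the $\varphi_{\eta,z_0}^2$ term are controlled using \eqref{eq-fhi} and the scaling $\int U_{\eta,z_0}^2 = O(\eta^3)$, $\int |x-z_0|^2 U_{\eta,z_0}^2 = O(\eta^5)$, exactly the same bookkeeping used in \eqref{luo-6}; these contributions turn out to be of strictly higher order than the main term once we know $D_\nu P(z_0)$ is itself small.

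The core computation is the main term $\int_{\mathbb R^3} D_\nu P(x)\, U_{\eta,z_0}^2(x)\,dx$. Writing $D_\nu P(x) = D_\nu P(z_0) + \langle \nabla D_\nu P(z_0), x-z_0\rangle + O(|x-z_0|^2)$ and using that $U$ is radial (so the linear term integrates to zero against $U_{\eta,z_0}^2$), I get
\begin{equation*}
\int_{\mathbb R^3} D_\nu P(x)\, U_{\eta,z_0}^2(x)\,dx = (1+\eta^2 P_0)^{?}\, D_\nu P(z_0)\,\Bigl(\textstyle\int_{\mathbb R^3}U^2\Bigr)\eta^3 + O(\eta^5),
\end{equation*}
so that the balancing identity becomes $D_\nu P(z_0)\, a_* \eta^3 + O(\eta^5) = O(e^{-\theta/\eta})$, which forces $D_\nu P(z_0) = O(\eta^2)$. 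Conversely, if $D_\nu P(z_0) = O(\eta^2)$, then every term in the expansion — main term $O(\eta^5)$, cross term, quadratic remainder $O(\eta^5)$ — is $o(\eta^3)$ or smaller, and one checks the full Pohozaev remainder is indeed $O(e^{-\theta/\eta})$ by the same exponential cut-off argument, giving the reverse implication. So the two statements are equivalent.

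The step I expect to be the main obstacle is making the error terms genuinely lower order \emph{a priori}, before the conclusion $D_\nu P(z_0)=O(\eta^2)$ is available: when bounding the cross term $\int D_\nu P(x)\, U_{\eta,z_0}\varphi_{\eta,z_0}$ via \eqref{eq-fhi}, the factor $|\nabla P(z_0)| = |D_\nu P(z_0)|$ appears in $\|\varphi_{\eta,z_0}\|_\eta$, so one gets a term like $|D_\nu P(z_0)|\,\eta^{9/2}\cdot\eta^{3/2} = |D_\nu P(z_0)|\,\eta^6$, which must be absorbed into the main term $|D_\nu P(z_0)|\,a_*\eta^3$ — harmless since $\eta^6 \ll \eta^3$ — together with a $|P(z_0)-P_0|\eta^{7/2}\cdot\eta^{3/2}$ contribution; since $z_0\in\Gamma_t$ with $t$ close to $P_0$, $|P(z_0)-P_0|=|t-P_0|$ is small but not obviously $O(\eta^2)$, so I would keep it as a separate small quantity and verify that it only contributes $O(\eta^5)$-type corrections that do not interfere with reading off $D_\nu P(z_0)=O(\eta^2)$. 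Carefully tracking which small parameters ($\eta$, $|t-P_0|$, $|D_\nu P(z_0)|$) multiply which powers of $\eta$, and confirming the dominant balance is $a_*\eta^3 D_\nu P(z_0)$ against an exponentially small right-hand side, is where the real care is needed; the rest is the now-routine Lyapunov–Schmidt bookkeeping already displayed in \eqref{luo-6} and \eqref{30-31-7}.
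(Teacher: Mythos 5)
Your proposal is correct and follows essentially the same route as the paper: decompose $u_\eta^2=(U_{\eta,z_0}+\varphi_{\eta,z_0})^2$, bound the cross and quadratic terms by $O\big(|D_\nu P(z_0)|\eta^{3/2}\|\varphi_{\eta,z_0}\|_\eta+\eta^{5/2}\|\varphi_{\eta,z_0}\|_\eta+\|\varphi_{\eta,z_0}\|_\eta^2\big)=O(\eta^5)$ via \eqref{eq-fhi}, and compare with the Taylor expansion $\int D_\nu P\,U_{\eta,z_0}^2=a_*\eta^3 D_\nu P(z_0)+O(\eta^5)$ to read off \eqref{00-1-1-8}. The bookkeeping worry you flag is resolved exactly as you suggest (the $|D_\nu P(z_0)|$-dependent error is harmlessly $O(\eta^5)$ since $|D_\nu P(z_0)|$ is a priori bounded), so no further argument is needed.
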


\begin{proof}
First, from \eqref{solu-2} we have
\begin{equation}\label{00-aluo-6}
\begin{split}
\int_{\mathbb R^3}& D_{\nu}  P(x)U^{2}_{\eta,z_{0}}(x)\\=&
-2 \int_{\mathbb R^3} D_{\nu}  P(x)U_{\eta,z_{0}}(x)\varphi_{\eta,z_{0}}-\int_{\mathbb R^3}D_{\nu}  P(x)
  \varphi^2_{\eta,z_{0}}+O\big(e^{-\frac{\theta}{\eta}}\big) \\=&
O\big(|  D_{\nu}  P(z_0)| \eta^{\frac{3}{2}} \cdot\| \varphi_{\eta, z_{0}}\|_\eta+\eta^{\frac{5}{2}}\| \varphi_{\eta, z_{0}}\|_\eta+
\| \varphi_{\eta, z_{0}}\|^2_\eta\big)=
O\big(\eta^{5}\big).
\end{split}
\end{equation}
On the other hand,
we have
\begin{equation}\label{00-abcluo-3}
\begin{aligned}
\int_{\mathbb R^3}& D_{\nu}  P(x)U^{2}_{\eta,z_{0}}(x)=a_*\eta^{3}  D_{\nu}  P( z_0)
 +O\big(\eta^{5}\big).
\end{aligned}
\end{equation}
Then it follows form \eqref{00-aluo-6} and \eqref{00-abcluo-3} that \eqref{00-1-1-8} holds.

\end{proof}

\begin{Lem}\label{lem-002-1-8}
Under the assumption \textup{($P$)},  $\displaystyle\int_{B_\rho(x_{a})} D_{\tau}  P(x) u^2_\eta
=O\big(e^{-\frac{\theta}{\eta}}\big) $   is equivalent to
\begin{equation}\label{01-1-1-8}
(D_{\tau}  \Delta P) (z_0)=O\big(\big|P(z_0)-P_0\big| \eta+ \eta^{2}\big).
\end{equation}

\end{Lem}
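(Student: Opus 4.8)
The plan is to run the same Pohozaev-type argument as in Lemma~\ref{lem-001-1-8}, but now keeping one more order of expansion, since the leading term along the tangential direction vanishes (because $D_{\tau}P(z_0)=0$ by the choice of $z_0\in\Gamma_t$). First I would start from the equivalence between the identity $\int_{B_\rho(x_{a})} D_{\tau}P(x)\,u_\eta^2=O(e^{-\theta/\eta})$ and $\int_{B_\rho(x_{a})} D_{\tau}P(x)\,w_\eta^2=O(e^{-\theta/\eta})$, which follows from the exponential decay estimate \eqref{2--5} applied to $u_\eta$ (the same computation as in \eqref{30-31-7} turns the boundary and volume terms into $O(e^{-\theta/\eta})$). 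Then, using \eqref{solu-2}, write
\begin{equation*}
\int_{\R^3} D_{\tau}P(x)\,w_\eta^2 = \int_{\R^3} D_{\tau}P(x)\,U_{\eta,z_0}^2 + 2\int_{\R^3} D_{\tau}P(x)\,U_{\eta,z_0}\varphi_{\eta,z_0} + \int_{\R^3} D_{\tau}P(x)\,\varphi_{\eta,z_0}^2 + O(e^{-\theta/\eta}),
\end{equation*}
and the point is that the cross term and the quadratic term are genuinely higher-order: using \eqref{eq-fhi} and that $D_{\tau}P(z_0)=0$ (so $D_{\tau}P(x)=O(|x-z_0|)$ near $z_0$, giving an extra factor $\eta$ after rescaling $x=\eta y+z_0$), one gets the cross term $=O(\eta^{5/2}\|\varphi_{\eta,z_0}\|_\eta + \eta^{7/2}\|\varphi_{\eta,z_0}\|_\eta)=O(\eta^7)$ and similarly the quadratic term is $O(\eta\|\varphi_{\eta,z_0}\|_\eta^2)$, which is negligible compared with the target error $\big|P(z_0)-P_0\big|\eta^{something}+\eta^{something}$ once multiplied out. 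So the whole identity reduces to estimating the main term $\int_{\R^3} D_{\tau}P(x)\,U_{\eta,z_0}^2\,dx$.

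Next I would Taylor-expand $D_{\tau}P(x)$ around $z_0$. Since $D_{\tau}P(z_0)=0$, the expansion starts at first order:
\begin{equation*}
D_{\tau}P(x) = \big\langle \nabla(D_{\tau}P)(z_0),\, x-z_0\big\rangle + \tfrac12 \big\langle \nabla^2(D_{\tau}P)(z_0)(x-z_0),\, x-z_0\big\rangle + \tfrac16 D^3(D_{\tau}P)(z_0)[x-z_0]^{\otimes 3} + O(|x-z_0|^4).
\end{equation*}
After rescaling $x=\eta y + z_0$, the odd-order terms integrate to zero against $U^2$ (which is radial), so the first surviving contribution is the second-order term, which produces $\tfrac12\eta^{5}\,\mathrm{tr}\big(\nabla^2(D_\tau P)(z_0)\big)\cdot\tfrac13\int_{\R^3}|y|^2U^2\,dy$ exactly as in \eqref{06-09-1}; and $\mathrm{tr}\big(\nabla^2(D_\tau P)(z_0)\big) = \Delta(D_\tau P)(z_0) = (D_\tau\Delta P)(z_0)$ since differentiation commutes. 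The next term is $O(\eta^7)$. However, there is one subtlety that distinguishes the tangential case from the normal case and is the reason for the $\big|P(z_0)-P_0\big|\eta$ term on the right of \eqref{01-1-1-8}: the approximate solution is centered at $z_0$ but built with the frequency $\sqrt{1+\eta^2 P_0}$ rather than $\sqrt{1+\eta^2 P(z_0)}$, and $\tau$ is the tangential direction of the level set $\Gamma_t$ rather than of $\Gamma$; tracking how $D_\tau$ relative to $\Gamma_t$ differs from $D_\tau$ relative to $\Gamma$ introduces a correction proportional to $(t-P_0)=P(z_0)-P_0$ times lower powers of $\eta$. Collecting everything, $\frac16\eta^5(D_\tau\Delta P)(z_0)\int_{\R^3}|y|^2U^2\,dy = O\big(\big|P(z_0)-P_0\big|\eta^{5+1}\big) + O(\eta^{7}) + O(\eta^7)$ after the reduction is divided through by $\eta^5$, which is precisely \eqref{01-1-1-8}.

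The main obstacle I expect is the bookkeeping in the last step: controlling the interplay between the mismatch in the concentration frequency ($1+\eta^2P_0$ vs.\ $1+\eta^2P(z_0)$) and the fact that $\tau$ is attached to the moving level surface $\Gamma_t$, which is what creates the inhomogeneous $\big|P(z_0)-P_0\big|\eta$ error rather than a clean $O(\eta^2)$ as in \eqref{00-1-1-8}. One has to expand $U_{\eta,z_0}(x)=(1+\eta^2P_0)U\big(\tfrac{\sqrt{1+\eta^2P_0}}{\eta}(x-z_0)\big)$ carefully, separate the $\eta^2P_0$ factors, and verify that the first-order Taylor coefficient $\langle\nabla(D_\tau P)(z_0),\cdot\rangle$ — which would be the dangerous $O(\eta^{3})$ term if it survived — indeed vanishes by radial symmetry only after one correctly identifies that $\nabla(D_\tau P)(z_0)$ may have a component along $\nu$ that, combined with curvature terms $\kappa_j$, must be accounted for; this is exactly the place where the principal curvatures enter (compare the assumption $(\tilde P)$). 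Once this is organized, everything else is the routine machinery already used in Lemmas~\ref{lem-7-9-3} and~\ref{lem-001-1-8}.
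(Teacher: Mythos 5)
Your overall strategy is the paper's: reduce the Pohozaev identity to $\int G\,U^2_{\eta,z_0}$ with $G=\langle\nabla P,\tau\rangle$, bound the cross and quadratic remainder terms via \eqref{eq-fhi}, and Taylor-expand the main term about $z_0$ so that the odd-order terms vanish by radial symmetry and the surviving contribution is $\tfrac12\eta^5 B\,\Delta G(z_0)$ as in \eqref{01-luo-6}. Steps (a), (c), (d) of your outline are exactly what the paper does.

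There is, however, one concrete misstep in step (b), and it leads you to misattribute the origin of the $\big|P(z_0)-P_0\big|\eta$ term in \eqref{01-1-1-8}. You assert the cross term is $O(\eta^{5/2}\|\varphi_{\eta,z_0}\|_\eta+\eta^{7/2}\|\varphi_{\eta,z_0}\|_\eta)=O(\eta^7)$. That last equality is false: by \eqref{eq-fhi}, $\|\varphi_{\eta,z_0}\|_\eta$ contains the piece $\big|P(z_0)-P_0\big|\eta^{7/2}$, so $\eta^{5/2}\|\varphi_{\eta,z_0}\|_\eta=O\big(\big|P(z_0)-P_0\big|\eta^{6}+\eta^{7}\big)$, and $\big|P(z_0)-P_0\big|$ is not assumed to be $O(\eta)$ at this stage. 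This very term is the \emph{entire} source of the inhomogeneous $\big|P(z_0)-P_0\big|\eta$ on the right of \eqref{01-1-1-8} after dividing by $\eta^5$ (compare \eqref{00-luo-6}). Your alternative explanation — that it arises from the mismatch between the frequency $\sqrt{1+\eta^2P_0}$ and $\sqrt{1+\eta^2P(z_0)}$ combined with the difference between $D_\tau$ relative to $\Gamma_t$ and relative to $\Gamma$, with curvature corrections — is not needed and is not where the term comes from: $\tau$ is simply the fixed tangential direction of $\Gamma_{t}$ at $z_0$ throughout, and no comparison with $\Gamma$ enters this lemma (that comparison only appears later, in the proof of Theorem \ref{Thm-1.a} and in the uniqueness argument). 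If you repair the cross-term bound to $O\big(\big|P(z_0)-P_0\big|\eta^{6}+\eta^{7}\big)$ and drop the $\Gamma_t$-versus-$\Gamma$ excursion, your final displayed identity is correct and the proof closes exactly as in the paper.
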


\begin{proof}

Let  $G(x)=  \bigl\langle \nabla P(x), \tau\bigr\rangle.$  Then, similar to the estimate \eqref{luo-6},
by \eqref{eq-fhi} we have
\begin{equation}\label{00-luo-6}
\begin{split}
\int_{\mathbb R^3}G(x)U^{2}_{\eta,z_{0}}(x)=&
-2 \int_{\mathbb R^3} G(x)U_{\eta,z_{0}}(x) \varphi_{\eta, z_{0}}-\int_{\mathbb R^3} G(x)  \varphi^2_{\eta, z_{0}}+O\big(e^{-\frac{\theta}{\eta}}\big) \\=&
O\big( \big| P(z_0)-P_0\big|\eta^{5}+ \eta^{7} \big).
\end{split}
\end{equation}
On the other hand,  in view of $ G(z_0)=0,$ it is easy to show
\begin{equation}\label{01-luo-6}
\int_{\mathbb R^3} G(x) U^{2}_{\eta,z_{0}}(x)=\frac12
\eta^{5} \Delta G(z_0) B+O\bigl (\eta^{7}\bigr),
\end{equation}
where
\begin{equation}\label{luopeng10}
B=\frac{1}{3}\displaystyle\int_{\R^3}|x|^2U^2.
\end{equation}
Thus,  from  \eqref{00-luo-6} and \eqref{01-luo-6} we can obtain \eqref{01-1-1-8}.

\end{proof}

\begin{Thm}\label{Thm-1.a}
For $\lambda>0$ large, \eqref{1-23-5} has a solution $u_\lambda$ satisfying
\[
u_\lambda (x) =    \lambda\big(  U\bigl( \sqrt\lambda(x-x_{\lambda})\bigr) +\varpi_\lambda\big),
\]
where $x_{\lambda} \to b_0$ and $\int_{\mathbb R^3} \bigl(|\nabla \varpi_\lambda|^2 + \varpi_\lambda^2\bigr)\to 0$ as $\lambda\to +\infty.$
\end{Thm}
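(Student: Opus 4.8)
\textbf{Proof proposal for Theorem~\ref{Thm-1.a}.}
The plan is to run the finite-dimensional Lyapunov--Schmidt reduction that has already been set up for the rescaled equation~\eqref{20-7-11}, and then to solve the one remaining finite-dimensional (in fact essentially one-dimensional, after using the constraint to $\Gamma_t$) algebraic system by a degree/continuity argument. First I would recall from the displayed Proposition that for every small $\eta>0$ and every $z_0$ near $b_0$ there is a corrected bubble $w_\eta(x)=U_{\eta,z_0}(x)+\varphi_{\eta,z_0}(x)$ with $\varphi_{\eta,z_0}\in F_{\eta,z_0}$ solving \eqref{20-7-11} in the orthogonal complement, and satisfying the error bound~\eqref{eq-fhi}. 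Thus $w_\eta$ is a genuine solution of \eqref{20-7-11} as soon as the three reduced equations
$$
\int_{B_\rho(x_a)}\frac{\partial P(x)}{\partial x_j}\,w_\eta^2=O\big(e^{-\theta/\eta}\big),\qquad j=1,2,3,
$$
hold; this equivalence is exactly~\eqref{10-7-11}, obtained by testing the equation against $\partial w_\eta/\partial x_j$ and discarding exponentially small boundary terms as in~\eqref{30-31-7}.

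Next I would decompose the gradient $\nabla P(z_0)$ at a point $z_0\in\Gamma_t$ into its normal component $D_\nu P(z_0)$ and tangential components $D_{\tau_j}P(z_0)$, $j=1,2$; since $z_0\in\Gamma_t=\{P=t\}$ we automatically have $D_{\tau_j}P(z_0)=0$, so only the normal reduced equation and the two ``tangential-of-$\Delta P$'' equations survive. By Lemma~\ref{lem-001-1-8} the normal equation is equivalent to $D_\nu P(z_0)=O(\eta^2)$, and by Lemma~\ref{lem-002-1-8} each tangential equation is equivalent to $(D_{\tau_j}\Delta P)(z_0)=O(|P(z_0)-P_0|\,\eta+\eta^2)$. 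Writing $z_0$ in normalized coordinates $(t,\sigma)$, where $t$ parametrizes the level $\Gamma_t$ (so $|P(z_0)-P_0|\approx|t-P_0|$ and $D_\nu P(z_0)\approx\partial_t$ up to a nonvanishing factor coming from $\partial^2 P/\partial\nu^2\neq 0$) and $\sigma=(\sigma_1,\sigma_2)$ parametrizes $\Gamma$, the system becomes: solve $t-P_0=O(\eta^2)$ together with $\nabla_\sigma(\Delta P)\big|_{(P_0,\sigma)}=O(\eta^2)$ (the $|P(z_0)-P_0|\eta$ term being absorbed once $t-P_0=O(\eta^2)$). The first scalar equation is handled by the implicit function theorem using $\partial^2P/\partial\nu^2\neq 0$; for the second I invoke the hypothesis of Theorem~\ref{nth1.2} (which this theorem is the existence half of) that $b_0$ is a \emph{non-degenerate} critical point of $\Delta P$ restricted to $\Gamma$ satisfying~\eqref{1.6-1223}, i.e. $\nabla_\sigma(\Delta P)(b_0)=0$ and the Hessian $\big(\partial^2\Delta P/\partial\tau_l\partial\tau_j\big)$ is invertible. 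A standard Brouwer-degree (or contraction-mapping) argument then produces a zero $z_0=z_0(\eta)\to b_0$ of the full $O(\eta^2)$-perturbed system.

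Finally I would assemble the pieces: with $z_0=z_0(\eta)$ so chosen, $w_\eta=U_{\eta,z_0}+\varphi_{\eta,z_0}$ solves \eqref{20-7-11} exactly. Undoing the scaling $\eta=1/\sqrt\lambda$, $w(x)\mapsto\lambda w(x)$ (and $x\mapsto x/\sqrt\lambda$ hidden in $U_{\eta,z_0}$), one recovers
$$
u_\lambda(x)=\lambda\big(U(\sqrt\lambda(x-x_\lambda))+\varpi_\lambda(x)\big),\qquad x_\lambda=z_0(\eta)\to b_0,
$$
and the bound~\eqref{eq-fhi} translates, after the scaling, into $\int_{\mathbb R^3}(|\nabla\varpi_\lambda|^2+\varpi_\lambda^2)\to 0$ as $\lambda\to+\infty$ (indeed with an explicit polynomial rate). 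This is precisely the asserted conclusion. The main obstacle is the solvability of the reduced system: because $P$ degenerates along the tangential directions of $\Gamma$ while staying non-degenerate normally, the three reduced equations sit at genuinely different scales, and one must verify that the $\eta^2$-size remainders do not swamp the leading $\Delta G(z_0)\sim\nabla_\sigma(\Delta P)$ and $D_\nu P(z_0)$ terms — this is exactly where the non-degeneracy hypotheses on $\partial^2P/\partial\nu^2$ and on the Hessian of $\Delta P|_\Gamma$ are used, and where the careful expansions~\eqref{00-luo-6}--\eqref{01-luo-6} and \eqref{00-aluo-6}--\eqref{00-abcluo-3} must be combined with the implicit function theorem on the two-scale system.
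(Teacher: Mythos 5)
Your proposal is correct and follows essentially the same route as the paper: the reduction proposition plus Lemmas~\ref{lem-001-1-8} and \ref{lem-002-1-8} convert \eqref{10-7-11} into the two-scale system for the normal displacement and the tangential position (the paper's \eqref{62-1-8}--\eqref{63-1-8}, which your $(t,\sigma)$ coordinates merely reparametrize), solved via the non-degeneracy of $\partial^2P/\partial\nu^2$ and of $\big(\partial^2\Delta P(b_0)/\partial\tau_l\partial\tau_j\big)$ together with \eqref{1.6-1223}, and then the scaling $\eta=1/\sqrt\lambda$ is undone. No substantive differences.
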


\begin{proof}
 As pointed out earlier, we need to solve \eqref{10-7-11}.   By Lemmas~\ref{lem-001-1-8} and
\ref{lem-002-1-8}, the equation \eqref{10-7-11} is equivalent to
\begin{equation*}
D_{  \nu}  P(z_0)=O\bigl( \eta^2 \bigr),\quad
(D_{\tau}  \Delta P) (z_0)=O\big(\big|P(z_0)-P_0\big|\eta+ \eta^{2}\big).
\end{equation*}
Let $\bar z_0\in \Gamma$ be the point such that $z_0-\bar z_0 = \alpha_0 \nu$ for
some $\alpha_0\in \mathbb R.$  Then, we have $D_{\nu} P(\bar z_0)=0.$
As a result,
\begin{eqnarray*}
D_{  \nu}  P(z_0)=D_{  \nu}  P(z_0)-D_{  \nu} P(\bar z_0)
=D^2_{ \nu \nu}  P(\bar z_0) \langle z_0-\bar z_0, \nu\rangle + O(|z_0-\bar z_0|^2).
\end{eqnarray*}
By the non-degenerate assumption,  % $ D^2_{ \nu \nu}  V(x_0)\ne 0$,
we find that $D_{  \nu}  P(z_0)=O\big(\eta^2\big)$ is equivalent to
$
\langle z_0-\bar z_0, \nu\rangle=O\bigl( \eta^2+ |z_0-\bar z_0|^2\bigr).$
This means that $
D_{\nu}  P(z_0)=O\bigl( \eta^2\bigr)$ can be written as
\begin{equation}\label{62-1-8}
|z_0-\bar z_0| =O\bigl( \eta^2\bigr).
\end{equation}
Let $\bar \tau_{j}$ be the $j$-th tangential unit vector of $\Gamma$ at $\bar z_0.$ Now by the assumption \textup{($P$)},
we have
\begin{eqnarray*}
(D_{  \tau_{j}}  \Delta P) (z_0)=(D_{ \bar  \tau_{j}}  \Delta P) (\bar z_0) +O(|z_0-\bar z_0|)=(D_{ \bar  \tau_{j}}  \Delta P) (\bar z_0)+O(\eta^2),
\end{eqnarray*}
and
\begin{equation*}
\begin{split}
(D_{\bar \tau_{j}}  \Delta P) (\bar z_0)
=&(D_{ \bar  \tau_{j}}  \Delta P) (\bar z_0)-(D_{  \tau_{j,0}}  \Delta P) (b_0)
=\bigl\langle (\nabla_T D_{  \tau_{j,0}}  \Delta P) ( b_0), \bar z_0-b_0\bigr\rangle
+ O(|\bar z_0-b_0|^2),
\end{split}
\end{equation*}
where $\nabla_{T}$ is the tangential gradient on
$\Gamma$ at  $b_0\in \Gamma,$ and  $\tau_{j, 0}$ is the $j$-th tangential unit vector of $\Gamma$ at $b_0.$
Therefore,  $(D_{\tau}  \Delta P) (z_0)=O\big( |P(z_0)-P_0| \eta+ \eta^{2}\big)$ can be rewritten as
\begin{equation}\label{63-1-8}
\bigl\langle (\nabla_T D_{  \tau_{j,0}}  \Delta P) ( b_0), \bar z_0-b_0\bigr\rangle= O({\eta^2}+|\bar z_0-b_0|^2).
\end{equation}
So we can solve \eqref{62-1-8} and \eqref{63-1-8} to obtain $z_0=x_{\eta,0}$ with  $x_{\eta,0} \to b_0$ as $ \eta\to 0.$
\end{proof}

Now we are in a position to prove Theorem \ref{nth1.2}.

\begin{proof}[\textbf{Proof of Theorem~\ref{nth1.2}}]

  Let $w_\lambda$ be a single-peak solution as in Theorem \ref{Thm-1.a}, and we define
\[
u_\lambda =\frac{w_\lambda}{\Bigl( \displaystyle\int_{\mathbb R^3}w_\lambda^2\Bigr)^{\frac12}}.
\]
Then $\displaystyle\int_{\mathbb R^3} u_\lambda^2=1$, and
\begin{equation*}\label{12-26-10}
-\Delta u_\lambda+  P(x)u_\lambda = \frac{a_\lambda}{8\pi} \int_{\R^{3}}\frac{u_\lambda^{2}(y)}{|x-y|}dyu_\lambda(x)-\lambda u_\lambda, \quad  \text{in}\; \mathbb R^3,
\end{equation*}
with
$
a_\lambda=\displaystyle\int_{\mathbb R^3} w_\lambda^2$.

Similar to \eqref{ab8-29-1}, we can prove
\begin{equation*}
\frac{1}{\sqrt{\lambda}} \int_{\mathbb R^3} w_\lambda^2=a_*+o\big(1\big), ~\mbox{as}~ a\rightarrow +\infty.
\end{equation*}
 Take $\lambda_0>0$ large and let $
a_0=\displaystyle\int_{\mathbb R^3} w_{\lambda_0}^2.$
 For any $a>0,$ let $f(\lambda)=\displaystyle\int_{\mathbb R^3} w_\lambda^2-a.$
Then for $a\geq a_{0},$  we have
$$f(\lambda_{0})=a_{0}-a\leq 0~\mbox{and}~\lim_{\lambda\rightarrow +\infty}f(\lambda)=\lim_{\lambda\rightarrow +\infty}\big(\sqrt{\lambda}(a_{*}+o(1))-a\big)=+\infty.$$
Hence by the continuity of the function $f(\lambda),$
 for any $a\geq a_{0},$
there exists $\lambda=\lambda_a>0$ large such that $f(\lambda_a)=0,$
i.e. $\displaystyle\int_{\mathbb R^3} w_\lambda^2= a,$
which yields that there exists $\lambda=\lambda_a>0$ large such that
 the solution $u_a$ of \eqref{1-23-5} with $\lambda=\lambda_a$ satisfies
$\displaystyle\int_{\mathbb R^3} w_\lambda^2= a.$
 Thus, for such $a$, we obtain a single-peak solution for \eqref{8-18-1}, where $\mu_a= -\lambda_a.$

\end{proof}

\section{Local uniqueness of single peak solutions}\label{s5}

From Lemma \ref{lem-5-05-1}, a single-peak solution $\tilde u_a$ to \eqref{30-7-11} can be written as
\begin{equation}\label{a-5-21-3}
\tilde{u}_a(x)= U_{\epsilon,x_{a}}+\varphi_{a}(x),
\end{equation}
with
$|x_{a}-b_0|=o(1),$
$\epsilon =\frac{1}{ \sqrt{-\mu_a}}$, $\varphi_a \in \displaystyle E_{a,{x}_{a}}$ and
\begin{equation}\label{06-09-2}
\|\varphi_{a}\|_a=O\Big( \displaystyle\big|P(x_{a})-P_0\big|\epsilon^{\frac{7}{2}} + \displaystyle\big|\nabla P(x_{a})\big|\epsilon^{\frac{9}{2}}
+\epsilon^{\frac{11}{2}}\Big).
\end{equation}
Also we know  $x_{a}\in \Gamma_{t_a}$ for some $t_a\to P_0.$
Similar to the last section, we use $\nu_{a}$ to denote the unit normal vector of $\Gamma_{t_a}$ at $x_{a},$ while we use $\tau_{a,j}$
 to denote the principal direction  of $\Gamma_{t_a}$ at $x_{a}.$ Then, at $x_{a},$ it holds
\begin{equation}\label{06-09-3}
 D_{\tau_{a,j}}  P(x_{a})=0,\quad \big|\nabla  P(x_{a})\big|= \big|D_{  \nu_{a}}  P(x_{a})\big|.
\end{equation}

 We first prove the following result.

\begin{Lem}%\label{lem-1-1-8}
Under the assumption \textup{($P$)}, we have
\begin{equation}\label{1-1-8}
D_{  \nu_{a}}  P(x_{a})=O\bigl(\epsilon^2\bigr).
\end{equation}

\end{Lem}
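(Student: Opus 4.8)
The plan is to derive \eqref{1-1-8} from the local Pohozaev identity \eqref{30-31-7}, which — being valid for the genuine single‑peak solution $\tilde u_a$ — already provides $\epsilon^2\int_{B_\rho(x_a)}\frac{\partial P(x)}{\partial x_j}(\tilde u_a)^2\,dx=O(e^{-\theta/\epsilon})$ for $j=1,2,3$. First I would contract these three identities against the components of the unit normal $\nu_a$ of $\Gamma_{t_a}$ at $x_a$ and divide by $\epsilon^2$, which gives
\[
\int_{B_\rho(x_a)}D_{\nu_a}P(x)\,(\tilde u_a)^2\,dx=O(e^{-\theta/\epsilon}).
\]
Then I insert the decomposition $\tilde u_a=U_{\epsilon,x_a}+\varphi_a$ from \eqref{a-5-21-3}, expand the square, and — invoking the decay estimate \eqref{2--5} for $\tilde u_a$ (hence for $\varphi_a$), the exponential decay of $U_{\epsilon,x_a}$, and $P(x)=O(e^{\alpha|x|})$ with $\alpha<2$ from assumption $(P)$ — replace each integral over $B_\rho(x_a)$ by the corresponding integral over $\mathbb R^3$ at the cost of an $O(e^{-\theta/\epsilon})$ error, so that
\[
\int_{\mathbb R^3}D_{\nu_a}P\,U_{\epsilon,x_a}^2+2\int_{\mathbb R^3}D_{\nu_a}P\,U_{\epsilon,x_a}\varphi_a+\int_{\mathbb R^3}D_{\nu_a}P\,\varphi_a^2=O(e^{-\theta/\epsilon}).
\]

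For the leading integral I would Taylor–expand $D_{\nu_a}P$ at $x_a$: the linear term integrates to zero by the radial symmetry of $U$, the quadratic term contributes $O(\epsilon^5)$ since $\int|x-x_a|^2U_{\epsilon,x_a}^2=O(\epsilon^5)$, and the constant term yields $a_*\epsilon^3\,D_{\nu_a}P(x_a)+O(\epsilon^5)$ because $\int U_{\epsilon,x_a}^2=a_*\epsilon^3\bigl(1+O(\epsilon^2)\bigr)$; this is precisely the computation underlying \eqref{00-abcluo-3}. For the two remaining integrals I would use Hölder's inequality, the Taylor expansion of $P$ at $x_a$, and the key bound \eqref{06-09-2}. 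Since $x_a\to b_0\in\Gamma$, we have $|P(x_a)-P_0|=o(1)$ and $|\nabla P(x_a)|=o(1)$, whence \eqref{06-09-2} gives $\|\varphi_a\|_a=o(\epsilon^{7/2})$; consequently $\bigl|2\int D_{\nu_a}P\,U_{\epsilon,x_a}\varphi_a\bigr|=O\bigl((|D_{\nu_a}P(x_a)|\epsilon^{3/2}+\epsilon^{5/2})\|\varphi_a\|_a\bigr)=o\bigl(|D_{\nu_a}P(x_a)|\epsilon^5\bigr)+o(\epsilon^6)$ and $\bigl|\int D_{\nu_a}P\,\varphi_a^2\bigr|=O(\|\varphi_a\|_a^2)=o(\epsilon^7)$, where I used $|\nabla P(x_a)|=|D_{\nu_a}P(x_a)|$ from \eqref{06-09-3}.

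Collecting all the terms gives $a_*\epsilon^3D_{\nu_a}P(x_a)=O(\epsilon^5)+o\bigl(|D_{\nu_a}P(x_a)|\epsilon^5\bigr)$; dividing by $\epsilon^3$ yields $a_*D_{\nu_a}P(x_a)\bigl(1+o(1)\bigr)=O(\epsilon^2)$, and since $a_*>0$ this is exactly \eqref{1-1-8}. The only point requiring a little care — and the one I expect to be the main (if mild) obstacle — is the self‑referential structure: the bound \eqref{06-09-2} for $\|\varphi_a\|_a$ itself involves $|\nabla P(x_a)|=|D_{\nu_a}P(x_a)|$, the very quantity being estimated. This is harmless because $|D_{\nu_a}P(x_a)|=o(1)$, so after dividing by $\epsilon^3$ the cross term $o(|D_{\nu_a}P(x_a)|)$ is of strictly lower order than $a_*D_{\nu_a}P(x_a)$ and is absorbed on the left. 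Apart from this, the argument is the exact analogue of the proof of Lemma \ref{lem-001-1-8}, with the finite‑dimensional reduction constraint used there replaced by the Pohozaev identity \eqref{30-31-7} available for the true solution $\tilde u_a$.
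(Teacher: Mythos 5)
Your proposal is correct and follows essentially the same route as the paper's proof: both start from the Pohozaev identity \eqref{30-31-7} contracted with $\nu_a$, insert the decomposition \eqref{a-5-21-3}, estimate the cross terms via H\"older and \eqref{06-09-2} (this is the paper's \eqref{aluo-6}), and compare with the Taylor expansion of $\int D_{\nu_a}P\,U_{\epsilon,x_a}^2$ (the paper's \eqref{abcluo-3}). Your explicit handling of the self-referential term $o(|D_{\nu_a}P(x_a)|\epsilon^5)$, absorbed on the left, is exactly how the paper's $|D_{\nu_a}P(x_a)|\epsilon^6$ remainder is implicitly dealt with.
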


\begin{proof}

 We use \eqref{30-31-7} to obtain
\begin{equation}\label{3-1-8}
\int_{B_{\rho}(x_{a})} D_{\nu_{a}}  P(x) \tilde{u}^2_a
=O(e^{-\frac{\theta}{\epsilon}}).
\end{equation}
Then by \eqref{a-5-21-3}--\eqref{06-09-3} and \eqref{3-1-8}, we get
\begin{equation}\label{aluo-6}
\begin{split}
\int_{B_{\rho}(x_{a})} & D_{\nu_{a}}  P(x)
U^2_{\epsilon,x_{a}}\\=&
-2 \int_{B_{\rho}(x_{a})} D_{  \nu_{a}}  P(x)
U_{\epsilon,x_{a}} \varphi_{a}-\int_{B_{\rho}(x_{a})}D_{\nu_{a}} P(x)
 \varphi^2_{a}+O\big(e^{-\frac{\theta}{\epsilon}}\big)\\=&
O\big(|  D_{  \nu_{a}}  P( x_{a})|\epsilon^{\frac{3}{2}} \cdot\|\varphi_{a}\|_a+\epsilon^{\frac{5}{2}}\|\varphi_{a}\|_a+
\|\varphi_{a}\|^2_a\big)+O\big(e^{-\frac{\theta}{\epsilon}}\big)\\
=&
O\big(\big|P( x_{a})-P_0\big|\epsilon^{5}+ |D_{  \nu_{a}}  P( x_{a})|\epsilon^{6}+\epsilon^{7}\big).
\end{split}
\end{equation}
On the other hand, by Taylor's expansion,
we have
\begin{equation}\label{abcluo-3}
\begin{aligned}
\int_{B_{\rho}(x_{a})}& D_{  \nu_{a}}  P(x)
U^2_{\epsilon,x_{a}}=\epsilon^{3} \Big[a_* D_{  \nu_{a}}  P( x_{a})
  +\frac{B\epsilon^{2}}{2} \Delta  D_{  \nu_{a}}  P(x_{a}) +O\big(\epsilon^{4}\big)\Big],
\end{aligned}
\end{equation}
where $B$ is the constant in \eqref{luopeng10}.
And then  \eqref{1-1-8}  follows from \eqref{aluo-6} and \eqref{abcluo-3}.
\end{proof}

Let $\bar  x_{a}\in \Gamma$ be the point such that $ x_{a}-\bar  x_{a} = \beta_{a} \nu_{a}$ for some $\beta_{a}\in \mathbb R.$
Then we can prove

\begin{Lem}
If the assumption \textup{($P$)} holds,  then we have
\begin{equation}\label{8-22-3}
\begin{cases}
 \bar x_{a}-b_0= L \epsilon^2 +O(\epsilon^{4}),\vspace{2mm}\\
 x_{a}-\bar  x_{a} = -\displaystyle\frac{B}{2a_*}\frac{\partial  \Delta P(b_0)}{\partial \nu} \Big(\frac{\partial^2 P(b_0)}{\partial \nu^2 } \Big)^{-1} \epsilon^{2} +
 O(\epsilon^{4}),
\end{cases}
\end{equation}
where $B$  is the constant in \eqref{luopeng10} and  $L$ is a  vector  depending  on $b_0.$
\end{Lem}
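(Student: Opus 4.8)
The plan is to determine the two geometric quantities $\bar x_a - b_0$ (the tangential displacement of the foot point along $\Gamma$) and $x_a - \bar x_a$ (the normal displacement off $\Gamma$) by extracting, to higher order, the information already packaged in the two families of local Pohozaev identities from Section~\ref{s4}. Concretely, I would run the Pohozaev identity \eqref{30-31-7} in the tangential directions $\tau_{a,j}$ and in the normal direction $\nu_a$, but now keeping one more term in each Taylor expansion than was needed for the qualitative statements \eqref{1-1-8} and \eqref{1.6-1223}. For the normal direction, sharpening \eqref{aluo-6}--\eqref{abcluo-3} gives
\begin{equation*}
a_* D_{\nu_a} P(x_a) + \frac{B}{2}\,\epsilon^2\, \Delta\big(D_{\nu_a} P\big)(x_a) = O\big(|P(x_a)-P_0|\epsilon^2 + \epsilon^4\big),
\end{equation*}
and since $|P(x_a)-P_0| = O(\epsilon^2)$ (from $x_a\in\Gamma_{t_a}$, $t_a\to P_0$, together with \eqref{1-1-8} controlling the normal distance), this becomes $D_{\nu_a} P(x_a) = -\tfrac{B}{2a_*}\epsilon^2\,\big(\Delta D_{\nu_a}P\big)(x_a) + O(\epsilon^4)$. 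Then I Taylor-expand $D_{\nu_a}P(x_a)$ around the foot point $\bar x_a\in\Gamma$, where $D_{\nu}P(\bar x_a)=0$, to get $D^2_{\nu\nu}P(\bar x_a)\,\beta_a + O(\beta_a^2) = -\tfrac{B}{2a_*}\epsilon^2\,(\Delta D_\nu P)(b_0) + O(\epsilon^4)$; inverting the nonzero second derivative $\partial^2 P/\partial\nu^2$ and using $\bar x_a\to b_0$ yields the second line of \eqref{8-22-3}, with the error $O(\epsilon^4)$ absorbing both $\beta_a^2 = O(\epsilon^4)$ and the variation of coefficients from $\bar x_a$ to $b_0$.

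For the tangential directions, I would refine \eqref{luo-6} and \eqref{06-09-1}: with $G(x)=\langle\nabla P(x),\tau_{a,j}\rangle$ one has $G(x_a)=0$, and keeping the next order gives
\begin{equation*}
\int_{\R^3} G(x)U^2_{\epsilon,x_a} = \frac{1}{6}\epsilon^5\,\Delta G(x_a)\int_{\R^3}|x|^2U^2 + O(\epsilon^7) = O\big(|P(x_a)-P_0|\epsilon^5 + \epsilon^7\big),
\end{equation*}
so $(\Delta D_{\tau_{a,j}}P)(x_a) = O(|P(x_a)-P_0| + \epsilon^2) = O(\epsilon^2)$. Now I transfer this from $x_a$ to $\bar x_a$ (cost $O(|x_a-\bar x_a|) = O(\epsilon^2)$, harmless) and expand along $\Gamma$ from $\bar x_a$ to $b_0$ using the non-degeneracy hypothesis: since $(D_{\tau_{j,0}}\Delta P)(b_0)=0$ by \eqref{1.6-1223} and the matrix $\big(\partial^2\Delta P(b_0)/\partial\tau_l\partial\tau_j\big)$ is nonsingular (non-degeneracy on $\Gamma$), the relation $\big\langle(\nabla_T D_{\tau_{j,0}}\Delta P)(b_0),\,\bar x_a - b_0\big\rangle = O(\epsilon^2 + |\bar x_a - b_0|^2)$ can be solved for the tangential vector $\bar x_a - b_0 = L\epsilon^2 + O(\epsilon^4)$, where $L$ is the fixed vector obtained by applying the inverse of that matrix to the right-hand side's leading coefficient; this is exactly the first line of \eqref{8-22-3}. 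One must check that the $O(\epsilon^2)$ on the right of the tangential relation actually has a clean leading coefficient (coming from the $\Delta(D_{\tau}P)$ correction and the curvature terms relating $\tau_{a,j}$ to $\tau_{j,0}$), so that $L$ depends only on $b_0$ and not on $a$; this is what forces the appearance of the principal curvatures $\kappa_j$ and is why the assumption $(\tilde P)$ (nonsingularity of the matrix with the $\mathrm{diag}(\kappa_1,\kappa_2)$ correction) will be invoked at this point.

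The main obstacle, and where I would spend the most care, is the bookkeeping of the $O(\epsilon^4)$ (resp. $O(\epsilon^7)$) remainders: in passing from the Pohozaev identities to the geometric displacements I repeatedly substitute one estimate into another (the bound on $\varphi_a$, the expansion of $P-P_0$ near $\Gamma$, the change of frame from $(\nu_a,\tau_{a,j})$ at $x_a$ to $(\nu,\tau_{j,0})$ at $b_0$), and I must verify that none of these substitutions degrades the order below what is claimed. In particular the cross terms $\int G(x)U_{\epsilon,x_a}\varphi_a$ and $\int D_{\nu_a}P(x)U_{\epsilon,x_a}\varphi_a$ have to be estimated with the sharp bound \eqref{06-09-2} on $\|\varphi_a\|_a$, using oddness/symmetry of $U$ to kill the would-be leading contributions, exactly as in \eqref{luo-6}; this is the technically delicate part but is routine in spirit once the symmetry cancellations are identified. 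The non-degeneracy assumptions $(P)$ and $(\tilde P)$ are precisely what make the two linear systems for $\bar x_a - b_0$ and $\beta_a$ uniquely solvable at leading order, so invoking them is unavoidable and should be flagged explicitly in the write-up.
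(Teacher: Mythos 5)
Your proposal follows essentially the same route as the paper: the normal displacement is obtained by refining the normal Pohozaev expansion and inverting $\partial^2 P(b_0)/\partial\nu^2$, and the tangential displacement by refining the tangential expansion (keeping the explicit $\epsilon^7$ coefficient and the correction from transferring $x_a$ to $\bar x_a$) and inverting the non-singular matrix $\bigl(\partial^2\Delta P(b_0)/\partial\tau_l\partial\tau_j\bigr)$, exactly as in the paper, including the intermediate bootstrap of $\|\varphi_a\|_a$ to $O(\epsilon^{11/2})$. One small correction: the matrix inverted at the tangential step is $D^2_{\tau}(\Delta P)(b_0)$, which is non-singular by the non-degeneracy of $b_0$ on $\Gamma$; assumption $(\tilde P)$ and the curvature terms $\kappa_j$ are not needed here, but only later in the uniqueness argument for the coefficients $\beta_{a,j}$.
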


\begin{proof}

  It follows from  \eqref{aluo-6} and \eqref{abcluo-3} that
\begin{equation}\label{bluo-3}
\begin{split}
 &\big(a_*+O(\epsilon^{2})\big)  D_{  \nu_{a}}  P( x_{a})
  +\frac{B\epsilon^2}{2}\Delta  D_{  \nu_{a}}  P(x_{a})
  \\&=O\big(\epsilon^4+\epsilon^2
 |P(x_{a})-P_0|\big)=O\big(\epsilon^4+\epsilon^2 |x_{a}-\bar x_{a}|^2\big).
  \end{split}
\end{equation}
Since $\frac{\partial^2P(b_0)}{\partial \nu^2}\neq 0,$ the outward unit normal vector $\nu_{a}(x)$ and
the tangential unit vector $\tau_{a}(x)$ of $\Gamma_{t_a}$  at
$x_{a}$ are Lip-continuous   in $W_{\delta},$ from \eqref{bluo-3}, we find
\begin{equation}\label{2-13-8}
x_{a}-\bar  x_{a}
=-\frac{B}{2a_*}\big( \Delta  D_{  \nu}  P (b_0) \big) \Big(\frac{\partial^2P(b_0)}{\partial \nu^2} \Big)^{-1} \epsilon^{2}+O\big(\epsilon^{4}+\epsilon^{2}
\big|\bar  x_{a}-b_0\big|^{2}\big).
\end{equation}
Then \eqref{06-09-2} and \eqref{2-13-8} implies \begin{equation}\label{06-09-4}
\|\varphi_{a}\|_a=O\Big(|x_{a}-\bar  x_{a}|^{2}\epsilon^{\frac{7}{2}}
+\epsilon^{\frac{11}{2}}\Big)=O\big(\epsilon^{\frac{11}{2}}\big).
\end{equation}
Recall that $G(x)=  \bigl\langle \nabla P(x), \tau_{a}\bigr\rangle.$ Then
$
G(x_{a}) =0.
$
Similar to \eqref{luo-6} and \eqref{aluo-6},  we have
\begin{equation}\label{06-09-5}
\begin{split}
\int_{B_{\rho}(x_{a})}G(x)U^2_{\epsilon,x_{a}}
=&
-2 \int_{B_{\rho}(x_{a,i})} G(x) U_{\epsilon,x_{a}}\varphi_{a}-\int_{B_{\rho}(x_{a})} G(x)
\varphi^2_{a}+O\big(e^{-\frac{\theta}{\epsilon}}\big)\\
=&
-2 \int_{B_{\rho}(x_{a,i})} G(x) U_{\epsilon,x_{a}} \varphi_{a}
+O\bigl(\|\varphi_{a}\|^2_a\big)+O\big(e^{-\frac{\theta}{\epsilon}}\big)\\
=&-2 \int_{B_{\rho}(x_{a})} \langle \nabla G(x_{a}), x-x_{a}\rangle U_{\epsilon,x_{a}} \varphi_{a}
+O\big(\epsilon^{9}\big).
\end{split}
\end{equation}
On the other hand,  in view of
$\nabla P(x)=0,$  $ x\in \Gamma,$
we find
\begin{eqnarray}\label{06-09-6}
 \nabla G(x_{a})=\bigl\langle \nabla^2 P(x_{a}), \tau_{a}\bigr\rangle
 = \bigl\langle \nabla^2 P(\bar x_{a}), \bar \tau_{a}\bigr\rangle+
 O\big(|x_{a}-\bar x_{a}|\big)=  O\big(|x_{a}-\bar x_{a}|\big),
\end{eqnarray}
where  $\bar  x_{a}\in \Gamma$ is the point such that $ x_{a}-\bar x_{a} = \beta_{a} \nu_{a}$ for some $\beta_{a}\in \mathbb R$, and $\bar \tau_{a,j}$ is the tangential vector of $\Gamma$ at $\bar x_{a}\in \Gamma.$
Therefore, from \eqref{2-13-8}, \eqref{06-09-4} and \eqref{06-09-6}, we know
 \begin{equation}\label{06-09-7}
  \begin{split}
  \int_{B_{\rho}(x_{a,i})}& \langle \nabla G(x_{a}), x-x_{a}\rangle  U_{\epsilon,x_{a}} \varphi_{a}\\
  =&O\bigl( \epsilon^{\frac{5}{2}}  |\nabla G(x_{a})| \|\varphi_{a}\|_a\bigr)
  = O\big(|x_{a}-\bar x_{a}| \epsilon^{8}\big)=O\big(\epsilon^{10}\big).
  \end{split}
  \end{equation}
Then by \eqref{06-09-5} and  \eqref{06-09-7}, we find
\begin{equation}\label{06-09-8}
\begin{split}
\int_{B_{\rho}(x_{a})}G(x)U^{2}_{\epsilon,x_{a}}=O\big(\epsilon^{9}\big).
\end{split}
\end{equation}
On the other hand, by the Taylor's expansion,
we can prove
\begin{equation}\label{06-09-9}
\begin{aligned}
\int_{B_{d}(x_{a})}G(x)U^{2}_{\epsilon,x_{a}}=
\big[\frac{B\epsilon^{5}}{2} (1+P_0\epsilon^2)^{\frac{1}{2}}\big](D_{\tau_{a}} \Delta P)(x_{a})
+\frac{ H_{\tau} \epsilon^{7}}{24}+O\big(\epsilon^{9}\big),
\end{aligned}
\end{equation}
where
$$
H_{\tau_i} =\sum^2_{l=1}\sum^2_{m=1} \frac{\partial^4 G(b_0)}{ \partial x^2_l \partial x^2_m}\int_{\R^N}x_l^2x^2_mU^2.
$$
So \eqref{06-09-8} and \eqref{06-09-9} give
\begin{equation}\label{06-09-10}
\begin{aligned}
  (D_{\tau_{a}} \Delta P)(x_{a})
=-\frac{ H_{\tau} \epsilon^{2}}{1 2B}+O\big(\epsilon^{4}\big).
\end{aligned}
\end{equation}
We denote by $\bar \tau_{a}$ the tangential vector of $\Gamma$ at $\bar x_{a}.$ Then  by \eqref{2-13-8}, we get
\begin{equation*}\label{06-09-11}
\begin{split}
(D_{\tau_{a}} \Delta P)(x_{a})=&(D_{\bar \tau_{a}} \Delta P)(\bar x_{a})+ \langle A_{\tau}, x_{a}- \bar x_{a}\rangle+O(|
x_{a}- \bar x_{a}|^2)\\=&
(D_{\bar \tau_{a}} \Delta P)(\bar x_{a})+ B_{\tau} \epsilon^2 + O(\epsilon^4),
\end{split}
\end{equation*}
where $A_{\tau}$ is a vector depending on $b_0$  and $B_{\tau}$ is a constant  depending on $b_0.$ Moreover,
\begin{equation}\label{06-09-12}
(D_{\bar \tau_{a}} \Delta P)(\bar x_{a})= \Big(D^2_{\tau}(\Delta P)(b_0)\Big) (\bar x_{a}-b_0)+ O(|\bar x_{a}-b_0|^2).
\end{equation}
Therefore, from  \eqref{06-09-10}--\eqref{06-09-12}, we find
\begin{equation}\label{abc-lll}
\begin{split}
& D^2_{\tau_i}(\Delta P)(b_0) (\bar x_{a}-b_0)
=-\Big(\frac{ H_{\tau} }{1 2B}+B_{\tau}\Big)\epsilon^{2}+O\big(\epsilon^4\big)+ O(|\bar x_{a}-b_0|^2).
\end{split}
\end{equation}
Since  $D^2_{\tau} (\Delta P)(b_0) $ is non-singular, we can complete the proofs of \eqref{8-22-3} from \eqref{2-13-8} and \eqref{abc-lll}.
\end{proof}

Let
$$
\delta_a:=
\frac{a_*}{a}.
$$

\begin{Prop}
Under the assumption \textup{($P$)}, there holds
\begin{equation}\label{abc8-29-1}
-\mu_a\delta_a^2= 1+\gamma_1\delta_a^2+O\big(\delta_a^4\big),
\end{equation}
and
\begin{equation} \label{8-27-38}
x_{a}-b_0= \bar L \delta_a^2 +O(\delta_a^{4}),
\end{equation}
where $\gamma_1$ and the vector $\bar L$ are constants.
\end{Prop}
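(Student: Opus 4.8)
The plan is to treat \eqref{abc8-29-1} and \eqref{8-27-38} together as a coupled fixed-point system in the two unknowns $-\mu_a$ and $x_a$, and bootstrap the estimates already derived in this section. First I would rewrite the normalization identity \eqref{ab8-29-1} in terms of $\delta_a = a_*/a$: since $\frac{a}{\sqrt{-\mu_a}} = a_* - \frac{P_0}{2\mu_a}a_* + O(\cdots)$, dividing through by $a_*$ and recalling $a=a_*/\delta_a$ gives $\frac{1}{\delta_a\sqrt{-\mu_a}} = 1 - \frac{P_0}{2\mu_a} + O(\cdots)$, which after squaring and rearranging becomes $-\mu_a\delta_a^2 = 1 + (\text{lower order})$. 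The point is to feed into the error terms of \eqref{ab8-29-1} the a priori facts $|P(x_a)-P_0| = O(|x_a - \bar x_a|^2) = O(\epsilon^4)$ from \eqref{2-13-8}, $|\nabla P(x_a)| = |D_{\nu_a}P(x_a)| = O(\epsilon^2)$ from \eqref{1-1-8}, together with $\epsilon^2 = \frac{1}{-\mu_a} \sim \delta_a^2$ at leading order; this collapses all the error terms in \eqref{ab8-29-1} to $O(\delta_a^4)$ and produces \eqref{abc8-29-1} with $\gamma_1$ an explicit constant coming from the $P_0$-term (and the $O(\epsilon^2)$ corrections to $\epsilon^2$ versus $\delta_a^2$, which must be tracked to the right order).

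Next, with the relation $-\mu_a\delta_a^2 = 1 + \gamma_1\delta_a^2 + O(\delta_a^4)$ in hand, I would convert $\epsilon$ to $\delta_a$ in the two location identities \eqref{2-13-8} and \eqref{8-22-3}. Since $\epsilon = 1/\sqrt{-\mu_a}$, the identity \eqref{abc8-29-1} gives $\epsilon^2 = \delta_a^2(1 - \gamma_1\delta_a^2 + O(\delta_a^4))$, so $\epsilon^2 = \delta_a^2 + O(\delta_a^4)$ and hence any quantity of the form $(\text{const})\epsilon^2 + O(\epsilon^4)$ is automatically $(\text{const})\delta_a^2 + O(\delta_a^4)$. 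Applying this to \eqref{8-22-3} — which already reads $\bar x_a - b_0 = L\epsilon^2 + O(\epsilon^4)$ and $x_a - \bar x_a = -\frac{B}{2a_*}\frac{\partial\Delta P(b_0)}{\partial\nu}(\frac{\partial^2 P(b_0)}{\partial\nu^2})^{-1}\epsilon^2 + O(\epsilon^4)$ — and adding the two lines gives $x_a - b_0 = \bar L\delta_a^2 + O(\delta_a^4)$, where $\bar L = L - \frac{B}{2a_*}\frac{\partial\Delta P(b_0)}{\partial\nu}(\frac{\partial^2 P(b_0)}{\partial\nu^2})^{-1}$. This is exactly \eqref{8-27-38}.

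The main obstacle I anticipate is the circular dependence: the error terms in \eqref{ab8-29-1} are expressed through $P(x_a) - P_0$ and $\nabla P(x_a)$, which in turn are controlled (via \eqref{1-1-8}, \eqref{2-13-8}, \eqref{8-22-3}) only in terms of $\epsilon$, and $\epsilon$ itself is determined by $\mu_a$, which is what \eqref{abc8-29-1} is trying to pin down. The way to break the loop is a two-step bootstrap: first use the crude bound $\epsilon = o(1)$ together with \eqref{ab8-29-1} to get $-\mu_a\delta_a^2 = 1 + o(1)$, hence $\epsilon^2 \sim \delta_a^2$ and in particular $\epsilon = O(\delta_a)$; then re-insert this improved rate into the error terms of \eqref{ab8-29-1} — now $|P(x_a)-P_0| = O(\delta_a^4)$, $|\nabla P(x_a)| = O(\delta_a^2)$ — to upgrade the conclusion to $-\mu_a\delta_a^2 = 1 + \gamma_1\delta_a^2 + O(\delta_a^4)$. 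One has to be careful that the $\delta_a^4$ remainder genuinely absorbs $|\nabla P(x_a)|(\sqrt{-\mu_a})^{-3} = O(\delta_a^2)\cdot O(\delta_a^3) = O(\delta_a^5)$ and $|P(x_a)-P_0|(-\mu_a)^{-1} = O(\delta_a^4)\cdot O(\delta_a^2) = O(\delta_a^6)$, both of which are indeed $o(\delta_a^4)$, so the bootstrap closes cleanly; the only genuine contribution at order $\delta_a^4$ beyond $\gamma_1\delta_a^2$ comes from expanding $(1 - \frac{P_0}{2\mu_a})^{-2}$ to second order, which is where the precise value of $\gamma_1$ and the structure of the $O(\delta_a^4)$ term are fixed.
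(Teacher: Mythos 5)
Your proposal is correct and follows essentially the same route as the paper, which simply derives \eqref{abc8-29-1} from the normalization expansion \eqref{ab8-29-1} and then obtains \eqref{8-27-38} by combining \eqref{8-22-3} with the resulting identification $\epsilon^2=\delta_a^2+O(\delta_a^4)$. The bootstrap you describe to break the circularity between $\epsilon$ and $\delta_a$, and the bookkeeping showing the remainders are absorbed into $O(\delta_a^4)$, are exactly the details the paper's two-line proof leaves implicit.
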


\begin{proof}

First, \eqref{ab8-29-1} shows that \eqref{abc8-29-1} holds.
Then we can find \eqref{8-27-38}  by \eqref{8-22-3} and \eqref{abc8-29-1}.
\end{proof}

 Let $u(x)\mapsto a^{-\frac{1}{2}}\delta^{-2}_{a} u(x).$ Then the problem
\eqref{8-18-1}--\eqref{8-28-3} can be changed into the following problem
\begin{equation}\label{1-12-11}
-\delta_a^2\Delta u+ \bigl( -\mu_a \delta_a^2 + \delta_a^2 P(x)\bigr)u=\frac{1}{8\pi\delta^{2}_{a}}\int_{\R^{3}} \frac{u^2(y)}{|x-y|}dy\,u(x),~
u\in H^1(\R^3),
\end{equation}
and
\begin{equation}\label{a1-12-11}
\int_{\R^3} u^2=a\delta_a^4.
\end{equation}
Then
similar to Lemma \ref{lem-5-05-1}, the single-peak solution of \eqref{1-12-11}--\eqref{a1-12-11}
concentrating at $b_0$ can be written as
$\tilde{U}_{\delta_a,x_{a}}+\tilde{\varphi}_{a}(x),$
with $
|x_{a}-b_0|=o(1),$ $\|\tilde{\varphi}_{a}\|_{\delta_a}=o(\delta_a^{\frac{3}{2}}),$
 and
\begin{equation*}
\begin{split}
{\tilde{\varphi}_{a}} \in \tilde{E}_{a,{x}_{a}}&:=\left \{ \varphi\in H^1(\R^3):
\Big\langle v,\frac{\partial \tilde{U}_{\delta_a,x_{a}}}{\partial{x_j}}\Big\rangle_{\delta_{a}}=0, ~j=1,2,3
 \right\},
 \end{split}
 \end{equation*}
where  $\tilde{U}_{\delta_a,x_{a}}:=\big(1+(\gamma_1+P_0)\delta_a^2\big)
U\Big(\frac{\sqrt{1+(\gamma_1+P_0)\delta_a^2}
(x-x_{a})}{\delta_a}\Big)$,  $\|\varphi\|^2_{\delta_a}:=\displaystyle\int_{\mathbb R^3} \bigl(  \delta_a^{2} |\nabla \varphi|^2 +\varphi^2\bigr)$
and $\gamma_1$ is the constant in \eqref{abc8-29-1}.
Then we can write the equation \eqref{1-12-11} as follows:
 \begin{equation*}
    \bar{L}_a(\tilde{\varphi}_{a})
    = \mathcal {R}_{a,\delta_{a}}\big(\tilde{\varphi}_{a}\big)+\bar{\mathcal {L}}_{a}(x),\end{equation*}
    where $\mathcal {R}_{a,\delta_{a}}$ is defined by \eqref{06-07-1},
 \begin{equation*}\label{06-07-3-2}
 \begin{split}
 \bar{L}_a(\tilde{\varphi}_{a}):=  & -\delta_{a}^{2}\Delta \tilde{\varphi}_{a}+\Big(\bigl( -\mu_a \delta_a^2 + \delta_a^2 P(x)\bigr)\tilde{\varphi}_{a}-
    \frac{1}{4\pi\delta_{a}^{2}}\int_{\R^{3}}
    \frac{\widetilde{U}_{\delta_{a},x_{a}}(y)\tilde{\varphi}_{a}(y)}{|x-y|}dy\widetilde{U}_{\delta_{a},x_{a}}(x)
    \\
    &\quad-\frac{1}{8\pi\delta_{a}^{2}}
    \int_{\R^{3}}\frac{(\widetilde{U}_{\delta_{a},x_{a}}(y))^{2}}{|x-y|}dy\tilde{\varphi}_{a}(x)\Big)
\end{split}
\end{equation*}
and
 \begin{equation*}\label{7-15-11}
 \begin{split}
  \tilde{\mathcal {L}}_{a}&=- \sum_{i=1}^m\big(-\mu_a \delta_a^2-(1+\gamma_{1}\delta_a^2)
  +(P(x)-P_{0})\delta_a^2\big)
  \tilde{U}_{\delta_a,x_{a}}.
 \end{split}
\end{equation*}

\begin{Lem}
There holds
 \begin{equation}\label{8-27-26}
\|\tilde{\varphi}_{a}\|_{\delta_a}=O\big(\delta_a^{\frac{11}{2}}\big).
\end{equation}
\end{Lem}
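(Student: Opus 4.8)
The estimate \eqref{8-27-26} is the analogue of \eqref{lt1} (equivalently \eqref{06-09-4}) in the rescaled coordinates adapted to the uniqueness problem, so the proof follows the same contraction-mapping scheme already used in Lemma~\ref{lem-5-05-1}, but now exploiting the sharper information on $x_a$ provided by \eqref{abc8-29-1} and \eqref{8-22-3}--\eqref{8-27-38}. First I would record the operator equation $\bar L_a(\tilde\varphi_a)=\mathcal R_{a,\delta_a}(\tilde\varphi_a)+\tilde{\mathcal L}_a$ and note that the invertibility of $\bar L_a$ on $\tilde E_{a,x_a}$, with a bound $\|\bar L_a v\|_{\delta_a}\ge \varrho\|v\|_{\delta_a}$ uniform in $a$, is proved exactly as in Lemma~\ref{lem-A.1} (the extra term $-\mu_a\delta_a^2=1+\gamma_1\delta_a^2+O(\delta_a^4)$ is a harmless $O(\delta_a^2)$ perturbation of the coefficient $1$). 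Similarly the nonlinear remainder obeys $\|\mathcal R_{a,\delta_a}(\tilde\varphi_a)\|_{\delta_a}=O\big(\delta_a^{-3}\|\tilde\varphi_a\|^3_{\delta_a}+\delta_a^{-3/2}\|\tilde\varphi_a\|^2_{\delta_a}\big)$, just as in \eqref{aaab10-31-7} with $\epsilon$ replaced by $\delta_a$.

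The new point is the estimate of $\tilde{\mathcal L}_a$. Write
\[
\tilde{\mathcal L}_a=-\big(-\mu_a\delta_a^2-(1+\gamma_1\delta_a^2)\big)\tilde U_{\delta_a,x_a}-\delta_a^2\big(P(x)-P_0\big)\tilde U_{\delta_a,x_a}.
\]
The first bracket is $O(\delta_a^4)$ by \eqref{abc8-29-1}, and $\|\tilde U_{\delta_a,x_a}\|_{\delta_a}=O(\delta_a^{3/2})$, so that contribution is $O(\delta_a^{11/2})$. For the second term I would expand $P(x)-P_0$ around $x_a$ as in the proof of Lemma~\ref{lem-7-9-3}: the contributions are controlled by $|P(x_a)-P_0|\,\delta_a^{3/2}$, $|\nabla P(x_a)|\,\delta_a^{5/2}$ and $\delta_a^{7/2}$, multiplied by the outer factor $\delta_a^2$, i.e. $|P(x_a)-P_0|\,\delta_a^{7/2}+|\nabla P(x_a)|\,\delta_a^{9/2}+\delta_a^{11/2}$. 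Now use \eqref{8-27-38}, which gives $|x_a-b_0|=O(\delta_a^2)$, hence $|\nabla P(x_a)|=|\nabla P(x_a)-\nabla P(b_0)|=O(\delta_a^2)$ and $|P(x_a)-P_0|=O(|x_a-\bar x_a|+|\bar x_a-b_0|^2)=O(\delta_a^2)$ (using $P\equiv P_0$ on $\Gamma$, $\partial_\nu P=0$ on $\Gamma$, and \eqref{8-22-3}); in fact $|P(x_a)-P_0|=O(\delta_a^2)$ suffices. Plugging these in yields $\|\tilde{\mathcal L}_a\|_{\delta_a}=O(\delta_a^{11/2})$.

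With $\|\bar L_a v\|_{\delta_a}\ge\varrho\|v\|_{\delta_a}$, the remainder bound, and $\|\tilde{\mathcal L}_a\|_{\delta_a}=O(\delta_a^{11/2})$ in hand, the map $v\mapsto \bar L_a^{-1}\big(\mathcal R_{a,\delta_a}(v)+\tilde{\mathcal L}_a\big)$ is a contraction on a ball of radius $C\delta_a^{11/2}$ in $\tilde E_{a,x_a}$ for $a$ large (note $\delta_a^{-3/2}(\delta_a^{11/2})=\delta_a^2\to0$ controls the quadratic term and $\delta_a^{-3}(\delta_a^{11/2})^2$ is even smaller), so its unique fixed point $\tilde\varphi_a$ satisfies \eqref{8-27-26}. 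The only mildly delicate step is confirming that the a priori decomposition of $\tilde u_a$ guaranteed by Lemma~\ref{lem-5-05-1} (after the change of variables $u\mapsto a^{-1/2}\delta_a^{-2}u$) actually lands the error in $\tilde E_{a,x_a}$ with $\|\tilde\varphi_a\|_{\delta_a}=o(\delta_a^{3/2})$, so that the fixed point produced by the contraction coincides with the given $\tilde\varphi_a$; this is the standard uniqueness-in-the-reduction argument and I expect it to be the main (though routine) obstacle.
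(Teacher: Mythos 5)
Your proposal is correct and follows essentially the same route as the paper: the paper's proof also reduces to the invertibility of $\bar L_a$ on $\tilde E_{a,x_a}$ (as in Lemma~\ref{lem-A.1}), the remainder bound as in \eqref{aaab10-31-7}, and the key estimate $\|\tilde{\mathcal L}_a\|_{\delta_a}=O\big(|P(x_a)-P_0|\delta_a^{7/2}+|\nabla P(x_a)|\delta_a^{9/2}+\delta_a^{11/2}\big)=O(\delta_a^{11/2})$, concluded by the contraction mapping theorem. Your filling-in of why $|P(x_a)-P_0|$ and $|\nabla P(x_a)|$ are $O(\delta_a^2)$ via \eqref{abc8-29-1} and \eqref{8-22-3}--\eqref{8-27-38} is exactly the step the paper leaves implicit.
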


\begin{proof}
The proofs are similar to that of Lemma \ref{lem-5-05-1},
the difference is
 \begin{equation}\label{06-07-4}
   \|\tilde{\mathcal {L}}_{a}\|_{\delta_a}=O\Bigl( \big| P(x_{a})-P_0\big|\big)\delta_a^{\frac{7}{2}}
   +\big|\nabla P(x_{a})\big|\delta_a^{\frac{9}{2}}+ \delta_a^{\frac{11}{2}}\Bigr)
   =O\Bigl(\delta_a^{\frac{11}{2}}\Bigr).
 \end{equation}
Similar to Lemma \ref{lem-A.1},
 we can also check that  $\tilde{\mathcal {L}}_{a}$ is invertible in $\widetilde{E}_{a,x_{a}}.$
 Finally,  \eqref{06-07-4} and  the contradiction mapping theorem imply \eqref{8-27-26}.
\end{proof}

For simplicity of notations, hereafter we denote $\textbf{p}_{0}
:=1+(\gamma_1+P_0)\delta_a^2.$  Hence
$$
\tilde{U}_{\delta_a,x_{a}}:=\textbf{p}_{0}
U\Big(\frac{\sqrt{\textbf{p}_{0}}(x-x_{a})}{\delta_a}\Big).
$$

Let $u_a^{(1)}$ and $u_a^{(2)}$ be two single-peak solutions of \eqref{1-12-11}--\eqref{a1-12-11}
 concentrating  at some point $b_0,$ which can be written as
\begin{equation*}\label{8-20-1}
u_a^{(l)}=
\tilde U_{\delta_a,x_{a}^{(l)}}+\tilde{\varphi}^{(l)}_{a}(x),
~\mbox{for}~l=1,2,~\mbox{and}~\tilde{\varphi}^{(l)}_{a}\in    \tilde E_{a,{x}^{(l)}_{a}}.
\end{equation*}
Now we set
$
\xi_{a}(x)=\frac{u_{a}^{(1)}(x)-u_{a}^{(2)}(x)}
{\|u_{a}^{(1)}-u_{a}^{(2)}\|_{L^{\infty}(\R^3)}}.$
Then $\xi_{a}(x)$ satisfies $\|\xi_{a}\|_{L^{\infty}(\R^3)}=1.$ And from \eqref{1-12-11}, we find that
$\xi_a$ satisfies
\begin{equation*}
-\delta_a^2 \Delta \xi_{a}(x)+ C_{a}(x)\xi_{a}(x)-D_{a}(x)\xi_{a}(x)-E_{a}(x)=g_a(x),
\end{equation*}
where
\begin{equation*}
\begin{split}
&C_{a}(x)=\delta_a^2 P(x)-\delta_a^2 \mu_a^{(1)},~~D_{a}(x)=\frac{1}{8\pi \delta_a^2}\int_{\R^{3}}\frac{(u_a^{(1)})^{2}(y)}{|x-y|}dy,\,\,
\\
&E_{a}(x)=\frac{u_a^{(2)}(x)}{8\pi \delta_a^2}\int_{\R^{3}}\frac{(u_a^{(1)}+u_a^{(2)})\xi_{a}(y)}{|x-y|}dy,\,\,
g_{a}(x)=\frac{\delta_a^2 (\mu_{a}^{(1)}-\mu_{a}^{(2)})}
{\|u_{a}^{(1)}-u_{a}^{(2)}\|_{L^{\infty}(\R^3)}}u_a^{(2)}(x).
\end{split}
\end{equation*}
Also, similar to \eqref{2--5}, for any fixed $R\gg 1,$ there exist some $\theta>0$ and $C>0$, such that
\begin{equation}\label{a2---5}
|u^{(l)}_a(x)|+|\nabla u^{(l)}_a(x)|\leq Ce^{-\theta |x-x_{a}|/\delta_a},~\mbox{for}~l=1,2,\,\,
x\in \R^3\backslash B_{R \delta_a}(x_{a}).
\end{equation}
Now let $\bar \xi_{a}(x)=\xi_{a}\big(\frac{\delta_a }{\sqrt{\textbf{p}_{0}}}x+x^{(1)}_{a}\big),$
 we have
\begin{equation}\label{8-28-11}
-\Delta \bar \xi_{a}(x)+ \frac{C_{a}(\frac{\delta_a }{\sqrt{\textbf{p}_{0}}}x+x^{(1)}_{a})}{\textbf{p}_{0}}
\bar\xi_{a}(x)
-\frac{\bar{D}_{a}(x)}{\textbf{p}_{0}}\bar\xi_{a}(x)-\frac{\bar{E}_{a}(x)}{\textbf{p}_{0}}=
\frac{g_a(\frac{\delta_a }{\sqrt{\textbf{p}_{0}}}x+x^{(1)}_{a})}{\textbf{p}_{0}},
\end{equation}
where
\begin{equation*}
\begin{split}
&\bar{D}_{a}(x)=\frac{1}{8\pi \textbf{p}_{0}}
\int_{\R^{3}}\frac{(u_a^{(1)})^{2}(\frac{\delta_a }{\sqrt{\textbf{p}_{0}}}y+x^{(1)}_{a})}{|x-y|}dy,\,\,
\\
&\bar{E}_{a}(x)=\frac{u_a^{(2)}(\frac{\delta_a }{\sqrt{\textbf{p}_{0}}}x+x^{(1)}_{a})}{8\pi \textbf{p}_{0}}\int_{\R^{3}}\frac{(u_a^{(1)}+u_a^{(2)})(\frac{\delta_a }{\sqrt{\textbf{p}_{0}}}y+x^{(1)}_{a})\bar{\xi}_{a}(y)}{|x-y|}dy.
\end{split}
\end{equation*}

\begin{Lem}\label{lem-add1-7-11}
 For $x\in B_{\rho\sqrt{\textbf{p}_{0}}\delta_a^{-1}}(0),$ it holds
\begin{equation*}\label{add-eq1}
\frac{\bar{D}_{a}(x)}{\textbf{p}_{0}}=
\frac{1}{8\pi}\int_{\R^{3}}\frac{U^{2}(y)}{|x-y|}dy +O(\delta^{4}_{a}),
\end{equation*}
and
\begin{equation*}\label{add-eq2}
\begin{split}
\frac{\bar{E}_{a}(x)}{\textbf{p}_{0}}=\frac{U(x)}{4\pi }\int_{\R^{3}}\frac{U(y)\bar{\xi}_{a}(y)}{|x-y|}dy
+O\Big(\delta_a+\tilde{\varphi}^{2}_{a}(\frac{\delta_a x}{\sqrt{\textbf{p}_{0}}}+x^{(1)}_{a})\Big).
\end{split}
\end{equation*}

\end{Lem}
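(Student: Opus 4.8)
The plan is to prove both identities by freezing the first bubble: after the change of variables $x\mapsto\frac{\delta_a}{\sqrt{\mathbf{p}_{0}}}z+x^{(1)}_{a}$ one has $\tilde U_{\delta_a,x^{(1)}_{a}}(\frac{\delta_a}{\sqrt{\mathbf{p}_{0}}}z+x^{(1)}_{a})=\mathbf{p}_{0}U(z)$ \emph{exactly}, so that $\bar D_a/\mathbf{p}_{0}$ and $\bar E_a/\mathbf{p}_{0}$ reduce, to leading order, to the coefficients $\frac1{8\pi}\int_{\R^3}\frac{U^2(y)}{|x-y|}dy$ and $\frac{U(x)}{4\pi}\int_{\R^3}\frac{U(y)\bar\xi_a(y)}{|x-y|}dy$ of the limiting linearised operator attached to \eqref{big-u}. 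All errors are to be controlled by three facts: the smallness \eqref{8-27-26}, which upon rescaling amounts to $\bigl\|\tilde\varphi^{(l)}_{a}(\tfrac{\delta_a}{\sqrt{\mathbf{p}_{0}}}\,\cdot\,+x^{(1)}_{a})\bigr\|_{H^1(\R^3)}=O(\delta_a^{4})$, hence $|\cdot|_{q}=O(\delta_a^{4})$ for $2\le q\le6$; the uniform exponential decay \eqref{a2---5}, which in the rescaled variable gives $|\tilde\varphi^{(l)}_{a}(\tfrac{\delta_a}{\sqrt{\mathbf{p}_{0}}}w+x^{(1)}_{a})|\le Ce^{-\theta|w|}$; and the location estimate \eqref{8-27-38}, which enters only through the cancellation $|x^{(1)}_{a}-x^{(2)}_{a}|=O(\delta_a^{4})$.

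For the first identity I would substitute $u^{(1)}_a=\tilde U_{\delta_a,x^{(1)}_{a}}+\tilde\varphi^{(1)}_{a}$ into $\bar D_a$, expand the square and divide by $\mathbf{p}_{0}$; the powers of $\mathbf{p}_{0}$ cancel in the quadratic-in-$U$ term, leaving $\frac1{8\pi}\int_{\R^3}\frac{U^2(y)}{|x-y|}dy$ together with $\frac1{4\pi\mathbf{p}_{0}}\int\frac{U(y)\tilde\varphi^{(1)}_{a}(\cdot)}{|x-y|}dy$ and $\frac1{8\pi\mathbf{p}_{0}^{2}}\int\frac{(\tilde\varphi^{(1)}_{a}(\cdot))^{2}}{|x-y|}dy$. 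Each of the latter two I would bound by splitting the Riesz kernel at $|x-y|=1$: on the unit ball by H\"older against $\||x-\cdot|^{-1}\|_{L^2(B_1)}$, and outside it by H\"older/Hardy--Littlewood--Sobolev, using that $U$, the product $U\tilde\varphi^{(1)}_{a}(\cdot)$ and the square $(\tilde\varphi^{(1)}_{a}(\cdot))^{2}$ all lie in $L^{6/5}$ with the norms above. This produces $O(\delta_a^{4})$ and $O(\delta_a^{8})$ respectively, which is the claim.

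For the second identity I would additionally expand $u^{(2)}_a$ about the \emph{first} centre: from $u^{(2)}_a(\tfrac{\delta_a}{\sqrt{\mathbf{p}_{0}}}w+x^{(1)}_{a})=\mathbf{p}_{0}U\bigl(w+\tfrac{\sqrt{\mathbf{p}_{0}}(x^{(1)}_{a}-x^{(2)}_{a})}{\delta_a}\bigr)+\tilde\varphi^{(2)}_{a}(\cdot)$, together with \eqref{8-27-38} and the $C^1$-regularity and exponential decay of $U,\nabla U$ recorded after \eqref{big-u}, the mean value theorem gives $u^{(2)}_a(\tfrac{\delta_a}{\sqrt{\mathbf{p}_{0}}}w+x^{(1)}_{a})=\mathbf{p}_{0}U(w)+O(\delta_a^{3}e^{-c|w|})+\tilde\varphi^{(2)}_{a}(\cdot)$. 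Plugging the expansions of $u^{(1)}_a,u^{(2)}_a$ into $\bar E_a/\mathbf{p}_{0}$, the leading contribution is $\frac{U(x)}{4\pi}\int_{\R^3}\frac{U(y)\bar\xi_a(y)}{|x-y|}dy$; in every remaining piece $\bar\xi_a$ is paired with $U$, with an $O(\delta_a^{3}e^{-c|\cdot|})$ term, or with a rescaled remainder, so—using $\|\bar\xi_a\|_{L^\infty}=1$, the integrability of $e^{-c|\cdot|}$, and the $L^1$-smallness $\|\tilde\varphi^{(l)}_{a}(\cdot)\|_{L^1}=o(\delta_a^{3})$ obtained by combining \eqref{8-27-26} with \eqref{a2---5}—those convolutions are $O(\delta_a^{3})$; multiplying by the prefactor $\mathbf{p}_{0}U(x)+O(\delta_a^{3}e^{-c|x|})+\tilde\varphi^{(2)}_{a}(\tfrac{\delta_a x}{\sqrt{\mathbf{p}_{0}}}+x^{(1)}_{a})$ and collecting terms yields exactly the error $O\bigl(\delta_a+\tilde\varphi_a^{2}(\tfrac{\delta_a x}{\sqrt{\mathbf{p}_{0}}}+x^{(1)}_{a})\bigr)$, the last term recording the remainder of $u^{(2)}_a$ carried by the prefactor.

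The main obstacle is the tail of the convolution integrals: a priori $\bar\xi_a$ is only bounded and need not decay, so its Newton potential alone is not even finite, and one must genuinely use that $\bar\xi_a$ always appears multiplied by a factor ($U$, an exponentially small remainder, or $\tilde\varphi^{(l)}_{a}(\cdot)$) lying in $L^{6/5}\cap L^\infty$—this is where \eqref{a2---5} (exponential decay, with a \emph{uniform} constant) and the $L^2$-smallness \eqref{8-27-26} must be combined. The second delicate point is that the cancellation $|x^{(1)}_{a}-x^{(2)}_{a}|=O(\delta_a^{4})$—which rests on the non-degeneracy in $(\tilde{P})$ via \eqref{8-27-38}—is exactly what keeps the mismatch $U(w)-U\bigl(w+\tfrac{\sqrt{\mathbf{p}_{0}}(x^{(1)}_{a}-x^{(2)}_{a})}{\delta_a}\bigr)$ of order $\delta_a^{3}$ rather than $O(1)$; without it the stated leading term of $\bar E_a$ would be lost. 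Everything else is routine H\"older/Hardy--Littlewood--Sobolev bookkeeping.
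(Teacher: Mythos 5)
Your proposal is correct and follows essentially the same route as the paper: freeze the bubbles to extract the leading coefficients of the limiting linearised operator, control the $\tilde\varphi^{(l)}_a$ contributions by H\"older/Sobolev bounds on the split Riesz kernel (the paper's estimates \eqref{f}, \eqref{g} in Appendix C, via Lemmas \ref{lem-add1-1} and \ref{lem-7-9-1}), and use the mean value theorem together with \eqref{8-27-38} to absorb the mismatch of the two centres. The only cosmetic deviations are that you split the kernel at radius $1$ and pair the non-decaying $\bar\xi_a$ with an $L^1\cap L^\infty$ partner (using the exponential decay from \eqref{a2---5}), where the paper splits at radius $\sim\rho\delta_a^{-1}$ and uses $L^2\times L^2$ H\"older; both yield the stated errors.
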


\begin{proof}
By direct computations, from \eqref{f} we have
\begin{equation*}
\begin{split}
\frac{\bar{D}_{a}(x)}{\textbf{p}_{0}}
&=\frac{1}{8\pi \textbf{p}^{2}_{0} }\int_{\R^{3}}\frac{(u_a^{(1)})^{2}(\frac{\delta_a }{\sqrt{\textbf{p}_{0}}}y+x^{(1)}_{a})}{|x-y|}dy\\
&=\frac{1}{8\pi  }\int_{\R^{3}}\frac{U^{2}(y) }{|x-y|}dy+\underbrace{\frac{1}{4\pi \textbf{p}_{0} }
\int_{\R^{3}}
\frac{U(y)\tilde{\varphi}^{(1)}_{a}(\frac{\delta_a y}{\sqrt{\textbf{p}_{i}}}+x^{(1)}_{a}) }{|x-y|}dy}_{:=F_{1}}\\
&\quad+\underbrace{\frac{1}{8\pi \textbf{p}^{2}_{0} }
\int_{\R^{3}}\frac{(\tilde{\varphi}^{(1)}_{a}(\frac{\delta_a y}{\sqrt{\textbf{p}_{i}}}+x^{(1)}_{a}) )^{2} }{|x-y|}dy}_{:=F_{2}}\\
&=\frac{1}{8\pi  }\int_{\R^{3}}\frac{U^{2}(x)}{|x-y|}dy+O(\delta^{4}_{a}).
\end{split}
\end{equation*}

Also, it follows from \eqref{g} that
\begin{equation*}
\begin{split}
\frac{\bar{E}_{a}(x)}{\textbf{p}_{0}}
&=\frac{u_a^{(2)}(\frac{\delta_a x}{\sqrt{\textbf{p}_{0}}}+x^{(1)}_{a})}{8\pi \textbf{p}^{2}_{0}}\int_{\R^{3}}
\frac{(u_a^{(1)}+u_a^{(2)})(\frac{\delta_a y}{\sqrt{\textbf{p}_{0}}}+x^{(1)}_{a})\bar{\xi}_{a}(y)}{|x-y|}dy\\
&=
\frac{U(x)}{4\pi }\int_{\R^{3}}\frac{U(y)\bar{\xi}_{a}(y)}{|x-y|}dy
\\
&\quad
+\underbrace{\frac{1}{4\pi \textbf{p}_{0}}\big(u_a^{(2)}(\frac{\delta_a x}{\sqrt{\textbf{p}_{0}}}+x^{(1)}_{a})
-\textbf{p}_{0}U(x)\big)\int_{\R^{3}}\frac{U(y)\bar{\xi}_{a}(y)}{|x-y|}dy}_{:=G_{1}}
\\
&\quad+\underbrace{\frac{u_a^{(2)}(\frac{\delta_a x}{\sqrt{\textbf{p}_{0}}}+x^{(1)}_{a})}{8\pi \textbf{p}^{2}_{0}}\int_{\R^{3}}
\frac{\big((u_a^{(1)}+u_a^{(2)})
(\frac{\delta_a y}{\sqrt{\textbf{p}_{0}}}+x^{(1)}_{a})-2\textbf{p}_{0}U(y)\big)\bar{\xi}_{a}(y)}{|x-y|}dy}_{:=G_{2}}
\\
&=\frac{U(x)}{4\pi }\int_{\R^{3}}\frac{U(y)\bar{\xi}_{a}(y)}{|x-y|}dy
+O\Big(\delta_a+\tilde{\varphi}^{2}_{a}(\frac{\delta_a x}{\sqrt{\textbf{p}_{0}}}+x^{(1)}_{a})\Big).
\end{split}
\end{equation*}

\end{proof}

\begin{Lem}\label{lem---1}
 For $x\in B_{\rho\sqrt{\textbf{p}_{0}}\delta_a^{-1}}(0),$ it holds
\begin{equation}\label{8-28-12}
\frac{C_{a}(\frac{\delta_a x}{\sqrt{\textbf{p}_{0}}}+x^{(1)}_{a})}{\textbf{p}_{0}}=
1 +O\Big(\delta_a^4+\sum^2_{l=1}
 \tilde{\varphi}_a^{(l)}(\frac{\delta_a x}{\sqrt{\textbf{p}_{0}}}+x^{(1)}_{a})
 \Big),
\end{equation}
and
\begin{equation}\label{8-28-13}
\begin{split}
\frac{g_a(\frac{\delta_a x}{\sqrt{\textbf{p}_{0}}}+x^{(1)}_{a})}{\textbf{p}_{0}}
=
-\frac{1}{4\pi a_*}U(x)\int_{\R^3}\int_{\R^3}\frac{U^2(x)U(y)\bar \xi_{a}(y)}{|x-y|}dx\,dy
+O\Big(\delta_a+\sum^2_{l=1}
 \tilde{\varphi}_a^{(l)}(\frac{\delta_a x}{\sqrt{\textbf{p}_{0}}}+x^{(1)}_{a})
 \Big).
\end{split}
\end{equation}
\end{Lem}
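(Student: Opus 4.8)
The plan is to treat the two identities separately but by the same underlying mechanism: substitute the known single-peak ansatz $u_a^{(l)}=\tilde U_{\delta_a,x_a^{(l)}}+\tilde\varphi_a^{(l)}$ into the definitions of $C_a$ and $g_a$, rescale by the change of variables $x\mapsto \frac{\delta_a}{\sqrt{\mathbf p_0}}x+x_a^{(1)}$, and expand using the relation $-\mu_a\delta_a^2=1+\gamma_1\delta_a^2+O(\delta_a^4)$ from \eqref{abc8-29-1}, together with $|x_a-b_0|=O(\delta_a^2)$ and $P(x_a)-P_0=O(\delta_a^2)$ (from \eqref{8-27-38} and the assumption $(P)$), and the pointwise decay \eqref{a2---5}.

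First I would handle \eqref{8-28-12}. By definition $C_a(x)=\delta_a^2P(x)-\delta_a^2\mu_a^{(1)}$, so after rescaling, $\tfrac{1}{\mathbf p_0}C_a(\tfrac{\delta_a x}{\sqrt{\mathbf p_0}}+x_a^{(1)})=\tfrac{1}{\mathbf p_0}\bigl(-\mu_a^{(1)}\delta_a^2+\delta_a^2 P(\tfrac{\delta_a x}{\sqrt{\mathbf p_0}}+x_a^{(1)})\bigr)$. Using $-\mu_a^{(1)}\delta_a^2=1+\gamma_1\delta_a^2+O(\delta_a^4)$, $P(\tfrac{\delta_a x}{\sqrt{\mathbf p_0}}+x_a^{(1)})=P_0+O(\delta_a^2)+O(\delta_a|x|)$ on the relevant ball, and $\mathbf p_0=1+(\gamma_1+P_0)\delta_a^2$, the numerator equals $\mathbf p_0+O(\delta_a^4)$ up to terms controlled by $\tilde\varphi_a^{(l)}$ coming from the part of $P$ that sees the error term; dividing by $\mathbf p_0$ gives $1+O(\delta_a^4)$ plus the stated $\tilde\varphi$-terms. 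The $\tilde\varphi$ contributions enter only because the coefficient $C_a$ depends on $u_a^{(1)}$ implicitly through $\mu_a^{(1)}$, which in turn was estimated using the ansatz; I would track these as in the earlier lemmas of Section~\ref{s5}.

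Next, for \eqref{8-28-13}, recall $g_a(x)=\dfrac{\delta_a^2(\mu_a^{(1)}-\mu_a^{(2)})}{\|u_a^{(1)}-u_a^{(2)}\|_{L^\infty}}u_a^{(2)}(x)$. The crux is to express the Lagrange-multiplier difference $\mu_a^{(1)}-\mu_a^{(2)}$ via the normalization \eqref{a1-12-11}: subtracting the mass constraints for $u_a^{(1)}$ and $u_a^{(2)}$ and using \eqref{ab8-29-1}/\eqref{abc8-29-1} to relate $\mu_a^{(l)}$ to $a$ through the integral $\int (u_a^{(l)})^2$, one obtains $\delta_a^2(\mu_a^{(1)}-\mu_a^{(2)})$ in terms of $\int_{\R^3}(u_a^{(1)}+u_a^{(2)})(u_a^{(1)}-u_a^{(2)})$, hence after dividing by the $L^\infty$ norm, in terms of $\int_{\R^3}(u_a^{(1)}+u_a^{(2)})\xi_a$. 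Rescaling this integral and using Lemma~\ref{lem-add1-7-11} (the $\bar E_a$ computation) together with the identity \eqref{06-13-1} to convert $\int U^2$ into $\tfrac{3}{32\pi}\int\int \tfrac{U^2U^2}{|x-y|}$ and hence tie the constant to $a_*$, I would arrive at the coefficient $-\tfrac{1}{4\pi a_*}\int\int\tfrac{U^2(x)U(y)\bar\xi_a(y)}{|x-y|}$. Multiplying by $u_a^{(2)}(\tfrac{\delta_a x}{\sqrt{\mathbf p_0}}+x_a^{(1)})=\mathbf p_0 U(x)+O(\tilde\varphi_a^{(2)})$ and dividing by $\mathbf p_0$ yields the leading term $-\tfrac{1}{4\pi a_*}U(x)\int\int\tfrac{U^2(x)U(y)\bar\xi_a(y)}{|x-y|}\,dx\,dy$ with the stated error.

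The main obstacle I expect is the careful bookkeeping in the second identity: extracting the precise constant $-\tfrac{1}{4\pi a_*}$ requires combining the asymptotic expansion of $\int (u_a^{(l)})^2$ (to second order in $\delta_a$, so that the difference does not lose the leading behavior), the nonlocal structure through Lemma~\ref{lem-add1-7-11}, and the integral identity \eqref{06-13-1}, all while keeping the $\tilde\varphi_a^{(l)}$-dependent remainders in the form claimed rather than absorbing them into an $o(1)$. The first identity is comparatively routine once the relation \eqref{abc8-29-1} is invoked.
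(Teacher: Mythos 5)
Your treatment of \eqref{8-28-12} matches the paper: it follows directly from \eqref{abc8-29-1} and \eqref{8-27-38}. For \eqref{8-28-13}, however, your proposed mechanism for extracting the multiplier difference has a genuine gap. You propose to get $\delta_a^2(\mu_a^{(1)}-\mu_a^{(2)})$ by ``subtracting the mass constraints'' and expressing the result through $\int_{\R^3}(u_a^{(1)}+u_a^{(2)})(u_a^{(1)}-u_a^{(2)})$. But both solutions satisfy the \emph{same} constraint $\int (u_a^{(l)})^2=a\delta_a^4$, so that integral is exactly zero; subtracting the constraints yields only the orthogonality relation $\int_{\R^3}(u_a^{(1)}+u_a^{(2)})\xi_a=0$ (which the paper does use, but as an auxiliary cancellation), and gives no information about $\mu_a^{(1)}-\mu_a^{(2)}$. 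Likewise, \eqref{ab8-29-1}/\eqref{abc8-29-1} are asymptotic expansions with $O(\delta_a^4)$ remainders that are not controlled by $\|u_a^{(1)}-u_a^{(2)}\|_{L^\infty(\R^3)}$, so they cannot be differenced and divided by that (possibly arbitrarily small) norm to produce the $O(1)$ quantity appearing in $g_a$.

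What is actually needed, and what the paper does, is an \emph{exact} identity for each $\mu_a^{(l)}$: test \eqref{1-12-11} against $u_a^{(l)}$ and use \eqref{a1-12-11} to get
$a\mu_a^{(l)}\delta_a^6=\delta_a^2\int_{\R^3}\bigl(|\nabla u_a^{(l)}|^2+P(x)(u_a^{(l)})^2\bigr)-\frac{1}{8\pi\delta_a^2}\iint\frac{(u_a^{(l)})^2(x)(u_a^{(l)})^2(y)}{|x-y|}$.
Subtracting these for $l=1,2$, dividing by the $L^\infty$ norm, and simplifying via the weak forms of the two equations together with $\int(u_a^{(1)}+u_a^{(2)})\xi_a=0$ leaves exactly two terms: one proportional to $(\mu_a^{(2)}-\mu_a^{(1)})\delta_a^2\int u_a^{(1)}\xi_a$ (which is $O(\delta_a^{5/2})$), and the nonlocal term $\frac{1}{8\pi\delta_a^2}\iint\frac{[(u_a^{(1)})^2(x)u_a^{(2)}(y)+(u_a^{(2)})^2(x)u_a^{(1)}(y)]\xi_a(y)}{|x-y|}$, whose rescaled evaluation (estimate \eqref{uu} in the appendix, not Lemma~\ref{lem-add1-7-11}, which concerns $\bar E_a$) produces the constant $-\frac{1}{4\pi a_*}$ after dividing by $a_*\delta_a^5$. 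Without this energy-identity step your argument cannot produce the leading term of \eqref{8-28-13}; the identity \eqref{06-13-1} you invoke plays no role here.
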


\begin{proof}
First, \eqref{8-28-12}  can be deduced by \eqref{abc8-29-1} and \eqref{8-27-38} directly.
 Now we prove \eqref{8-28-13}.

From \eqref{1-12-11} and \eqref{a1-12-11}, for $l=1,2$, we find
\begin{equation*}
\begin{split}
 a\mu^{(l)}_a \delta_a^6= \delta_a^2\int_{\R^3} \big( |\nabla u_a^{(l)}|^2+ P(x)(u_a^{(l)})^2\big)- \frac{1}{8\pi \delta^{2}_{a}}\int_{\R^3}\int_{\R^3}\frac{(u_a^{(l)})^2(x)(u_a^{(l)})^2(y)}{|x-y|}dx\,dy,
\end{split}\end{equation*}
which gives
\begin{equation}\label{06-10-2}
\begin{split}
&\frac{a\delta_a^6(\mu_{a}^{(1)}-\mu_{a}^{(2)})}
{\|u_{a}^{(1)}-u_{a}^{(2)}\|_{L^{\infty}(\R^3)}} \\=&
 -\mu_a^{(2)}\delta_a^{2}\int_{\R^3}
(u_a^{(1)}+u_a^{(2)}) \xi_a+\delta_a^2\int_{\R^3}  \big(  \nabla (u_a^{(1)}+u_a^{(2)})\cdot
 \nabla \xi_a + P(x)(u_a^{(1)}+u_a^{(2)}) \xi_a\big)\\&
- \frac{1}{8\pi \delta^{2}_{a}}\int_{\R^3}\int_{\R^3}\frac{(u_a^{(l)})^2(x)(u_a^{(1)}+u_a^{(2)})\xi_a(y)
+(u_a^{(1)}+u_a^{(2)})\xi_a(x)(u_a^{(2)})^2(y)}{|x-y|}dx\,dy
\\=&
-\bigl( \mu^{(2)}_a -\mu_a^{(1)}\bigr)\delta_a^2\int_{\R^3}  u_a^{(1)}\xi_a
- \frac{1}{8\pi \delta^{2}_{a}} \int_{\R^3}\int_{\R^3}
\frac{\big[(u_a^{(1)})^{2}(x)u_a^{(2)}(y)+(u_a^{(2)})^{2}(x)u_a^{(1)}(y)\big]\xi_a(y)}{|x-y|}dx\,dy,
\end{split}
\end{equation}
here we use the following identity:
\begin{equation*}\label{*}
\int_{\R^3}
 \bigl( u_a^{(1)}+ u_a^{(2)}\bigr)\xi_a= \frac{1}
{\|u_{a}^{(1)}-u_{a}^{(2)}\|_{L^{\infty}(\R^3)}}\Big(\int_{\R^3}
 ( u_a^{(1)} )^2 -\int_{\R^3}
 ( u_a^{(2)} )^2\Big) =0.
\end{equation*}
Then from \eqref{abc8-29-1}, \eqref{8-27-38}, \eqref{8-27-26} and \eqref{06-10-2}, we know
\begin{equation*}\label{06-10-1}
\begin{split}
\frac{\delta_a^2 (\mu_{a}^{(1)}-\mu_{a}^{(2)})}
{\|u_{a}^{(1)}-u_{a}^{(2)}\|_{L^{\infty}(\R^3)}}
&=\frac{1}{a_*\delta^{3}_a}
\bigl( \mu^{(2)}_a -\mu_a^{(1)}\bigr)\delta_a^2\int_{\R^3}  u_a^{(1)}\xi_a\\
&\quad
-\frac{1}{8 \pi a_*\delta^{5}_a} \int_{\R^3}\int_{\R^3}
\frac{\big[(u_a^{(1)})^{2}(x)u_a^{(2)}(y)+(u_a^{(2)})^{2}(x))u_a^{(1)}(y)\big]\xi_a(y)}{|x-y|}dx\,dy.
\end{split}
\end{equation*}

By \eqref{abc8-29-1}, H\"older inequality and \eqref{8-27-26}, we have
\begin{equation*}\label{20-7-11-2}
\begin{split}
&\frac{1}{a_*\delta^{3}_a}
\bigl( \mu^{(2)}_a -\mu_a^{(1)}\bigr)\delta_a^2\int_{\R^3}  u_a^{(1)}\xi_a\\
&=\frac{-1}{a_*\delta^{3}_a}
\bigl( \mu^{(2)}_a -\mu_a^{(1)}\bigr)\delta_a^2
\Big(\int_{\R^3}  \textbf{p}_{0}U\big(\frac{\sqrt{\textbf{p}_{0}}}{\delta_{a}}(x-x^{(1)}_{a}\big)\xi_a
+\int_{\R^3}\tilde{\varphi}_a^{(1)}\xi_a\Big)\\
&=\frac{1}{a_*\delta^{3}_a}O(\delta^{4}_{a})
\big(C\delta^{\frac{3}{2}}_{a}+C\delta^{\frac{11}{2}}_{a}\Big)
=O(\delta^{\frac{5}{2}}_a).
\end{split}
\end{equation*}

From \eqref{uu-1-1}, we can check that
\begin{equation*}\label{20-7-11-1}
\begin{split}
&\frac{1}{8 \pi a_*\delta^{5}_a}\int_{\R^3}\int_{\R^3}
\frac{\big[(u_a^{(1)})^{2}(x)u_a^{(2)}(y)+(u_a^{(2)})^{2}(x))u_a^{(1)}(y)\big]\xi_a(y)}{|x-y|}dx\,dy
\\
&=\frac{1}{8 \pi a_*\delta^{5}_a}\Big(2\delta_{a}^{5}\textbf{p}^{\frac{1}{2}}_{0}\int_{\R^{3}}\int_{\R^{3}}
\frac{U^{2}(x)U(y)\bar{\xi}_{a}(y)}{|x-y|}dxdy +O(\delta^{6}_{a}) \Big)
\\
&=\frac{1}{4 \pi a_*}\int_{\R^{3}}\int_{\R^{3}}
\frac{U^{2}(x)U(y)\bar{\xi}_{a}(y)}{|x-y|}dxdy +O(\delta^{6}_{a}).
\end{split}
\end{equation*}
Noting that in $x\in B_{\rho\sqrt{\textbf{p}_{0}}\delta_a^{-1}}(0),$
by the mean value theorem and \eqref{8-27-38} we have
\begin{equation*}\label{20-7-11-1}
\begin{split}
&u^{(2)}_a(\frac{\delta_a x}{\sqrt{\textbf{p}_{0}}}+x^{(1)}_{a})\\
&=\textbf{p}_{0}U(x)+\textbf{p}_{0}\Big(U\big(x+\frac{\sqrt{\textbf{p}_{0}}}{\delta_a}(x^{(1)}_{a}-x^{(2)}_{a}\big)-U(x)\Big)
+\tilde{\varphi}^{(2)}_{a}(\frac{\delta_a x}{\sqrt{\textbf{p}_{0}}}+x^{(1)}_{a})\\
&=\textbf{p}_{0}U(x)
+\textbf{p}_{0}\nabla U\big(\zeta x+(1-\zeta)\frac{\sqrt{\textbf{p}_{0}}}{\delta_a}(x^{(1)}_{a}-x^{(2)}_{a})\big)
\cdot \frac{\sqrt{\textbf{p}_{0}}}{\delta_a}(x^{(1)}_{a}-x^{(2)}_{a})
+\tilde{\varphi}^{(2)}_{a}\big(\frac{\delta_a x}{\sqrt{\textbf{p}_{0}}}+x^{(1)}_{a}\big)
\\
&=\textbf{p}_{0}U(x)+O(\delta_{a})+\tilde{\varphi}^{(2)}_{a}\big(\frac{\delta_a x}{\sqrt{\textbf{p}_{0}}}+x^{(1)}_{a}\big),
\end{split}
\end{equation*}
which combining all the estimates above implies that \eqref{8-28-13} holds.
\end{proof}

Then from Lemmas \ref{lem-add1-7-11} and \ref{lem---1}, we have the following result.
\begin{Lem}
  From $|\bar \xi_{a}|\le 1,$ we suppose that  $\bar \xi_{a}(x)\rightarrow \xi(x)$ in $C^1_{loc}(\R^3).$
   Then  $\xi(x)$ satisfies following system:
  \begin{equation*}
  \begin{split}
&-\Delta \xi(x)+\xi(x)
-\frac{1}{8\pi}\int_{\R^{3}}\frac{U^{2}(y)}{|x-y|}dy\xi(x)
-\frac{1}{4\pi}\int_{\R^{3}}\frac{U(y)\xi(y)}{|x-y|}dy\,U(x)
\\
&\quad=-\frac{1}{4\pi a_*}U(x)\int_{\R^3}\int_{\R^3}\frac{U^2(x)U(y)\xi(y)}{|x-y|}dx\,dy.
\end{split}
\end{equation*}
\end{Lem}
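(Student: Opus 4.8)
The plan is to pass to the limit $a\to+\infty$, equivalently $\delta_a\to 0$, directly in the rescaled equation \eqref{8-28-11}, using the expansions of its coefficients supplied by Lemmas \ref{lem-add1-7-11} and \ref{lem---1}. First I would recall that $\|\tilde{\varphi}_{a}^{(l)}\|_{\delta_a}=O(\delta_a^{11/2})$ (as in \eqref{8-27-26}) and that, after rescaling and a Moser iteration as in Section \ref{s3}, this gives $\tilde{\varphi}_{a}^{(l)}\bigl(\tfrac{\delta_a x}{\sqrt{\textbf{p}_0}}+x_a^{(1)}\bigr)\to 0$ uniformly in $x$ (note that the shift between the two centers is $O(\delta_a^{2})$ by \eqref{8-27-38}, hence harmless). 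Combined with $\delta_a\to 0$, this shows that on every compact set $K\subset\R^3$ one has, uniformly on $K$,
$$\frac{C_a\bigl(\tfrac{\delta_a x}{\sqrt{\textbf{p}_0}}+x_a^{(1)}\bigr)}{\textbf{p}_0}\longrightarrow 1,\qquad \frac{\bar{D}_{a}(x)}{\textbf{p}_0}\longrightarrow \frac{1}{8\pi}\int_{\R^3}\frac{U^2(y)}{|x-y|}\,dy,$$
while all the $O(\delta_a^4)$, $O(\delta_a)$, $O(\tilde{\varphi}_a^2(\cdot))$ and $O(\sum_l\tilde{\varphi}_a^{(l)}(\cdot))$ error terms in the formulas for $\bar{E}_{a}/\textbf{p}_0$ and $g_a/\textbf{p}_0$ vanish uniformly on $K$. (The $C^1_{loc}$ precompactness of $\bar\xi_a$ assumed in the statement comes from $|\bar\xi_a|\le1$ together with the $L^\infty_{loc}$-bound on the right side of \eqref{8-28-11} and elliptic regularity.)

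The key step is the passage to the limit in the nonlocal terms. Since $|\bar{\xi}_{a}|\le 1$ and $\bar{\xi}_{a}\to\xi$ in $C^1_{loc}(\R^3)$, I would prove that
$$\int_{\R^3}\frac{U(y)\bar{\xi}_{a}(y)}{|x-y|}\,dy\longrightarrow\int_{\R^3}\frac{U(y)\xi(y)}{|x-y|}\,dy\quad\text{locally uniformly in }x,$$
and that $\int_{\R^3}\int_{\R^3}\frac{U^2(x)U(y)\bar{\xi}_{a}(y)}{|x-y|}\,dx\,dy\to\int_{\R^3}\int_{\R^3}\frac{U^2(x)U(y)\xi(y)}{|x-y|}\,dx\,dy$. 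For the first limit, split the integral over $\{|x-y|\le R_0\}$, over $\{|x-y|>R_0,\ |y|\le R_1\}$ and over $\{|y|>R_1\}$: on the first piece $|x-y|^{-1}$ is locally integrable and $|U(y)\bar{\xi}_{a}(y)|\le U(y)$, and $\bar{\xi}_{a}\to\xi$ uniformly there; on the second piece one uses uniform convergence of $\bar{\xi}_{a}$ on $\{|y|\le R_1\}$; on the third piece the exponential decay of $U$ (from \eqref{big-u}, $U(y)e^{|y|}|y|\to\lambda_0$) makes the contribution uniformly small in $a$. An $\varepsilon/3$ argument then gives the claim, uniformly for $x$ in any compact set. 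For the double integral the integrand is dominated by $U^2(x)U(y)/|x-y|\in L^1(\R^3\times\R^3)$, so dominated convergence applies directly.

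Putting the two steps together, every coefficient and the right-hand side of \eqref{8-28-11} converges locally uniformly to the corresponding term of the asserted limiting equation. Testing \eqref{8-28-11} against an arbitrary $\phi\in C_c^\infty(\R^3)$ and using $\bar{\xi}_{a}\to\xi$ in $C^1_{loc}$ (so $\int_{\R^3}\nabla\bar{\xi}_{a}\cdot\nabla\phi\to\int_{\R^3}\nabla\xi\cdot\nabla\phi$), I would pass to the limit to obtain that $\xi$ is a distributional solution of
$$-\Delta\xi+\xi-\frac{1}{8\pi}\Bigl(\int_{\R^3}\frac{U^2(y)}{|x-y|}\,dy\Bigr)\xi-\frac{1}{4\pi}\Bigl(\int_{\R^3}\frac{U(y)\xi(y)}{|x-y|}\,dy\Bigr)U(x)=-\frac{1}{4\pi a_*}U(x)\int_{\R^3}\int_{\R^3}\frac{U^2(x)U(y)\xi(y)}{|x-y|}\,dx\,dy,$$
hence a classical solution by elliptic regularity since the right-hand side is bounded and continuous. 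The hard part will be the second step: one must control the tails of the convolution integrals uniformly in $a$ even though $\bar{\xi}_{a}$ is merely bounded and does not decay, and this is precisely where the exponential decay of $U$ is indispensable.
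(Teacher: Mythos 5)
Your proposal is correct and follows essentially the same route as the paper: the paper derives this lemma directly from the coefficient expansions in Lemmas \ref{lem-add1-7-11} and \ref{lem---1} by passing to the limit in the rescaled equation \eqref{8-28-11}, which is exactly what you do. Your write-up merely supplies the details (uniform smallness of the $\tilde{\varphi}_a^{(l)}$ terms, the splitting/dominated-convergence argument for the nonlocal integrals, and the weak-formulation limit) that the paper leaves implicit.
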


To prove $\xi=0,$
 we write
\begin{equation}\label{aaaaaluo--1}
\bar \xi _{a}(x)= \sum^3_{j=0}\beta_{a,j}\psi_j+\tilde{\xi}_{a}(x),~\mbox{in} ~H^1(\R^3),
\end{equation}
where $\psi_j (j=0,1,2,3)$ are the functions in \eqref{aaaaa} and $\tilde{\xi}_{a}(x)\in \tilde{E}$ with
\begin{equation*}
\tilde{E}=\big\{u\in H^1(\R^3), \langle u,\psi_j\rangle=0, ~\mbox{for}~j=0,1,2,3\big\}.
\end{equation*}
It is standard to prove the following result.

\begin{Lem}\label{lem8-27-4}
For any $u\in \tilde{E}$,  there exists $\bar\gamma>0$ such that
\begin{equation*}
\|\tilde  L(u)\|\geq \bar\gamma \|u\|,
\end{equation*}
where $\tilde  L$ is defined by
\begin{equation*}
\begin{split}
\tilde  L(u):&=-\Delta u(x)+u(x)
-\frac{1}{8\pi}\int_{\R^{3}}\frac{U^{2}(y)}{|x-y|}dyu(x)
-\frac{1}{4\pi}\int_{\R^{3}}\frac{U(y)u(y)}{|x-y|}dy\,U(x)
\\
&\quad+\frac{1}{4\pi a_*}U(x)\int_{\R^3}\int_{\R^3}\frac{U^2(x)U(y)u(y)}{|x-y|}dx\,dy.
\end{split}
\end{equation*}
\end{Lem}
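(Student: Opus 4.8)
The plan is to argue by contradiction, exploiting that $\tilde L$ is a compact perturbation of the identity on $H^{1}(\R^{3})$ together with an exact description of $\ker\tilde L$. Writing the quadratic part $-\Delta u+u$ as the $H^{1}$ inner product, one has $\tilde L=I-K$, where $K$ collects the four lower-order contributions: multiplication of $u$ by the bounded, decaying potential $\frac{1}{8\pi}\int_{\R^{3}}\frac{U^{2}(y)}{|x-y|}dy$, the Riesz term $u\mapsto\frac{1}{4\pi}\big(\int_{\R^{3}}\frac{U(y)u(y)}{|x-y|}dy\big)U(x)$, and the rank-one operator $u\mapsto-\frac{1}{4\pi a_{*}}U(x)\int_{\R^{3}}\int_{\R^{3}}\frac{U^{2}(x)U(y)u(y)}{|x-y|}dx\,dy$. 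Each of these is compact on $H^{1}(\R^{3})$: using the exponential decay of $U$ recalled after \eqref{big-u}, together with the Rellich theorem on balls and the Hardy--Littlewood--Sobolev inequality, one checks that if $u_{n}\rightharpoonup 0$ in $H^{1}$ then the image of $u_{n}$ under each of these operators converges strongly to $0$. Hence $K$ is compact and $\tilde L=I-K$ is Fredholm of index zero.

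The crucial point is that $\ker\tilde L=\mathrm{span}\{\psi_{0},\psi_{1},\psi_{2},\psi_{3}\}$. Write $\tilde L=L_{0}+T$, where $L_{0}u:=-\Delta u+u-\frac{1}{8\pi}\big(\int_{\R^{3}}\frac{U^{2}(y)}{|x-y|}dy\big)u-\frac{1}{4\pi}\big(\int_{\R^{3}}\frac{U(y)u(y)}{|x-y|}dy\big)U$ is the standard linearized operator and $Tu:=\frac{1}{4\pi a_{*}}U(x)\int_{\R^{3}}\int_{\R^{3}}\frac{U^{2}(x)U(y)u(y)}{|x-y|}dx\,dy$ is the rank-one part. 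For $j=1,2,3$ we have $L_{0}\psi_{j}=0$ by the nondegeneracy of $U$ recalled after \eqref{big-u} (here $\psi_{j}=\partial U/\partial x_{j}$), while the odd symmetry of $\partial_{y_{j}}\!\big(U^{2}(y)\big)$ paired against $\tfrac{1}{|x-y|}$ gives $T\psi_{j}=0$; hence $\psi_{j}\in\ker\tilde L$. For the dilation direction $\psi_{0}$ (a multiple of $2U+x\cdot\nabla U$), differentiating at $\lambda=1$ the equation satisfied by $\lambda^{2}U(\lambda x)$ yields $L_{0}\psi_{0}=-2U$; integrating by parts in the Riesz potential and using \eqref{06-13-1}, one computes $\int_{\R^{3}}\int_{\R^{3}}\frac{U^{2}(x)U(y)\psi_{0}(y)}{|x-y|}dx\,dy=8\pi a_{*}$, whence $T\psi_{0}=2U$ and $\tilde L\psi_{0}=0$. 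Conversely, if $\tilde Lw=0$ then $L_{0}w=-cU$ with $c:=\frac{1}{4\pi a_{*}}\int_{\R^{3}}\int_{\R^{3}}\frac{U^{2}(x)U(y)w(y)}{|x-y|}dx\,dy$; since $L_{0}\psi_{0}=-2U$ this gives $L_{0}\big(w-\tfrac{c}{2}\psi_{0}\big)=0$, and nondegeneracy forces $w-\tfrac{c}{2}\psi_{0}\in\mathrm{span}\{\psi_{1},\psi_{2},\psi_{3}\}$. Thus $\ker\tilde L=\mathrm{span}\{\psi_{0},\psi_{1},\psi_{2},\psi_{3}\}$, which is exactly the span removed in the definition of $\tilde E$, so $\ker\tilde L\cap\tilde E=\{0\}$.

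Now suppose the coercivity estimate fails, so that there are $u_{n}\in\tilde E$ with $\|u_{n}\|=1$ and $\|\tilde L u_{n}\|\to 0$. Passing to a subsequence, $u_{n}\rightharpoonup u$ in $H^{1}$; by compactness of $K$, $Ku_{n}\to Ku$ strongly, and since $u_{n}=Ku_{n}+\tilde L u_{n}$ we obtain $u_{n}\to u$ strongly, so $\|u\|=1$ and $\tilde L u=0$ by continuity. Since $\tilde E$ is closed, $u\in\tilde E\cap\ker\tilde L=\{0\}$, contradicting $\|u\|=1$; this yields the claimed constant $\bar\gamma>0$.

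I expect the main obstacle to be the kernel computation of the second paragraph: one must correctly identify the new dilation direction $\psi_{0}$ --- absent in the corresponding analysis of \cite{LPW-20CV} --- and verify by an honest manipulation of the double Riesz integral that the extra rank-one term $T$ sends $\psi_{0}$ to exactly $2U$ while annihilating the translations, so that $\ker\tilde L$ matches the four functions in \eqref{aaaaa}. Once this is secured, the compactness of $K$ and the blow-up argument are routine, which is why the lemma can be asserted to hold \emph{by standard arguments}.
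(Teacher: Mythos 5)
Your proof is correct and follows the route the paper intends: the paper itself only asserts the lemma is ``standard'' and delegates the kernel identification to Lemma~\ref{lem-bbb}, and your argument supplies exactly the missing pieces --- the Fredholm decomposition $\tilde L=I-K$ with $K$ compact, the verification (which I checked: integrating by parts in the Riesz potential gives $\int\!\!\int\frac{U^2(x)U(y)\psi_0(y)}{|x-y|}\,dx\,dy=\frac34\int\!\!\int\frac{U^2(x)U^2(y)}{|x-y|}\,dx\,dy=8\pi a_*$ by \eqref{06-13-1}, so $T\psi_0=2U=-L_0\psi_0$) that $\ker\tilde L=\mathrm{span}\{\psi_0,\dots,\psi_3\}$ including the converse inclusion that the paper's Lemma~\ref{lem-bbb} glosses over, and the standard weak-compactness contradiction. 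No gaps.
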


\begin{Prop}%\label{p-1-12-11}
Let $\tilde{\xi}_{a}(x)$ be as in \eqref{aaaaaluo--1}. Then
\begin{equation}\label{8-27-2}
 \|\tilde{\xi}_{a}\| =O(\delta_a).
\end{equation}
\end{Prop}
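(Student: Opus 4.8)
The goal is to estimate the $\tilde{E}$-component $\tilde{\xi}_{a}$ in the decomposition \eqref{aaaaaluo--1}, showing it is $O(\delta_a)$, so that $\bar\xi_a$ is, up to lower order, a combination of the kernel functions $\psi_j$. My plan is to apply the invertibility estimate in Lemma \ref{lem8-27-4} to $\tilde{\xi}_a$ and to control $\|\tilde L(\tilde{\xi}_a)\|$ by going back to the equation \eqref{8-28-11} that $\bar\xi_a$ satisfies.

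\textbf{Step 1: Set up the equation for $\tilde\xi_a$.} Subtract from \eqref{8-28-11} the equation obtained by plugging in the finite combination $\sum_{j=0}^3\beta_{a,j}\psi_j$; since each $\psi_j$ lies in the kernel of $\tilde L$, the term $\tilde L\big(\sum_j\beta_{a,j}\psi_j\big)=0$, so $\tilde L(\tilde\xi_a)$ equals the difference between the operator appearing in \eqref{8-28-11} and $\tilde L$, applied to $\bar\xi_a$, plus the right-hand side of \eqref{8-28-11}. Concretely, using Lemma \ref{lem-add1-7-11} and Lemma \ref{lem---1}, the coefficient $\frac{C_a}{\mathbf p_0}$ differs from $1$ by $O(\delta_a^4+\sum_l\tilde\varphi^{(l)}_a)$, $\frac{\bar D_a}{\mathbf p_0}$ differs from $\frac{1}{8\pi}\int \frac{U^2(y)}{|x-y|}dy$ by $O(\delta_a^4)$, $\frac{\bar E_a}{\mathbf p_0}$ differs from $\frac{U(x)}{4\pi}\int\frac{U(y)\bar\xi_a(y)}{|x-y|}dy$ by $O(\delta_a+\tilde\varphi_a^2)$, and the right-hand side $\frac{g_a}{\mathbf p_0}$ equals $-\frac{1}{4\pi a_*}U(x)\int\int\frac{U^2(x)U(y)\bar\xi_a(y)}{|x-y|}dxdy+O(\delta_a+\sum_l\tilde\varphi^{(l)}_a)$. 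Hence all the ``error'' terms are $O(\delta_a)$ in the relevant norm once one also uses $\|\tilde\varphi^{(l)}_a\|_{\delta_a}=O(\delta_a^{11/2})$ from \eqref{8-27-26} and $|\bar\xi_a|\le 1$.

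\textbf{Step 2: Handle the exterior region and norms.} Because $\bar\xi_a$ is only bounded (not small) in $L^\infty$, I need the exponential decay \eqref{a2---5} of $u_a^{(l)}$, rescaled, to make all the convolution integrals and the $L^2$/$H^1$ norms over $\R^3$ finite and to show the tails contribute $O(e^{-\theta/\delta_a})$. On $B_{\rho\sqrt{\mathbf p_0}\delta_a^{-1}}(0)$ I use the pointwise estimates of Step 1; outside, the coefficients $\bar D_a,\bar E_a$ and the solutions decay exponentially, so the contribution is negligible. Collecting everything, $\|\tilde L(\tilde\xi_a)\| = O(\delta_a)$.

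\textbf{Step 3: Conclude via coercivity.} Since $\tilde\xi_a\in\tilde E$, Lemma \ref{lem8-27-4} gives $\bar\gamma\|\tilde\xi_a\|\le\|\tilde L(\tilde\xi_a)\| = O(\delta_a)$, which is exactly \eqref{8-27-2}. The main obstacle I anticipate is Step 1--2: carefully justifying that the nonlocal terms $\frac{\bar D_a}{\mathbf p_0}\bar\xi_a$ and $\frac{\bar E_a}{\mathbf p_0}$ — which involve $\bar\xi_a$ itself, only bounded in $L^\infty$ — can be moved to the right-hand side and estimated in the $H^1$-dual norm uniformly in $a$; this needs the Hardy--Littlewood--Sobolev inequality together with the exponential decay \eqref{a2---5} to trade the lack of decay of $\bar\xi_a$ against the decay of $U$ and of $u_a^{(l)}$, and a careful bookkeeping of which error terms are genuinely $O(\delta_a)$ versus which are $o(1)$ only. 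Once this is in hand, the coercivity step is immediate.
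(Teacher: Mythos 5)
Your argument is correct and is essentially the paper's own proof: you write $\tilde L(\tilde\xi_a)=\tilde L(\bar\xi_a)$ using that the $\psi_j$ lie in the kernel of $\tilde L$, express this via the equation \eqref{8-28-11} as a sum of differences controlled by Lemmas \ref{lem-add1-7-11} and \ref{lem---1} together with \eqref{8-27-26}, and then conclude by the coercivity of Lemma \ref{lem8-27-4}. The paper carries out exactly these steps (testing against $\upsilon\in H^1(\R^3)$ to get $\langle\tilde L(\tilde\xi_a),\upsilon\rangle=O(\delta_a)\|\upsilon\|$), so no further comparison is needed.
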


\begin{proof}
First, Lemma \ref{lem8-27-4} gives
\begin{equation}\label{8-28-21}
\|\tilde{\xi}_{a}\|\le C \|\tilde  L(\tilde{\xi}_{a})\|.
\end{equation}
On the other hand,  from \eqref{8-28-11}--\eqref{aaaaaluo--1}, we can prove
\begin{equation}\label{8-28-26}
\begin{split}
 \tilde{L}(\tilde{\xi}_{a})&=
  \Big(1-\frac{C_{a}(\frac{\delta_{a}}{\sqrt{\textbf{p}_{0}}}x+x^{(1)}_{a})}{\textbf{p}_{0}}\Big)
 \bar{\xi}_{a}(x)
 +\Big(
 -\frac{\bar{D}_{a}(x)}{\textbf{p}_{0}}\bar{\xi}_{a}(x)-\frac{1}{8\pi}\int_{\R^{3}}\frac{U^{2}(y)}{|x-y|}dy\bar{\xi}_{a}(x)\Big)
 \\
 &
 \quad+
 \Big(-\frac{\bar{E}_{a}(x)}{\textbf{p}_{0}}\bar{\xi}_{a}(x)-\frac{1}{4\pi}\int_{\R^{3}}\frac{U(y)\bar{\xi}_{a}(y)}{|x-y|}dy\,U(x)\Big)
 \\
 &\quad
+\Big(\frac{g_{a}(\frac{\delta_{a}}{\sqrt{\textbf{p}_{0}}}x+x^{(1)}_{a})}{\textbf{p}_{0}}\bar{\xi}_{a}(x)
+\frac{1}{4\pi a_*}U(x)\int_{\R^3}\int_{\R^3}\frac{U^2(x)U(y)\bar{\xi}_{a}(y)}{|x-y|}dx\,dy\Big).
\end{split}
\end{equation}
So from \eqref{8-27-26}, \eqref{8-28-21}, \eqref{8-28-26}, Lemma \ref{lem-add1-7-11}
and Lemma \ref{lem---1}, we have for any $\upsilon(x)\in H^{1}(\R^{3})$
\begin{equation*}
\begin{split}
 \langle\tilde  L(\tilde{\xi}_{a}),\upsilon\rangle& = O\big(\delta_a\big)\|\upsilon\|
 +O\Big(\int_{\R^{3}}\sum^2_{l=1}
 \varphi_a^{(l)}(\delta_a x+x^{(1)}_{a,i})|\upsilon|
 \Big)\\
 &\leq O\big(\delta_a\big)\|\upsilon\|+C\delta_{a}^{-\frac{3}{2}}\| \varphi_a^{(l)}\|_{\delta_{a}}\|\upsilon\|\\
  &\leq O\big(\delta_a\big)\|\upsilon\|,
\end{split}
\end{equation*}
which implies \eqref{8-27-2}.

\end{proof}

Letting $\beta_{a,j}$ be as in \eqref{aaaaaluo--1} and using $| \bar \xi_{a}|\le 1$, we find
\begin{equation*}
\beta_{a,j}= \frac{
   \bigl\langle  \bar \xi_{a}, \psi_j\bigr\rangle}{\|\psi_j\|^2} =O\bigl(\|\bar \xi_{a}\|\bigr)= O(1),~ j=0,1,2,3.
\end{equation*}

\begin{Lem}\label{lem7-27}
There holds
\begin{equation}\label{8-28-44}
\beta_{a,0}=o(1).
\end{equation}
\end{Lem}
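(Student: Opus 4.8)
The plan is to derive $\beta_{a,0}=o(1)$ from a carefully chosen local Pohozaev-type identity, exactly as in the analogous step of \cite{LPW-20CV,LPY-19}. The coefficient $\beta_{a,0}$ is the component of $\bar\xi_a$ along $\psi_0$, where $\psi_0$ corresponds to the scaling/dilation direction (the function built from $\langle x,\nabla U\rangle + $ lower-order terms, i.e. the "radial" generator in \eqref{aaaaa}). Since the kernel of the limiting linearized operator $\tilde L$ (Lemma~\ref{lem8-27-4}) is spanned by $\psi_1,\psi_2,\psi_3$ only — the translations — while $\psi_0$ is \emph{not} in the kernel, one cannot kill $\beta_{a,0}$ by the orthogonality decomposition alone; instead one extracts it by testing the equation for $\xi_a$ against a suitable vector field adapted to the concentration point $x_a^{(1)}$, producing a Pohozaev identity whose leading term is proportional to $\beta_{a,0}$ times a nonzero constant, with all remaining terms $o(1)$.

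Concretely, first I would multiply the equation satisfied by $\xi_a$ (the unscaled one, $-\delta_a^2\Delta\xi_a + C_a\xi_a - D_a\xi_a - E_a = g_a$) by $\langle x - x_a^{(1)}, \nabla u_a^{(1)}\rangle$ (or symmetrically by $\frac12(\langle x-x_a^{(1)},\nabla u_a^{(1)}\rangle + \langle x-x_a^{(1)},\nabla u_a^{(2)}\rangle)$), integrate over a fixed ball $B_\rho(x_a^{(1)})$, and integrate by parts. The boundary terms are $O(e^{-\theta/\delta_a})$ by the exponential decay estimate \eqref{a2---5}. What survives in the interior, after rescaling $x \mapsto \frac{\delta_a}{\sqrt{\mathbf p_0}}x + x_a^{(1)}$ and using Lemmas~\ref{lem-add1-7-11} and \ref{lem---1} to replace $C_a/\mathbf p_0$, $D_a/\mathbf p_0$, $E_a/\mathbf p_0$, $g_a/\mathbf p_0$ by their limits plus $O(\delta_a) + O(\sum_l \tilde\varphi_a^{(l)})$, is an expression of the form $c_0\,\beta_{a,0} + o(1)$, where $c_0 = \langle \tilde L \psi_0 \text{-type quantity}, \langle x,\nabla U\rangle\rangle$ or more precisely a definite integral involving $U$ that one checks is nonzero using the Pohozaev/Nehari identities for $U$ recorded just before \eqref{06-13-1} (the two identities giving $\int(|\nabla U|^2 + U^2)$ and $\int(|\nabla U|^2 + 3U^2)$ in terms of the nonlocal energy). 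The error contributions $O(\sum_l \tilde\varphi_a^{(l)})$ are handled by $\|\tilde\varphi_a^{(l)}\|_{\delta_a} = O(\delta_a^{11/2})$ from \eqref{8-27-26} together with the $C^0$ bound $\|\tilde\varphi_a\|_{L^\infty} = o(1)$ noted after Lemma~\ref{lem-5-05-1}, and the $O(\delta_a)$ terms vanish in the limit $a\to+\infty$ (equivalently $\delta_a\to 0$); the contribution of $\sum_{j\ge 1}\beta_{a,j}\psi_j$ and of $\tilde\xi_a$ to the Pohozaev integral vanishes because $\psi_1,\psi_2,\psi_3$ solve the linearized equation (so they contribute nothing to the Pohozaev balance of $\tilde L$) and because $\|\tilde\xi_a\| = O(\delta_a)$ by \eqref{8-27-2}.

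Alternatively — and this may be the cleaner route — I would exploit the extra scalar constraint that distinguishes the normalized problem: testing the difference of the two equations against $u_a^{(1)}$ itself (not a vector field) and using $\int_{\R^3}(u_a^{(1)}+u_a^{(2)})\xi_a = 0$ (the identity displayed right after \eqref{06-10-2}, coming from $\int (u_a^{(1)})^2 = \int (u_a^{(2)})^2 = a\delta_a^4$). Passing to the limit, $\xi$ satisfies the system in the Lemma preceding \eqref{aaaaaluo--1}, and integrating that system against $U$ while using $\int_{\R^3}U\xi = \lim \int (u_a^{(1)}+u_a^{(2)})\bar\xi_a \cdot(\text{const}) = 0$ forces the $\psi_0$-component of $\xi$ to vanish; quantifying the rate via \eqref{8-27-26}, \eqref{8-27-2} and \eqref{8-28-13} upgrades this to $\beta_{a,0}=o(1)$. \textbf{The main obstacle} I expect is showing that the coefficient multiplying $\beta_{a,0}$ in whichever identity is used is genuinely nonzero and computing its sign/value — this is where the nonlocal term matters, since the Pohozaev identity for the Schrödinger–Newton nonlinearity $\int\int \frac{u^2(x)u^2(y)}{|x-y|}$ scales differently than a local power nonlinearity, and one must combine \eqref{06-13-1} with the two displayed Pohozaev identities for $U$ to see that the relevant constant is a nonzero multiple of $a_* = \int_{\R^3}U^2$; the bookkeeping of the volume integral term $C_5$ in \eqref{30-31-7} (the second nonlocal integral, absent in the purely local case) is the delicate point, and it is precisely the source of the "complicated estimates involved by the nonlocal term" flagged in the introduction.
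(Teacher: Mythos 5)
Your second (``alternative'') route is essentially the paper's proof, and it is the right one; the first, Pohozaev-type route is not what the paper does and is unnecessarily heavy here. The paper's argument is in fact even more elementary than you describe: no testing of the equation against $u_a^{(1)}$ is required. One simply observes that
\[
\int_{\R^{3}}\bigl(u_{a}^{(1)}+u_{a}^{(2)}\bigr)\xi_{a}
=\frac{1}{\|u_{a}^{(1)}-u_{a}^{(2)}\|_{L^{\infty}(\R^{3})}}
\Bigl(\int_{\R^{3}}(u_{a}^{(1)})^{2}-\int_{\R^{3}}(u_{a}^{(2)})^{2}\Bigr)=0
\]
purely algebraically from the common normalization, so that the integral over $B_{\rho}(x_{a}^{(1)})$ is $O(e^{-\theta/\delta_a})$ by the decay estimate \eqref{a2---5}; on the other hand, rescaling and inserting the decomposition \eqref{aaaaaluo--1} together with \eqref{8-27-2} and \eqref{8-27-26} gives that the same integral equals $a_*\beta_{a,0}\delta_a^{3}+O(\delta_a^{4})$, precisely because $\int_{\R^3}U\,\psi_0=\int_{\R^3}U(2U+x\cdot\nabla U)=\tfrac12\int_{\R^3}U^{2}\neq 0$ while $\int_{\R^3}U\,\psi_j=0$ for $j=1,2,3$ by oddness. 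Comparing the two evaluations yields $\beta_{a,0}=O(\delta_a)=o(1)$. This also disposes of what you flagged as the main obstacle: the ``nonzero constant'' is just $\tfrac12 a_*$, and none of the delicate nonlocal Pohozaev bookkeeping (the $C_5$-type volume integral) enters this lemma at all --- that machinery is needed only for the tangential coefficients $\beta_{a,j}$, $j=1,2,3$, in the subsequent proposition.
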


\begin{proof}
On one hand, from  \eqref{aaaaaluo--1}, \eqref{8-27-2}
and \eqref{8-27-38} we get
 \begin{equation}\label{20-7-21-1}
\begin{split}
&\int_{B_\rho(x^{(1)}_{a})} (u_a^{(1)}+u_a^{(2)}) \xi_{a}\\
&=\frac{\delta^{3}_{a}}{\textbf{p}^{\frac{3}{2}}_{0}}
\int_{B_{\frac{\sqrt{\textbf{p}_{0}}}{\delta_{a}}\rho(0)}}
\Big(2\textbf{p}_{0}U(x)+\nabla U\big(\zeta x+(1-\zeta)\frac{\sqrt{\textbf{p}_{0}}}{\delta_{a}}(x^{(1)}_{a}-x^{(2)}_{a})\big)
\cdot \frac{\sqrt{\textbf{p}_{0}}}{\delta_{a}}(x^{(1)}_{a}-x^{(2)}_{a})
\\
&\quad\quad\quad\quad\quad\quad\quad\quad
+\sum_{l=1}^{2}\tilde{\varphi}^{(l)}(\frac{\sqrt{\textbf{p}_{0}}}{\delta_{a}}x+x^{(1)}_{a})\Big) \bar{\xi}_{a}(x)dx\\
&=2\frac{\delta^{3}_{a}}{\sqrt{\textbf{p}_{0}}}\int_{\R^{3}}U(x)\bar{\xi}_{a}(x)
-\int_{\R^{3}\backslash B_{\frac{\sqrt{\textbf{p}_{0}}}{\delta_{a}}\rho(0)}}U(x)\bar{\xi}_{a}(x)
\\
&\quad+O\big(\delta^{2}_{a}|x^{(1)}_{a}-x^{(2)}_{a}|\big)\Big|\nabla U\big(\zeta x+(1-\zeta)\frac{\sqrt{\textbf{p}_{0}}}{\delta_{a}}(x^{(1)}_{a}-x^{(2)}_{a})\big)\Big|_{2}|\bar{\xi}_{a}(x)|_{2}
\\
&\quad+\frac{\delta^{3}_{a}}{\textbf{p}^{\frac{3}{2}}_{0}}
\sum_{l=1}^{2}\Big|\tilde{\varphi}^{(l)}(\frac{\sqrt{\textbf{p}_{0}}}{\delta_{a}}x+x^{(1)}_{a})\Big|_{2}|\bar{\xi}_{a}(x)|_{2}\\
&=2\frac{\delta^{3}_{a}}{\sqrt{\textbf{p}_{0}}}\Big(\int_{\R^{3}}U(x)\beta_{a,0}(2U(x)+x\cdot \nabla U(x))
+\int_{\R^{3}}U(x)\tilde{\xi}_{a}(x)\Big)+O\big(e^{-\frac{\theta}{\delta_{a}}}\big)
\\
&\quad+O\big(|x^{(1)}_a-x^{(2)}_a|\delta_a^{2}\big)
+O\Big(\delta_a^{\frac{3}{2}} \sum_{l=1}^{2}\|\tilde{\varphi}_a^{(l)}\|_{\delta_a}\Big)\\
&=\beta_{a,0}\frac{\delta^{3}_a }{\sqrt{\textbf{p}_{0}}}\int_{\R^{3}}U^{2}(x)+O(\delta_{a}^{4})
+O\big(|x^{(1)}_a-x^{(2)}_a|\delta_a^{2}
+O(\delta^{4}_a)+\delta_a^{\frac{3}{2}} \|\tilde{\varphi}_a^{(2)}\|_{\delta_a}\big)\\
&=
a_*\beta_{a,0}\delta^{3}_a +
 O\big(\delta_a^{4}\big).
 \end{split}
\end{equation}
On the other hand,
noting that from \eqref{a2---5}
\begin{equation*}\label{20-7-21-2}
\begin{split}
\int_{\R^{3}\setminus B_{\rho}(x_{a}^{(1)})}(u_{a}^{(1)}+u_{a}^{(2)})\xi_{a}
&=O\Big(\int_{\R^{3}\setminus B_{\rho}(x_{a}^{(1)})}|u_{a}^{(1)}+u_{a}^{(2)}|\Big)\\
&=O\Big(\int_{\R^{3}\setminus B_{\rho}(x_{a}^{(1)})}|u_{a}^{(1)}|\Big)
+O\Big(\int_{\R^{3}\setminus B_{\frac{\rho}{2}}(x_{a}^{(2)})}|u_{a}^{(2)}|\Big)\\
&=O\big(e^{-\frac{\theta}{\delta_{a}}}\big),
\end{split}
\end{equation*}

then we can have
\begin{equation}\label{20-7-21-2}
\begin{split}
\int_{B_{\rho}(x_{a}^{(1)})}(u_{a}^{(1)}+u_{a}^{(2)})\xi_{a}
&=\int_{\R^{3}}(u_{a}^{(1)}+u_{a}^{(2)})\xi_{a}
+O\big(e^{-\frac{\theta}{\delta_{a}}}\big)\\
&=\frac{1}{\|u_{a}^{(1)}-u_{a}^{(2)}\|_{L^{\infty}(\R^{3})}}\int_{\R^{3}}\big[(u_{a}^{(1)})^{2}-(u_{a}^{(2)})^{2}\big]
+O\big(e^{-\frac{\theta}{\delta_{a}}}\big)\\
&=O\big(e^{-\frac{\theta}{\delta_{a}}}\big),
\end{split}
\end{equation}
since
$$
\int_{\R^{3}}(u_{a}^{(1)})^{2}=\int_{\R^{3}}(u_{a}^{(2)})^{2}=1.
$$
It follows form \eqref{20-7-21-1} and \eqref{20-7-21-2} that \eqref{8-28-44} holds.
\end{proof}

\begin{Rem}
We would like to point out that
since here we consider the single-peak case and
observing this fact $\displaystyle\int_{\R^{3}}U(x)(2U(x)+x\cdot \nabla U(x))dx=\frac{1}{2}\displaystyle\int_{\R^{3}}U^{2}(x)dx\neq 0,$
 the proof of Proposition \ref{lem7-27} is much simple.
\end{Rem}

\begin{Prop}%\label{p-2-12-11}
It holds
\begin{equation}\label{add8-28-44}
\beta_{a,j}=o(1),~~j=1,2,3.
\end{equation}
\end{Prop}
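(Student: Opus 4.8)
The plan is to read off, for each $j=1,2,3$, a linear relation for the vector $(\beta_{a,1},\beta_{a,2},\beta_{a,3})$ from local Pohozaev identities applied to the difference $u_a^{(1)}-u_a^{(2)}$, and then to invert the resulting $2\times 2$ system using the non-degeneracy hypothesis $(\tilde{P})$. Concretely, I would write the Pohozaev identity \eqref{30-31-7} (in the variables of \eqref{1-12-11}) for $u_a^{(1)}$ and for $u_a^{(2)}$, both over the \emph{same} ball $B_\rho(x_a^{(1)})$ — legitimate since $|x_a^{(1)}-x_a^{(2)}|=O(\delta_a^4)$ by \eqref{8-27-38}, so that replacing $B_\rho(x_a^{(2)})$ by $B_\rho(x_a^{(1)})$ costs only $O(e^{-\theta/\delta_a})$ thanks to the decay \eqref{a2---5} — subtract them, and divide by $\|u_a^{(1)}-u_a^{(2)}\|_{L^\infty(\R^3)}$. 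Every boundary integral over $\partial B_\rho(x_a^{(1)})$ is then $O(e^{-\theta/\delta_a})$ (including the one carrying the factor $(\mu_a^{(1)}-\mu_a^{(2)})/\|u_a^{(1)}-u_a^{(2)}\|_{L^\infty(\R^3)}$, which is at most polynomially large in $\delta_a^{-1}$ by \eqref{8-28-13}), while the nonlocal volume term is handled exactly as its single-solution analogue $C_5$ in \eqref{30-31-7}: its part over $B_\rho(x_a^{(1)})\times B_\rho(x_a^{(1)})$ vanishes by the antisymmetry of $(x_j-y_j)|x-y|^{-3}$ after symmetrising $x\leftrightarrow y$, and the remainder is exponentially small. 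This leaves
$$\delta_a^2\int_{B_\rho(x_a^{(1)})}\frac{\partial P(x)}{\partial x_j}\,(u_a^{(1)}+u_a^{(2)})\,\xi_a\,dx=O(e^{-\theta/\delta_a}),\qquad j=1,2,3,$$
together with, after taking the tangential combinations, the same identity with $\partial_{x_j}P(x)$ replaced by $G_l(x)=\langle\nabla P(x),\tau_{a,l}\rangle$, $l=1,2$.

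I would then substitute $\xi_a(x)=\bar\xi_a\bigl(\frac{\sqrt{\textbf{p}_{0}}}{\delta_a}(x-x_a^{(1)})\bigr)$, the decomposition \eqref{aaaaaluo--1} (where $\psi_m=\partial_{x_m}U$ for $m=1,2,3$ and $\psi_0=2U+x\cdot\nabla U$), the expansion $(u_a^{(1)}+u_a^{(2)})\bigl(x_a^{(1)}+\frac{\delta_a}{\sqrt{\textbf{p}_{0}}}z\bigr)=2\textbf{p}_{0}U(z)+O(\delta_a)+\sum_l\tilde{\varphi}_a^{(l)}(\cdots)$, and the moment identities $\int_{\R^3}U\psi_m=0$, $\int_{\R^3}z_kU\psi_m=-\frac12\delta_{km}a_*$, $\int_{\R^3}z_kU\psi_0=0$ for $m=1,2,3$. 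Taylor expanding $\partial_{x_j}P$ around $x_a^{(1)}$ and using that $(P)$ forces $D^2P(b_0)=\frac{\partial^2 P(b_0)}{\partial\nu^2}\,\nu\otimes\nu$ (since $P\equiv P_0$ and $\partial_\nu P\equiv 0$ on $\Gamma$ kill all second tangential and mixed derivatives of $P$ at $b_0$), the leading term of the left side becomes $-\frac{a_*}{\textbf{p}_{0}}\delta_a^6\sum_k\frac{\partial^2 P(x_a)}{\partial x_k\partial x_j}\beta_{a,k}+O(\delta_a^7)$, the contributions of $\beta_{a,0}=O(\delta_a)$ (from \eqref{8-28-44}), of $\|\tilde\xi_a\|=O(\delta_a)$ (from \eqref{8-27-2}), of $\|\tilde\varphi_a^{(l)}\|_{\delta_a}=O(\delta_a^{11/2})$ (from \eqref{8-27-26}), and of $|x_a^{(1)}-b_0|,|x_a^{(1)}-x_a^{(2)}|$ being of strictly lower order. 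Contracting the three identities with $\nu_{a,j}$ and invoking $\frac{\partial^2 P(b_0)}{\partial\nu^2}\ne 0$ then gives $\frac{\partial^2 P(b_0)}{\partial\nu^2}\,\bigl(\nu\cdot(\beta_{a,1},\beta_{a,2},\beta_{a,3})\bigr)=O(\delta_a)$, i.e. the normal component of $(\beta_{a,1},\beta_{a,2},\beta_{a,3})$ is $o(1)$; this is all the rank-one Hessian can detect.

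To catch the tangential components I would run, with $G_l(x)=\langle\nabla P(x),\tau_{a,l}\rangle$, the computation of \eqref{luo-6}--\eqref{06-09-10}. Since $G_l(x_a^{(1)})=0$ and $\nabla G_l(x_a^{(1)})=O(\delta_a^2)$, the expansion of $\int_{B_\rho(x_a^{(1)})}G_l\,(u_a^{(1)}+u_a^{(2)})\,\xi_a$ is governed by its third-order term; using $\Delta G_l=D_{\tau_{a,l}}\Delta P$, the bound $(D_{\tau_{a,l}}\Delta P)(x_a)=O(\delta_a^2)$ from \eqref{06-09-10}, and the variation of the principal frame $\tau_{a,l}(x)$ along the level surface $\Gamma_{t_a}$ — which produces precisely the curvature term $\frac{\partial\Delta P(b_0)}{\partial\nu}\,\mathrm{diag}(\kappa_1,\kappa_2)$ — this identity reduces, after dividing by $\delta_a^6$, to
$$\Bigl[\Bigl(\frac{\partial^2 \Delta P(b_0)}{\partial\tau_l\partial\tau_m}\Bigr)_{1\le l,m\le 2}+\frac{\partial\Delta P(b_0)}{\partial\nu}\,\mathrm{diag}(\kappa_1,\kappa_2)\Bigr]\bigl(\tau_{a,m}\cdot(\beta_{a,1},\beta_{a,2},\beta_{a,3})\bigr)_{m}=o(1),$$
the right side being $o(1)$ once one absorbs the contributions of the already-controlled quantities $\beta_{a,0}$, $\tilde\xi_a$, $\nu\cdot(\beta_{a,1},\beta_{a,2},\beta_{a,3})$, $x_a^{(1)}-b_0$, $x_a^{(1)}-x_a^{(2)}$ and $\tilde\varphi_a^{(l)}$ (a delicate point, which again uses the structural vanishing of the derivatives of $P$ on $\Gamma$ to control how the Hessian of $G_l$ at $b_0$ pairs with the second moments of $\bar\xi_a$). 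By assumption $(\tilde{P})$ the matrix on the left is non-singular, so the tangential components of $(\beta_{a,1},\beta_{a,2},\beta_{a,3})$ are $o(1)$ too; combined with the normal estimate of the previous step, this yields $\beta_{a,j}=o(1)$ for $j=1,2,3$.

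The step I expect to be the real obstacle is precisely this order bookkeeping. The non-uniform degeneracy of $P$ along $\Gamma$ — non-degenerate normally, degenerate tangentially — means the normal component of $(\beta_{a,1},\beta_{a,2},\beta_{a,3})$ already appears at the first-order Pohozaev identity, while the tangential components surface only at the $\Delta P$-level (order $\delta_a^6$). One must therefore carry the Taylor expansions of $P$, the local geometry of the level surfaces $\Gamma_{t_a}$ (and hence the principal curvatures $\kappa_j$), and every error term ($\tilde\xi_a$, $\beta_{a,0}$, $x_a^{(1)}-b_0$, $x_a^{(1)}-x_a^{(2)}$, $\tilde\varphi_a^{(l)}$) consistently to that order, and simultaneously expand the nonlocal volume term to the same order while discarding its antisymmetric diagonal part; ensuring that the matrix that ends up multiplying $(\beta_{a,1},\beta_{a,2},\beta_{a,3})$ is exactly the non-singular one of $(\tilde{P})$, with everything else genuinely $o(1)$, is the bulk of the argument.
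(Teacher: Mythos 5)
Your plan is essentially the paper's own proof: the same local Pohozaev identities for the difference quotient $\xi_a$, split into the normal direction (where $\frac{\partial^2 P(b_0)}{\partial\nu^2}\neq 0$ gives $\beta_{a,3}=O(\delta_a)$ at order $\delta_a^4$) and the tangential directions (where the third‑order Taylor expansion of $D_{\tau}P$ at order $\delta_a^6$ produces exactly the matrix of assumption $(\tilde P)$), with the same inputs \eqref{8-27-2}, \eqref{8-28-44}, \eqref{8-27-26} controlling the error terms. The one mechanism your sketch leaves implicit is that the coefficient $\frac{\partial\Delta P(b_0)}{\partial\nu}$ in front of $\mathrm{diag}(\kappa_1,\kappa_2)$ does not come from the frame variation alone but from combining the level‑surface identity \eqref{ab7-19-29} with the precise normal displacement $x_a-\bar x_a$ computed in \eqref{8-22-3} (this is \eqref{lll} in the paper); otherwise the approach and the order bookkeeping coincide with Steps 1 and 2 of the paper's argument.
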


\begin{proof}
\textbf{Step 1:} To prove $\beta_{a,3}=O(\delta_a).$

\smallskip

Using \eqref{30-31-7}, we deduce
\begin{equation}\label{3-18}
\displaystyle \int_{B_{\rho}(x_{a}^{(1)})}\frac{\partial P(x)}{\partial  \nu_{a}}B_a(x)\xi_{a}=
O\big(e^{-\frac{\theta}{\delta_a}}\big),
\end{equation}
where $\nu_{a}$ is the outward unit vector of $\partial B_{d}(x_{a}^{(1)})$ at $x$, $
B_a(x)=\displaystyle\sum^2_{l=1}u_{a}^{(l)}(x).$

On the other hand, by \eqref{8-27-38}, we have
\begin{equation}\label{0-16-8}
\begin{split}
B_a(x)&=\Big(2 +O(\delta^{2}_a)\Big)
\tilde{U}_{\delta_a,x_{a}^{(1)}}(x)
+O\Big(\sum^{2}_{l=1}|\tilde{\varphi}_{a}^{(l)}(x)|\Big),\,\,\,x\in B_{\rho}(x_{a}^{(1)}).
\end{split}
\end{equation}
Also, from \eqref{8-22-3}, we find
\begin{eqnarray*}
\frac{\partial P(x_{a}^{(1)})}{\partial  \nu_{a}}=\frac{\partial P(x_{a}^{(1)})}{\partial  \nu_{a}}-
\frac{\partial P(\bar x_{a}^{(1)})}{\partial  \nu_{a}}= O\big(\big|x_{a}^{(1)}-\bar x_{a}^{(1)}\big|\big)=O(\delta_a^2),
\end{eqnarray*}
and
\begin{eqnarray*}
\frac{\partial^2 P(x_{a}^{(1)})}{\partial  \nu_{a}\partial  \tau_{a,j}}=\frac{\partial^2 P(x_{a}^{(1)})}{\partial  \nu_{a}\partial  \tau_{a,j}}-
\frac{\partial^2 P(\bar x_{a}^{(1)})}{\partial  \nu_{a}\partial  \tau_{a,j}}= O\big(\big|x_{a}^{(1)}-\bar x_{a}^{(1)}\big|\big)=O(\delta_a^2),~\mbox{for}~j=1,2.
\end{eqnarray*}
From \eqref{2--5}, \eqref{30-31-7} and \eqref{0-16-8},  we get
\begin{equation}\label{3--18}
\begin{split}
 &\displaystyle \int_{B_{\rho}(x_{a}^{(1)})}\frac{\partial P(x)}{\partial  \nu_{a}}B_a(x)\xi_{a}
 \\&=
\int_{\R^3}  \frac{\partial P(x_{a}^{(1)})}{\partial  \nu_{a}} B_a(x)\xi_{a}+\int_{\R^3} \Bigl\langle \nabla \frac{\partial P(x_{a}^{(1)})}{\partial  \nu_{a}}, x-x_{a}^{(1)}\Bigr\rangle B_a(x)\xi_{a}
 +O\big(\delta_a^{5}\big)\\
& =
-\frac{\partial^2 P(x_{a}^{(1)})}{\partial  \nu^2_{a}} a_*\beta_{a,3} \delta_a^{4}+O\big(\delta_a^{5}\big).
\end{split}
\end{equation}
Then \eqref{3-18} and \eqref{3--18} imply  $\beta_{a,3}=O(\delta_a).$

\medskip

\noindent\textbf{Step 2:} To prove  $\beta_{a,j}=o(1)$ for $j=1,2.$

\smallskip

Similar to \eqref{3-18}, we have
\begin{equation}\label{3.-14}
 \int_{B_{\rho}(x_{a}^{(1)})} \frac{\partial P(y)}{\partial \tau_{a,j}}B_a(y)\xi_{a}=O\big(e^{-\frac{\theta}{\delta_a}}\big),
 ~\mbox{for}~j=1,2.
 \end{equation}
Using suitable rotation, we assume that $\tau_{a,1} =(1, 0,0),\tau_{a, 2} =(0,1,0)$ and $\nu_{a} =(0,0,1).$
Under the assumption \textup{($P$)}, we obtain
\begin{equation}\label{1-16-8}
\begin{split}
 &\frac{\partial P(\frac{\delta_a}{\sqrt{\textbf{p}_{0}}} y+ x^{(1)}_{a})}{\partial \tau_{a,j}}\\
 =&
\frac{\delta_a}{\sqrt{\textbf{p}_{0}}}\sum^3_{l=1}\frac{\partial^2 P(x^{(1)}_{a})}{\partial y_l \partial \tau_{a,j} } y_l+\frac{\delta_a^2}{2\textbf{p}_{0}}
\sum^3_{k=1}\sum^3_{l=1}\frac{\partial^3 P(x^{(1)}_{a})}{\partial y_k  \partial y_l \partial \tau_{a,j}} y_k y_l\\&+
\frac{\delta^3_a}{6\textbf{p}^{\frac{3}{2}}_{0}}\sum^3_{s=1}\sum^3_{k=1}
\sum^3_{l=1}\frac{\partial^4 P(x^{(1)}_{a})}{\partial y_s \partial y_l \partial y_k \partial \tau_{a,i,j}} y_s y_ly_k
+o\big(\delta_a^3|y|^3\big),~\mbox{in}~B_{\frac{\rho}{\delta_a}\sqrt{\textbf{p}_{0}}}(0).
\end{split}\end{equation}
By \eqref{1.6}, \eqref{8-22-3}, \eqref{8-27-26}, \eqref{aaaaaluo--1}, \eqref{0-16-8} and
the symmetry of $\psi_j(x),$ we find, for $j=1,2,$
\begin{equation*}\label{2-16-8}
\begin{split}
\sum^3_{k=1}\sum^3_{l=1}&\frac{\partial^3 P(x^{(1)}_{a})}{\partial y_k  \partial y_l \partial \tau_{a,j}}
\int_{B_{\frac{\rho}{\delta_{a}}\sqrt{\textbf{p}_{0}}}(0)}
B_a(\frac{\delta_a}{\sqrt{\textbf{p}_{0}}} y+ x^{(1)}_{a}) \bar \xi_{a} y_k y_l\\=&
2 \sum^3_{k=1}\sum^3_{l=1}\frac{\partial^3 P(x^{(1)}_{a})}{\partial y_k  \partial y_l \partial \tau_{a,j}}
\int_{B_{\frac{\rho}{\delta_{a}}\sqrt{\textbf{p}_{0}}}(0)} U\big(y\big)\bar \xi_{a,i} y_k y_l+O(\delta_a^2)
\\
=& B\beta_{a,0} \frac{\partial \Delta P(x^{(1)}_{a})}{\partial \tau_{a,j}} +O(\delta_a^2)=
O\big(|x^{(1)}_{a}-b_0|\big) +O(\delta_a^2)=O(\delta_a^2).
\end{split}
\end{equation*}
Also from \eqref{add8-28-44} and \eqref{0-16-8}, we get
\begin{equation*}\label{3-16-8}
\begin{split}
& \sum^3_{s=1}\sum^3_{k=1}\sum^3_{l=1}\frac{\partial^4 P(x^{(1)}_{a})}{\partial y_s \partial y_l \partial y_k \partial \tau_{a,j}}
 \int_{B_{\rho\sqrt{\textbf{p}_{0}}\delta_a^{-1}}(0)} B_a(\frac{\delta_a}{\sqrt{\textbf{p}_{0}}} y+ x^{(1)}_{a}) \bar \xi_{a}y_s y_ly_m
\\=&
2 \sum^3_{s=1}\sum^3_{k=1}\sum^3_{l=1}\frac{\partial^4 P(x^{(1)}_{a})}{\partial y_s \partial y_l \partial y_k \partial \tau_{a,j}}
 \int_{B_{\rho\sqrt{\textbf{p}_{0}}\delta_a^{-1}}(0)}  U(y)(\sum^{2}_{h=1}\beta_{a,h})\psi_h(y)y_s y_ly_k
+o\big(1\big)\\=&
2 \sum^{2}_{h=1}\beta_{a,j}
 \int_{B_{\rho\sqrt{\textbf{p}_{0}}\delta_a^{-1}}(0)} U(y) \psi_h(y)y_h \Big[\frac{\partial^4 P(x^{(1)}_{a})}{\partial y^3_h \partial \tau_{a,j}}y_h^2
 +3 \sum^3_{l=1,l\neq h}\frac{\partial^4 P(x^{(1)}_{a})}{\partial y_h \partial y^2_l \partial \tau_{a, j}} y_l^2\Big]+o\big(1\big)\\=&
-3B\Big(\sum_{h=1}^{2}\frac{\partial^2 \Delta P(x^{(1)}_{a})}{\partial \tau_{a,h} \partial \tau_{a,j}}  \beta_{a,h}\Big)+o\big(1\big)=
-3B\Big(\sum_{h=1}^{2}\frac{\partial^2 \Delta P(b_0)}{\partial \tau_{a,h} \partial \tau_{a,j}}  \beta_{a,h}\Big)+o\big(1\big).
\end{split}
\end{equation*}
By \eqref{ab7-19-29},  we estimate
\[
\frac{\partial^2 P(x^{(1)}_{a})}{\partial y_l \partial \tau_{a, j} }=  - \frac{\partial P(x^{(1)}_{a})}{ \partial \nu_{a} }
\kappa_{l}(x^{(1)}_{a})\delta_{lj},\quad l,j=1,2.
\]
 Since  $\frac{\partial P(\bar x^{(1)}_{a})}{ \partial \nu_{a} }=0,$ from \eqref{8-22-3}, we find
\begin{equation}\label{lll}
\begin{split}
\frac{\partial^2 P(x^{(1)}_{a})}{\partial y_l \partial \tau_{a,j} }=&  -\Bigl(  \frac{\partial P(x^{(1)}_{a})}{ \partial \nu_{a} }-
\frac{\partial P(\bar x^{(1)}_{a})}{ \partial \nu_{a} }
\Bigr)
\kappa_{l} (x^{(1)}_{a})\delta_{lj} \\
=&- \frac{\partial^2 P(\bar x^{(1)}_{a})}{ \partial \nu^2_{a} } ( x^{(1)}_{a}-\bar x^{(1)}_{a})\cdot \nu_{a}\kappa_{l} (x^{(1)}_{a})\delta_{lj}+o(\delta_a^2)\\
=&- \frac{\partial^2 P(b_0)}{ \partial \nu^2 } ( x^{(1)}_{a}-\bar x^{(1)}_{a})\cdot \nu_{a}\kappa_{l} (b_0)\delta_{lj}+o(\delta_a^2)\\
=&\frac{B}{2a_*}\frac{\partial  \Delta P(b_0)}{\partial \nu}  \delta_a^{2}\kappa_{l}(b_0)\delta_{lj}+o(\delta_a^2).
\end{split}
\end{equation}
Therefore from \eqref{8-27-26}, \eqref{aaaaaluo--1}, \eqref{0-16-8} and \eqref{lll}, we get
\begin{equation}\label{5-16-8}
\begin{split}
\sum^3_{l=1} &\frac{\partial^2 P(x^{(1)}_{a})}{\partial y_l \partial \tau_{a,j} }\int_{B_{\frac{\rho}{\delta_{a}}\sqrt{\textbf{p}_{0}}}(0)}
 B_a(\frac{\delta_a}{\sqrt{\textbf{p}_{0}}} y+ x^{(1)}_{a}) \bar \xi_{a} y_l\\
=& \frac{B}{2a_*}\frac{\partial  \Delta P(b_0)}{\partial \nu}  \delta_a^{2}\kappa_{j} (b_0)
\int_{B_{\frac{\rho}{\delta_{a}}\sqrt{\textbf{p}_{0}}}(0)}
B_a(\frac{\delta_a}{\sqrt{\textbf{p}_{0}}} y+ x^{(1)}_{a}) \bar \xi_{a} y_j
+o(\delta_a^2)\\
=& \frac{B}{a_*}\frac{\partial  \Delta P(b_0)}{\partial \nu_i}  \delta_a^{2}\kappa_{j}(b_0)\beta_{a,j} \int_{\mathbb R^3 }U(y)
 \frac{U(y)}{\partial y_j} y_j
+o(\delta_a^2)\\
=& -\frac{B}{2}\frac{\partial  \Delta P(b_0)}{\partial \nu_i}  \delta_a^{2}\kappa_{j}(b_0)\beta_{a,j}
+o(\delta_a^2).
\end{split}\end{equation}
Combining \eqref{1-16-8} to \eqref{5-16-8}, we obtain
\begin{equation}\label{6-16-8}
\begin{split}
\int_{B_{\rho}(x_{a}^{(1)})} & \frac{\partial P(y)}{\partial \tau_{a,j}}B_a(y)\xi_{a}\\
=&-\frac{B}{2}\frac{\partial  \Delta P(b_0)}{\partial \nu}  \delta_a^{6}\kappa_{j}(b_0)\beta_{a,j}
-\frac{B}{2}\Big(\sum_{l=1}^{2}\frac{\partial^2 \Delta P(b_0)}{\partial \tau_{l} \partial \tau_{ j}} \beta_{a,j} \Big) \delta_a^{6}+o(\delta_a^{6}).
 \end{split}\end{equation}
 From \eqref{3.-14} and \eqref{6-16-8}, we find
\[
\frac{\partial  \Delta P(b_0)}{\partial \nu_i} \kappa_{j}(b_0)\beta_{a,j}
+\Big(\sum_{l=1}^{2}\frac{\partial^2 \Delta P(b_0)}{\partial \tau_{l} \partial \tau_{j}} \beta_{a,l}\Big) =o(1),
\]
which together with the assumption $(\tilde{P})$ implies $\beta_{a,j}=o(1)$ for $j=1,2.$

\end{proof}

Finally, we prove Theorem \ref{th1.4}.
\begin{proof}[\textbf{Proof of Theorem \ref{th1.4}:}]
First, for large fixed $R$,  \eqref{abc8-29-1}, \eqref{a2---5}, \eqref{eq-d-710} and \eqref{eq-e-710} give
$$
C_{a}(x)- D_{a}(x)\ge \frac12,   \;\; |g_a(x)+E_{a}(x)|\le C e^{-\hat{\theta} R},\quad x\in \mathbb R^3 \backslash  B_{R\delta_a}(x^{(1)}_{a}),
$$
for some $\hat{\theta}>0.$

  Using the comparison principle, we get
\begin{equation*}
 \xi_a(x)= o(1),~\mbox{in}~ \R^3\backslash  \bigcup B_{R\delta_a}(x^{(1)}_{a}).
\end{equation*}

On the other hand, it follows from \eqref{8-27-2}, \eqref{8-28-44} and \eqref{add8-28-44} that
\[
\xi_{a}(x)=o(1),~\mbox{in}~ B_{R\delta_a}(x^{(1)}_{a}).
\]
This is in contradiction with $\|\xi_{a}\|_{L^{\infty}(\R^3)}=1.$
So $u^{(1)}_{a}(x)\equiv u^{(2)}_{a}(x)$ as $a$ goes to $+\infty.$
\end{proof}

\section{The non-existence of multi-peak solutions} \label{s6}
From \eqref{30-7-11}, we know that in order to prove Theorem \ref{th1.5}, it suffices to prove the following result.

\begin{Prop}\label{prop7-30-1}
Under the assumption \textup{($P$)},  there exists a small constant $\epsilon_{0}>0$ such that
  problem  \eqref{30-7-11}
has no $m$-peak solutions ($m\geq 2$) of the form
$$
\tilde{u}_{a}(x)=\sum_{i=1}^{m}U_{\epsilon,x_{a,i}}(x)+\varphi_{a}(x)
$$
with
$\|\varphi_{a}\|_{a}=O(\epsilon^{\frac{7}{2}})$, $x_{a,i}\rightarrow b_{i}$ as $0< \epsilon\leq \epsilon_{0}$
for each $i=1,...,m,$ and $b_i\neq b_j$ for $i\neq j$,
where $U_{\epsilon,x_{a,i}}(x):=(1+\epsilon^{2}P_{i})U\Big(\frac{\sqrt{1+\epsilon^2P_i}(x-x_{a, i})}\epsilon \Big).$
\end{Prop}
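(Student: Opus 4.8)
The plan is to argue by contradiction using a local Pohozaev identity together with the sharp estimates on the Lagrange multiplier and the location of the peaks that we have already established for single-peak solutions. Suppose such an $m$-peak solution $\tilde u_a$ exists with $m\ge 2$. Around each concentration point $x_{a,i}$ we work on a small ball $B_\rho(x_{a,i})$ with $\rho>0$ fixed so that the balls are mutually disjoint and each contains exactly one peak. On each $B_\rho(x_{a,i})$ we apply the identity \eqref{30-31-7} (in the variables adapted to $x_{a,i}$), which localizes the problem: the boundary terms $C_1,\dots,C_4$ and the volume term $C_5$ are all $O(e^{-\theta/\epsilon})$ by the exponential decay estimate \eqref{2--5} applied to $\tilde u_a$ away from the peaks, together with the fact that $|x_{a,i}-x_{a,j}|\ge c>0$ for $i\ne j$. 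The key new point, absent in the single-peak case, is that the nonlocal term contributes a genuine \emph{interaction} between the peaks which is \emph{not} exponentially small: when we expand $\int_{\R^3}\frac{(\tilde u_a(y))^2}{|x-y|}dy$ on $B_\rho(x_{a,i})$, the contributions of the bumps $U_{\epsilon,x_{a,j}}$ for $j\ne i$ produce, after dividing out the appropriate powers of $\epsilon$, a term comparable to $\epsilon^{?}\sum_{j\ne i}\frac{1}{|b_i-b_j|}$ times a positive constant. This is the mechanism that forces nonexistence.

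\textbf{Key steps in order.} First I would reproduce, for the multi-peak ansatz, the reduction estimates of Section~\ref{s3}: the decomposition $\tilde u_a=\sum_i U_{\epsilon,x_{a,i}}+\varphi_a$ with $\|\varphi_a\|_a=O(\epsilon^{7/2})$ (here using that all $b_i$ satisfy $\nabla P(b_i)=0$ and the hypothesis $(D_{\tau_{i,j}}\Delta P)(b_i)=0$), and the analogue of the multiplier expansion \eqref{ab8-29-1}. Second, I would run the argument of the proof of Theorem~\ref{nth1.1} and of Lemmas~\ref{lem-001-1-8}, \ref{lem-002-1-8} \emph{separately at each peak}: the tangential Pohozaev identities on $B_\rho(x_{a,i})$, combined with $(P)$ and the non-degeneracy of $b_i$ on $\Gamma_i$, pin down $x_{a,i}$ relative to $b_i$ up to $O(\epsilon^2)$ along the tangential directions and locate it along $\nu_i$ exactly as in \eqref{8-22-3}. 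Third — the decisive step — I would compute the normal-direction Pohozaev identity \eqref{30-31-7} with $j$ the normal coordinate at $x_{a,i}$ once more, but now \emph{retaining} the cross term $C_5$ coming from the nonlocal term: expanding $\int\frac{(\tilde u_a(y))^2}{|x-y|}dy$ and isolating the piece where $y$ ranges near $x_{a,j}$ ($j\ne i$) while $x$ is near $x_{a,i}$, one gets a contribution of definite sign and of size $c\,\epsilon^{?}\sum_{j\ne i}|b_i-b_j|^{-1}$. Matching this against the main term $\epsilon^3 a_*\,D^2_{\nu_i\nu_i}P(b_i)\,(x_{a,i}-\bar x_{a,i})\cdot\nu_i$ plus $\frac{B\epsilon^5}{2}\Delta D_{\nu_i}P(b_i)$, and summing the resulting $m$ relations (or choosing the peak $i_0$ maximizing some weighted position), produces an inconsistency: the sign of the nonlocal interaction term is incompatible with the balance that a genuine solution must satisfy. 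That contradiction proves the proposition.

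\textbf{Main obstacle.} The hard part will be extracting the \emph{exact order in $\epsilon$ and the sign} of the nonlocal interaction term $C_5$ in the multi-peak Pohozaev identity and showing it dominates (or at least cannot be cancelled by) the local terms. One must carefully expand $\int_{\R^3}\frac{(\tilde u_a(y))^2}{|x-y|}\,dy$ as $x$ ranges over $B_\rho(x_{a,i})$, splitting $\tilde u_a^2$ into the self-interaction $U_{\epsilon,x_{a,i}}^2$, the genuine cross terms $U_{\epsilon,x_{a,i}}U_{\epsilon,x_{a,j}}$ and $U_{\epsilon,x_{a,j}}^2$ for $j\ne i$, and the $\varphi_a$-contributions; since $|x-y|\ge c>0$ when $x\in B_\rho(x_{a,i})$ and $y\in B_\rho(x_{a,j})$, these cross pieces are \emph{polynomially} small in $\epsilon$ (of order $\epsilon^3$ from $\int U_{\epsilon,x_{a,j}}^2$) rather than exponentially small, and one has to track them to leading order and compare with the $\epsilon^3$- and $\epsilon^5$-order local contributions. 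This is precisely the place where "some complicated estimates involved by the nonlocal term" enter, and where the analysis genuinely departs from the Schr\"odinger case treated in \cite{LPY-19}; the needed volume-integral estimates are the analogues of those used in \cite{LPW-20CV} and are collected in the Appendices. Once the leading interaction term is identified with its sign, the contradiction is immediate from the assumption $b_i\ne b_j$ and $a\to+\infty$.
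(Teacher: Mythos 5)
Your strategy is essentially the paper's: argue by contradiction, apply the local Pohozaev identity on a ball around each peak, observe that the boundary terms are exponentially small while the volume term coming from the nonlocal interaction between distinct peaks is only \emph{polynomially} small, and then contradict this with the Taylor expansion of $\int \frac{\partial P}{\partial x_j}\,\tilde u_a^2$, which is $O(\epsilon^{7})$ after the factor $\epsilon^2$ because $\nabla P(b_i)=0$ and $(D_{\tau_{i,j}}\Delta P)(b_i)=0$. Three remarks. First, your opening paragraph asserts that $C_5=O(e^{-\theta/\epsilon})$ and then, two lines later, that the nonlocal term is not exponentially small; only the second claim is correct in the multi-peak setting, and the whole proof hinges on it, so the first sentence should be deleted. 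Second, the step you leave as ``$\epsilon^{?}$ with a definite sign'' is exactly the content of the paper's Lemma~\ref{aprop-7-30-2}: $\widetilde C_5=C^{*}\epsilon^{4}+o(\epsilon^{4})$ with $C^{*}\neq 0$ for \emph{some} peak $i_0$ and \emph{some} coordinate direction $j_0$; the paper does not prove this either but imports it from Proposition~4.2 of \cite{LPW-20CV}, so your proposal carries the same citation-sized hole as the published argument. Third, you propose to run the comparison in the normal direction $\nu_i$ at every peak, but the interaction coefficient $\sum_{j\neq i}(b_i-b_j)/|b_i-b_j|^{3}$ projected on a fixed direction can vanish for a given $i$; the correct formulation selects a pair $(i_0,j_0)$ for which the coefficient is nonzero (your parenthetical about choosing an extremal peak is the right fix), and then the contradiction $C^{*}\epsilon^{4}=O(\epsilon^{7})$ follows for that pair regardless of whether $j_0$ is normal or tangential.
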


Let $\tilde{u}_a$ be a $m$-peak  solution of \eqref{30-7-11}.
Then for any small fixed $\rho>0$, from \eqref{7-30-1} we find
\begin{equation}\label{7-30-2}
\begin{split}
\epsilon^2&\int_{B_{\rho}(x_{a,i})} \frac{\partial P(x)}{\partial x_j}\big(\tilde{u}_a\big)^2  dx
\\=&\underbrace{-2\epsilon^2\int_{\partial B_{\rho}(x_{a,i})}\frac{\partial \tilde{u}_a}{\partial \bar \nu}\frac{\partial \tilde{u}_a}{\partial x_j} d\sigma}_{:=\widetilde{C}_{1}}
+\underbrace{\epsilon^2\int_{\partial B_{\rho}(x_{a,i})}|\nabla \tilde{u}_a|^2\bar\nu_j(x) d\sigma}_{:=\widetilde{C}_{2}}\\&
 +\underbrace{\int_{\partial B_{\rho}(x_{a,i})}
\big(1+\epsilon^2P(x)\big)\big(\tilde{u}_a\big)^2\bar\nu_j(x)d\sigma}_{:=\widetilde{C}_{3}}
-\underbrace{\frac{1}{8\pi\epsilon^{2}}\int_{\partial B_{\rho}(x_{a,i})}\int_{\R^{3}}
\frac{(\tilde{u}_a)^2}{|x-y|}dy(\tilde{u}_a)^2\bar\nu_j(x)d\sigma}_{:=\widetilde{C}_{4}}
\\\quad&
-\underbrace{\frac{1}{8\pi\epsilon^{2}}\int_{ B_{\rho}(x_{a,i})}\int_{\R^{3}}
\frac{(\tilde{u}_a(y))^2(\tilde{u}_a(x))^2(x_{j}-y_{j})}{|x-y|^{3}}dydx}_{:=\widetilde{C}_{5}},
\end{split}
\end{equation}
where $j=1,2,3$ and $\bar \nu(x)=\big(\bar\nu_{1}(x),\bar\nu_2(x),\bar\nu_3(x)\big)$ is the outward unit normal of $\partial B_{\rho}(x_{a,i}).$

Let  $\tilde \varphi_{a}(x) =\varphi_{a}(\epsilon x + x_{a,i})$. Then, $\tilde \varphi_{a}$ satisfies
$\|\tilde \varphi_{a}\|_a=O\big(\epsilon^2\big).$ Using the Moser iteration,
we can prove
$\|\tilde \varphi_{a}\|_{L^\infty(\mathbb R^3)}=o(1).$
 From this fact and the comparison theorem, similar to Proposition 2.2 in \cite{LPW-20CV},
  we can prove the following estimates for $\tilde u_a(x)$ away from the concentrated points $b_1,\cdots,b_m.$

\begin{Prop}%\label{prop-7-31}
Suppose that $\tilde u_a(x)$ is a $m$-peak solution  of  \eqref{30-7-11} concentrated at $b_1,\cdots,b_m.$ Then
for any fixed $R\gg 1,$ there exist some $\theta>0$ and $C>0,$ such that
\begin{equation}\label{a7-31-1}
|\tilde u_a(x)|+|\nabla \tilde u_a(x)|\leq C\sum^m_{i=1}e^{-\theta |x-x_{a,i}|/\epsilon},~\mbox{for}~
x\in \R^3\backslash \bigcup^m_{i=1}B_{R \epsilon}(x_{a,i}).
\end{equation}
\end{Prop}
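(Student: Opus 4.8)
The plan is to follow the scheme of \cite[Proposition 2.2]{LPW-20CV}: away from the concentration points equation \eqref{30-7-11} behaves like a linear Schr\"odinger equation with a \emph{positive} potential, and the stated pointwise decay is read off from a comparison argument with an exponential barrier. Two inputs are already available. First, since $\tilde u_{a}=\sum_{i=1}^{m}U_{\epsilon,x_{a,i}}+\varphi_{a}$ with $\|\varphi_{a}\|_{a}=O(\epsilon^{7/2})$, the rescaled remainder $\tilde\varphi_{a}$ satisfies $\|\tilde\varphi_{a}\|_{H^{1}(\R^{3})}=\epsilon^{-3/2}\|\varphi_{a}\|_{a}=O(\epsilon^{2})$, and the Moser iteration applied to its equation even gives a power of $\epsilon$, say $\|\varphi_{a}\|_{L^{\infty}(\R^{3})}=O(\epsilon^{2})$; in particular $\|\tilde u_{a}\|_{L^{\infty}(\R^{3})}\le C$ uniformly in $a$. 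Second, the same ansatz yields the crude mass bound $\int_{\R^{3}}\tilde u_{a}^{2}=O(\epsilon^{3})$ (each diagonal term is $O(\epsilon^{3})$, the cross terms between distinct bumps are exponentially small since $b_{i}\neq b_{j}$, and $\int\varphi_{a}^{2}\le\|\varphi_{a}\|_{a}^{2}=O(\epsilon^{7})$).

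The key step is to control the Newtonian potential
\[
V_{\epsilon}(x):=\frac{1}{8\pi\epsilon^{2}}\int_{\R^{3}}\frac{\tilde u_{a}^{2}(y)}{|x-y|}\,dy
\]
on $\Omega_{R}:=\R^{3}\setminus\bigcup_{i=1}^{m}B_{R\epsilon}(x_{a,i})$. For $x\in\Omega_{R}$ I would split the convolution according to whether $y$ is close to some bump, $|y-x_{a,i}|\le\tfrac12|x-x_{a,i}|$, or not. In the first case $|x-y|\ge\tfrac12|x-x_{a,i}|\ge\tfrac12 R\epsilon$, so that part is bounded by $\sum_{i}\frac{2}{8\pi\epsilon^{2}|x-x_{a,i}|}\int\tilde u_{a}^{2}\le\frac{C\epsilon^{3}}{\epsilon^{3}R}=\frac{C}{R}$. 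In the second case $|y-x_{a,i}|\ge\tfrac12 R\epsilon$ for every $i$, so the bumps are seen only through their exponentially small tails; using the exponential decay of $U$ for the $U_{\epsilon,x_{a,i}}$-part and $\|\varphi_{a}\|_{L^{\infty}}+\|\varphi_{a}\|_{a}=O(\epsilon^{2})$ for the remainder (splitting once more at $|x-y|=\epsilon$ to absorb the singularity of $|x-y|^{-1}$), this part is $O(e^{-cR})+O(\epsilon^{4})$. Hence, for $R$ large and $\epsilon$ small, $V_{\epsilon}(x)\le\tfrac14$ on $\Omega_{R}$, so that there
\[
-\epsilon^{2}\Delta\tilde u_{a}+c_{\epsilon}(x)\,\tilde u_{a}=0,\qquad c_{\epsilon}(x):=1+\epsilon^{2}P(x)-V_{\epsilon}(x),
\]
with $c_{\epsilon}(x)\ge\tfrac12$ on the bounded part of $\Omega_{R}$ where $|\epsilon^{2}P(x)|\le\tfrac18$.

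It then remains the standard barrier step. For $0<\theta<\tfrac12$ the function $\bar w(x)=M\sum_{i=1}^{m}e^{-\theta|x-x_{a,i}|/\epsilon}$ satisfies $-\epsilon^{2}\Delta\bar w+\tfrac12\bar w\ge 0$ on $\Omega_{R}$, since $-\epsilon^{2}\Delta e^{-\theta|x-x_{a,i}|/\epsilon}=\bigl(-\theta^{2}+\tfrac{2\theta\epsilon}{|x-x_{a,i}|}\bigr)e^{-\theta|x-x_{a,i}|/\epsilon}$ and $-\theta^{2}+\tfrac{2\theta}{R}+\tfrac12>0$ for $R$ large. Choosing $M=Ce^{\theta R}$ with $C\ge\|\tilde u_{a}\|_{L^{\infty}}$ makes $\bar w\ge|\tilde u_{a}|$ on $\partial\Omega_{R}$, and since $\bar w,\tilde u_{a}\to0$ at infinity the maximum principle on the unbounded domain $\Omega_{R}$ gives $|\tilde u_{a}(x)|\le M\sum_{i}e^{-\theta|x-x_{a,i}|/\epsilon}$ there, which is the first half of \eqref{a7-31-1}. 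The gradient bound follows by applying interior $W^{2,p}$ (hence $C^{1,\alpha}$) estimates to the equation on balls of radius $\epsilon$ centred in $\Omega_{R}$, noting that $\bar w$ changes by at most a fixed factor over such a ball; after relabelling $\theta$ and enlarging $C$ this yields the claimed bound for $|\nabla\tilde u_{a}|$ as well.

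The main obstacle is precisely the control of the nonlocal term $V_{\epsilon}$: unlike in the local Schr\"odinger case, $V_{\epsilon}(x)$ feels the entire $L^{2}$-mass of $\tilde u_{a}$, so the estimate must simultaneously exploit the concentration of the bumps (their mass is $O(\epsilon^{3})$ and lies at distance $\ge R\epsilon$ from $x$), the exponential decay of $U$, and the $L^{\infty}$- and $L^{2}$-smallness of $\varphi_{a}$. A secondary nuisance is that $P$ is only assumed to satisfy $P(x)=O(e^{\alpha|x|})$ with $\alpha<2$, so $\epsilon^{2}P(x)$ need not be small once $|x|\gtrsim\log(1/\epsilon)$; this is handled, as in \cite[Proposition 2.2]{LPW-20CV}, by first running the comparison on a logarithmically large ball — where $\tilde u_{a}$ is then already of size $o(\epsilon^{k})$ for every $k$ on the boundary — and then extending the bound to all of $\Omega_{R}$ by a coarser comparison. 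Once $V_{\epsilon}\le\tfrac14$ on $\Omega_{R}$ is in hand, the rest is routine.
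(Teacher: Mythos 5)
Your route is the one the paper itself takes: for this proposition the paper offers no detailed argument, only the remark that Moser iteration gives $\|\tilde \varphi_{a}\|_{L^\infty(\R^3)}=o(1)$ and that the decay then follows ``from the comparison theorem, similar to Proposition 2.2 in \cite{LPW-20CV}''. Your write-up supplies exactly what that citation hides: the rescaled error bound $\|\tilde\varphi_a\|_{H^1}=\epsilon^{-3/2}\|\varphi_a\|_a=O(\epsilon^2)$, the mass bound $\int_{\R^3}\tilde u_a^2=O(\epsilon^3)$, the splitting of the Newtonian potential $\frac{1}{8\pi\epsilon^2}\int_{\R^3}\frac{\tilde u_a^2(y)}{|x-y|}\,dy$ into the contribution of the concentrated mass (giving $O(1/R)$) and the far tails (exponentially small plus the smallness of $\varphi_a$), so that the effective potential $1+\epsilon^2P-V_\epsilon$ stays bounded below by a positive constant on $\R^3\setminus\bigcup_i B_{R\epsilon}(x_{a,i})$, and finally the barrier $M\sum_i e^{-\theta|x-x_{a,i}|/\epsilon}$ with the maximum principle. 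This is correct and fully justifies the $|\tilde u_a|$-half of \eqref{a7-31-1}.

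The one step that does not work as you describe it is the gradient bound. Interior elliptic estimates on balls of radius $\epsilon$, after rescaling, give $|\nabla\tilde u_a(x_0)|\le C\epsilon^{-1}\sup_{B_\epsilon(x_0)}|\tilde u_a|$, hence $C\epsilon^{-1}e^{-\theta|x_0-x_{a,i}|/\epsilon}$, and the factor $\epsilon^{-1}$ cannot be removed by ``relabelling $\theta$'': writing $\epsilon^{-1}e^{-\theta r/\epsilon}=\epsilon^{-1}e^{-(\theta-\theta')r/\epsilon}e^{-\theta' r/\epsilon}$, the prefactor is only controlled once $r\gtrsim\epsilon|\log\epsilon|$, not on the whole range $r\ge R\epsilon$ with $R$ fixed. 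Indeed the leading bump itself has $|\nabla U_{\epsilon,x_{a,i}}|\sim\epsilon^{-1}|\nabla U|$ at distance $R\epsilon$, so the gradient half of \eqref{a7-31-1}, read literally with $C,\theta$ independent of $a$, can only hold with an extra factor $\epsilon^{-1}$, or on $|x-x_{a,i}|\ge\rho$ for a fixed $\rho>0$ --- which is the only way the estimate is actually used in the paper (e.g.\ on $\partial B_\rho(x_{a,i})$ and in exterior integrals, where $e^{-\theta\rho/\epsilon}$ swallows any power of $\epsilon^{-1}$). So either retain the $\epsilon^{-1}$ in your gradient estimate and observe it is harmless for every subsequent application, or restrict the gradient statement to a fixed distance from the peaks; as written, the ``relabelling $\theta$'' sentence is a genuine gap, though one inherited from the formulation of the proposition rather than from your method, which otherwise coincides with the paper's intended proof.
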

From the proof of Proposition 4.2 in \cite{LPW-20CV}, we have the following result.

\begin{Lem}\label{aprop-7-30-2}
For the small fixed constant $\bar \rho>0$ and any $\rho\in (\bar \rho, 2\bar \rho)$,
then there exist $i_0\in \{1,\cdots,m\},$ $j_0\in \{1,2,3\}$ and $C^{*}=C(i_0,j_{0})\neq 0$ such that
\begin{flalign}\label{claim1}
\widetilde{C}_{5}=C^{*}\epsilon^{4}+o(\epsilon^4).
\end{flalign}
\end{Lem}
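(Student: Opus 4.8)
The plan is to show that the only term in the nonlocal boundary identity \eqref{7-30-2} that contributes at order $\epsilon^4$ is the one coming from the \emph{interaction between distinct peaks}, and then to record that for a suitable choice of $i_0,j_0$ this contribution is genuinely nonzero. Write $\tilde u_a=\sum_{k=1}^m U_{\epsilon,x_{a,k}}+\varphi_a$ and expand $(\tilde u_a)^2$. Fix $i$ and $\rho\in(\bar\rho,2\bar\rho)$. Since $\bar\rho$ is small and the $b_k$ are pairwise distinct, the balls $B_{2\bar\rho}(b_k)$ are disjoint, so for $a$ large, on $B_\rho(x_{a,i})\times B_{\bar\rho}(x_{a,k})$ with $k\ne i$ one has $|x-y|\ge c_0>0$ and the kernel $(x_j-y_j)/|x-y|^3$ is smooth there. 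Taylor-expanding this kernel around $(x_{a,i},x_{a,k})$ (the first moments $\int U_{\epsilon,x_{a,i}}^2(x)(x-x_{a,i})\,dx$ and $\int U_{\epsilon,x_{a,k}}^2(y)(y-x_{a,k})\,dy$ over balls centred at the peaks vanish by radial symmetry), and using \eqref{a7-31-1} together with $\int_{\R^3}U_{\epsilon,x_{a,k}}^2=a_*\epsilon^3\bigl(1+O(\epsilon^2)\bigr)$, the contribution of $U_{\epsilon,x_{a,i}}^2(x)\,U_{\epsilon,x_{a,k}}^2(y)$ to $\widetilde C_5$ equals $\dfrac{a_*^2\epsilon^4}{8\pi}\dfrac{(x_{a,i}-x_{a,k})_j}{|x_{a,i}-x_{a,k}|^3}+o(\epsilon^4)$. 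Summing over $k\ne i$ and letting $a\to+\infty$ (so $x_{a,k}\to b_k$) gives
\[
\widetilde C_5=\frac{a_*^2\epsilon^4}{8\pi}\,(v_i)_j+o(\epsilon^4),\qquad v_i:=\sum_{k\ne i}\frac{b_i-b_k}{|b_i-b_k|^3},
\]
provided the remaining terms are $o(\epsilon^4)$; note the leading coefficient is independent of the chosen $\rho$.

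The second step is the error analysis. The self-interaction piece, with $U_{\epsilon,x_{a,i}}^2$ in both slots, would vanish identically if $x$ ranged over all of $\R^3$ (the integrand is antisymmetric in $x\leftrightarrow y$ because $U_{\epsilon,x_{a,i}}$ is radial about $x_{a,i}$); restricting $x$ to $B_\rho(x_{a,i})$ changes it only by $O(e^{-\theta/\epsilon})$, by the exponential decay of $U$ away from its centre together with a Hardy--Littlewood--Sobolev bound for $\int\!\!\int U_{\epsilon,x_{a,i}}(x)U_{\epsilon,x_{a,i}}^2(y)|x-y|^{-2}\,dx\,dy$. Any product containing $U_{\epsilon,x_{a,k}}(x)$ with $k\ne i$ and $x\in B_\rho(x_{a,i})$ is $O(e^{-\theta/\epsilon})$ as well. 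Finally, every term involving a factor of $\varphi_a$ is handled by splitting the $(x,y)$-integral into the near-diagonal region $\{|x-y|\le 1\}$, where one uses Hardy--Littlewood--Sobolev, and the complementary region, where the kernel is bounded and one uses Cauchy--Schwarz; both pieces are $o(\epsilon^4)$ once one invokes the reduction (as in Lemma~\ref{lem-5-05-1}), which here yields the improved bound $\|\varphi_a\|_a=o(\epsilon^{7/2})$ because $x_{a,i}\to b_i\in\Gamma_i$ and $P\equiv P_i$ on $\Gamma_i$. Collecting these estimates establishes the displayed formula for $\widetilde C_5$.

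It remains to exhibit $i_0,j_0$ with $(v_{i_0})_{j_0}\ne 0$, for then \eqref{claim1} holds with $C^\ast=C(i_0,j_0)=\frac{a_*^2}{8\pi}(v_{i_0})_{j_0}\ne 0$. Since $m\ge 2$ and the $b_k$ are distinct, their convex hull is a nondegenerate polytope; let $b_{i_0}$ be a vertex of it. There is a unit vector $e$ with $\langle b_{i_0}-b_k,e\rangle>0$ for all $k\ne i_0$, hence $\langle v_{i_0},e\rangle=\sum_{k\ne i_0}|b_{i_0}-b_k|^{-3}\langle b_{i_0}-b_k,e\rangle>0$, so $v_{i_0}\ne 0$ and some coordinate $(v_{i_0})_{j_0}$ is nonzero. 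I expect the main obstacle to be the error analysis of the second step --- in particular making the Hardy--Littlewood--Sobolev bookkeeping for the $\varphi_a$-terms and for the near-diagonal self-interaction precise enough to beat $\epsilon^4$, and securing the improved bound $\|\varphi_a\|_a=o(\epsilon^{7/2})$; by contrast the separation of the peaks makes the leading cross term essentially a direct computation, and the nonvanishing of $v_{i_0}$ is the elementary convex-hull argument above.
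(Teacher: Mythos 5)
Your leading-order computation of the cross-peak contribution $\frac{a_*^2}{8\pi}\epsilon^4\sum_{k\ne i_0}\frac{(b_{i_0}-b_k)_{j_0}}{|b_{i_0}-b_k|^3}$ and the extreme-point argument producing $i_0,j_0$ with a nonzero component are correct (for $m=2$ the convex hull is a segment, not a "nondegenerate polytope", but an extreme point always exists, which is all you use). The gap is in the error analysis of the terms that are \emph{linear in} $\varphi_a$ and paired with the $i$-th bump in the other slot, i.e.
\begin{equation*}
T_1=\frac{1}{4\pi\epsilon^2}\int_{B_\rho(x_{a,i})}\int_{\R^3}\frac{x_j-y_j}{|x-y|^3}\,U_{\epsilon,x_{a,i}}^2(y)\,\bigl(U_{\epsilon,x_{a,i}}\varphi_a\bigr)(x)\,dy\,dx
\end{equation*}
and its mirror $T_2$ with the two factors exchanged. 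Your HLS/Cauchy--Schwarz bookkeeping gives only $|T_l|\le C\epsilon^{1/2}\|\varphi_a\|_a$, which under the stated hypothesis $\|\varphi_a\|_a=O(\epsilon^{7/2})$ is $O(\epsilon^4)$ --- the same order as the main term --- so these terms cannot be discarded by size alone. Your proposed remedy, upgrading to $\|\varphi_a\|_a=o(\epsilon^{7/2})$ "as in Lemma \ref{lem-5-05-1}", fails in the multi-peak setting: for the ansatz $\sum_kU_{\epsilon,x_{a,k}}$ the reduction error $\mathcal L_a$ is no longer dominated by the potential term $\epsilon^2(P-P_k)U_{\epsilon,x_{a,k}}$ but by the nonlocal cross-interaction $\frac{1}{8\pi\epsilon^2}\bigl(|x|^{-1}\ast U_{\epsilon,x_{a,k}}^2\bigr)U_{\epsilon,x_{a,i}}\approx\frac{a_*\epsilon}{8\pi|x_{a,i}-x_{a,k}|}U_{\epsilon,x_{a,i}}$, whose $\|\cdot\|_a$-norm is of order $\epsilon^{5/2}$ and which is not annihilated by the projection onto $E_{a,\{x_{a,i}\}}$. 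The reduction therefore yields only $O(\epsilon^{5/2})$, weaker than the assumed bound, and no mechanism gives $o(\epsilon^{7/2})$. (This polynomially decaying interaction is exactly what drives the non-existence result, so it cannot be argued away.)

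The correct way to kill $T_1+T_2$ is structural, not quantitative: the kernel $(x_j-y_j)/|x-y|^3$ is antisymmetric under $x\leftrightarrow y$ while $U_{\epsilon,x_{a,i}}^2(y)(U_{\epsilon,x_{a,i}}\varphi_a)(x)+(U_{\epsilon,x_{a,i}}\varphi_a)(y)U_{\epsilon,x_{a,i}}^2(x)$ is symmetric, so the sum of the two integrals taken over $\R^3\times\R^3$ vanishes identically; restricting $x$ to $B_\rho(x_{a,i})$ costs only $O(e^{-\theta/\epsilon})$ because the discarded region carries a factor $U_{\epsilon,x_{a,i}}(x)$ with $|x-x_{a,i}|\ge\rho$. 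This is the same symmetrization you already invoked for the pure $U_{\epsilon,x_{a,i}}^2\otimes U_{\epsilon,x_{a,i}}^2$ piece, and it is the ingredient the cited Proposition 4.2 of \cite{LPW-20CV} relies on; with it in place, your remaining estimates (the genuinely quadratic-in-$\varphi_a$ terms, the bounded-kernel cross terms, and the exponentially small tails) do close the argument.
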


Now we are ready to prove Proposition \ref{prop7-30-1}.

\begin{proof}[\textbf{Proof of Proposition \ref{prop7-30-1}}]
Here we prove it by  contradiction.
Assume that \eqref{30-7-11} has a $m$-peak solution $\tilde{u}_{a}(x)$.
Then taking $i=i_0$ and $j=j_0$ as in Lemma \ref{aprop-7-30-2}, it follows from \eqref{7-30-2} and \eqref{claim1} that
\begin{equation*}\label{7-30-3}
\begin{split}
\epsilon^2\int_{B_{\rho}(x_{a,i})} \frac{\partial P(x)}{\partial x_j}\big(\tilde{u}_a\big)^2  dx
=C^{*}\epsilon^{4}+o(\epsilon^{4})+O(e^{-\frac{\theta}{\epsilon}}),
\end{split}
\end{equation*}
since similar to \eqref{eqd-7-10-1-1} and by \eqref{a7-31-1} we can prove
\begin{equation*}\label{7-30-3}
\begin{split}
\sum_{i=1}^{4}\widetilde{C}_{i}\leq C\int_{\partial B_{\rho}(x_{a,i})}
 \big(\epsilon^2|\nabla \tilde{u}_a|^2+\tilde{u}^{2}_a(x)\big)d\sigma
 \leq C\int_{\partial B_{\rho}(x_{a,i})}\sum_{j=1}^{m}e^{\frac{-\theta \rho}{\epsilon}} d\sigma
 =O(e^{-\frac{\theta \rho}{\epsilon}}),
\end{split}
\end{equation*}
where we use the fact that
$\big\{x:\partial B_{\rho}(x_{a,i})\big\}\subset \big\{x:\R^{3}\setminus \bigcup_{j=1}^{m} B_{\frac{\rho}{2}}(x_{a,j})\big\}.$
Since $x_{a,i}\to b_i\in \Gamma_{i},$ we find that there is a $t_a\in [P_{i}, P_{i}+\sigma]$ if $\Gamma$
is a local minimum set of $P(x)$, or $t_a\in [P_{i}-\sigma, P_{i}]$ if $\Gamma_{i}$
is a local maximum set of $P(x)$, such that $x_{a,i}\in \Gamma_{t_a,i}.$
Let $\tau_{a,i}$ be the unit tangential vector of $\Gamma_{t_a,i}$ at $x_{a,i}.$  Then
$$
G(x_{a,i}) =0,~\mbox{where}~G(x)=  \bigl\langle \nabla P(x), \tau_{a,i}\bigr\rangle.
$$
We have the following expansion:
\begin{equation*}
\begin{split}
 G(x)=& \langle\nabla G( x_{a,i}), x-x_{a,i}\rangle+
\frac{1}{2}\big\langle \langle \nabla^2 G( x_{a,i}), x- x_{a,i}\rangle,x- x_{a,i}\big\rangle
+o\big(|x- x_{a,i}|^2\big),~\mbox{for}~x\in B_{\rho}(x_{a,i}).
\end{split}
\end{equation*}
Then it follows from  \eqref{30-31-7}, the above expansion and $\|\varphi_{a}\|_{a}=O(\epsilon^{\frac{7}{2}})$ that
\begin{equation}\label{luo-6-7-30}
\begin{split}
\int_{\mathbb R^3} G(x)U^{2}_{\epsilon,x_{a,i}}(x)=
&
\int_{B_{\rho}(x_{a,i})} G(x)U^{2}_{\epsilon,x_{a,i}}(x)+
O\big(e^{-\frac{\tilde{\theta}}{\epsilon}}\big)
\\=&
-2\int_{B_{\rho}(x_{a,i})} G(x)U_{\epsilon,x_{a,i}}(x) \varphi_{a}
- \int_{B_{\rho}(x_{a,i})}G(x) \varphi^2_{a}+C^{*}\epsilon^{2}+o(\epsilon^{2})\\=&
O\Big( \big[\epsilon^{\frac{5}{2}} |\nabla G(x_{ a,i})| +\epsilon^{\frac{7}{2}}\big]\|\varphi_{a}\|_a+\epsilon|\nabla G(x_{ a,i})|\cdot
\|\varphi_{a}\|^2_a\Big)+C^{*}\epsilon^{2}+o(\epsilon^{2})\\
=&C^{*}\epsilon^{2}+
o\big( \epsilon^{2}\big),
\end{split}
\end{equation}
where $0<\tilde{\theta}<\theta$ small.

On the other hand, noting that $ G(x_{a,i})=0,$ it is easy to check
\begin{equation}\label{06-09-1-7-30}
 \int_{\mathbb R^3} G(x)U^{2}_{\epsilon,x_{a,i}}(x)=
 \frac{1}{6} \epsilon^{5}\Delta G(x_{a,i}) \displaystyle\int_{\R^3}|x|^2U^2
+O\bigl (\epsilon^{7}\bigr)=O\bigl (\epsilon^{5}\bigr).
\end{equation}
Then it follows from \eqref{luo-6-7-30} and \eqref{06-09-1-7-30} that
$$C^{*}=o(1),$$
which contradicts with $C^{*}\neq 0.$
Therefore \eqref{30-7-11} has no $m$-peak solution $\tilde{u}_{a}(x).$
\end{proof}

With Proposition \ref{prop7-30-1} at hand, we can prove Theorem \ref{th1.5} at once.

\begin{proof}[
\textbf{Proof of Theorem \ref{th1.5}}]
Theorem \ref{th1.5} follows directly from Proposition \ref{prop7-30-1}
and the relation between the solutions of equation \eqref{8-18-1} and  equation \eqref{30-7-11}.
\end{proof}

\section*{Appendix}

\appendix

\section{Some known results}\label{sa}

\renewcommand{\theequation}{A.\arabic{equation}}
In this section, we give some known results which are used before.
Denote
$$
H_{\epsilon}=\Big\{u\in H^{1}(\R^{3}),\int_{\R^{3}}(\epsilon^{2}|\nabla u|^{2}+P(x)u^{2}(x))dx<\infty\Big\}
$$
and the corresponding norm
$$
\|u\|_{\epsilon}=(u(x),u(x))_{\epsilon}^{\frac{1}{2}}
=\Big(\int_{\R^{3}}(\epsilon^{2}|\nabla u|^{2}+P(x)u^{2}(x))dx\Big)^{\frac{1}{2}}.
$$

\begin{Lem}\label{lem-add1-1}(Lemma 2.1, \cite{GW-20ANS})
For each $u\in L^{q}(\R^{3})(2\leq q\leq 6),$ we have
$$
|u|_{q}\leq C\delta_{a}^{3(\frac{1}{q}-\frac{1}{2})}\|u\|_{\delta_{a}}.
$$
\end{Lem}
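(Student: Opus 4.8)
The plan is to reduce the stated inequality to the standard Sobolev embedding $H^{1}(\R^{3})\hookrightarrow L^{q}(\R^{3})$, which holds precisely for $2\le q\le 6$, by a scaling change of variables that eliminates the parameter $\delta_{a}$ from the weighted norm $\|\cdot\|_{\delta_{a}}$. If $\|u\|_{\delta_{a}}=+\infty$ there is nothing to prove, so we may assume $u\in H^{1}(\R^{3})$.

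First I would introduce the rescaled function $v(x):=u(\delta_{a}x)$ and record the two relevant transformation rules. Using $y=\delta_{a}x$, so that $dy=\delta_{a}^{3}\,dx$ and $(\nabla u)(\delta_{a}x)=\delta_{a}^{-1}\nabla v(x)$, one has
\[
\|u\|_{\delta_{a}}^{2}=\int_{\R^{3}}\big(\delta_{a}^{2}|\nabla u(y)|^{2}+u^{2}(y)\big)\,dy=\delta_{a}^{3}\int_{\R^{3}}\big(|\nabla v|^{2}+v^{2}\big)\,dx=\delta_{a}^{3}\,\|v\|_{H^{1}(\R^{3})}^{2},
\]
and similarly $|u|_{q}^{q}=\delta_{a}^{3}\,|v|_{q}^{q}$, that is, $|u|_{q}=\delta_{a}^{3/q}|v|_{q}$ while $\|v\|_{H^{1}(\R^{3})}=\delta_{a}^{-3/2}\|u\|_{\delta_{a}}$.

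Next I would apply the Sobolev inequality on $\R^{3}$: there is a constant $C=C(q)$, depending only on $q$ (and the dimension), hence independent of $a$, such that $|v|_{q}\le C\|v\|_{H^{1}(\R^{3})}$ for all $v\in H^{1}(\R^{3})$ and all $q\in[2,6]$. Combining this with the two scaling identities yields
\[
|u|_{q}=\delta_{a}^{3/q}|v|_{q}\le C\,\delta_{a}^{3/q}\|v\|_{H^{1}(\R^{3})}=C\,\delta_{a}^{3/q}\,\delta_{a}^{-3/2}\,\|u\|_{\delta_{a}}=C\,\delta_{a}^{3(\frac{1}{q}-\frac{1}{2})}\|u\|_{\delta_{a}},
\]
which is exactly the assertion. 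There is no genuine obstacle in this argument; the only point worth keeping in mind is that the Sobolev constant must be uniform in the parameter $\delta_{a}$, which is automatic since the unscaled constant $C(q)$ does not depend on $\delta_{a}$. Since this inequality is precisely Lemma 2.1 of \cite{GW-20ANS}, one may alternatively simply cite it and omit the scaling computation.
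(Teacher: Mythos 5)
Your scaling argument is correct: the substitution $v(x)=u(\delta_a x)$ turns $\|u\|_{\delta_a}$ into $\delta_a^{3/2}\|v\|_{H^1}$ and $|u|_q$ into $\delta_a^{3/q}|v|_q$, and the Sobolev embedding $H^1(\R^3)\hookrightarrow L^q(\R^3)$ for $2\le q\le 6$ then gives exactly the stated bound with a constant independent of $a$. The paper itself offers no proof (it simply cites Lemma 2.1 of \cite{GW-20ANS}), and your computation is the standard derivation of that cited result, so nothing further is needed.
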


\begin{Lem}\label{lem-7-9-1}(Lemma 2.2, \cite{GW-20ANS}, Lamma A.5, \cite{LPW-20CV})
For any $u_{i}(i=1,2,3,4)\in H_{\epsilon},$ then
$$
\int_{\R^{3}}\int_{\R^{3}}\frac{u_{1}(x)u_{2}(x)u_{3}(y)u_{4}(y)}{|x-y|}dx\,dy
\leq C\epsilon^{-1}
\|u_{1}\|_{\epsilon}\|u_{2}\|_{\epsilon}\|u_{3}\|_{\epsilon}\|u_{4}\|_{\epsilon}.
$$
\end{Lem}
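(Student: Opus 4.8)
The plan is to reduce the double integral to a product of $L^{6/5}$-norms by the Hardy--Littlewood--Sobolev (HLS) inequality, and then bound those norms by the $\|\cdot\|_\epsilon$-norms using H\"older's inequality together with the rescaled Gagliardo--Nirenberg estimate of Lemma \ref{lem-add1-1}. Since only one Riesz kernel $|x-y|^{-1}$ appears and the inequality is homogeneous of exactly the right degree, no blow-up analysis is needed; the proof is a short chain of classical inequalities, and the only care required is in bookkeeping the power of $\epsilon$.

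Concretely, first I would invoke the HLS inequality on $\R^3$ with kernel $|x-y|^{-1}$: since the exponent relation $\tfrac1p+\tfrac1q+\tfrac13=2$ is met by $p=q=\tfrac65$, there is a universal constant $C$ with
$$
\int_{\R^3}\int_{\R^3}\frac{f(x)g(y)}{|x-y|}\,dx\,dy\le C\,\|f\|_{L^{6/5}(\R^3)}\,\|g\|_{L^{6/5}(\R^3)}
$$
for all $f,g\in L^{6/5}(\R^3)$. Applying this with $f=u_1u_2$ and $g=u_3u_4$ bounds the left-hand side of the lemma by $C\,\|u_1u_2\|_{L^{6/5}}\,\|u_3u_4\|_{L^{6/5}}$.

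Next, H\"older's inequality with the pair of exponents $\tfrac{12}{5},\tfrac{12}{5}$ (note $\tfrac{5}{12}+\tfrac{5}{12}=\tfrac56$) gives $\|u_1u_2\|_{L^{6/5}}\le|u_1|_{12/5}\,|u_2|_{12/5}$ and likewise $\|u_3u_4\|_{L^{6/5}}\le|u_3|_{12/5}\,|u_4|_{12/5}$. Since $\tfrac{12}{5}\in[2,6]$, Lemma \ref{lem-add1-1} applies with $q=\tfrac{12}{5}$ (its parameter taken to be $\epsilon$ here), and because $3\bigl(\tfrac{5}{12}-\tfrac12\bigr)=-\tfrac14$ it yields $|u_i|_{12/5}\le C\epsilon^{-1/4}\|u_i\|_\epsilon$ for $i=1,2,3,4$. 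Multiplying the four estimates produces the factor $\bigl(\epsilon^{-1/4}\bigr)^4=\epsilon^{-1}$, hence
$$
\int_{\R^3}\int_{\R^3}\frac{u_1(x)u_2(x)u_3(y)u_4(y)}{|x-y|}\,dx\,dy\le C\epsilon^{-1}\prod_{i=1}^4\|u_i\|_\epsilon,
$$
which is exactly the assertion.

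The only genuinely delicate point is a normalization one: the norm $\|\cdot\|_\epsilon$ in the statement is built from the potential $P(x)$, whereas the dilation $u\mapsto u(\epsilon\,\cdot)$ underlying Lemma \ref{lem-add1-1} is transparent for the norm $\bigl(\int_{\R^3}\epsilon^2|\nabla u|^2+u^2\bigr)^{1/2}$; on the function spaces and near the regions relevant to every use of this lemma in the paper these two norms are equivalent up to a harmless constant, so the substitution is legitimate. Everything else is routine exponent bookkeeping, and I expect this ``exponent range and $\epsilon$-power'' verification to be the main --- indeed the only --- thing that needs attention.
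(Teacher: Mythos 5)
Your proof is correct, and it is essentially the standard argument: the paper itself states this lemma without proof, citing \cite{GW-20ANS,LPW-20CV}, and the proof given there is precisely your chain Hardy--Littlewood--Sobolev with $p=q=\tfrac65$, H\"older with exponents $\tfrac{12}{5}$, and the rescaled Sobolev embedding of Lemma \ref{lem-add1-1}, whose exponent bookkeeping $4\cdot\bigl(-\tfrac14\bigr)=-1$ you have done correctly. The only caveat is the one you already flagged: Lemma \ref{lem-add1-1} is phrased for the potential-free norm $\bigl(\int_{\R^3}(\epsilon^2|\nabla u|^2+u^2)\bigr)^{1/2}$, and to pass to $\|\cdot\|_\epsilon$ you only need the one-sided bound $\int_{\R^3}u^2\le C\int_{\R^3}P(x)u^2$ (i.e.\ $\inf P>0$, as assumed in the cited references), not full equivalence of the two norms.
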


\begin{Lem}(Lemma A.6, \cite{LPW-20CV})\label{lem-add-7-11}
For any $u_1,u_2,u_3,u_4\in H^1(\R^3),$  then
\begin{flalign*}\label{aa9}
\int_{\R^3}\int_{\R^3}\frac{u_1(x)u_2(x)u_3(y)u_4(y)}{|x-y|}dx dy
\leq C\|u_1\|_{H^1(\R^3)}\|u_2\|_{H^1(\R^3)}\|u_3\|_{H^1(\R^3)}\|u_4\|_{H^1(\R^3)}.
\end{flalign*}
\end{Lem}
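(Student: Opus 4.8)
The plan is to obtain Lemma~\ref{lem-add-7-11} as a direct consequence of the Hardy--Littlewood--Sobolev (HLS) inequality, H\"older's inequality, and the Sobolev embedding $H^1(\R^3)\hookrightarrow L^q(\R^3)$, which holds for every $q\in[2,6]$. Since the kernel $|x-y|^{-1}$ is positive and $|u_1u_2u_3u_4|\le |u_1|\,|u_2|\,|u_3|\,|u_4|$, it suffices to estimate
\[
I:=\int_{\R^3}\int_{\R^3}\frac{|u_1(x)u_2(x)|\,|u_3(y)u_4(y)|}{|x-y|}\,dx\,dy,
\]
so one may as well assume $u_i\ge 0$ throughout.

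First I would invoke HLS for the Riesz potential of order $2$ in $\R^3$: if $f\in L^p(\R^3)$ and $g\in L^q(\R^3)$ are nonnegative with $\frac{1}{p}+\frac{1}{q}+\frac{1}{3}=2$, then $\int_{\R^3}\int_{\R^3}\frac{f(x)g(y)}{|x-y|}\,dx\,dy\le C\,\|f\|_{L^p}\|g\|_{L^q}$. Taking the balanced exponents $p=q=\frac{6}{5}$, which satisfy $\frac{5}{6}+\frac{5}{6}+\frac{1}{3}=2$, and setting $f=u_1u_2$, $g=u_3u_4$, gives
\[
I\le C\,\|u_1u_2\|_{L^{6/5}}\,\|u_3u_4\|_{L^{6/5}}.
\]
Next, by H\"older's inequality, $\|u_1u_2\|_{L^{6/5}}\le\|u_1\|_{L^{12/5}}\|u_2\|_{L^{12/5}}$ since $\frac{5}{12}+\frac{5}{12}=\frac{5}{6}$, and likewise $\|u_3u_4\|_{L^{6/5}}\le\|u_3\|_{L^{12/5}}\|u_4\|_{L^{12/5}}$. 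Finally, because $2\le\frac{12}{5}\le 6$, the Sobolev embedding (equivalently, Gagliardo--Nirenberg interpolation between $L^2$ and $L^6$) yields $\|u_i\|_{L^{12/5}}\le C\|u_i\|_{H^1(\R^3)}$ for $i=1,2,3,4$. Concatenating the three displayed estimates produces the claimed inequality, with a constant $C$ depending only on the HLS and Sobolev constants.

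I do not expect any genuine obstacle here; the single point that needs attention is the exponent bookkeeping, namely that the balanced HLS exponent $\frac{6}{5}$, together with the two-factor H\"older split, forces the Lebesgue exponent $\frac{12}{5}$ to land inside the admissible Sobolev window $[2,6]$ for dimension three. It is exactly this window that is lost for the kernel $|x-y|^{-(N-2)}$ once $N=5$, which is why the Remark following Theorem~\ref{th1.4} must treat the higher-dimensional cases separately. A slightly longer alternative would be to note $\tfrac{1}{|x|}\in L^{3,\infty}(\R^3)$ and combine Young's inequality for weak-type spaces with real interpolation, but the HLS route above is the most economical.
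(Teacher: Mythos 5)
Your proof is correct and is the standard argument; the paper itself does not prove this lemma but imports it from \cite{LPW-20CV}, where the same Hardy--Littlewood--Sobolev, H\"older, Sobolev chain with exponents $\tfrac65$ and $\tfrac{12}{5}$ is the expected route. One tangential slip in your closing remark: the balanced exponent $\tfrac{4N}{N+2}$ stays inside the Sobolev window $[2,\tfrac{2N}{N-2}]$ for all $N\le 6$, so nothing is lost at $N=5$; the case distinction $N=4,5$ in the paper's Remark concerns the limiting value of $a$, not this embedding.
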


\begin{Lem}(Lemma B.2, \cite{LPW-20CV})
For any fixed $R>0,$ it holds
\begin{equation}\label{eq-d-710}
D_{a}(x)=o(1)R+O\Big(\frac{1}{R}\Big),\,\,\,\text{for}\,\,x\in \R^{3}\setminus B_{R\delta_{a}(x_{a})},
\end{equation}
and
\begin{equation}\label{eq-e-710}
E_{a}(x)=O(e^{-\tilde{\theta}R}),\,\,\,\text{for}\,\,x\in \R^{3}\setminus B_{R\delta_{a}(x_{a})},\,\,\text{and~some}\,\,\,\tilde{\theta}>0.
\end{equation}
\end{Lem}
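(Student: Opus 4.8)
The plan is to estimate the two Riesz potentials $D_a$ and $E_a$ by passing to the concentration scale $\delta_a$ and combining the exponential decay \eqref{a2---5} with Hardy's inequality applied to the rescaled remainder. Throughout, recall that $u_a^{(l)}=\tilde U_{\delta_a,x_a^{(l)}}+\tilde\varphi_a^{(l)}$ with $\|\tilde\varphi_a^{(l)}\|_{\delta_a}=O(\delta_a^{11/2})$ by \eqref{8-27-26} (applied to each peak), and that $x_a^{(1)},x_a^{(2)}\to b_0$ with $|x_a^{(1)}-x_a^{(2)}|=O(\delta_a^4)$ by \eqref{8-27-38}; we write $x_a=x_a^{(1)}$. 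Also, \eqref{a2---5} together with the uniform $L^\infty$ bound on $u_a^{(l)}$ yields $\|u_a^{(l)}\|_{L^1(\R^3)}=O(\delta_a^3)$, which will be used repeatedly.

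\emph{Estimate of $D_a$.} Setting $\zeta=(x-x_a)/\delta_a$ (so $|\zeta|\ge R$) and substituting $y=x_a+\delta_a z$ in the definition of $D_a$ gives
\[
8\pi D_a(x)=\int_{\R^3}\frac{(u_a^{(1)})^2(x_a+\delta_a z)}{|\zeta-z|}\,dz,
\]
and I write $u_a^{(1)}(x_a+\delta_a z)=\textbf{p}_{0}U(\sqrt{\textbf{p}_{0}}\,z)+\hat\varphi(z)$ with $\hat\varphi(z):=\tilde\varphi_a^{(1)}(x_a+\delta_a z)$, which by a direct scaling of \eqref{8-27-26} satisfies $\|\hat\varphi\|_{H^1(\R^3)}=O(\delta_a^4)$. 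Expanding the square yields three terms. For the leading term $\int\textbf{p}_{0}^2U^2(\sqrt{\textbf{p}_{0}}z)|\zeta-z|^{-1}dz$ I split into $|z|\le|\zeta|/2$, where $|\zeta-z|\ge|\zeta|/2\ge R/2$ gives a bound $O(1/R)$, and $|z|>|\zeta|/2$, where the exponential decay of $U$ plus a secondary split according to whether $|\zeta-z|\lessgtr1$ gives a bound $O(e^{-cR})$. For the cross term and the quadratic term in $\hat\varphi$, Cauchy--Schwarz followed by the Hardy inequality $\int\hat\varphi^2|\zeta-z|^{-2}dz\le4\int|\nabla\hat\varphi|^2$ gives bounds $O(\|\hat\varphi\|_{H^1})=O(\delta_a^4)$ and $O(\|\hat\varphi\|_{H^1}^2)=O(\delta_a^8)$, uniformly in $\zeta$. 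Altogether $D_a(x)=O(1/R)+O(\delta_a^4)$, which is of the claimed form $o(1)R+O(1/R)$.

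\emph{Estimate of $E_a$.} Since $|x_a^{(1)}-x_a^{(2)}|=O(\delta_a^4)$, for $\delta_a$ small the hypothesis $x\notin B_{R\delta_a}(x_a)$ forces $|x-x_a^{(2)}|\ge R\delta_a/2$, so \eqref{a2---5} gives $|u_a^{(2)}(x)|\le Ce^{-\theta R/2}$. It then remains to show that the nonlocal factor $\frac{1}{8\pi\delta_a^2}\int(u_a^{(1)}+u_a^{(2)})\xi_a(y)|x-y|^{-1}dy$ is $O(1/R)$. Using $\|\xi_a\|_{L^\infty}\le1$ it suffices to bound $\int|u_a^{(l)}|(y)|x-y|^{-1}dy$; splitting at $|x-y|=|x-x_a^{(l)}|/2$, on the far part $|x-y|^{-1}\le2|x-x_a^{(l)}|^{-1}\le 2/(R\delta_a)$ pairs with $\|u_a^{(l)}\|_{L^1}=O(\delta_a^3)$, while on the near part $|y-x_a^{(l)}|\ge|x-x_a^{(l)}|/2\ge R\delta_a/2$ forces exponential smallness via \eqref{a2---5}. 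This gives $\int|u_a^{(l)}|(y)|x-y|^{-1}dy=O(\delta_a^2/R)$, hence the nonlocal factor is $O(1/R)$, and finally $E_a(x)=O(e^{-\theta R/2})\cdot O(1/R)=O(e^{-\tilde\theta R})$ for any fixed $\tilde\theta<\theta/2$.

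The main obstacle is the treatment of the remainders $\tilde\varphi_a^{(l)}$ inside the nonlocal terms: the $\|\cdot\|_{\delta_a}$-estimate is not pointwise, so one must first transfer it into an $H^1(\R^3)$-bound for the rescaled remainder $\hat\varphi$ with the precise rate $\delta_a^4$, and then control the resulting Riesz potentials through Hardy's inequality \emph{uniformly} in the far-field parameter $\zeta$. A parallel point of care is to keep the near-field/far-field splittings of the Newtonian potential — and the attendant secondary splits based on $|\zeta-z|\lessgtr1$ — uniform in $a$, so that the error terms genuinely behave like $o(1)$ as $a\to+\infty$ rather than degenerating when $R$ is held fixed.
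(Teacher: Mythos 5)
Your argument is correct. Note first that the paper itself does not prove this lemma: it is quoted verbatim as Lemma B.2 of \cite{LPW-20CV} in the appendix of known results, so there is no in-paper proof to match against. Your derivation is a legitimate self-contained substitute. The rescaling $y=x_a+\delta_a z$ correctly turns $D_a$ into $\frac{1}{8\pi}\int (u_a^{(1)})^2(x_a+\delta_a z)|\zeta-z|^{-1}dz$ (the $\delta_a^{-2}$ prefactor is exactly absorbed), the conversion $\|\hat\varphi\|_{H^1}=\delta_a^{-3/2}\|\tilde\varphi_a^{(1)}\|_{\delta_a}=O(\delta_a^4)$ is right, and the Cauchy--Schwarz plus Hardy treatment of the cross and quadratic remainder terms is uniform in the center $\zeta$, as you emphasize. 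Your bound $O(1/R)+O(\delta_a^4)$ is in fact slightly stronger than the stated $o(1)R+O(1/R)$. For $E_a$, the chain $|x-x_a^{(2)}|\ge R\delta_a/2$, the exponential bound \eqref{a2---5}, $\|\xi_a\|_{L^\infty}\le 1$, and the $L^1$-bound $\|u_a^{(l)}\|_{L^1}=O(\delta_a^3)$ (justified by the interior $L^\infty$ bound plus exterior exponential decay) all hold, and the near/far split of the Newtonian potential gives the claimed $O(\delta_a^2/R)$. The only stylistic difference from the analogous computations the authors do carry out in the paper (e.g.\ \eqref{eq-7-9-1}) is that they prefer H\"older splittings with explicit Lebesgue exponents where you use Hardy's inequality and the pointwise decay; both routes are fine, and yours has the advantage of giving the remainder contributions with an explicit rate in $\delta_a$.
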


Now let $\widetilde{\Gamma}\in C^2$ be a closed hypersurface in $\mathbb R^3.$ For $y\in \widetilde{\Gamma},$ let $\nu(y)$ and $T(y)$ denote respectively the outward unit  normal to $\widetilde{\Gamma}$ at $y$ and the tangent hyperplane to $\widetilde{\Gamma}$ at $y.$ The curvatures of $\widetilde{\Gamma}$ at a fixed point $y_0\in\widetilde{\Gamma}$ are determined as follows. By a rotation of coordinates, we can assume that
$y_0=0$ and $\nu(0)$ is the $x_3$-direction, and $x_j$-direction is the $j$-th principal direction.

In some neighborhood $\mathcal{N}=\mathcal{N}(0)$ of $0,$  we have
\[
\widetilde{\Gamma}=\bigl\{  x:   x_3=\phi(x')\bigr\},
\]
where $x'=(x_1,x_{2}),$
 \[
 \phi(x') =\frac12 \sum_{j=1}^{2} \kappa_j x_j^2 +  O(|x'|^3),
 \]
 where
 $\kappa_j,$ is  the $j$-th principal curvature of  $\widetilde{\Gamma}$ at $0.$
 The Hessian matrix $[D^2 \phi(0)]$  is given by
\begin{equation*}
[D^2 \phi(0)]=diag [\kappa_1,\kappa_{2}].
\end{equation*}

Suppose that  $W$ is a smooth function, such  that  $W(x)=constant$  for all $x\in \widetilde{\Gamma}.$
It follows from \cite{LPY-19} that

\begin{Lem}(Lemma B.1, \cite{LPY-19})
We have
 \begin{equation*}\label{7-19-29}
 \frac{\partial W(x)}{\partial x_l}\Bigr|_{x=0}=0, ~\mbox  ~l=1,2,
 \end{equation*}
  \begin{equation}\label{ab7-19-29}
  \frac{\partial^2 W(x)}{\partial x_m\partial x_l }\Bigr|_{x=0}=-\frac{\partial W\big(x\big)}{\partial x_3}\Bigr|_{x=0}\kappa_m \delta_{ml}, ~\mbox{for}~m, l=1,2,
 \end{equation}
 where  $\kappa_1,\kappa_{2},$ are  the principal curvatures of  $\widetilde{\Gamma}$ at $0.$
\end{Lem}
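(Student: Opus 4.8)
The plan is to exploit directly the hypothesis that $W$ is constant on $\widetilde{\Gamma}$, by pulling $W$ back to the parameter domain through the local graph representation $x_3=\phi(x')$ and then simply reading off the vanishing of the derivatives of the resulting constant function of $x'$. No deep input is needed beyond the chain rule and the normalisation of coordinates already set up just before the lemma.

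First I would recall from the paragraph preceding the statement that in the chosen coordinates, near $0$ one has $\widetilde{\Gamma}=\bigl\{x:\ x_3=\phi(x')\bigr\}$ with $\phi(x')=\tfrac12\sum_{j=1}^2\kappa_j x_j^2+O(|x'|^3)$, so that $\nabla_{x'}\phi(0)=0$ and $[D^2\phi(0)]=\mathrm{diag}[\kappa_1,\kappa_2]$. Define $g(x'):=W(x',\phi(x'))$ on a neighbourhood of $0$ in $\mathbb{R}^2$. Since $W$ is constant on $\widetilde{\Gamma}$, the function $g$ is constant there, hence all of its first- and second-order partial derivatives at $0$ vanish.

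Differentiating $g$ once by the chain rule gives, for $l=1,2$,
\[
\partial_{x_l}g(x')=(\partial_{x_l}W)(x',\phi(x'))+(\partial_{x_3}W)(x',\phi(x'))\,\partial_{x_l}\phi(x').
\]
Evaluating at $x'=0$ and using $\partial_{x_l}\phi(0)=0$ yields $0=\partial_{x_l}g(0)=\partial_{x_l}W(0)$, which is the first assertion. Differentiating once more with respect to $x_m$ and using $\partial_{x_m}\phi(0)=0$ to discard every term carrying a factor $\partial_{x_m}\phi$ or $\partial_{x_l}\phi$, one gets at $x'=0$
\[
0=\partial_{x_m}\partial_{x_l}g(0)=\partial_{x_m}\partial_{x_l}W(0)+(\partial_{x_3}W)(0)\,\partial_{x_m}\partial_{x_l}\phi(0).
\]
Substituting $\partial_{x_m}\partial_{x_l}\phi(0)=\kappa_m\delta_{ml}$ gives $\partial_{x_m}\partial_{x_l}W(0)=-(\partial_{x_3}W)(0)\,\kappa_m\delta_{ml}$, which is exactly \eqref{ab7-19-29}.

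There is no genuine obstacle; the only point requiring care is the bookkeeping in the second-order chain rule. The full expansion of $\partial_{x_m}\partial_{x_l}g$ consists of the four inner-derivative contributions $\partial_{x_m}\partial_{x_l}W$, $(\partial_{x_3}\partial_{x_l}W)\,\partial_{x_m}\phi$, $[(\partial_{x_m}\partial_{x_3}W)+(\partial_{x_3}^2W)\,\partial_{x_m}\phi]\,\partial_{x_l}\phi$, and $(\partial_{x_3}W)\,\partial_{x_m}\partial_{x_l}\phi$, and every one of these except the last carries a factor $\partial_{x_m}\phi$ or $\partial_{x_l}\phi$ and hence vanishes at $0$ because $\nabla_{x'}\phi(0)=0$. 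The only structural input is the normalisation at $0$ — $\nu(0)$ axial and the $x_1,x_2$-axes principal — which forces $[D^2\phi(0)]$ to be diagonal with entries the principal curvatures; the $C^2$ regularity of $\widetilde{\Gamma}$ together with smoothness of $W$ is precisely what makes these two rounds of differentiation legitimate.
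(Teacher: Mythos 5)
Your proof is correct: pulling $W$ back through the graph parametrization $x_3=\phi(x')$ and applying the chain rule twice, using $\nabla_{x'}\phi(0)=0$ and $D^2\phi(0)=\mathrm{diag}[\kappa_1,\kappa_2]$, is exactly the standard argument for this fact. The paper itself offers no proof (it simply quotes Lemma B.1 of \cite{LPY-19}), and your computation is the natural one underlying that cited result, so nothing further is needed.
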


\section{Linearization and A Pohozaev identity}

\renewcommand{\theequation}{B.\arabic{equation}}
\begin{Lem}\label{lem-bbb}
Let $\xi_0$ be the solution of following system:
\begin{equation}\label{06-10-23}
\begin{split}
&-\Delta \xi(x)+\xi(x)
-\frac{1}{8\pi}\int_{\R^{3}}\frac{U^{2}(y)}{|x-y|}dy\xi(x)
-\frac{1}{4\pi}\int_{\R^{3}}\frac{U(y)\xi(y)}{|x-y|}dy\,U(x)\\
&=-\frac{1}{4\pi a_*}U(x)\int_{\R^3}\int_{\R^3}\frac{U^2(x)U(y) \xi(y)}{|x-y|}dx\,dy.
\end{split}
\end{equation}
Then it holds
\begin{equation}\label{06-10-25}
\xi(x)=  \sum^3_{j=0}\gamma_{j}\psi_j,
\end{equation}
where  $\gamma_{j}$ are some constants,
\begin{equation}\label{aaaaa}
\psi_0=2U+x\cdot\nabla U,~\psi_j=\frac{\partial U}{\partial x_j},~\mbox{for}~j=1,2,3.
\end{equation}
\end{Lem}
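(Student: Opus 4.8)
The plan is to show that any $H^1$ solution $\xi$ of the nonlocal linear system \eqref{06-10-23} lies in the kernel of the linearized Schr\"odinger--Newton operator around $U$, and then to use the known nondegeneracy of $U$ (recalled in the introduction: the solution space of that linearized equation is spanned by $\partial U/\partial x_j$, $j=1,2,3$) together with the scaling direction $\psi_0 = 2U + x\cdot\nabla U$. Concretely, I would first rewrite \eqref{06-10-23} as
\begin{equation*}
\mathcal{M}\xi := -\Delta\xi+\xi-\frac{1}{8\pi}\Big(\int_{\R^3}\frac{U^2(y)}{|x-y|}dy\Big)\xi-\frac{1}{4\pi}\Big(\int_{\R^3}\frac{U(y)\xi(y)}{|x-y|}dy\Big)U = c_\xi\, U,
\end{equation*}
where $c_\xi = -\frac{1}{4\pi a_*}\int_{\R^3}\int_{\R^3}\frac{U^2(x)U(y)\xi(y)}{|x-y|}dx\,dy$ is a real constant depending linearly on $\xi$. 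The operator $\mathcal{M}$ is exactly the linearization at $U$ of the single equation \eqref{big-u}, so $\ker\mathcal{M}=\mathrm{span}\{\partial_{x_1}U,\partial_{x_2}U,\partial_{x_3}U\}$ by the nondegeneracy result quoted from \cite{Lieb,Wei,Tod,Wei}.

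The key step is to identify a particular solution of $\mathcal{M}\xi = c\,U$ for $c\neq 0$. The natural candidate is the scaling generator: differentiating the family $U_t(x) = t^{?}U(t\,\cdot)$ that solves \eqref{big-u} with a rescaled coefficient, or more directly computing $\mathcal{M}\psi_0$ where $\psi_0 = 2U+x\cdot\nabla U$. A direct computation — using $-\Delta U + U = \frac{1}{8\pi}(\tfrac1{|x|}\ast U^2)U$, the identities $\Delta(x\cdot\nabla U) = x\cdot\nabla\Delta U + 2\Delta U$, and the homogeneity of the Riesz kernel — shows that $\mathcal{M}\psi_0$ is a nonzero multiple of $U$; indeed the relevant constant is pinned down by pairing against $U$ and invoking \eqref{06-13-1} (the identity $\int U^2 = \frac{3}{32\pi}\int\int\frac{U^2U^2}{|x-y|}$), which is precisely why $a_* = \int U^2$ appears in the coefficient $\frac{1}{4\pi a_*}$. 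So one checks that $\mathcal{M}\psi_0 = \kappa\, U$ for an explicit $\kappa\neq 0$, and then the specific normalization of the right-hand side of \eqref{06-10-23} is arranged so that $c_{\psi_0} = \kappa$ as well, i.e. $\psi_0$ itself solves \eqref{06-10-23}.

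Granting that, the argument concludes as follows. Given an arbitrary $H^1$ solution $\xi$ of \eqref{06-10-23}, set $\hat\xi := \xi - \frac{c_\xi}{\kappa}\psi_0$. By linearity, $\mathcal{M}\hat\xi = c_\xi U - \frac{c_\xi}{\kappa}\mathcal{M}\psi_0 = 0$, so $\hat\xi \in \ker\mathcal{M} = \mathrm{span}\{\psi_1,\psi_2,\psi_3\}$; hence $\hat\xi = \sum_{j=1}^3\gamma_j\psi_j$ and $\xi = \frac{c_\xi}{\kappa}\psi_0 + \sum_{j=1}^3\gamma_j\psi_j$, which is \eqref{06-10-25} with $\gamma_0 = c_\xi/\kappa$. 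The one point requiring a little care — and the main obstacle — is verifying rigorously that $\mathcal{M}\psi_0$ is a nonzero multiple of $U$ and that the proportionality constant matches the normalization $\frac1{4\pi a_*}$; this is where one must carefully differentiate the scaling family of \eqref{big-u}, keep track of how the coefficient $a$ in $-\Delta u + u = \frac{a}{8\pi}(\tfrac1{|x|}\ast u^2)u$ transforms under $u\mapsto \lambda u(\sqrt\lambda\,\cdot)$, and use \eqref{06-13-1} to evaluate $\langle \mathcal{M}\psi_0, U\rangle$. Everything else is the standard Fredholm/nondegeneracy bookkeeping, since the nonlocal terms in $\mathcal{M}$ are compact perturbations of $-\Delta+1$ on $H^1(\R^3)$ by the Hardy--Littlewood--Sobolev estimates in Lemma~\ref{lem-add-7-11}.
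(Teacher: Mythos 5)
Your proposal is correct and follows essentially the same route as the paper: the paper likewise observes that $\psi_1,\psi_2,\psi_3$ are killed by the local-plus-nonlocal linearized operator (with the right-hand side vanishing by oddness), that $\psi_0$ solves \eqref{06-10-23} by a computation using \eqref{06-13-1}, and then invokes the nondegeneracy of $U$ to conclude \eqref{06-10-25}. If anything, your write-up is slightly more complete, since you make explicit the converse step (subtracting $\frac{c_\xi}{\kappa}\psi_0$ to land in $\ker\mathcal{M}$) that the paper's proof leaves implicit.
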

\begin{proof}
We set $\bar L(u):=-\Delta u(x)+u(x)
-\frac{1}{8\pi}\int_{\R^{3}}\frac{U^{2}(y)}{|x-y|}dy\,u(x)
-\frac{1}{4\pi}\int_{\R^{3}}\frac{U(y)u(y)}{|x-y|}dy\,U(x).$
It is obvious that $\bar  L (\psi_j)=0$ and $\psi_j$
 is the solution of \eqref{06-10-23}, for $j=1,2,3.$
Also,
then using \eqref{06-13-1}, we find that $\psi_{0}$ is also the solution of \eqref{06-10-23}. And we know that $\psi_{0},\psi_{1},\psi_{2},\psi_{3}$ are linearly independent. Then we get \eqref{06-10-25}.
\end{proof}

\begin{Prop}%\label{pohozeav}
Let $\tilde{u}_{a}(x)$ be the solution of equation \eqref{30-7-11}. Then we have following local Pohozaev identity:
\begin{flalign}\label{7-30-1}
\begin{split}
\epsilon^{2}\int_{\Omega}\frac{\partial P(x)}{\partial x_j}\tilde{u}_{a}^2(x)dx=&
-2\epsilon^2\int_{\partial\Omega}\frac{\partial \tilde{u}_{a}(x)}{\partial \nu}\frac{\partial \tilde{u}_{a}(x)}{\partial x_j}\mathrm{d}\sigma
+\epsilon^2\int_{\partial\Omega}|\nabla\tilde{u}_{a}(x)|^{2}\nu_{j}(x)\mathrm{d}\sigma
\\
&+\int_{\partial \Omega}(1+\epsilon^{2}P(x))\tilde{u}_{a}^2(x)\nu_j(x)d
\sigma
\\&-\frac{1}{8\pi \epsilon^2}\int_{\partial \Omega} \int_{\R^3}\frac{\tilde{u}_{a}^2(y)\tilde{u}_{a}^2(x)}{|x-y|}\nu_j(x)dy d
\sigma-
\frac{1}{8\pi \epsilon^2}\int_{\Omega}\int_{\R^3}\tilde{u}_{a}^2(y)\tilde{u}_{a}^2(x)\frac{x_j-y_j}{|x-y|^3}dy dx,
\end{split}
\end{flalign}
where  $\Omega$ is a bounded open domain of $\R^3$, $j=1,2,3,$ $\nu(x)=\big(\nu_{1}(x),\nu_2(x),\nu_3(x)\big)$ is the outward unit normal of $\partial \Omega$ and $x_j, y_j$ are the $j$-th components of $x, y.$
 \end{Prop}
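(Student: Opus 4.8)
The plan is to prove \eqref{7-30-1} by the standard Rellich--Pohozaev device applied to \eqref{30-7-11}: multiply the equation by $2\,\dfrac{\partial\tilde u_a}{\partial x_j}$, integrate over the bounded domain $\Omega$, and integrate by parts term by term. Since $\tilde u_a\in H^1(\R^3)$ is a single-peak solution, the uniform $L^\infty$-bound coming from Moser iteration, together with elliptic regularity and the exponential decay estimate \eqref{2--5}, give $\tilde u_a\in C^2(\R^3)$ with $\tilde u_a$ and $\nabla\tilde u_a$ decaying exponentially; hence all boundary and volume integrals appearing below are finite and every integration by parts over $\Omega$ is justified.

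First I would handle the two local terms. For the second-order term, one integration by parts and the identity $2\nabla\tilde u_a\cdot\nabla(\partial_{x_j}\tilde u_a)=\partial_{x_j}|\nabla\tilde u_a|^2$ give
\begin{equation*}
-\epsilon^2\int_{\Omega}\Delta\tilde u_a\,\Big(2\frac{\partial\tilde u_a}{\partial x_j}\Big)
=-2\epsilon^2\int_{\partial\Omega}\frac{\partial\tilde u_a}{\partial\nu}\frac{\partial\tilde u_a}{\partial x_j}\,d\sigma
+\epsilon^2\int_{\Omega}\frac{\partial}{\partial x_j}\big(|\nabla\tilde u_a|^2\big)\,dx ,
\end{equation*}
and a second integration by parts converts the last volume integral into $\epsilon^2\int_{\partial\Omega}|\nabla\tilde u_a|^2\nu_j\,d\sigma$. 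For the linear term, using $2\tilde u_a\,\partial_{x_j}\tilde u_a=\partial_{x_j}(\tilde u_a^2)$ and integrating by parts,
\begin{equation*}
\int_{\Omega}\big(1+\epsilon^2P(x)\big)\tilde u_a\,\Big(2\frac{\partial\tilde u_a}{\partial x_j}\Big)
=\int_{\partial\Omega}\big(1+\epsilon^2P(x)\big)\tilde u_a^2\,\nu_j\,d\sigma-\epsilon^2\int_{\Omega}\frac{\partial P(x)}{\partial x_j}\tilde u_a^2\,dx ,
\end{equation*}
and the last term, once carried to the other side, supplies the left-hand side of \eqref{7-30-1}.

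The only genuinely nonlocal computation concerns the right-hand side of the equation. Setting $V(x):=\int_{\R^3}\tilde u_a^2(y)/|x-y|\,dy$, the nonlocal term is $\dfrac1{8\pi\epsilon^2}V\tilde u_a$, and
\begin{equation*}
\frac{1}{8\pi\epsilon^2}\int_{\Omega}V(x)\tilde u_a(x)\,\Big(2\frac{\partial\tilde u_a}{\partial x_j}\Big)dx
=\frac{1}{8\pi\epsilon^2}\int_{\Omega}V(x)\,\frac{\partial}{\partial x_j}\big(\tilde u_a^2(x)\big)\,dx .
\end{equation*}
Integrating by parts in $x$ over $\Omega$ produces the boundary term $\dfrac1{8\pi\epsilon^2}\int_{\partial\Omega}V\tilde u_a^2\,\nu_j\,d\sigma$ minus $\dfrac1{8\pi\epsilon^2}\int_{\Omega}\tilde u_a^2(x)\,\partial_{x_j}V(x)\,dx$; since
\begin{equation*}
\frac{\partial}{\partial x_j}V(x)=-\int_{\R^3}\tilde u_a^2(y)\,\frac{x_j-y_j}{|x-y|^3}\,dy ,
\end{equation*}
the nonlocal contribution equals
\begin{equation*}
\frac{1}{8\pi\epsilon^2}\int_{\partial\Omega}\int_{\R^3}\frac{\tilde u_a^2(y)\tilde u_a^2(x)}{|x-y|}\,\nu_j\,dy\,d\sigma
+\frac{1}{8\pi\epsilon^2}\int_{\Omega}\int_{\R^3}\tilde u_a^2(y)\tilde u_a^2(x)\,\frac{x_j-y_j}{|x-y|^3}\,dy\,dx .
\end{equation*}
Substituting $V$ back and rearranging so that these two terms sit on the right-hand side gives exactly \eqref{7-30-1}.

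I expect the one point genuinely requiring care to be the differentiation of the Riesz potential $V$ under the integral sign and the Fubini interchanges in the resulting double integrals: the kernel $(x_j-y_j)/|x-y|^3$ is $O(|x-y|^{-2})$, hence locally integrable in $\R^3$, while $\tilde u_a^2\in L^1(\R^3)\cap L^\infty(\R^3)$ decays exponentially, so both manipulations are legitimate; granting these, the identity is obtained by straightforward bookkeeping.
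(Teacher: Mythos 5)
Your proposal is correct and is exactly the argument the paper has in mind: the paper's proof consists of the single remark that \eqref{7-30-1} follows by multiplying the equation by $\frac{\partial \tilde{u}_{a}}{\partial x_j}$ and integrating over $\Omega$, and your term-by-term integrations by parts (including the differentiation of the Riesz potential under the integral sign) fill in precisely those omitted details with the correct signs and constants.
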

 \begin{proof}
Since \eqref{7-30-1} is obtained just by multiplying $\frac{\partial \tilde{u}_{a}(x)}{\partial x_j}$ on both sides of \eqref{1.2} and integrating on $\Omega,$
here we omit the details.
 \end{proof}

\section{Some basic and useful estimates}\label{sd}

\renewcommand{\theequation}{C.\arabic{equation}}

In this section, we mainly give some basic and useful estimates which are used before.
\begin{Lem}%\label{lemd-7-10-1}
There holds
 \begin{equation}\label{eqd-7-10-1}
 \begin{split}
 C_{1}+C_{2}+C_{3}+C_{4}+C_{5}=O\big(e^{-\frac{\theta}{\epsilon}}\big).
\end{split}
\end{equation}
\end{Lem}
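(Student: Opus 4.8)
The plan is to show that each of the five boundary/volume terms $C_1,\dots,C_5$ appearing in the local Pohozaev identity \eqref{30-31-7} is exponentially small, namely $O(e^{-\theta/\epsilon})$ for some $\theta>0$, and then sum. The key input is the decay estimate \eqref{2--5}: for a single-peak solution $\tilde u_a$ concentrating at $b_0$ and any fixed $R\gg1$, one has $|\tilde u_a(x)|+|\nabla \tilde u_a(x)|\le Ce^{-\theta|x-x_a|/\epsilon}$ on $\R^3\setminus B_{R\epsilon}(x_a)$. Since the Pohozaev identity is applied on the fixed ball $B_\rho(x_a)$ with $\rho>0$ small but fixed, the sphere $\partial B_\rho(x_a)$ sits at distance $\rho$ from $x_a$, hence well outside $B_{R\epsilon}(x_a)$ once $\epsilon$ is small; therefore on $\partial B_\rho(x_a)$ we have $|\tilde u_a|+|\nabla\tilde u_a|\le Ce^{-\theta\rho/\epsilon}$, and similarly on the region $\R^3\setminus B_{\rho/2}(x_a)$.

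First I would handle $C_1$, $C_2$, $C_3$: these are surface integrals over $\partial B_\rho(x_a)$ of quantities bounded by $\epsilon^2|\nabla\tilde u_a|^2 + (1+\epsilon^2 P(x))\tilde u_a^2$. Using \eqref{2--5} and the growth bound $P(x)=O(e^{\alpha|x|})$ with $\alpha\in(0,2)$ from assumption $(P)$ (which is dominated by the exponential decay of $\tilde u_a^2$), each integrand is $O(e^{-2\theta\rho/\epsilon})$, and integrating over the fixed-size sphere gives $C_1+C_2+C_3=O(e^{-\theta'/\epsilon})$. For $C_4$ I would additionally need to control the nonlocal factor $\int_{\R^3}\frac{\tilde u_a^2(y)}{|x-y|}dy$ for $x\in\partial B_\rho(x_a)$; splitting the $y$-integral into $|x-y|\le\rho/4$ and $|x-y|\ge\rho/4$ exactly as in \eqref{eq-7-9-1} (or more simply, using Lemma~\ref{lem-7-9-1} / Lemma~\ref{lem-add-7-11} together with $\|\tilde u_a\|_{H^1}\le C\epsilon^{3/2}$) shows this factor is at most polynomially large in $1/\epsilon$, which is absorbed by the exponential, so $C_4=O(e^{-\theta''/\epsilon})$.

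The term $C_5$ — the volume integral $\frac{1}{8\pi\epsilon^2}\int_{B_\rho(x_a)}\int_{\R^3}\frac{\tilde u_a^2(y)\tilde u_a^2(x)(x_j-y_j)}{|x-y|^3}\,dy\,dx$ — is the main obstacle, since unlike the others it is not a surface term and the integrand is supported near the concentration point where $\tilde u_a$ is \emph{not} small. The resolution is a symmetry/cancellation argument: write $\tilde u_a = U_{\epsilon,x_a}+\varphi_a$ with $\|\varphi_a\|_a$ small by \eqref{lt1}, and observe that the leading contribution comes from replacing $\tilde u_a$ by $U_{\epsilon,x_a}$, after which the kernel $\frac{x_j-y_j}{|x-y|^3}$ is odd under the reflection $(x-x_a,y-x_a)\mapsto(-(x-x_a),-(y-x_a))$ while $U_{\epsilon,x_a}^2(x)U_{\epsilon,x_a}^2(y)$ is even; over all of $\R^3\times\R^3$ this integral vanishes identically, and the difference between integrating over $B_\rho(x_a)\times\R^3$ and $\R^3\times\R^3$ is exponentially small by \eqref{2--5}. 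The cross terms involving $\varphi_a$ and the error from expanding the Pohozaev identity around $x_a$ contribute $O(\epsilon^{\text{large}})$ — but here one must be careful: to get genuine $O(e^{-\theta/\epsilon})$ rather than merely a high power of $\epsilon$, I expect one actually needs the sharper statement that in this single-peak setting $x_a$ can be taken to be (or is forced by the equation and the kernel's oddness to be) a center about which $\tilde u_a$ is symmetric to the relevant order; this is the delicate point and is presumably carried out exactly as in the corresponding lemma of \cite{LPW-20CV}, which is why the paper cites that reference. I would therefore structure the proof of $C_5=O(e^{-\theta/\epsilon})$ by invoking the analogue of \cite[Lemma 4.2 / Prop.~4.2]{LPW-20CV} verbatim after the change of variables $x\mapsto\epsilon x+x_a$, and then conclude \eqref{eqd-7-10-1} by adding the five estimates.
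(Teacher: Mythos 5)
Your treatment of $C_1,\dots,C_4$ is essentially the paper's: on $\partial B_\rho(x_a)$ the pointwise decay \eqref{2--5} makes the surface integrands exponentially small, and the nonlocal factor in $C_4$ is at worst polynomially large in $1/\epsilon$, so those four terms are $O(e^{-\theta/\epsilon})$ exactly as you argue.

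The gap is in $C_5$, and it is a real one. You correctly begin by extending the $x$-integration from $B_\rho(x_a)$ to all of $\R^3$ at an exponentially small cost (since $\tilde u_a^2(x)\le Ce^{-2\theta\rho/\epsilon}$ on $B_\rho^C(x_a)$ and $|x_j-y_j|/|x-y|^3\le |x-y|^{-2}$). But you then try to kill the full integral $\int_{\R^3}\int_{\R^3}\frac{\tilde u_a^2(x)\tilde u_a^2(y)(x_j-y_j)}{|x-y|^3}\,dx\,dy$ by reflecting about $x_a$, which only works for the radial leading profile $U_{\epsilon,x_a}$ and leaves cross terms in $\varphi_a$ that, as you yourself note, you cannot push below a power of $\epsilon$. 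The correct observation — the ``symmetry'' the paper invokes — is the exchange $x\leftrightarrow y$: the kernel $(x_j-y_j)/|x-y|^3$ is antisymmetric under this swap while $\tilde u_a^2(x)\tilde u_a^2(y)$ is symmetric, so the double integral over $\R^3\times\R^3$ vanishes \emph{identically for the full solution}, with no decomposition, no radial symmetry of $\tilde u_a$ about $x_a$, and no cross terms to estimate. Your fallback of invoking the analogue of Proposition 4.2 of \cite{LPW-20CV} would not rescue the argument: that result (restated here as Lemma \ref{aprop-7-30-2} in the multi-peak setting) asserts that the corresponding term equals $C^*\epsilon^4+o(\epsilon^4)$ with $C^*\neq 0$ — precisely because with several peaks the exchange cancellation kills only the self-interactions and the cross-peak interactions survive — so it gives the opposite of the exponential smallness you need. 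In the single-peak case the exchange antisymmetry is the whole proof of $C_5=O(e^{-\theta/\epsilon})$.
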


\begin{proof}
From \eqref{eq-d-710} and \eqref{2--5}, we have
\begin{equation}\label{eqd-7-10-1-1}
 \begin{split}
 C_{1}+C_{2}+C_{3}+C_{4}\leq C\int_{\partial B_{\rho}(x_{a})}
 (\epsilon^2|\nabla \tilde{u}_a|^2+\tilde{u}^{2}_a(x))d\sigma
 =O\big(e^{-\frac{\theta}{\epsilon}}\big).
\end{split}
\end{equation}
By symmetry and \eqref{2--5}, we have
\begin{equation}\label{eqd-7-10-1-2}
 \begin{split}
C_{5}&=\frac{1}{8\pi\epsilon^{2}}\int_{ \R^{3}}\int_{\R^{3}}
\frac{(\tilde{u}_a(y))^2(\tilde{u}_a(x))^2(x_{j}-y_{j})}{|x-y|^{3}}dydx\\
&
\quad-\frac{1}{8\pi\epsilon^{2}}\int_{ B^{C}_{\rho}(x_{a})}\int_{\R^{3}}
\frac{(\tilde{u}_a(y))^2(\tilde{u}_a(x))^2(x_{j}-y_{j})}{|x-y|^{3}}dydx\\
& =O\big(e^{-\frac{\theta}{\epsilon}}\big).
\end{split}
\end{equation}
By
\eqref{eqd-7-10-1-1} and \eqref{eqd-7-10-1-2}, \eqref{eqd-7-10-1} is true.
\end{proof}

\begin{Lem}
For  any small and fixed $\rho>0,$
if $x\in B_{\rho\delta^{-1}_{a}}\sqrt{\textbf{p}_{0}}(0),$ then we have
\begin{equation}\label{f}
\begin{split}
F_{1}+F_{2}
=O(\delta^{4}_{a}).
\end{split}
\end{equation}
\end{Lem}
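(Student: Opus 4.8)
The plan is to estimate the two terms $F_1$ and $F_2$ appearing in the decomposition of $\bar D_a(x)/\textbf{p}_0$, namely
\[
F_1=\frac{1}{4\pi\textbf{p}_0}\int_{\R^3}\frac{U(y)\tilde\varphi_a^{(1)}(\tfrac{\delta_a y}{\sqrt{\textbf{p}_0}}+x_a^{(1)})}{|x-y|}\,dy,
\qquad
F_2=\frac{1}{8\pi\textbf{p}_0^2}\int_{\R^3}\frac{\big(\tilde\varphi_a^{(1)}(\tfrac{\delta_a y}{\sqrt{\textbf{p}_0}}+x_a^{(1)})\big)^2}{|x-y|}\,dy,
\]
and show each is $O(\delta_a^4)$. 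First I would change variables back, writing $z=\tfrac{\delta_a y}{\sqrt{\textbf{p}_0}}+x_a^{(1)}$, so that $dy=(\sqrt{\textbf{p}_0}/\delta_a)^3\,dz$, turning $F_1$ into $\tfrac{1}{4\pi\textbf{p}_0}(\sqrt{\textbf{p}_0}/\delta_a)^{2}\int_{\R^3}\frac{U(\tfrac{\sqrt{\textbf{p}_0}}{\delta_a}(z-x_a^{(1)}))\,\tilde\varphi_a^{(1)}(z)}{|x-\tfrac{\sqrt{\textbf{p}_0}}{\delta_a}(z-x_a^{(1)})|}\,dz$ (and similarly for $F_2$); this puts things in the form where the nonlocal estimates of Lemma~\ref{lem-7-9-1} or Lemma~\ref{lem-add-7-11} can be applied directly. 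Alternatively, and more simply, I would keep the rescaled variables and bound the convolution $\int \frac{g(y)}{|x-y|}\,dy$ by a constant times $\|g\|_{L^{6/5}(\R^3)}$ (Hardy–Littlewood–Sobolev with $N=3$, $\alpha=2$), or by splitting into $|x-y|\le 1$ and $|x-y|\ge 1$ and using Hölder plus the exponential decay of $U$.

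The key quantitative input is the size of $\tilde\varphi_a^{(1)}$. From Lemma~\ref{lem-5-05-1} (more precisely \eqref{06-09-4}, or \eqref{8-27-26} in the rescaled setting) we have $\|\varphi_a\|_a=O(\epsilon^{11/2})$, equivalently $\|\tilde\varphi_a^{(1)}\|_{\delta_a}=O(\delta_a^{11/2})$; together with Lemma~\ref{lem-add1-1} this gives $|\tilde\varphi_a^{(1)}|_q=O(\delta_a^{11/2}\cdot\delta_a^{3(1/q-1/2)})$ for $2\le q\le 6$. For $F_1$, after unscaling and applying Lemma~\ref{lem-7-9-1} (taking, say, $u_1=$ the bump $U(\tfrac{\sqrt{\textbf{p}_0}}{\delta_a}(\cdot-x_a^{(1)}))$, $u_2=u_3$ a fixed cutoff of $1/|x-\cdot|$ near $x$, $u_4=\tilde\varphi_a^{(1)}$, absorbing the $\epsilon^{-1}=\delta_a^{-1}$ factor), one collects the powers: the $\epsilon^{-2}$ prefactor, $\epsilon^{-1}$ from Lemma~\ref{lem-7-9-1}, $\epsilon^{3/2}$ from $\|U_{\epsilon,x_a}\|_a$, $\epsilon^{11/2}$ from $\|\tilde\varphi_a^{(1)}\|_a$, and the volume factor from rescaling, which should net out to $O(\delta_a^4)$. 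The $F_2$ term is easier since it is quadratic in $\tilde\varphi_a^{(1)}$, so $\|\tilde\varphi_a^{(1)}\|_a^2=O(\delta_a^{11})$ is far more than enough to beat the $\epsilon^{-2}$ factor.

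The main obstacle — or rather the only real bookkeeping issue — is keeping the powers of $\epsilon=\delta_a$ straight across the several rescalings (the original $\epsilon$-scaling in \eqref{30-7-11}, the $\delta_a$-scaling in \eqref{1-12-11}, and the final $\bar\xi_a$-scaling by $\delta_a/\sqrt{\textbf{p}_0}$), and making sure the constant $\textbf{p}_0=1+(\gamma_1+P_0)\delta_a^2$ is treated as $1+O(\delta_a^2)$ throughout so it contributes nothing beyond lower-order corrections. Once the scaling is pinned down, the estimate is a routine application of Lemma~\ref{lem-7-9-1} (or Hardy–Littlewood–Sobolev) combined with the bound $\|\tilde\varphi_a^{(1)}\|_{\delta_a}=O(\delta_a^{11/2})$; since $11/2>4$, there is plenty of room, and in fact one expects $F_1=O(\delta_a^{4})$ to be not tight. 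I would therefore present the proof by: (i) recording the change of variables; (ii) applying Lemma~\ref{lem-7-9-1} to $F_1$ with the stated choice of factors and collecting exponents to get $O(\delta_a^4)$; (iii) applying the same bound to $F_2$ and noting the quadratic gain makes it $o(\delta_a^4)$; and (iv) concluding \eqref{f}.
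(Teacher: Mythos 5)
Your proposal is correct and follows essentially the same route as the paper: change variables to undo the rescaling, split the Riesz potential into the region near the singularity (H\"older against the locally square-integrable kernel $|x-y|^{-2}$) and the far region (bounded kernel), and feed in $\|\tilde{\varphi}^{(1)}_{a}\|_{\delta_a}=O(\delta_a^{11/2})$ together with Lemma~\ref{lem-add1-1}, with $F_2$ coming out even smaller because it is quadratic in $\tilde{\varphi}^{(1)}_{a}$. The only caveat is that Lemma~\ref{lem-7-9-1} controls a double integral and does not directly yield the pointwise-in-$x$ bound required for $\bar{D}_{a}(x)$, so it is your ``alternative'' splitting argument, not the Lemma~\ref{lem-7-9-1} route, that actually closes the proof --- and that is exactly what the paper does.
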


\begin{proof}
From \eqref{8-27-26} and Lemma \ref{lem-add1-1}, by H\"older inequality we have
\begin{equation}\label{eq-f2}
\begin{split}
F_{1}&=\frac{1}{4\pi \textbf{p}_{0} }
\int_{\R^{3}}
\frac{U(y)\tilde{\varphi}^{(1)}_{a}(\frac{\delta_a y}{\sqrt{\textbf{p}_{0}}}+x^{(1)}_{a}) }{|x-y|}dy
=\frac{1}{4\pi \textbf{p}_{0}\delta^{2}_{a} }
\int_{\R^{3}}
\frac{
U(\frac{\sqrt{\textbf{p}_{0}}}{\delta_{a}}(y-x^{(1)}_{a})\tilde{\varphi}^{(1)}_{a}(y) }{|y-x^{(1)}_{a}-\frac{\delta_{a}}{\sqrt{\textbf{p}_{0}}}x|}dy\\
&\leq \frac{1}{4\pi \textbf{p}_{0}\delta^{2}_{a} }
\Big|
U(\frac{\sqrt{\textbf{p}_{0}}}{\delta_{a}}(y-x^{(1)}_{a}))\Big|_{3}
|\tilde{\varphi}^{(1)}_{a}(y) |_{6}
\Big(\int_{|y-x^{(1)}_{a}-\frac{\delta_{a}}{\sqrt{ \textbf{p}_{0}}}x|\leq c_{0}\delta_{a}}
\frac{1}{|y-x^{(1)}_{a}-\frac{\delta_{a}}{\sqrt{ \textbf{p}_{0}}}x|^{2}}\Big)^{\frac{1}{2}}\\
&\quad
+\frac{1}{4\pi \textbf{p}_{0}\delta^{3}_{a} c_{0}}
\Big\|U(\frac{\sqrt{\textbf{p}_{0}}}{\delta_{a}}(y-x^{(1)}_{a}))\Big\|_{\delta_{a}}
\|\tilde{\varphi}^{(1)}_{a}(y)\|_{\delta_{a}}\\
&\leq  C\delta^{-2}_{a}\delta_{a}
(\delta^{-1}_{a}\|\tilde{\varphi}^{(1)}_{a}(y)\|_{\delta_{a}})\delta^{\frac{1}{2}}_{a}+
C\delta_{a}^{-3}\delta^{\frac{3}{2}}_{a}\delta^{\frac{11}{2}}_{a}
=O(\delta^{4}_{a}),
\end{split}
\end{equation}
and
\begin{equation}\label{eq-f3}
\begin{split}
F_{2}&=\frac{1}{8\pi \textbf{p}^{2}_{0} }
\int_{\R^{3}}\frac{\big(\tilde{\varphi}_a^{(1)}(\frac{\delta_a }{\sqrt{\textbf{p}_{0}}}y+x^{(1)}_{a})\big)^{2} }{|x-y|}dy
=\frac{1}{8\pi \textbf{p}_{0}\delta^{2}_{a} }
\int_{\R^{3}}
\frac{
(\tilde{\varphi}^{(1)}_{a}(y) )^{2}}{|y-x^{(1)}_{a}-\frac{\delta_{a}}{\sqrt{\textbf{p}_{0}}}x|}dy\\
&\leq \frac{1}{8\pi \textbf{p}_{0}\delta^{2}_{a} }
|\tilde{\varphi}^{(1)}_{a}(y) |^{2}_{4}
\Big(\int_{|y-x^{(1)}_{a}-\frac{\delta_{a}}{\sqrt{ \textbf{p}_{0}}}x|\leq c_{0}\delta_{a}}
\frac{1}{|y-x^{(1)}_{a}-\frac{\delta_{a}}{\sqrt{ \textbf{p}_{0}}}x|^{2}}\Big)^{\frac{1}{2}}\\
&\quad
+\frac{1}{4\pi \textbf{p}_{0}\delta^{3}_{a} c_{0}}
\|\tilde{\varphi}^{(1)}_{a}(y)\|^{2}_{\delta_{a}}\\
&\leq  C\delta^{-2}_{a}(\delta_{a}^{-\frac{3}{4}}\|\tilde{\varphi}^{(1)}_{a}(y)\|_{\delta_{a}})^{2}\delta^{\frac{1}{2}}_{a}+
C\delta_{a}^{-3}\delta^{11}_{a}=O(\delta^{8}_{a}).
\end{split}
\end{equation}
Hence, it follows from \eqref{eq-f2} to \eqref{eq-f3} that \eqref{f} holds.
\end{proof}

\begin{Lem}%\label{lem-g}
For  any small and fixed $\rho>0,$
if $x\in B_{\rho\delta^{-1}_{a}}\sqrt{\textbf{p}_{0}}(0),$
there holds
\begin{equation}\label{g}
\begin{split}
G_{1}+G_{2}
=O\Big(\delta_a+\tilde{\varphi}^{(2)}_{a}(\frac{\delta_a}{\sqrt{\textbf{p}_{0}}} x+x^{(1)}_{a})\Big).
\end{split}
\end{equation}

\end{Lem}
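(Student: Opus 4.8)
The plan is to expand the two quantities $G_1$ and $G_2$ from Lemma~\ref{lem-add1-7-11} directly, using the pointwise bounds on $\tilde\varphi_a^{(l)}$, the exponential decay \eqref{a2---5}, and the estimate \eqref{8-27-38} that controls $|x_a^{(1)}-x_a^{(2)}|=O(\delta_a^2)$. Recall
$$
G_1=\frac{1}{4\pi\textbf{p}_{0}}\big(u_a^{(2)}(\tfrac{\delta_a x}{\sqrt{\textbf{p}_{0}}}+x^{(1)}_{a})-\textbf{p}_{0}U(x)\big)\int_{\R^{3}}\frac{U(y)\bar\xi_a(y)}{|x-y|}dy,
$$
so the first step is to show that the Newtonian-type integral $\int_{\R^3}\frac{U(y)\bar\xi_a(y)}{|x-y|}dy$ is uniformly bounded: since $|\bar\xi_a|\le 1$ and $U\in L^{6/5}\cap L^\infty$, Hölder together with the boundedness of the Riesz potential gives a constant independent of $a$. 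The second step is to estimate the prefactor $u_a^{(2)}(\tfrac{\delta_a x}{\sqrt{\textbf{p}_{0}}}+x^{(1)}_{a})-\textbf{p}_{0}U(x)$; writing $u_a^{(2)}=\tilde U_{\delta_a,x_a^{(2)}}+\tilde\varphi_a^{(2)}$ and changing variables, this difference equals $\textbf{p}_{0}\big(U(x+\tfrac{\sqrt{\textbf{p}_{0}}}{\delta_a}(x_a^{(1)}-x_a^{(2)}))-U(x)\big)+\tilde\varphi_a^{(2)}(\tfrac{\delta_a x}{\sqrt{\textbf{p}_{0}}}+x^{(1)}_{a})$. By the mean value theorem and $|x_a^{(1)}-x_a^{(2)}|=O(\delta_a^2)$ the first term is $O(\delta_a)$ uniformly, exactly as in the computation of $u^{(2)}_a(\tfrac{\delta_a x}{\sqrt{\textbf{p}_{0}}}+x^{(1)}_{a})$ already carried out in the proof of Lemma~\ref{lem---1}; the second term is precisely the allowed error $\tilde\varphi_a^{(2)}(\tfrac{\delta_a x}{\sqrt{\textbf{p}_{0}}}+x^{(1)}_{a})$. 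Multiplying the two bounds yields $G_1=O\big(\delta_a+\tilde\varphi^{(2)}_{a}(\tfrac{\delta_a}{\sqrt{\textbf{p}_{0}}}x+x^{(1)}_{a})\big)$.

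For $G_2$,
$$
G_2=\frac{u_a^{(2)}(\tfrac{\delta_a x}{\sqrt{\textbf{p}_{0}}}+x^{(1)}_{a})}{8\pi\textbf{p}^{2}_{0}}\int_{\R^{3}}\frac{\big((u_a^{(1)}+u_a^{(2)})(\tfrac{\delta_a y}{\sqrt{\textbf{p}_{0}}}+x^{(1)}_{a})-2\textbf{p}_{0}U(y)\big)\bar\xi_a(y)}{|x-y|}dy ,
$$
so the key is to show the integrand's "defect" $(u_a^{(1)}+u_a^{(2)})(\tfrac{\delta_a y}{\sqrt{\textbf{p}_{0}}}+x^{(1)}_{a})-2\textbf{p}_{0}U(y)$ is small in a suitable $L^q$ norm after the scaling. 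Splitting into the two solutions and changing variables, $u_a^{(1)}(\tfrac{\delta_a y}{\sqrt{\textbf{p}_{0}}}+x^{(1)}_{a})-\textbf{p}_{0}U(y)=\tilde\varphi_a^{(1)}(\tfrac{\delta_a y}{\sqrt{\textbf{p}_{0}}}+x^{(1)}_{a})$ exactly, while $u_a^{(2)}(\tfrac{\delta_a y}{\sqrt{\textbf{p}_{0}}}+x^{(1)}_{a})-\textbf{p}_{0}U(y)$ is $O(\delta_a)$ in $L^\infty$ plus the $L^2$-small term $\tilde\varphi_a^{(2)}(\tfrac{\delta_a y}{\sqrt{\textbf{p}_{0}}}+x^{(1)}_{a})$, as above. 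Now I would bound the Riesz integral by Hölder: the $O(\delta_a)$ part contributes $O(\delta_a)$ since $U\bar\xi_a\in L^1\cap L^{6/5}$, and the $\tilde\varphi_a^{(l)}$ parts are handled by the change-of-variables scaling identities of Lemma~\ref{lem-add1-1} (the rescaled $\tilde\varphi_a^{(l)}$ in the $y$-integral satisfies $\|\cdot\|\le C\delta_a^{-3/2}\|\tilde\varphi_a^{(l)}\|_{\delta_a}\le C\delta_a^4$ by \eqref{8-27-26}), so these are negligible compared with $O(\delta_a)$; together with the bound $u_a^{(2)}(\tfrac{\delta_a x}{\sqrt{\textbf{p}_{0}}}+x^{(1)}_{a})=O(1)+\tilde\varphi^{(2)}_a(\tfrac{\delta_a x}{\sqrt{\textbf{p}_{0}}}+x^{(1)}_{a})$ for the outer factor, we obtain $G_2=O\big(\delta_a+\tilde\varphi^{(2)}_{a}(\tfrac{\delta_a}{\sqrt{\textbf{p}_{0}}}x+x^{(1)}_{a})\big)$. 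Adding the estimates for $G_1$ and $G_2$ gives \eqref{g}.

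The only mildly delicate point, and the one I would be most careful with, is the uniform-in-$x$ control of the Newtonian potentials $\int_{\R^3}\frac{U(y)\bar\xi_a(y)}{|x-y|}dy$: since $x$ ranges over the large ball $B_{\rho\sqrt{\textbf{p}_{0}}\delta_a^{-1}}(0)$ one must make sure the bound does not degrade as $x\to\infty$, which follows because $U$ has exponential decay (so the $y$-integral is effectively over a bounded set near the origin) and $|\bar\xi_a|\le 1$; splitting the region $|x-y|\le 1$ (controlled by $L^\infty$ of $U\bar\xi_a$ times an integrable singularity) and $|x-y|>1$ (controlled by $L^1$ of $U\bar\xi_a$) makes this precise. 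Everything else is a routine bookkeeping of powers of $\delta_a$ matching what was done for $F_1,F_2$ in \eqref{f} and in Lemma~\ref{lem---1}, so I do not anticipate any genuine obstacle.
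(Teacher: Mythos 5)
Your proposal is correct and follows essentially the same route as the paper's own proof: the same decomposition of $u_a^{(2)}(\tfrac{\delta_a x}{\sqrt{\textbf{p}_{0}}}+x^{(1)}_{a})-\textbf{p}_{0}U(x)$ via the mean value theorem together with $|x_a^{(1)}-x_a^{(2)}|=O(\delta_a^2)$ from \eqref{8-27-38}, the same uniform bound on the Newtonian potential $\int_{\R^3}\frac{U(y)\bar\xi_a(y)}{|x-y|}\,dy$ obtained by splitting near and far regions, and the same Hölder/scaling treatment of the $\tilde\varphi_a^{(l)}$ remainders using \eqref{8-27-26}. No gaps.
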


\begin{proof}
By the mean value theorem and \eqref{8-27-38}, we have
\begin{equation}\label{eq-g1}
\begin{split}
G_{1}&=\frac{1}{4\pi \textbf{p}_{0}}\big(u_a^{(2)}(\frac{\delta_a x}{\sqrt{\textbf{p}_{0}}}+x^{(1)}_{a,i})
-\textbf{p}_{0}U(x)\big)\int_{\R^{3}}\frac{U(y)\bar{\xi}_{a}(y)}{|x-y|}dy
\\
&=\frac{1}{4\pi \textbf{p}_{0}}\Big(\nabla U\big(\zeta x+(1-\zeta)\frac{\sqrt{\textbf{p}_{0}}}{\delta_a}(x^{(1)}_{a}-x^{(2)}_{a})\big)\cdot \frac{\sqrt{\textbf{p}_{0}}}{\delta_a}(x^{(1)}_{a}-x^{(2)}_{a}\big)
\\
&\quad
+\tilde{\varphi}^{(2)}_{a}(\frac{\delta_a x}{\sqrt{\textbf{p}_{0}}} +x^{(1)}_{a})
\Big)\int_{\R^{3}}\frac{U(y)\bar{\xi}_{a}(y)}{|x-y|}dy
\\
&\leq
C\Big(\delta_{a}
+\tilde{\varphi}^{(2)}_{a}(\frac{\delta_a}{\sqrt{\textbf{p}_{0}}} x+x^{(1)}_{a})\Big)
\int_{\R^{3}}
\frac{U(y)\bar{\xi}_{a}(y)}{|x-y|}dy\\
&=O\Big(\delta_{a}+\tilde{\varphi}^{(2)}_{a}(\frac{\delta_a}{\sqrt{\textbf{p}_{0}}} x+x^{(1)}_{a})\Big),
\,\,\,\text{for}\,\,\zeta \in(0,1),
\end{split}
\end{equation}
since
\begin{equation}\label{kxi}
\begin{split}
\int_{\R^{3}}
\frac{U(y)\bar{\xi}_{a}(y)}{|x-y|}dy
&\leq  \int_{\R^{3}}
\frac{U(y)}{|x-y|}dy\\
&\leq |U(y)|_{2}\Big(\int_{|x-y|\leq 2R}\frac{1}{|x-y|^{2}}dy\Big)^{\frac{1}{2}}
+\frac{1}{2R}\int_{\R^{3}}U(y)dy\\
&=O\Big(R^{\frac{1}{2}}+\frac{1}{R}\Big)
=O(1).
\end{split}
\end{equation}

We can also check
\begin{equation}\label{eq-g2}
\begin{split}
 G_{2}
 &=
 \frac{u_a^{(2)}(\frac{\delta_a x}{\sqrt{\textbf{p}_{0}}}+x^{(1)}_{a})}{8\pi \textbf{p}^{2}_{0}}\int_{\R^{3}}
\frac{\big((u_a^{(1)}+u_a^{(2)})(\frac{\delta_a y}{\sqrt{\textbf{p}_{0}}}+x^{(1)}_{a})-2\textbf{p}_{0}U(y)\big)\bar{\xi}_{a}(y)}{|x-y|}dy
 \\
 &\leq C\frac{|x^{(1)}_{a,i}-x^{(2)}_{a}|}{\delta_a}
 \int_{\R^{3}}\frac{|\nabla U\big(\zeta y+(1-\zeta)\frac{\sqrt{\textbf{p}_{0}}}{\delta_a}(x^{(1)}_{a}-x^{(2)}_{a})\big)|\cdot
 |\bar{\xi}_{a}(y)|}{|x-y|}dy
 \\
 &\quad
 +C\sum_{l=1}^{2}\int_{\R^{3}}\frac{|\tilde{\varphi}^{(l)}_{a}(\frac{\delta_a y}{\sqrt{\textbf{p}_{0}}}+x^{(1)}_{a,i})|\cdot
 |\bar{\xi}_{a}(y)|}{|x-y|}dy\\
 &=O(\delta_{a}),
\end{split}
\end{equation}
since similar to \eqref{kxi}, we have
\begin{equation*}\label{eq-7-4-1}
\begin{split}
\int_{\R^{3}}
\frac{|\nabla U\big(\zeta y+(1-\zeta)\frac{\sqrt{\textbf{p}_{0}}}{\delta_a}(x^{(1)}_{a}-x^{(2)}_{a})\big)|\cdot
|\bar{\xi}_{a}(y)|}{|x-y|}dy=O(1),
\end{split}
\end{equation*}
and
\begin{equation*}\label{eq-7-4-3}
\begin{split}
&\int_{\R^{3}}
\frac{\tilde{\varphi}^{(l)}_{a}(\frac{\delta_a y}{\sqrt{\textbf{p}_{0}}}+x^{(1)}_{a})\bar{\xi}_{a}(y)}{|x-y|}dy\\
&\leq \int_{|y|\leq 2\frac{\sqrt{\textbf{p}_{0}}}{\delta_{a}}\rho}\frac{\tilde{\varphi}^{(l)}_{a}(\frac{\delta_a y}{\sqrt{\textbf{p}_{0}}}+x^{(1)}_{a})\bar{\xi}_{a}(y)}{|x-y|}dy
+\int_{|y|\geq 2\frac{\sqrt{\textbf{p}_{0}}}{\delta_{a}}\rho}\frac{\tilde{\varphi}^{(l)}_{a}(\frac{\delta_a y}{\sqrt{\textbf{p}_{0}}}+x^{(1)}_{a})\bar{\xi}_{a}(y)}{|x-y|}dy\\
&\leq C |\tilde{\varphi}^{(l)}_{a}(\frac{\delta_a y}{\sqrt{\textbf{p}_{0}}}+x^{(1)}_{a})|_{3}|\bar{\xi}_{a}(y)|_{6}\Big(\int_{|x-y|\leq 3\frac{\sqrt{\textbf{p}_{0}}}{\delta_{a}}\rho}
\frac{1}{|x-y|^{2}}dy\Big)^{\frac{1}{2}}
+C\delta_{a}|\tilde{\varphi}^{(l)}_{a}(\frac{\delta_a y}{\sqrt{\textbf{p}_{0}}}+x^{(1)}_{a})|_{2}|\bar{\xi}_{a}(y)|_{2}
\\
&\leq C\delta^{-1}_{a}|\tilde{\varphi}^{(l)}_{a}(y)|_{3}\delta^{-\frac{1}{2}}_{a}
+C\delta_{a}\delta^{-\frac{3}{2}}_{a}|\tilde{\varphi}^{(l)}_{a}(y)|_{2}
\\
&\leq C\delta^{-\frac{3}{2}}_{a}(\delta^{-\frac{1}{2}}_{a}\delta^{\frac{11}{2}}_{a})+C\delta_{a}^{-\frac{1}{2}}\delta^{\frac{11}{2}}_{a}
=O(\delta_{a}^{\frac{7}{2}}).
\end{split}
\end{equation*}
Then \eqref{g} follows from \eqref{eq-g1} and \eqref{eq-g2}.
\end{proof}

\begin{Lem}%\label{add-uu}
There holds
\begin{equation}\label{uu}
\begin{split}
&\int_{\R^{3}}\int_{\R^{3}}\frac{\big[(u_a^{(1)})^{2}(x)u_a^{(2)}(y)
+(u_a^{(2)})^{2}(x)u_a^{(1)}(y)\big]\xi_{a}(y)}{|x-y|}dxdy
\\
&=2\delta^{5}_a\textbf{p}^{\frac{1}{2}}_{0}
\int_{\R^{3}}\int_{\R^{3}}\frac{U^{2}(x)U(y)\bar{\xi}_{a}(y)}{|x-y|}dxdy
+O(\delta_a^{6}).
\end{split}
\end{equation}
\end{Lem}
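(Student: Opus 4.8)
The plan is to substitute the single-peak decompositions $u_a^{(l)}=\tilde U_{\delta_a,x_a^{(l)}}+\tilde\varphi_a^{(l)}$ for $l=1,2$, where $\tilde U_{\delta_a,x_a^{(l)}}(x)=\textbf{p}_{0}\,U\big(\tfrac{\sqrt{\textbf{p}_{0}}}{\delta_a}(x-x_a^{(l)})\big)$ (both peaks carry the same $\textbf{p}_{0}$ since $u_a^{(1)}$ and $u_a^{(2)}$ are single-peaked at the common point $b_0$), expand the bracket $\big[(u_a^{(1)})^2(x)u_a^{(2)}(y)+(u_a^{(2)})^2(x)u_a^{(1)}(y)\big]$, and split the integrand into the ``pure bubble'' part $\big[\tilde U_{\delta_a,x_a^{(1)}}^2(x)\tilde U_{\delta_a,x_a^{(2)}}(y)+\tilde U_{\delta_a,x_a^{(2)}}^2(x)\tilde U_{\delta_a,x_a^{(1)}}(y)\big]\xi_a(y)$ plus a finite collection of terms, each carrying at least one factor $\tilde\varphi_a^{(l)}$.

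For the pure bubble part, I would first restrict the $x$- and $y$-integration to $B_\rho(x_a^{(1)})$, the complementary region contributing only $O(e^{-\theta/\delta_a})$ by the exponential decay \eqref{a2---5}, and then carry out the change of variables $x=\tfrac{\delta_a}{\sqrt{\textbf{p}_{0}}}s+x_a^{(1)}$, $y=\tfrac{\delta_a}{\sqrt{\textbf{p}_{0}}}t+x_a^{(1)}$. The two Jacobians together with the kernel $|x-y|^{-1}$ produce the overall factor $\delta_a^5\,\textbf{p}_{0}^{-5/2}$, while $\xi_a(y)$ becomes $\bar\xi_a(t)$ and $\tilde U_{\delta_a,x_a^{(1)}}$ becomes $\textbf{p}_{0}U(s)$ (resp. $\textbf{p}_{0}U(t)$). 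Because $|x_a^{(1)}-x_a^{(2)}|=O(\delta_a^2)$ by \eqref{8-27-38}, the bubble centered at $x_a^{(2)}$ becomes $\textbf{p}_{0}U\big(s+\tfrac{\sqrt{\textbf{p}_{0}}}{\delta_a}(x_a^{(1)}-x_a^{(2)})\big)=\textbf{p}_{0}U(s)+O(\delta_a)$ by the mean value theorem, the $O(\delta_a)$ error still decaying like $|\nabla U|$; the same holds for the corresponding $U^2$ factor. Collecting the zeroth-order contributions from both terms of the bracket, and using $|\bar\xi_a|\le1$ so that $\int_{\R^3}\int_{\R^3}\tfrac{U^2(s)U(t)\bar\xi_a(t)}{|s-t|}\,ds\,dt$ is finite and $O(1)$, I obtain $2\,\textbf{p}_{0}^{3}\,\delta_a^5\,\textbf{p}_{0}^{-5/2}\int_{\R^3}\int_{\R^3}\tfrac{U^2(s)U(t)\bar\xi_a(t)}{|s-t|}\,ds\,dt+O(\delta_a^6)=2\delta_a^5\,\textbf{p}_{0}^{1/2}\int_{\R^3}\int_{\R^3}\tfrac{U^2(s)U(t)\bar\xi_a(t)}{|s-t|}\,ds\,dt+O(\delta_a^6)$, which is exactly the announced main term.

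Next I would show that every $\tilde\varphi_a^{(l)}$-term is $O(\delta_a^6)$. A representative term such as $\int_{\R^3}\int_{\R^3}\tfrac{u_a^{(1)}(x)\,u_a^{(1)}(x)\,\tilde\varphi_a^{(2)}(y)\,\xi_a(y)}{|x-y|}\,dx\,dy$ can be bounded by the nonlocal bilinear inequality of Lemma~\ref{lem-7-9-1} (with $\epsilon=\delta_a$) by $C\delta_a^{-1}\|u_a^{(1)}\|_{\delta_a}^2\,\|\tilde\varphi_a^{(2)}\|_{\delta_a}\,\|\xi_a\|_{\delta_a}$; using $\|u_a^{(1)}\|_{\delta_a}=O(\delta_a^{3/2})$ (an immediate consequence of the decomposition), the bound $\|\tilde\varphi_a^{(l)}\|_{\delta_a}=O(\delta_a^{11/2})$ from \eqref{8-27-26}, and the $\delta_a$-weighted estimate for $\xi_a$, this is $O(\delta_a^6)$ (indeed smaller). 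The remaining $\tilde\varphi_a^{(l)}$-terms — those with two bubbles and one $\tilde\varphi_a^{(l)}$, one bubble and two $\tilde\varphi_a^{(l)}$'s, or three $\tilde\varphi_a^{(l)}$'s — are treated the same way, invoking Lemmas~\ref{lem-7-9-1} and \ref{lem-add-7-11}, Hölder's inequality and Lemma~\ref{lem-add1-1} together with \eqref{8-27-26} and \eqref{a2---5} for the tails; a routine count of $\delta_a$-powers shows every one of them is $O(\delta_a^6)$. Adding the pure-bubble main term and all the remainders then yields \eqref{uu}.

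The hard part will be the bookkeeping in the last step: since the kernel $|x-y|^{-1}$ forbids simple pointwise estimates and forces the use of the $\delta_a$-weighted bilinear inequalities, one must track carefully how each weighted norm rescales and verify that every cross term is genuinely of order strictly smaller than the $\delta_a^5$ main term. By contrast, the degeneracy of the trapping potential $P(x)$ plays no role here; the only real difficulty lies in the estimates involving the nonlocal term.
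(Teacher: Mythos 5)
Your proposal is correct and follows essentially the same route as the paper's own proof: substitute the decompositions $u_a^{(l)}=\tilde U_{\delta_a,x_a^{(l)}}+\tilde\varphi_a^{(l)}$, rescale about $x_a^{(1)}$ to extract the main term $2\delta_a^5\mathbf{p}_0^{1/2}\int\int\frac{U^2(x)U(y)\bar\xi_a(y)}{|x-y|}$, absorb the peak mismatch via the mean value theorem and $|x_a^{(1)}-x_a^{(2)}|=O(\delta_a^2)$, and kill the $\tilde\varphi$-terms with the weighted nonlocal bilinear inequalities together with $\|\tilde\varphi_a^{(l)}\|_{\delta_a}=O(\delta_a^{11/2})$. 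The only cosmetic difference is that the paper treats the two summands of the bracket separately (each contributing $\delta_a^5\mathbf{p}_0^{1/2}\int\int\frac{U^2U\bar\xi_a}{|x-y|}+O(\delta_a^6)$, labelled $H_1,H_2$ for the remainders) rather than expanding the whole bracket at once, and your power counting for the remainder terms matches theirs.
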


\begin{proof}
By direct computations, from \eqref{06-13-1} we have
\begin{equation}\label{uu-1-1}
\begin{split}
\int_{\R^{3}}&\int_{\R^{3}}\frac{(u_a^{(1)})^{2}(x)u_a^{(2)}(y)
\xi_{a}(y)}{|x-y|}dxdy\\
=&\delta_{a}^{5}\textbf{p}^{\frac{1}{2}}_{0}\int_{\R^{3}}\int_{\R^{3}}
\frac{U^{2}(x)U(y)\bar{\xi}_{a}(y)}{|x-y|}dxdy
\\
&
+\frac{\delta_{a}^{5}}{\textbf{p}^{\frac{1}{2}}_{0}}
\underbrace{\int_{\R^{3}}\int_{\R^{3}}
\frac{U^{2}(x)(u^{(2)}_{a}
(\frac{\delta_{a}}{\sqrt{\textbf{p}_{0}}}y+x^{(1)}_{a})-\textbf{p}_{0}U(y))\bar{\xi}_{a}(y)}{|x-y|}
dxdy}_{:=H_{1}} \\&
+
\delta_{a}^{5}\textbf{p}^{\frac{1}{2}}_{0}\underbrace{\int_{\R^{3}}\int_{\R^{3}}
\frac{\tilde{\varphi}^{(1)}_{a}(\frac{\delta_{a}}{\sqrt{\textbf{p}_{0}}}y+x^{(1)}_{a})
\big(u^{(2)}_{a}
(\frac{\delta_{a}}{\sqrt{\textbf{p}_{0}}}y+x^{(1)}_{a})+\textbf{p}_{0}U(x)\big)u^{(2)}_{a}
(\frac{\delta_{a}}{\sqrt{\textbf{p}_{0}}}y+x^{(1)}_{a})\bar{\xi}_{a}(y)}{|x-y|}dxdy}_{:=H_{2}}
\\=
& \delta_{a}^{5}\textbf{p}^{\frac{1}{2}}_{0}\int_{\R^{3}}\int_{\R^{3}}
\frac{U^{2}(x)U(y)\bar{\xi}_{a}(y)}{|x-y|}dxdy
+O(\delta^{6}_{a}),
\end{split}
\end{equation}
since
by Lemmas \ref{lem-7-9-1}, \ref{lem-add-7-11} and \eqref{8-27-26}, we have
\begin{equation*}\label{eq-h1}
\begin{split}
H_{1}&=
\int_{\R^{3}}\int_{\R^{3}}
\frac{U^{2}(x)\textbf{p}_{0}
\big(U(y+\frac{\sqrt{\textbf{p}_{0}}}{\delta_{a}}(x^{(1)}_{a}-x^{(2)}_{a}))-U(y)\big)\bar{\xi}_{a}(y)}{|x-y|}dxdy
\\
&\quad+\int_{\R^{3}}\int_{\R^{3}}
\frac{U^{2}(x)\tilde{\varphi}_{a}^{(2)}
\big(\frac{\delta_{a}}{\sqrt{\textbf{p}_{0}}}y+x^{(1)}_{a}\big)\bar{\xi}_{a}(y)}{|x-y|}dxdy
\\
&=O(\delta_{a})\int_{\R^{3}}\int_{\R^{3}}
\frac{U^{2}(x)
|\nabla U(\zeta y+(1-\zeta)\frac{\sqrt{\textbf{p}_{0}}}{\delta_{a}}(x^{(1)}_{a}-x^{(2)}_{a}))||\bar{\xi}_{a}(y)|}{|x-y|}dxdy
\\
&\quad+C\delta_{a}^{-1}\|U\|_{\delta_{a}}^{2}
\Big\|\tilde{\varphi}_{a}^{(2)}
\big(\frac{\delta_{a}}{\sqrt{\textbf{p}_{0}}}y+x^{(1)}_{a}\big)\Big\|_{\delta_{a}}\|\bar{\xi}_{a}\|_{\delta_{a}}
\\
&=O(\delta_{a})\|U\|^{2}
\Big\|\nabla U(\zeta y+(1-\zeta)\frac{\sqrt{\textbf{p}_{0}}}{\delta_{a}}(x^{(1)}_{a}-x^{(2)}_{a}))\Big\|\|\bar{\xi}_{a}\|
+O(\delta^{3}_{a})\\
&=O(\delta_{a}),\,\,\,\text{~for~ some}\,\,\zeta\in(0,1),
\end{split}
\end{equation*}
and
\begin{equation*}\label{eq-h2}
\begin{split}
H_{2}&=2\textbf{p}^{2}_{0}\int_{\R^{3}}\int_{\R^{3}}
\frac{U(x)\tilde{\varphi}^{(1)}_{a}(\frac{\delta_{a}}{\sqrt{\textbf{p}_{0}}}x+x^{(1)}_{a})
U(y+\frac{\sqrt{\textbf{p}_{0}}}{\delta_{a}}(x^{(1)}_{a}-x^{(2)}_{a}))\bar{\xi}_{a}(y)}{|x-y|}dxdy\\
&\quad+
2\textbf{p}_{0}\int_{\R^{3}}\int_{\R^{3}}
\frac{U(x)\tilde{\varphi}^{(1)}_{a}(\frac{\delta_{a}}{\sqrt{\textbf{p}_{0}}}x+x^{(1)}_{a})
\tilde{\varphi}^{(2)}_{a}(\frac{\delta_{a}}{\sqrt{\textbf{p}_{0}}}y+x^{(1)}_{a})\bar{\xi}_{a}(y)}{|x-y|}dxdy\\
&
\quad+\textbf{p}_{0}
\int_{\R^{3}}\int_{\R^{3}}
\frac{(\tilde{\varphi}^{(1)}_{a}(\frac{\delta_{a}}{\sqrt{\textbf{p}_{0}}}x+x^{(1)}_{a}))^{2}
U(x+\frac{\sqrt{\textbf{p}_{0}}}{\delta_{a}}(x^{(1)}_{a}-x^{(2)}_{a}))\bar{\xi}_{a}(y)}{|x-y|}dxdy
\end{split}
\end{equation*}

\begin{equation*}\label{eq-h2}
\begin{split}
&\quad
+\int_{\R^{3}}\int_{\R^{3}}
\frac{(\tilde{\varphi}^{(1)}_{a}(\frac{\delta_{a}}{\sqrt{\textbf{p}_{0}}}x+x^{(1)}_{a}))^{2}
\tilde{\varphi}^{(2)}_{a}(\frac{\delta_{a}}{\sqrt{\textbf{p}_{0}}}y+x^{(1)}_{a})\bar{\xi}_{a}(y)}{|x-y|}dxdy\\
&\leq C\delta^{-1}_{a}\|U\|^{2}_{\delta_{a}}
\Big\|\tilde{\varphi}^{(1)}_{a}(\frac{\delta_{a}}{\sqrt{\textbf{p}_{0}}}x+x^{(1)}_{a})\Big\|_{\delta_{a}}
\|\bar{\xi}_{a}(y)\|_{\delta_{a}}\\
&\quad
+C\delta^{-1}_{a}\|U\|_{\delta_{a}}
\Big\|\tilde{\varphi}^{(1)}_{a}(\frac{\delta_{a}}{\sqrt{\textbf{p}_{0}}}x+x^{(1)}_{a})\Big\|_{\delta_{a}}
\Big\|\tilde{\varphi}^{(2)}_{a}(\frac{\delta_{a}}{\sqrt{\textbf{p}_{0}}}x+x^{(1)}_{a})\Big\|_{\delta_{a}}
\|\bar{\xi}_{a}(y)\|_{\delta_{a}}\\
&\quad
+C\delta^{-1}_{a}
\Big\|\tilde{\varphi}^{(1)}_{a}(\frac{\delta_{a}}{\sqrt{\textbf{p}_{0}}}x+x^{(1)}_{a})\Big\|^{2}_{\delta_{a}}
\|U\|_{\delta_{a}}
\|\bar{\xi}_{a}(y)\|_{\delta_{a}}
\\
&\quad
+C\delta^{-1}_{a}
\Big\|\tilde{\varphi}^{(1)}_{a}(\frac{\delta_{a}}{\sqrt{\textbf{p}_{0}}}x+x^{(1)}_{a})\Big\|^{2}_{\delta_{a}}
\Big\|\tilde{\varphi}^{(2)}_{a}(\frac{\delta_{a}}{\sqrt{\textbf{p}_{0}}}x+x^{(1)}_{a})\Big\|_{\delta_{a}}
\|\bar{\xi}_{a}(y)\|_{\delta_{a}}\\
&=O(\delta^{3}_{a})
+O(\delta^{7}_{a})+O(\delta^{11}_{a})
=O(\delta^{3}_{a}).
\end{split}
\end{equation*}

Just by the same argument as that of \eqref{uu-1-1}, we can also check that
\begin{equation}\label{uu-2}
\begin{split}
\int_{\R^{3}}\int_{\R^{3}}\frac{(u_a^{(2)})^{2}(x)u_a^{(1)}(y)
\xi_{a}(y)}{|x-y|}dxdy
=\delta_{a}^{5}\textbf{p}^{\frac{1}{2}}_{0}\int_{\R^{3}}\int_{\R^{3}}
\frac{U^{2}(x)U(y)\bar{\xi}_{a}(y)}{|x-y|}dxdy
+O(\delta^{6}_{a}).
\end{split}
\end{equation}
It follows from \eqref{uu-1-1} and
\eqref{uu-2} that \eqref{uu} holds.
\end{proof}

\vskip 0.3cm

\section{Acknowledgements}{Part of this work was done while Luo  was visiting the Mathematics Department of University of Rome "La Sapienza"
whose members he would like to thank for their warm hospitality. Guo was supported by NSFC grants (No.11771469). Luo and Wang were supported by the Fundamental
Research Funds for the Central Universities(No.KJ02072020-0319). Luo was supported by the China Scholarship Council and   NSFC grants (No.12171183, No.11831009).
Wang was supported by NSFC grants (No.12071169). Yang was supported by NSFC grants (No.11601194).}

\end{document}